\definecolor{webcolor}{rgb}{0.8,0,0.2}
\definecolor{webbrown}{rgb}{.6,0,0}
\numberwithin{equation}{section}
\newcommand{\CC}{\mathbb C}
\newcommand{\FF}{\mathbb F}
\newcommand{\GG}{\mathbb G}
\newcommand{\QQ}{\mathbb Q}
\newcommand{\ZZ}{\mathbb Z}
\newcommand{\OO}{\mathcal O}
\newcommand{\calG}{\mathcal G}  \newcommand{\calF}{\mathcal F}
\newcommand{\calH}{\mathcal H}
\newcommand{\calB}{\mathcal B}
\newcommand{\calS}{\mathcal S}
\newcommand{\calC}{\mathcal C}
\newcommand{\calW}{\mathcal W}
\newcommand{\calM}{\mathcal M}
\newcommand{\calT}{\mathcal T}
\newcommand{\calV}{\mathcal V}
\newcommand{\calY}{\mathcal Y}
\newcommand{\p}{\mathfrak p}
\newcommand{\m}{\mathfrak m}
\newcommand{\scrA}{\mathscr A}
\newcommand{\scrF}{\mathscr F}
\newcommand{\boldG}{\mathbf G}
\def\rs{{\operatorname{rs}}}
\def\un{{\operatorname{un}}}
\def\conn{{\operatorname{conn}}}
\def\rank{{\operatorname{rank}}}
\def\Res{{\operatorname{Res}}}
\def\Spec{\operatorname{Spec}} 
\def\Gal{\operatorname{Gal}}
\def \GL {\operatorname{GL}}  
\def \GSp {\operatorname{GSp}}
\def \PSL {\operatorname{PSL}}
\def \Sp {\operatorname{Sp}}
\def\Aut{\operatorname{Aut}} 
\def\End{\operatorname{End}}
\def\Frob{\operatorname{Frob}}
\def\T{\operatorname{T}}
\def\Ndim{\operatorname{Ndim}}
\def\sc{{\operatorname{sc}}}
\newcommand{\q}{\mathfrak q}
\newcommand{\MT}{\operatorname{MT}}
\newcommand{\defi}[1]{\textsf{#1}} % for defined terms
\newcommand\blank[1]{}
\def\bbar#1{\setbox0=\hbox{$#1$}\dimen0=.2\ht0 \kern\dimen0 
\overline{\kern-\dimen0 #1}}
\newcommand{\Qbar}{{\overline{\mathbb Q}}} 
\newcommand{\Kbar}{{\bbar{K}}}
\newcommand{\FFbar}{\overline{\FF}} 
\newtheorem{thm}{Theorem}[section]
\newtheorem{lemma}[thm]{Lemma}
\newtheorem{cor}[thm]{Corollary}
\newtheorem{prop}[thm]{Proposition}
\newtheorem{conj}[thm]{Conjecture}
\theoremstyle{definition}
\newtheorem{definition}[thm]{Definition}
\theoremstyle{remark}
\newtheorem{remark}[thm]{Remark}
\newenvironment{romanenum}{\hfill \begin{enumerate} }{\end{enumerate}}
\newenvironment{alphenum}{\hfill \begin{enumerate} }{\end{enumerate}}
\begin{document}

\title[An effective open image theorem for abelian varieties]{An effective open image theorem for abelian varieties}
\subjclass[2010]{Primary 11G05; Secondary 11F80}
%\keywords{}
\author{David Zywina}
\address{Department of Mathematics, Cornell University, Ithaca, NY 14853, USA}
\email{zywina@math.cornell.edu}
\urladdr{http://www.math.cornell.edu/~zywina}

\date{{\today}}

\begin{abstract}
Fix an abelian variety $A$ of dimension $g\geq 1$ defined over a number field $K$.  For each prime $\ell$, the Galois action on the $\ell$-power torsion points of $A$ induces a representation $\rho_{A,\ell}\colon \Gal_K \to \GL_{2g}(\ZZ_\ell)$.   The $\ell$-adic monodromy group of $A$ is the Zariski closure $G_{A,\ell}$ of the image of $\rho_{A,\ell}$ in $\GL_{2g,\QQ_\ell}$.   The image of $\rho_{A,\ell}$ is open in $G_{A,\ell}(\QQ_\ell)$ with respect to the $\ell$-adic topology and hence the index $[G_{A,\ell}(\QQ_\ell)\cap \GL_{2g}(\ZZ_\ell): \rho_{A,\ell}(\Gal_K)]$ is finite.   We prove that this index can be bounded in terms of $g$ for all $\ell$ larger then some constant depending on certain invariants of $A$.
\end{abstract}

\maketitle

\section{Introduction} \label{S:intro}

Let $A$ be an abelian variety of dimension $g\geq 1$ defined over a number field $K$.   Fix an algebraic closure $\Kbar$ of $K$ and define the absolute Galois group $\Gal_K:=\Gal(\Kbar/K)$.

Take any rational prime $\ell$.  For each positive integer $n$, we denote by $A[\ell^n]$ the $\ell^n$-torsion subgroup of $A(\Kbar)$. The group $A[\ell^n]$ is a free $\ZZ/\ell^n \ZZ$-module of rank $2g$ and comes with a natural $\Gal_K$-action that respects the group structure.
The $\ell$-adic \defi{Tate module} of $A$ is 
\[
T_\ell(A):= \varprojlim_n A[\ell^n],
\]
where the transition maps $A[\ell^{n+1}]\to A[\ell^n]$ are multiplication by $\ell$; it is a free $\ZZ_\ell$-module of rank $2g$.    The induced Galois action on $T_\ell(A)$ can be expressed in terms of a continuous representation
\[
\rho_{A,\ell} \colon \Gal_K \to \Aut_{\ZZ_\ell}(T_\ell(A)) = \GL_{T_\ell(A)}(\ZZ_\ell),
\]   
where $\GL_{T_\ell(A)}$ is the group scheme over $\ZZ_\ell$ for which $\GL_{T_\ell(A)}(R)=\Aut_R(T_\ell(A)\otimes_{\ZZ_\ell} R)$ for all $\ZZ_\ell$-algebras $R$ with the obvious functoriality.    After choosing a basis for $T_\ell(A)$, one could have $\rho_{A,\ell}$ mapping to $\GL_{2g}(\ZZ_\ell)$; it will make our arguments easier not to make such an arbitrary choice.

Define $V_\ell(A):= T_\ell(A) \otimes_{\ZZ_\ell} \QQ_\ell$; it is a $\QQ_\ell$-vector space of dimension $2g$  and inherits the Galois action from $T_\ell(A)$.  We can define a group scheme $\GL_{V_\ell(A)}$ over $\QQ_\ell$ as above; it is the generic fiber of $\GL_{T_\ell(A)}$.   We can view $\GL_{T_\ell(A)}(\ZZ_\ell)$, and hence also $\rho_{A,\ell}(\Gal_K)$, as a subgroup of $\GL_{V_\ell(A)}(\QQ_\ell)=\Aut_{\QQ_\ell}(V_\ell(A))$.  

To study the group $\rho_{A,\ell}(\Gal_K)$, we consider a related algebraic group defined over $\QQ_\ell$.

\begin{definition}
The \defi{$\ell$-adic monodromy group} of $A$ is the algebraic group $G_{A,\ell}$ defined over $\QQ_\ell$ obtained by taking the Zariski closure of $\rho_{A,\ell}(\Gal_K)$ in $\GL_{V_\ell(A)}$.  Let $\calG_{A,\ell}$ be the algebraic group over $\ZZ_\ell$ obtained by taking the Zariski closure of $\rho_{A,\ell}(\Gal_K)$ in $\GL_{T_\ell(A)}$.
\end{definition}

Note that the group schemes $G_{A,\ell}$ and $\calG_{A,\ell}$ determine each other; $G_{A,\ell}$ is the generic fiber of $\calG_{A,\ell}$ and $\calG_{A,\ell}$ is the Zariski closure of $G_{A,\ell}$ in $\GL_{T_\ell(A)}$.   

The group $\rho_{A,\ell}(\Gal_K)$ is open in $G_{A,\ell}(\QQ_\ell)$ with respect to the $\ell$-adic topology, cf.~\cite{MR574307}.   In particular, $\rho_{A,\ell}(\Gal_K)$ is an open, and hence finite index, subgroup of $\calG_{A,\ell}(\ZZ_\ell)=G_{A,\ell}(\QQ_\ell)\cap \Aut_{\ZZ_\ell}(T_\ell(A))$.   An effective version would ask for effective bounds for $[\calG_{A,\ell}(\ZZ_\ell):\rho_{A,\ell}(\Gal_K)]$. As a special case of our work, we will see that
 \[
 [\calG_{A,\ell}(\ZZ_\ell): \rho_{A,\ell}(\Gal_K)] \ll_g 1
 \] 
 for all sufficiently large primes $\ell$.  The notation ``$\ll_g$'' indicates that the index can be bounded in terms of a constant that depends only on $g$, cf.~\S\ref{SS:notation}.   For future applications, in particular~\ref{Zywina-Large families}, we are interested in describing how large $\ell$ needs to be in terms of  invariants of $A$.

\subsection{Some quantities}

Before stating our main results, we need to define some quantities that will show up in the bounds.  See \S\ref{S:mondromy groups} for further details and references.

For each prime $\ell$, let $G_{A,\ell}^\circ$ be the neutral component of $G_{A,\ell}$, i.e., the algebraic subgroup of $G_{A,\ell}$ that is the connected component of the identity.   The algebraic group $G_{A,\ell}^\circ$ is reductive and its rank $r$ is independent of $\ell$.   Let $\calG_{A,\ell}^\circ$ be the $\ZZ_\ell$-group subscheme of $\calG_{A,\ell}$ that is the Zariski closure of $G_{A,\ell}^\circ$.   Let $K_A^\conn$ be the minimal extension of $K$ in $\Kbar$ for which $\rho_{A,\ell}(\Gal_{K_A^\conn})\subseteq G_{A,\ell}^\circ(\QQ_\ell)$.   The field $K_A^\conn$ is a number field that is independent of $\ell$.   The extension $K_A^\conn/K$ is unramified at all primes ideals for which $A$ has good reduction and the degree $[K_A^\conn:K]$ can be bounded in terms of $g$.

Let $\p$ be any non-zero prime ideal of $\OO_K$ for which $A$ has good reduction.   Denote by $P_{A,\p}(x)$ the \defi{Frobenius polynomial} of $A$ at $\p$; it is a monic polynomial of degree $2g$ with integer coefficients.   For a prime $\ell$ satisfying $\p\nmid \ell$, the representation $\rho_{A,\ell}$ is unramified at $\p$ and we have
\[
P_{A,\p}(x)= \det(xI - \rho_{A,\ell}(\Frob_\p)).
\]
Let  $\Phi_{A,\p}$ be the subgroup of $\CC^\times$ generated by the roots of $P_{A,\p}(x)$.   If $\Phi_{A,\p}$ is a free abelian group, then it has rank at most $r$.   Moreover, the set of primes $\p$ for which $\Phi_{A,\p}$ is a free abelian group of rank $r$ has positive density.

Denote by $h(A)$ the (logarithmic absolute semistable) Faltings height of $A$.

\subsection{Main results}
 We can now state the main theorem of the paper.   As before, $A$ is an abelian variety of dimension $g\geq 1$ defined over a number field $K$. Let $\q$ be a non-zero prime ideal of $\OO_K$ for which $\Phi_{A,\q}$ is a free abelian group of rank $r$, where $r$ is the common rank of the reductive groups $G_{A,\ell}^\circ$.   
 
\begin{thm} \label{T:main new}
 There are positive constants $c$ and $\gamma$, depending only on $g$, such that for any prime $\ell$ satisfying
 \begin{align} \label{E:main ell bound}
 \ell \geq c \cdot \max(\{[K:\QQ],h(A), N(\q)\})^\gamma
 \end{align}
 the following hold:
\begin{alphenum}
\item  \label{T:main new a}
     $[\calG_{A,\ell}(\ZZ_\ell): \rho_{A,\ell}(\Gal_K)] \ll_{g,[K:\QQ]} 1$,
\item \label{T:main new b}
if $\ell$ is unramified in $K_A^\conn$, then $[\calG_{A,\ell}(\ZZ_\ell): \rho_{A,\ell}(\Gal_K)] \ll_{g} 1$.
     
\item  \label{T:main new c}
      $\calG_{A,\ell}^\circ$ is a reductive group scheme over $\ZZ_\ell$,    
\item   \label{T:main new d}
      the commutator subgroups of $\rho_{A,\ell}(\Gal_{K_A^\conn})$ and $\calG^\circ_{A,\ell}(\ZZ_\ell)$ agree.
\end{alphenum}
\end{thm}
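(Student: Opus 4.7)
My plan is to prove the four parts in the order (c) $\Rightarrow$ (d) $\Rightarrow$ (b) $\Rightarrow$ (a). Part (c) provides the integral-reductive structure on $\calG_{A,\ell}^\circ$, which is essential for analyzing its group of $\ZZ_\ell$-points; part (d) then controls the commutator subgroup of $\rho_{A,\ell}(\Gal_{K_A^\conn})$; and parts (b) and (a) follow by bounding the index in the abelianization, together with the finite-index contribution from $[K_A^\conn:K]$.

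For (c), the Frobenius element $\rho_{A,\ell}(\Frob_\q^{[K_A^\conn:K]})$ lies in $\calG_{A,\ell}^\circ(\ZZ_\ell)$ and, because $\Phi_{A,\q}$ is free of rank $r$, is Zariski-dense in a maximal torus $T\subseteq G_{A,\ell}^\circ$. The lower bound on $\ell$ in \eqref{E:main ell bound} is engineered so that (i) the discriminant of $P_{A,\q}(x)$, bounded by a power of $N(\q)$, is coprime to $\ell$, so the Frobenius remains regular semisimple modulo $\ell$; (ii) the primes of bad reduction of $A$, bounded in terms of $h(A)$ by Faltings' inequalities, are coprime to $\ell$; and (iii) $\ell$ exceeds the Coxeter number of any reductive group of rank $\leq g$. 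Under (i), the Zariski closure of the cyclic group generated by this Frobenius inside $\calG_{A,\ell}^\circ$ is already a torus over $\ZZ_\ell$; under (ii), the crystalline nature of $\rho_{A,\ell}$ at primes above $\ell$ forces the special fiber of $\calG_{A,\ell}^\circ$ to be reductive of the right rank; flatness then yields reductivity over $\ZZ_\ell$.

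For (d), given (c), the derived subgroup scheme $\calH:=(\calG_{A,\ell}^\circ)^{\mathrm{der}}$ is semisimple over $\ZZ_\ell$, and a standard structure theorem valid under (iii) identifies $\calG_{A,\ell}^\circ(\ZZ_\ell)'$ with the image $\calH(\ZZ_\ell)^{+}$ of $\calH^\sc(\ZZ_\ell)$. Since $\rho_{A,\ell}(\Gal_{K_A^\conn})$ is open and Zariski-dense in $\calG_{A,\ell}^\circ(\ZZ_\ell)$, it contains a congruence subgroup $\Gamma$, and forming commutators of elements of $\Gamma$ with $\rho_{A,\ell}(\Frob_\q)$, which is regular semisimple mod $\ell$, generates on the Lie-algebra level the full Lie algebra of $\calH$; exponentiating a la Nori yields the inclusion $\calH(\ZZ_\ell)^{+}\subseteq \rho_{A,\ell}(\Gal_{K_A^\conn})'$, and the reverse inclusion is automatic. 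For (a) and (b), the quotient $\calT:=\calG_{A,\ell}^\circ/\calH$ is a $\ZZ_\ell$-torus of dimension $\leq g$, and by (d) the obstruction to $\rho_{A,\ell}(\Gal_{K_A^\conn})=\calG_{A,\ell}^\circ(\ZZ_\ell)$ is controlled by the image in $\calT(\ZZ_\ell)$. That image is in turn determined by abelian characters (determinants, similitudes, and Weil-pairing type constructions) which behave predictably: when $\ell$ is unramified in $K_A^\conn$, the image has index bounded only in $g$, giving (b); otherwise one picks up ramification factors bounded in $[K:\QQ]$, producing (a) after accounting for the further bounded factor $[K_A^\conn:K]$.

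The main obstacle is part (c): both the regularity of the Frobenius at $\q$ modulo $\ell$ and the local structure of $\rho_{A,\ell}$ at primes over $\ell$ must be controlled simultaneously, and this is precisely where the effective bound \eqref{E:main ell bound} does real work. Once (c) is established, parts (d), (b), and (a) are essentially downstream consequences of combining the reductive integral model with the known openness and Zariski-density of the Galois image.
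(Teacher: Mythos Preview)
Your high-level strategy---establish (c), deduce (d), then control the abelian quotient for (a) and (b)---matches the paper. But the execution of (c) and (d) contains genuine gaps.

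\textbf{Part (c).} The claim that ``the primes of bad reduction of $A$, bounded in terms of $h(A)$ by Faltings' inequalities, are coprime to $\ell$'' is false: there is no known bound on the bad primes of $A$ in terms of $h(A)$ and $[K:\QQ]$ alone. Indeed, the paper's Remark after Theorem~\ref{T:main new} stresses that the set $S$ of bad primes does \emph{not} enter the bound, and that eliminating this dependence was one of the main technical points. The paper does not use any crystalline input. Instead it applies Wintenberger's criterion (Lemma~\ref{L:Wintenberger reductive criterion}): one needs (i) a maximal torus of $G_{A,\ell}$ whose schematic closure in $\GL_{T_\ell(A)}$ is already a $\ZZ_\ell$-torus, and (ii) that $\calG_{\FF_\ell}$ acts semisimply on $A[\ell]$. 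For (i) the paper uses the Frobenius at $\q$ exactly as you suggest; for (ii) it invokes the effective Masser--W\"ustholz theorem (Theorem~\ref{T:MW}), which gives semisimplicity of $A[\ell]$ once $\ell \geq c\,\max\{[K:\QQ],h(A)\}^\gamma$. This is where the dependence on $h(A)$ actually enters---not through bad primes.

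\textbf{Part (d).} Your argument is circular in its effectivity. You invoke that $\rho_{A,\ell}(\Gal_{K_A^\conn})$ contains some congruence subgroup $\Gamma$ (Bogomolov), but the level of $\Gamma$ is \emph{a priori} unbounded in terms of the data, so ``commutators with $\Frob_\q$ generate the Lie algebra, then exponentiate'' does not yield the exact equality of commutator subgroups with effective constants. The paper proceeds quite differently: it works entirely modulo $\ell$ first, showing via Nori theory, the Larsen--Pink structure theorem (Theorem~\ref{T:Larsen-Pink}), and a delicate ``complexity'' argument on formal characters (Lemmas~\ref{L:complexity G}--\ref{L:new bounded formal character}) that the Nori group $\boldG_\Gamma$ of $\bbar\rho_{A,\ell}(\Gal_K)$ equals the derived group $(\calS_{A,\ell})_{\FF_\ell}$ on the nose (Theorem~\ref{T:equal groups}). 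Only then does Larsen's exponential generation theorem (Theorem~\ref{T:Larsen exponential}) lift the statement to $\ZZ_\ell$. The equality of ranks (Lemma~\ref{L:same rank}) is where $N(\q)$ re-enters the bound.

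\textbf{Parts (a) and (b).} Your outline is roughly correct, but the paper's treatment is more structured: the abelian quotient is realized via a specific homomorphism $\beta_{A,\ell}\colon \Gal_K \to Y(\QQ_\ell)_c$ to a fixed $\QQ$-torus $Y$ built from the center of $\End(A_{\Kbar})\otimes\QQ$, and the index bound (Theorem~\ref{T:main abelian}) is obtained through Serre's theory of locally algebraic abelian representations and Serre tori (\S\ref{SS:Serre tori}), together with tame inertia weights (Lemma~\ref{L:tame inertia weights}). The distinction between (a) and (b) comes down to whether $(T_K)_{\QQ_\ell}$ splits over an unramified extension.
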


\begin{remark}
Building on the work of Serre, Wintenberger proved that parts (\ref{T:main new c}) and (\ref{T:main new d}) of Theorem~\ref{T:main new} hold for all primes $\ell \geq C$ with  $C$ a constant depending on $A$, cf.~\cite{MR1944805}*{\S2.1}.  Using results of Serre, it is then easy to show that $[\calG_{A,\ell}(\ZZ_\ell): \rho_{A,\ell}(\Gal_K)] \ll_A 1$ holds for all $\ell$.

In \S2.2 of \cite{MR1944805}, Wintenberger discusses how one could make the bound $\ell\geq C$ effective.    He mentioned that it should be possible to consider $\ell$ large enough in terms of the Faltings height $h(A)$, a related field $K$, the prime ideal $\q$, and the finite set $S$ of prime ideals of $\OO_K$ for which $A$ has bad reduction.   We tried to work this out following Wintenberger's approach but the dependencies on the set $S$ were not appropriate for our applications where we vary $A$ in a geometric family.

Note that, except for possibly part~(\ref{T:main new b}), the set $S$ of bad primes of $A$ do not occur in our theorem.  This was achieved by using the effective bounds of Masser--W\"ustholz (Theorem~\ref{T:MW}) to give a new streamlined proof.  In particular, our proof of (\ref{T:main new c}) and (\ref{T:main new d}) does not require inertia groups which is a key ingredient in the work of Serre and Wintenberger (for example, see \S\S3.1--3.3 of \cite{MR1944805}).
\end{remark}

  Assuming the Generalized Riemann Hypothesis (GRH) for number fields, we can give a version of Theorem~\ref{T:main new} that does not involve the prime ideal $\q$.
Let $D$ be the product of primes $p$ that ramify in $K$ or are divisible by a prime ideal for which $A$ has bad reduction. 

\begin{thm} \label{T:GRH bound for Nq}
Suppose that GRH holds.  Theorem~\ref{T:main new} holds with (\ref{E:main ell bound}) replaced by 
\[
\ell \geq c \cdot \max(\{[K:\QQ],h(A), \log D\})^\gamma,
\] 
where $c$ and $\gamma$ are positive constants  depending only on $g$.
\end{thm}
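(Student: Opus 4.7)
The plan is to reduce Theorem~\ref{T:GRH bound for Nq} to Theorem~\ref{T:main new} by producing, under GRH, a prime $\q$ of $\OO_K$ such that $\Phi_{A,\q}$ is free abelian of rank $r$ and $N(\q)$ is bounded polynomially in
\[
M := \max\{[K:\QQ],\, h(A),\, \log D\}.
\]
Once such a $\q$ is in hand, the hypothesis $\ell \geq c\cdot \max\{[K:\QQ], h(A), N(\q)\}^\gamma$ of Theorem~\ref{T:main new} collapses to a bound of the same shape in $M$ alone (with possibly enlarged constants depending only on $g$), and conclusions (a)--(d) follow directly.

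To exhibit $\q$, I will encode the condition ``$\Phi_{A,\q}$ is free of rank $r$'' as a Chebotarev condition in a finite Galois extension $L/K$ of $g$-bounded degree and controlled ramification. Since the set of such primes has positive density (see \S\ref{S:mondromy groups}), there exist a finite Galois extension $L/K$ and a nonempty union $C$ of conjugacy classes of $\Gal(L/K)$ for which $\Frob_\q\in C$ implies $\Phi_{A,\q}$ is free of rank $r$. Concretely, $L$ may be taken inside the compositum $K_A^\conn\cdot K(A[\ell_0])$ for a fixed auxiliary prime $\ell_0$ whose size depends only on $g$; then $[L:K]\ll_g 1$ and every rational prime ramifying in $L$ divides $D\cdot\ell_0$.

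A standard conductor--discriminant estimate then gives
\[
\log|\mathrm{disc}(L/\QQ)| \ll_g [K:\QQ]\bigl(\log[K:\QQ] + \log D\bigr) \ll M^2,
\]
and the effective Chebotarev density theorem of Lagarias--Odlyzko, valid under GRH, produces a prime ideal $\q\subseteq \OO_K$, unramified in $L$, with $\Frob_\q\in C$ and
\[
N(\q)\leq c_1\bigl(\log|\mathrm{disc}(L/\QQ)|\bigr)^2 \leq c_2\cdot M^{\gamma_1},
\]
where $c_1,c_2,\gamma_1$ depend only on $g$. Feeding this $\q$ into Theorem~\ref{T:main new} yields the conclusions of Theorem~\ref{T:GRH bound for Nq}.

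The main obstacle is the first step: certifying that the multiplicative-rank condition on $\Phi_{A,\q}$ is detected by a Chebotarev condition in a Galois extension $L/K$ with the claimed control on degree and ramification. Since freeness of $\Phi_{A,\q}$ concerns $\ZZ$-linear relations among complex eigenvalues, it is not a priori captured by any single finite quotient of $\Gal_K$; one must combine the structure of $G^\circ_{A,\ell_0}$ (its maximal tori and Weyl group) with an argument that $\ZZ$-relations among the eigenvalues of a regular semisimple Frobenius are already determined by its conjugacy class on $A[\ell_0]$ once $\ell_0$ exceeds a $g$-dependent constant. Arranging this so that $L$ and $C$ depend only on $g$---and in particular not on the bad-reduction set $S$---is the technical heart of the reduction and is the only place where the structural results of \S\ref{S:mondromy groups} enter beyond what was already used in Theorem~\ref{T:main new}.
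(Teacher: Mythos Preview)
Your proposal has a genuine gap at precisely the point you flag as ``the main obstacle.'' You claim one may take $L\subseteq K_A^\conn\cdot K(A[\ell_0])$ for an auxiliary prime $\ell_0$ depending \emph{only on $g$}, so that $[L:K]\ll_g 1$ and the Frobenius class of $\p$ in $\Gal(L/K)$ detects whether $\Phi_{A,\p}\cong\ZZ^r$. This is not established, and the results of the paper do not supply it. The mechanism by which a mod-$\ell$ condition forces $\Phi_{A,\p}\cong\ZZ^r$ (see Lemma~\ref{L:GRH Bell new}) requires that $(\calG_{A,\ell})_{\FF_\ell}$ be reductive with maximal tori of bounded complexity; these structural facts are only available once $\ell$ satisfies the bounds of Theorem~\ref{T:main new}(\ref{T:main new c}), which themselves involve $N(\q)$. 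For a fixed $\ell_0$ depending only on $g$ there is no reason two primes $\p,\p'$ with the same image $\bbar\rho_{A,\ell_0}(\Frob_\p)=\bbar\rho_{A,\ell_0}(\Frob_{\p'})$ should have $\Phi_{A,\p}$ and $\Phi_{A,\p'}$ of the same rank: the multiplicative relations among the Weil numbers live in characteristic zero and are not determined by their residues modulo a prime that is small relative to $N(\p)$. This is exactly the circularity noted in the remark after Theorem~\ref{T:GRH bound for Nq}.

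The paper's proof breaks the circularity in a different way. It first proves a variant (Theorem~\ref{T:main new revised}) in which the condition $\ell\geq cN(\q)^\gamma$ is replaced by $\ell\nmid n$ for some integer $n<cN(\q)^\gamma$ depending on $A$. One then chooses an auxiliary prime $\ell$ with $\ell\geq c\max\{[K:\QQ],h(A)\}^\gamma$ and $\ell\nmid nD$; a pigeonhole on $\sum_{\ell\mid nD}\log\ell$ shows such an $\ell$ exists with $\ell\ll_g \max\{[K:\QQ],h(A),\log N(\q),\log D\}$. For this $\ell$ the scheme $\calG_{A,\ell}$ is reductive and $[\calG_{A,\ell}(\ZZ_\ell):\rho_{A,\ell}(\Gal_K)]\ll_g 1$, so Lemmas~\ref{L:GRH Bell} and \ref{L:existence of ell} produce a conjugacy-stable set $\calC_\ell\subseteq\bbar\rho_{A,\ell}(\Gal_K)$ of density $1+O_g(1/\ell)$ whose Frobenii have $\Phi_{A,\p}\cong\ZZ^r$. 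Effective Chebotarev under GRH is then applied in $K(A[\ell])/K$, whose degree is at most $\ell^{4g^2}$ (\emph{not} bounded in terms of $g$ alone), yielding $N(\q)\ll_g(\max\{\ell,[K:\QQ],\log D\})^e$. Combining this with the upper bound on $\ell$ gives an inequality of the form $N(\q)\ll_g(\max\{\log N(\q),[K:\QQ],h(A),\log D\})^f$, which forces $N(\q)\ll_g M^f$ and closes the loop. The essential difference from your outline is that the auxiliary prime is allowed to grow with $A$, and the bootstrap is what tames that growth.
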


\begin{remark}  
The difficulty with bounding the minimal possible $N(\q)$ is that the most natural way to do this is via the Chebotarev density theorem, and this  requires knowledge about the image of a representation $\rho_{A,\ell}$ (which is the thing we are trying to study in the first place!). Our proof of Theorem~\ref{T:GRH bound for Nq} uses a variant of Theorem~\ref{T:main new} along with an effective version of the Chebotarev density theorem. 
Unfortunately, unconditional versions of the Chebotarev density theorem do not produce upper bounds for $N(\q)$.
\end{remark}

For $\ell$ sufficiently large, one can show that $T_\ell(A)$ has a basis over $\ZZ_\ell$ such that, with respect to this basis, $\rho_{A,\ell}(\Gal_K)\subseteq\GSp_{2g}(\ZZ_\ell)$.

 \begin{cor}  \label{C:maximal monodromy}
Suppose that $\End(A_{\Kbar})=\ZZ$ and that there is a non-zero prime ideal $\q$ of $\OO_K$ for which $A$ has good reduction and for which $\Phi_{A,\q}$ is a free abelian group of rank $g+1$.  Then there are positive constants $c$ and $\gamma$, depending only on $g$, such that 
  \[
\rho_{A,\ell}(\Gal_K)=\GSp_{2g}(\ZZ_\ell)
 \]
 holds for all primes $\ell\geq c \cdot \max(\{[K:\QQ],h(A), N(\q)\})^\gamma$ that are unramified in $K$.
 \end{cor}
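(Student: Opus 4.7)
My plan is to deduce the corollary from Theorem~\ref{T:main new} by verifying that, under the stated hypotheses, the monodromy data is as large as possible, and then leveraging the commutator equality in part~(\ref{T:main new d}) together with a cyclotomic character argument.

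I begin by choosing a basis of $T_\ell(A)$ compatible with the Weil pairing, so that $\rho_{A,\ell}(\Gal_K) \subseteq \GSp_{2g}(\ZZ_\ell)$ and $G_{A,\ell} \subseteq \GSp_{2g,\QQ_\ell}$. The central claim is that in fact $G_{A,\ell}^\circ = \GSp_{2g,\QQ_\ell}$. Since $\Phi_{A,\q}$ is free of rank $g+1$, the inequality $\rank \Phi_{A,\q} \leq r$ recalled in the introduction, together with $G_{A,\ell}^\circ \subseteq \GSp_{2g}$ (which has rank $g+1$), forces $r = g+1$, so $G_{A,\ell}^\circ$ is a connected reductive subgroup of $\GSp_{2g,\QQ_\ell}$ of maximal rank. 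The hypothesis $\End(A_{\Kbar}) = \ZZ$ rules out nontrivial centralizer structure and forces irreducibility of the action on $V_\ell(A)$; a classification argument of the sort developed in \S\ref{S:mondromy groups} then identifies $G_{A,\ell}^\circ$ with the full $\GSp_{2g,\QQ_\ell}$. In particular $G_{A,\ell} = G_{A,\ell}^\circ$ is connected, so $K_A^\conn = K$.

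Theorem~\ref{T:main new}(\ref{T:main new c}) now says that $\calG_{A,\ell}^\circ = \calG_{A,\ell}$ is a reductive $\ZZ_\ell$-group scheme with generic fiber $\GSp_{2g,\QQ_\ell}$; since we fixed a basis in which the Weil pairing is standard, this identifies $\calG_{A,\ell}$ with $\GSp_{2g,\ZZ_\ell}$, so $\calG_{A,\ell}(\ZZ_\ell) = \GSp_{2g}(\ZZ_\ell)$. Part~(\ref{T:main new d}) then yields
\[
[\rho_{A,\ell}(\Gal_K),\, \rho_{A,\ell}(\Gal_K)] \,=\, [\GSp_{2g}(\ZZ_\ell),\, \GSp_{2g}(\ZZ_\ell)] \,=\, \Sp_{2g}(\ZZ_\ell),
\]
the second equality being a direct computation valid for all $\ell$ beyond a constant depending on $g$. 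In particular $\Sp_{2g}(\ZZ_\ell) \subseteq \rho_{A,\ell}(\Gal_K)$. To upgrade this to the full similitude group, let $\nu \colon \GSp_{2g} \to \GG_m$ be the multiplier character; the composition $\nu \circ \rho_{A,\ell}$ is the $\ell$-adic cyclotomic character of $\Gal_K$ (via the Weil pairing). When $\ell$ is unramified in $K$, the intersection $K \cap \QQ(\mu_{\ell^\infty})$ is simultaneously unramified at $\ell$ and contained in the totally ramified extension $\QQ(\mu_{\ell^\infty})/\QQ$, hence equals $\QQ$; so this cyclotomic character surjects onto $\ZZ_\ell^\times$. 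Combining this surjectivity with $\Sp_{2g}(\ZZ_\ell) = \ker \nu \subseteq \rho_{A,\ell}(\Gal_K)$ gives the desired equality $\rho_{A,\ell}(\Gal_K) = \GSp_{2g}(\ZZ_\ell)$.

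The main obstacle is the identification $G_{A,\ell}^\circ = \GSp_{2g,\QQ_\ell}$ with the specified effective lower bound on $\ell$: one must check that the classification of maximal-rank connected reductive subgroups of $\GSp_{2g}$ satisfying the irreducibility constraint imposed by $\End(A_{\Kbar}) = \ZZ$ uses only quantities already appearing in the bound~\ref{E:main ell bound}. The remaining steps are formal consequences of Theorem~\ref{T:main new}, the standard computation of commutators in $\GSp_{2g}(\ZZ_\ell)$ for large $\ell$, and elementary ramification considerations for cyclotomic extensions.
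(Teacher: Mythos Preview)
Your strategy matches the paper's almost step for step, but two points you treat as routine are exactly where the paper supplies nontrivial input.

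First, the opening move ``choose a basis of $T_\ell(A)$ compatible with the Weil pairing so that $\rho_{A,\ell}(\Gal_K) \subseteq \GSp_{2g}(\ZZ_\ell)$'' presupposes a polarization on $A$ of degree prime to $\ell$; otherwise the induced pairing on $T_\ell(A)$ need not be perfect over $\ZZ_\ell$, and you only get $G_{A,\ell} \subseteq \GSp_{2g,\QQ_\ell}$ on the generic fiber rather than $\calG_{A,\ell} = \GSp_{2g,\ZZ_\ell}$ integrally.  Since $A$ is not assumed principally polarized, this is not automatic.  The paper invokes a theorem of Gaudron--R\'emond to produce such a polarization for all $\ell$ above an effective bound of the shape $c\cdot\max\{[K:\QQ],h(A)\}^\gamma$, which is exactly what is needed.

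Second, the ``classification argument'' you defer is in fact short and imposes no constraint on $\ell$: the paper applies Wintenberger's criterion (Lemma~\ref{L:reductive inclusion}) directly.  Once $G_{A,\ell}^\circ \subseteq \GSp_{2g,\QQ_\ell}$ are both reductive of rank $g+1$ and have the same commutant $\QQ_\ell$ in $\End_{\QQ_\ell}(V_\ell(A))$ (the latter from $\End(A_{\Kbar})=\ZZ$ via Proposition~\ref{P:conn facts}(\ref{P:conn facts iii})), that lemma gives equality immediately.  So the obstacle you flag in your last paragraph dissolves, and the remaining steps---the commutator identity $\GSp_{2g}(\ZZ_\ell)'=\Sp_{2g}(\ZZ_\ell)$ via Proposition~\ref{P:calS} and the surjectivity of the cyclotomic character for $\ell$ unramified in $K$---proceed exactly as you outline.
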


Corollary~\ref{C:maximal monodromy} recovers the main result of Lombardo in \cite{explicit} except that he gives explicit numerical values of $c$ and $\gamma$  (he also gives another condition on a prime ideal $\q$ that implies ours).   In principle our constants can be made explicit, but working them out in the full generality of Theorem~\ref{T:main new} would be quite a chore. Note that Lombardo's methods are different than ours; he studies the maximal proper subgroups of $\GSp_{2g}(\FF_\ell)$ and makes use of inertia groups.  
 
 \begin{remark}
 Suppose that we have $\rho_{A,\ell}(\Gal_K)=\GSp_{2g}(\ZZ_\ell)$ for all sufficiently large $\ell$.   In this case, one can show that $\End(A_{\Kbar})=\ZZ$ and that $\Phi_{A,\p}$ is a free abelian group of rank $g+1$ for all prime ideals $\p\subseteq \OO_K$ away from a set of density $0$.  This justifies the assumptions of Corollary~\ref{C:maximal monodromy}.
 \end{remark}
 
\subsection{Notation} \label{SS:notation}

For two real quantities $f$ and $g$, we write that $f \ll_{\alpha_1,\ldots,\alpha_n} g$ if the inequality $|f| \leq C |g|$ holds for some positive constant $C$ depending only on $\alpha_1,\ldots,\alpha_n$.  In particular, $f\ll g$ means that the implicit constant is absolute.  We denote by $O_{\alpha_1,\ldots,\alpha_n}(g)$ a quantity $f$ satisfying $f \ll_{\alpha_1,\ldots,\alpha_n} g$.

Fix a number field $F$. We denote the ring of integers of $F$ by $\OO_F$.   For a non-zero prime ideal $\p$ of $\OO_F$, we denote by $F_\p$ the $\p$-adic completion of $F$ and let $\OO_\p$ be the valuation ring of $F_\p$.  The residue field of $\OO_\p$ agrees with $\FF_\p:=\OO_K/\p$.  For a field $F$, let $\bbar{F}$ be a fixed algebraic closure of $F$ and define $\Gal_F:=\Gal(\bbar{F}/F)$.

 For a scheme $X$ over a commutative ring $R$ and a commutative $R$-algebra $S$, we denote by $X_S$ the base extension of $X$ by $\Spec S$.  
 
 Let $M$ be a free module of finite rank over a commutative ring $R$.  Denote by $\GL_M$ the $R$-scheme such that $\GL_M(S)= \Aut_S(M\otimes_R S)$ for any commutative $R$-algebra $S$ with the obvious functoriality. 

For an algebraic group $G$ over a field $F$, we denote by $G^\circ$ the neutral component of $G$ (i.e., the connected component of the identity of $G$); it is an algebraic subgroup of $G$.  For an algebraic group $T$ of multiplicative type defined over $F$, let $X(T)$ be the group of characters $T_{\bbar{F}}\to \GG_{m,\bbar{F}}$; it has a natural $\Gal_F$-action.  If $T$ is a torus, then the group $X(T)$ is free abelian whose rank is equal to the dimension of $T$.

Consider a topological group $G$. Note that profinite groups, will always be considered with their profinite topology.   The \defi{commutator subgroup} of $G$ is the closed subgroup generated by the commutators of $G$; we denote it by $G'$.

\subsection{Overview}

In \S\ref{S:mondromy groups}, we recall several fundamental results about the Galois representations $\rho_{A,\ell}$ and their $\ell$-adic monodromy group $G_{A,\ell}$.   We also state the \emph{Mumford--Tate conjecture} for $A$ in \S\ref{SS:MT group} which says that $G_{A,\ell}^\circ$ arises from a certain reductive group defined over $\QQ$ that is independent of $\ell$.  In \S\ref{SS:reduction to connected case}, we show that it suffices to prove Theorem~\ref{T:main new} in the special case where all the groups $G_{A,\ell}$ are connected.

In \S\ref{S:proof of c and d}, we prove parts (\ref{T:main new c}) and (\ref{T:main new d}) of Theorem~\ref{T:main new}.   Our new proof makes key use of theorems of Masser--W\"ustholz and Larsen--Pink.    Some of the needed group theory is described in \S\ref{S:some group theory}.

In \S\ref{S:abelian}, assuming the $\ell$-adic monodromy groups $G_{A,\ell}$ are connected, we construct abelian representations $\beta_{A,\ell}\colon \Gal_K \to Y(\QQ_\ell)_c$, where $Y$ is a certain torus over $\QQ$ and $Y(\QQ_\ell)_c$ is the maximal compact subgroup of $Y(\QQ_\ell)$ with respect to the $\ell$-adic topology.  We show that the image of $\beta_{A,\ell}(\Gal_K)$ is open in $Y(\QQ_\ell)_c$.  Moreover, we show that the index $[Y(\QQ_\ell)_c:\beta_{A,\ell}(\Gal_K)]$ can be bounded in terms of $g$ if $\ell$ is unramified in $K$ and in terms of $g$ and $[K:\QQ]$ otherwise.    This bound is key in deducing Theorem~\ref{T:main new}(\ref{T:main new a}) and (\ref{T:main new b}) from Theorem~\ref{T:main new}(\ref{T:main new d}).

In \S\ref{S:end of main proof}, we complete the proof of Theorem~\ref{T:main new}.  In \S\ref{S:GRH bound for Nq} and \S\ref{S:proof of maximal monodromy}, we prove Theorem~\ref{T:GRH bound for Nq} and Corollary~\ref{C:maximal monodromy}, respectively.

\subsection{Acknowledgements}

 Special thanks to David Zureick-Brown; this article is spun off from earlier work with him.  Thanks also to Chun Yin Hui.

\section{Background: \texorpdfstring{$\ell$}{l}-adic monodromy groups} \label{S:mondromy groups}

Fix an abelian variety $A$ of dimension $g\geq 1$ defined over a number field $K$.   

\subsection{Compatibility}

Take any non-zero prime ideal $\p$ of $\OO_K$ for which $A$ has good reduction.  Denote by $A_\p$ the abelian variety over $\FF_\p$ obtained by reducing $A$ modulo $\p$.      There is a unique polynomial $P_{A,\p}(x) \in \ZZ[x]$ such that $P_{A,\p}(n)$ is the degree of the isogeny $n-\pi$ for each integer $n$, where $\pi$ is the Frobenius endomorphism of $A_\p/\FF_\p$.   The polynomial $P_{A,\p}(x)$ is monic of degree $2g$.  For each rational prime $\ell$ for which $\p\nmid \ell$, the representation $\rho_{A,\ell}$ is unramified at $\p$ and satisfies 
\[
\det(xI - \rho_{A,\ell}(\Frob_\p)) = P_{A,\p}(x).
\]
In the notation of \cite{MR1484415}, the Galois representations $\{\rho_{A,\ell}\}_\ell$ form a \emph{strictly compatible} family.  

Note that $\rho_{A,\ell}(\Frob_\p)$ is semisimple in $\GL_{V_\ell(A)}$; this can be seen by noting that $\pi$ acts semisimply on the $\ell$-adic Tate module of $A_\p$.    From Weil, we know that all of the roots of $P_{A,\p}(x)$ in $\CC$ have absolute value $N(\p)^{1/2}$.

\subsection{Neutral component} 
 
Let $G_{A,\ell}^\circ$ be the neutral component of $G_{A,\ell}$, i.e., the connected component of $G_{A,\ell}$ containing the identity.  Define $\calG_{A,\ell}^\circ$ to be the $\ZZ_\ell$-group subscheme of $\calG_{A,\ell}$ that is the Zariski closure of $G_{A,\ell}^\circ$.

Define $K_A^\conn$ to be the subfield of $\Kbar$ fixed by the kernel of the homomorphism 
\begin{align} \label{E:conn hom}
\Gal_K \xrightarrow{\rho_{A,\ell}} G_{A,\ell}(\QQ_\ell)\to G_{A,\ell}(\QQ_\ell)/G_{A,\ell}^\circ(\QQ_\ell).  
\end{align}
Equivalently, $K_{A}^\conn$ is the smallest extension of $K$ in $\Kbar$ that satisfies $\rho_{A,\ell}(\Gal_{K_A^\conn}) \subseteq G_{A,\ell}^\circ(\QQ_\ell)$.  
\begin{prop} \label{P:connected}
\begin{romanenum}
\item \label{P:connected i}
The field $K_A^\conn$ depends only on $A$, i.e., it is independent of $\ell$.   
\item \label{P:connected ii}
The degree $[K_A^\conn:K]$ can be bounded in terms of $g$ only.
\item \label{P:connected iii}
We have 
\[
[\calG_{A,\ell}(\ZZ_\ell):\rho_{A,\ell}(\Gal_K)] = [\calG_{A',\ell}(\ZZ_\ell):\rho_{A',\ell}(\Gal_{K_A^\conn})],
\] 
where $A'$ is the base change of $A$ to $K_A^\conn$.
\end{romanenum}
\end{prop}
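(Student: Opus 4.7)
My plan is to prove the three parts in order. The only substantive step is (\ref{P:connected i}); parts (\ref{P:connected ii}) and (\ref{P:connected iii}) will then follow quickly from bounding a component group and from a diagram chase, respectively.

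For (\ref{P:connected i}), I would argue by Chebotarev. Let $N_\ell \subseteq \Gal_K$ denote the kernel of the homomorphism \eqref{E:conn hom}; then $K_A^{\conn}$ (as defined using the prime $\ell$) is the fixed field of $N_\ell$, so it suffices to show $N_\ell = N_{\ell'}$ for any two primes $\ell,\ell'$. Since $\Gal_K/N_\ell$ injects into the finite group $G_{A,\ell}(\QQ_\ell)/G_{A,\ell}^\circ(\QQ_\ell)$, $N_\ell$ is open, so by Chebotarev it is pinned down by the set of non-zero prime ideals $\p$ of $\OO_K$ (of good reduction, with $\p\nmid\ell$) for which $\rho_{A,\ell}(\Frob_\p)\in G_{A,\ell}^\circ(\QQ_\ell)$. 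By compatibility $\rho_{A,\ell}(\Frob_\p)$ is semisimple with characteristic polynomial $P_{A,\p}(x)$ independent of $\ell$, and the Zariski closure of $\langle \rho_{A,\ell}(\Frob_\p)\rangle$ in $\GL_{V_\ell(A)}$ is encoded by the multiplicative relations among the roots of $P_{A,\p}$. Serre's argument (letter to Ribet, reprinted in \emph{Oeuvres} IV) then shows that the coset of $\rho_{A,\ell}(\Frob_\p)$ in $G_{A,\ell}/G_{A,\ell}^\circ$ is $\ell$-independent, and hence so is $N_\ell$. Carefully tracking the interplay between the Zariski closure of a single Frobenius and the ambient $G_{A,\ell}^\circ$ is the main obstacle.

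For (\ref{P:connected ii}), I observe that $[K_A^{\conn}:K]=|\Gal_K/N_\ell|$ divides $|G_{A,\ell}/G_{A,\ell}^\circ|$. The desired bound in terms of $g$ alone is a known uniform result of Serre for abelian varieties of dimension $g$: using the symplectic polarization, $G_{A,\ell}\subseteq \GSp_{2g,\QQ_\ell}$, and the component group of $G_{A,\ell}$ is controlled by the normalizer structure of its (reductive) identity component inside $\GSp_{2g}$, giving a bound depending only on $g$.

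For (\ref{P:connected iii}), set $A'=A_{K_A^{\conn}}$. Since $\rho_{A',\ell}=\rho_{A,\ell}|_{\Gal_{K_A^{\conn}}}$ takes values in $G_{A,\ell}^\circ(\QQ_\ell)$ with open, hence Zariski dense, image, the definitions yield $G_{A',\ell}=G_{A,\ell}^\circ$ and $\calG_{A',\ell}=\calG_{A,\ell}^\circ$. Multiplicativity of indices then gives
\[
[\calG_{A,\ell}(\ZZ_\ell):\rho_{A,\ell}(\Gal_K)]=\frac{[\calG_{A,\ell}(\ZZ_\ell):\calG_{A,\ell}^\circ(\ZZ_\ell)]\cdot [\calG_{A,\ell}^\circ(\ZZ_\ell):\rho_{A,\ell}(\Gal_{K_A^{\conn}})]}{[\rho_{A,\ell}(\Gal_K):\rho_{A,\ell}(\Gal_{K_A^{\conn}})]},
\]
so it remains to verify that the first factor of the numerator equals the denominator; both should equal $|\Gal(K_A^{\conn}/K)|$. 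The denominator does by the very definition of $K_A^{\conn}$ together with surjectivity of $\Gal_K\to \Gal(K_A^{\conn}/K)$. For the first factor, the characterization $\calG_{A,\ell}^\circ(\ZZ_\ell)=\calG_{A,\ell}(\ZZ_\ell)\cap G_{A,\ell}^\circ(\QQ_\ell)$ (coming from $\calG_{A,\ell}^\circ$ being a Zariski closure) yields an injection $\calG_{A,\ell}(\ZZ_\ell)/\calG_{A,\ell}^\circ(\ZZ_\ell)\hookrightarrow G_{A,\ell}(\QQ_\ell)/G_{A,\ell}^\circ(\QQ_\ell)$; surjectivity follows because $\rho_{A,\ell}(\Gal_K)\subseteq \calG_{A,\ell}(\ZZ_\ell)$ is Zariski dense in $G_{A,\ell}$, so hits every coset of $G_{A,\ell}^\circ$. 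Combined with (\ref{P:connected i}) this identifies both factors with $|\Gal(K_A^{\conn}/K)|$ and finishes the proof.
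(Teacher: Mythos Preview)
Your treatment of (\ref{P:connected i}) and (\ref{P:connected iii}) matches the paper's: both cite Serre's argument (via Frobenius and compatibility) for (\ref{P:connected i}), and both prove (\ref{P:connected iii}) by identifying $[\calG_{A,\ell}(\ZZ_\ell):\calG_{A,\ell}^\circ(\ZZ_\ell)]$ and $[\rho_{A,\ell}(\Gal_K):\rho_{A,\ell}(\Gal_{K_A^\conn})]$ with $[K_A^\conn:K]$ using Zariski density.

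For (\ref{P:connected ii}) the paper takes a different and more self-contained route than yours. Rather than invoking a structural bound on component groups inside $\GSp_{2g}$, the paper exploits (\ref{P:connected i}) directly: since $K_A^\conn\subseteq K(A[\ell^\infty])$ for every $\ell$, and $\Gal(K(A[\ell^\infty])/K(A[\ell]))$ is pro-$\ell$, one gets that $[K_A^\conn:K]$ divides $|\GL_{2g}(\FF_\ell)|\cdot \ell^{e_\ell}$ for each $\ell$. Taking $\ell=2$ bounds the odd part, taking $\ell=3$ bounds the $2$-part, and hence $[K_A^\conn:K]$ divides $|\GL_{2g}(\FF_2)|\cdot|\GL_{2g}(\FF_3)|$. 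This is entirely elementary once (\ref{P:connected i}) is in hand.

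Your sketch for (\ref{P:connected ii}) has a gap as written. The assertion that ``the component group of $G_{A,\ell}$ is controlled by the normalizer structure of $G_{A,\ell}^\circ$ inside $\GSp_{2g}$'' does not by itself give a bound: if, say, $G_{A,\ell}^\circ$ were the central $\GG_m$, then $N_{\GSp_{2g}}(G_{A,\ell}^\circ)=\GSp_{2g}$ and the quotient is infinite. A correct argument along your lines needs further input (e.g., that $G_{A,\ell}^\circ$ contains the homotheties and acts with commutant $\End(A_{\Kbar})\otimes\QQ_\ell$, then bounding outer automorphisms and the centralizer contribution separately), and at that point it is no longer simpler than the paper's two-prime trick.
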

\begin{proof}
Part (\ref{P:connected i}) was proved by Serre \cite{MR1730973}*{133}; see also \cite{MR1441234}.   From (\ref{P:connected i}), we find that $K_A^\conn$ is a subfield of $K(A[\ell^\infty])$ and hence $[K_A^\conn:K]$ divides $[K(A[\ell]):K]\ell^{e_\ell}$ for some integer $e_\ell$.  Since $[K(A[\ell]):K]$ divides $|\GL_{2g}(\FF_\ell)|$, we deduce that $[K_A^\conn :K]$ divides $|\GL_{2g}(\FF_\ell)| \ell^{e_\ell}$.   Therefore, $[K_A^\conn:K]$ must divide $|\GL_{2g}(\FF_2)|\cdot |\GL_{2g}(\FF_3)|$ which completes the proof of (\ref{P:connected ii}).

Note that the homomorphism (\ref{E:conn hom}) is surjective since $G_{A,\ell}$ is the Zariski closure of $\rho_{A,\ell}(\Gal_K)$.     Therefore,  $[\calG_{A,\ell}(\ZZ_\ell):\calG_{A,\ell}^\circ(\ZZ_\ell)] = [G_{A,\ell}(\QQ_\ell):G_{A,\ell}^\circ(\QQ_\ell)]=[K_A^\conn:K]$.  Part (\ref{P:connected iii}) follows since $[\rho_{A,\ell}(\Gal_K): \rho_{A,\ell}(\Gal_{K_A^\conn})]=[K_A^\conn:K]$.
\end{proof}

\subsection{Tate conjecture}

The following, which was conjectured by Tate, is an important result of Faltings, cf.~\cite{MR861971}.

\begin{thm}[Faltings] \label{T:Tate conjecture}
\begin{romanenum}
\item \label{T:Tate conjecture i}
The $\QQ_\ell[\Gal_K]$-module $V_\ell(A)$ is semisimple.
\item  \label{T:Tate conjecture ii}
The natural map $\End(A) \otimes_\ZZ \QQ_\ell \hookrightarrow \End_{\QQ_\ell[\Gal_K]}(V_\ell(A))$ is an isomorphism.
\end{romanenum}
\end{thm}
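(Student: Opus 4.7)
The plan is to follow Faltings' strategy from \cite{MR861971}, which reduces both statements to a finiteness theorem (the Shafarevich conjecture for abelian varieties) via a clever lattice construction due originally to Tate in the function-field setting.

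First I would dispense with the easy injectivity in (\ref{T:Tate conjecture ii}): the functor $B \mapsto T_\ell(B)$ is faithful on isogenies between abelian varieties over $\Kbar$, so $\End(A) \otimes_\ZZ \QQ_\ell \to \End_{\QQ_\ell}(V_\ell(A))$ is injective, and its image manifestly commutes with the Galois action. The real content is surjectivity in (\ref{T:Tate conjecture ii}) together with semisimplicity (\ref{T:Tate conjecture i}), and both will follow from the Tate-style ``lattice trick'' once one has Shafarevich finiteness in hand.

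The core input I would invoke is \emph{Shafarevich finiteness}: for a fixed number field $K$, a fixed dimension $g$, and a fixed finite set $S$ of non-zero prime ideals of $\OO_K$, there are only finitely many $K$-isomorphism classes of $g$-dimensional abelian varieties over $K$ with good reduction outside $S$. Faltings proved this by showing that the Faltings height $h(B)$ is bounded on each isogeny class (the delicate step being to control how $h$ changes under an $\ell$-isogeny, which requires a careful study of semistable models and the Hodge bundle), and then combining this with a Northcott-type finiteness for abelian varieties of bounded height, dimension, and field of definition. This is the main obstacle: the finiteness of heights within an isogeny class is where nearly all of the genuine difficulty lies.

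Given Shafarevich finiteness, I would conclude as follows. For any Galois-stable $\ZZ_\ell$-lattice $\Lambda \subseteq V_\ell(A)$ containing $T_\ell(A)$ with finite index, the quotient $\Lambda/T_\ell(A)$ is a Galois-stable finite subgroup of $A(\Kbar)$, and the corresponding quotient $A_\Lambda := A/(\Lambda/T_\ell(A))$ is an abelian variety over $K$ with good reduction at exactly the same primes as $A$. Applying this to the infinite family of Galois-stable lattices $\Lambda_n := T_\ell(A) + \ell^n \, u(T_\ell(A))$ for a given $u \in \End_{\QQ_\ell[\Gal_K]}(V_\ell(A))$ (respectively to lattices built from a Galois-stable subspace $W \subseteq V_\ell(A)$ for semisimplicity), Shafarevich forces infinitely many of the $A_{\Lambda_n}$ to be $K$-isomorphic. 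Composing one such isomorphism with the quotient map produces elements of $\End(A)$ that, after a limiting/compactness argument in $\End(A) \otimes_\ZZ \ZZ_\ell$, yield an element of $\End(A) \otimes_\ZZ \QQ_\ell$ realizing $u$ in case (\ref{T:Tate conjecture ii}), and a Galois-equivariant projector onto $W$ in case (\ref{T:Tate conjecture i}). The compactness step uses that $\End(A)$ is a finitely generated $\ZZ$-module (hence its $\ell$-adic completion is profinite), so we can extract a convergent subsequence; this is routine once the finiteness input is granted.
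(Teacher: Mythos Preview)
The paper does not actually prove this theorem: it simply states it and cites Faltings \cite{MR861971}. Your sketch is a faithful outline of Faltings' own argument from that reference, so in that sense your approach and the paper's ``proof'' coincide---both defer to \cite{MR861971}.

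One small technical correction worth flagging: your specific choice $\Lambda_n := T_\ell(A) + \ell^n\, u(T_\ell(A))$ does not produce an infinite family of distinct lattices, since for $n$ large enough $\ell^n u(T_\ell(A)) \subseteq T_\ell(A)$ and the sum stabilizes at $T_\ell(A)$. The lattices actually used in the Tate--Faltings argument are of the form $\Lambda_n = (W \cap T_\ell(A)) + \ell^n T_\ell(A)$ for a Galois-stable subspace $W$ (this handles semisimplicity directly); surjectivity in (\ref{T:Tate conjecture ii}) is then typically deduced either via the double-centralizer theorem once semisimplicity is known, or by passing to $A\times A$ and applying the lattice argument to the graph of $u$. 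Your narrative of the strategy---Shafarevich finiteness via the Faltings height, then the lattice trick with a compactness/limit extraction in $\End(A)\otimes_\ZZ \ZZ_\ell$---is otherwise correct.
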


Here are some basic properties of the groups $G_{A,\ell}^\circ$.  

\begin{prop}  \label{P:conn facts}
\begin{romanenum}
\item  \label{P:conn facts i}
The  group $G_{A,\ell}^\circ$ is reductive.
\item  \label{P:conn facts ii} 
For any number field $L$ containing $K_A^\conn$, the group $\rho_{A,\ell}(\Gal_L)$ is Zariski dense in $G_{A,\ell}^\circ$.
\item  \label{P:conn facts iii}
The commutant of $G_{A,\ell}^\circ$ in $\End_{\QQ_\ell}(V_\ell(A))$ agrees with $\End(A_{\Kbar})\otimes_\ZZ \QQ_\ell$.
\item   \label{P:conn facts iv}
All of the endomorphisms of $A$ over $\Kbar$ are defined over $K_A^\conn$, i.e., $\End(A_{\Kbar})=\End(A_{K_A^\conn})$.
\end{romanenum}
\end{prop}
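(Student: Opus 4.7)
The four parts are interlinked through Faltings' theorem (Theorem~\ref{T:Tate conjecture}), so my plan is to prove them in the order (i), (ii), (iv), (iii), with (iv) being the main obstacle since it requires a subtle descent argument.

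For part (i), the plan is to invoke Theorem~\ref{T:Tate conjecture}(\ref{T:Tate conjecture i}): the $\QQ_\ell[\Gal_K]$-module $V_\ell(A)$ is semisimple. Since $\rho_{A,\ell}(\Gal_K)$ is Zariski dense in $G_{A,\ell}$, the module $V_\ell(A)$ is also semisimple as a $G_{A,\ell}$-module, hence as a $G_{A,\ell}^\circ$-module (passing to a subgroup of finite index preserves semisimplicity over a field of characteristic zero). The unipotent radical of $G_{A,\ell}^\circ$ must act trivially on any semisimple module on which $G_{A,\ell}^\circ$ acts faithfully; since $G_{A,\ell}^\circ$ is a subgroup of $\GL_{V_\ell(A)}$, this forces the unipotent radical to be trivial, and reductivity follows.

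For part (ii), the observation is that $\rho_{A,\ell}(\Gal_L)$ has finite index in $\rho_{A,\ell}(\Gal_K)$ (being the image of a finite-index subgroup of $\Gal_K$), and by definition of $K_A^\conn$ it lies inside $G_{A,\ell}^\circ(\QQ_\ell)$. Let $H$ be the Zariski closure of $\rho_{A,\ell}(\Gal_L)$. Then $H \subseteq G_{A,\ell}^\circ$ and $H$ has finite index in $G_{A,\ell}$ (since it already has finite index in the Zariski dense subgroup $\rho_{A,\ell}(\Gal_K)$ of $G_{A,\ell}$). Therefore $\dim H = \dim G_{A,\ell}^\circ$, so $H^\circ = G_{A,\ell}^\circ$, which combined with $H \subseteq G_{A,\ell}^\circ$ gives $H = G_{A,\ell}^\circ$.

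For part (iv), this is the main obstacle, and the plan uses (ii) together with Theorem~\ref{T:Tate conjecture}(\ref{T:Tate conjecture ii}) in a double application. First, applying Tate over $K_A^\conn$ gives $\End(A_{K_A^\conn}) \otimes_\ZZ \QQ_\ell = \End_{\QQ_\ell[\Gal_{K_A^\conn}]}(V_\ell(A))$, and this equals the commutant of the Zariski closure, which by (ii) is $G_{A,\ell}^\circ$. So the commutant of $G_{A,\ell}^\circ$ in $\End_{\QQ_\ell}(V_\ell(A))$ equals $\End(A_{K_A^\conn}) \otimes \QQ_\ell$. Now take any $\phi \in \End(A_{\Kbar})$. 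There is a finite extension $L/K_A^\conn$ over which $\phi$ is defined, so (viewed inside $\End_{\QQ_\ell}(V_\ell(A))$ via Tate's embedding) $\phi$ commutes with $\rho_{A,\ell}(\Gal_L)$. By (ii) applied to $L$, this group is Zariski dense in $G_{A,\ell}^\circ$, so $\phi$ commutes with all of $G_{A,\ell}^\circ(\QQ_\ell)$. In particular, for any $\sigma \in \Gal_{K_A^\conn}$ we have $\rho_{A,\ell}(\sigma) \in G_{A,\ell}^\circ(\QQ_\ell)$, so $\rho_{A,\ell}(\sigma) \phi \rho_{A,\ell}(\sigma)^{-1} = \phi$. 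Since the conjugation action of $\rho_{A,\ell}(\sigma)$ on $\End(V_\ell(A))$ realizes the Galois action of $\sigma$ on $\End(A_{\Kbar})$ (via the Tate injection of Theorem~\ref{T:Tate conjecture}(\ref{T:Tate conjecture ii})), we conclude that $\sigma \cdot \phi = \phi$. Hence $\phi \in \End(A_{K_A^\conn})$, proving (iv).

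Finally, part (iii) now follows immediately: the commutant of $G_{A,\ell}^\circ$ equals $\End(A_{K_A^\conn}) \otimes \QQ_\ell$ by the first step of the proof of (iv), and this equals $\End(A_{\Kbar}) \otimes \QQ_\ell$ by (iv). The main conceptual point is that (iii) and (iv) are really one statement, combining Tate with the fact that $K_A^\conn$ is ``large enough'' in the sense that further finite extensions do not enlarge the Zariski closure of the image.
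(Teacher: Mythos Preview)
Your proof is correct and follows essentially the same route as the paper: Faltings' semisimplicity for (\ref{P:conn facts i}), the finite-index-in-a-connected-group argument for (\ref{P:conn facts ii}), and then (\ref{P:conn facts ii}) combined with Tate's isomorphism over varying $L\supseteq K_A^{\conn}$ to pin down the commutant of $G_{A,\ell}^\circ$ and the field of definition of endomorphisms. The only cosmetic difference is the order in which (\ref{P:conn facts iii}) and (\ref{P:conn facts iv}) are deduced---the paper first obtains $\End(A_{K_A^{\conn}})\otimes_\ZZ\QQ_\ell=\End(A_{\Kbar})\otimes_\ZZ\QQ_\ell$ and then reads off (\ref{P:conn facts iv}) from torsion-freeness of $\End(A_{\Kbar})$, whereas you argue (\ref{P:conn facts iv}) directly via Galois invariance---but the content is the same.
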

\begin{proof}
Part (\ref{P:conn facts i}) can be deduced from Theorem~\ref{T:Tate conjecture}.

Take any number field $L\supseteq K_A^\conn$.  We have $\rho_{A,\ell}(\Gal_L) \subseteq G_{A,\ell}^\circ(\QQ_\ell)$, so $G_{A_L,\ell}$ is a finite index subgroup of $G_{A,\ell}^\circ$.  Since $G_{A,\ell}^\circ$ is connected, we have $G_{A_L,\ell}=G_{A,\ell}^\circ$.  This proves (\ref{P:conn facts ii}).

From part (\ref{P:conn facts ii}) and Theorem~\ref{T:Tate conjecture}(\ref{T:Tate conjecture ii}), we find that $\End(A_L)\otimes_\ZZ \QQ_\ell$ is naturally isomorphic to the subring of $\End_{\QQ_\ell}(V_\ell(A))$ that commutes with $G_{A_L,\ell}=G_{A,\ell}^\circ$.   Since this holds for all $L\supseteq K_A^\conn$, we deduce that
\begin{equation} \label{E:conn facts}
\End(A_{K_A^\conn})\otimes_\ZZ \QQ_\ell = \End(A_{\Kbar})\otimes_\ZZ \QQ_\ell.
\end{equation}
This proves part (\ref{P:conn facts iii}).   Since $\End(A_{\Kbar})$ is a free abelian group and $\Gal_{K_A^\conn}$ acts trivially on $\End(A_{\Kbar}) \otimes_\ZZ \QQ_\ell$ by (\ref{E:conn facts}), we deduce that $\Gal_{K_A^\conn}$ acts trivially on $\End(A_{\Kbar})$. This proves (\ref{P:conn facts iv}).  
\end{proof}

We will also need the following effective modulo $\ell$ version of Faltings' theorem due to Masser and W\"ustholz.   We denote by $h(A)$ the (logarithmic absolute) Faltings height of $A$ obtained after base extending to any finite extension of $K$ over which $A$ has semistable reduction (see \S5 of \cite{MR861978}).   In particular, note that $h(A_L)=h(A)$ for any finite extension $L/K$.

\begin{thm}[Masser-W\"ustholz]  \label{T:MW}
Let $L$ be a finite extension of $K$.  There are positive constants $c$ and $\gamma$, depending only on the dimension of $A$, such that if $\ell \geq c(\max\{[L:\QQ], h(A)\})^{\gamma}$, then the following hold:
\begin{itemize}
\item 
the $\FF_\ell[\Gal_L]$-module $A[\ell]$ is semisimple,
\item 
the natural map $\End(A_L) \otimes_\ZZ \FF_\ell \to \End_{\FF_\ell[\Gal_L]}(A[\ell])$ is an isomorphism,
\item
$\End(A_L)\otimes_\ZZ \FF_\ell$ is a semisimple $\FF_\ell$-algebra.
\end{itemize}
\end{thm}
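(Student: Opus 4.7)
The plan is to deduce Theorem~\ref{T:MW} from the effective isogeny estimate of Masser--W\"ustholz: any two $L$-isogenous abelian varieties $A,B$ of dimension $g$ over a number field $L$ admit an $L$-isogeny $A\to B$ of degree at most $c_0\cdot\max\{[L:\QQ],h(A)\}^{\gamma_0}$, with $c_0,\gamma_0$ depending only on $g$. All three conclusions will follow from a single saturation statement: for $\ell$ above a polynomial threshold in $\max\{[L:\QQ],h(A)\}$ one has
\[
\End_{\ZZ_\ell[\Gal_L]}(T_\ell(A)) = \End(A_L)\otimes_\ZZ \ZZ_\ell.
\]

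To establish this saturation, I would take $\phi\in\End_{\ZZ_\ell[\Gal_L]}(T_\ell(A))$; by Theorem~\ref{T:Tate conjecture} it lies in $\End(A_L)\otimes \QQ_\ell$, so one may write $\phi=\psi/\ell^m$ with $\psi\in\End(A_L)\otimes\ZZ_\ell$ primitive and $m\geq 0$ minimal. The graph of a $\Gal_L$-equivariant mod-$\ell^N$ lift of $\phi$ inside $A\times A$ defines a finite $L$-subgroup scheme whose associated quotient is an abelian variety $A'$ over $L$ that is $L$-isogenous to $A\times A$ via an isogeny of degree of order $\ell^{2gm}$. The Masser--W\"ustholz estimate then supplies a reverse isogeny $A'\to A\times A$ of degree $\ll_g \max\{[L:\QQ],h(A)\}^{\gamma_0}$, and composing forces $\ell^m\ll_g \max\{[L:\QQ],h(A)\}^{\gamma_0}$. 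So $m=0$ once $\ell$ exceeds this polynomial threshold, and reducing modulo $\ell$ yields the isomorphism $\End(A_L)\otimes\FF_\ell\xrightarrow{\sim}\End_{\FF_\ell[\Gal_L]}(A[\ell])$.

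For semisimplicity of $A[\ell]$, given a $\Gal_L$-stable $W\subseteq A[\ell]$ realized as a finite $L$-subgroup scheme of $A$, the quotient isogeny $A\to A/W$ combined with its Masser--W\"ustholz reverse yields an element of $\End(A_L)$ whose action on $A[\ell]$, once normalized via the saturation estimate, is a $\Gal_L$-equivariant projector onto $W$ in $\End(A_L)\otimes\FF_\ell$; its kernel furnishes the required complement. Finally, $\End(A_L)\otimes\FF_\ell$ is semisimple because the isomorphism just proved identifies it with the $\FF_\ell$-commutant of the semisimple $\FF_\ell[\Gal_L]$-module $A[\ell]$, and the commutant of a semisimple module is a semisimple algebra by Wedderburn--Jacobson. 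The main technical obstacle is the saturation estimate itself: the graph-and-quotient construction has to be carried out with precise control on how Faltings heights, polarization data, and isogeny degrees transform under the quotient, so that the final degree comparison with Masser--W\"ustholz yields a bound that is polynomial in $\max\{[L:\QQ],h(A)\}$ rather than exponential in $m$.
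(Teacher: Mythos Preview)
The paper does not actually prove this theorem: its proof reduces to $L=K$ via $h(A_L)=h(A)$ and then cites Corollaries~1 and~2 and Lemmas~2.3 and~5.2 of Masser--W\"ustholz (\emph{Refinements of the Tate conjecture for abelian varieties}), together with the final remark of that paper for the dependence on $[K:\QQ]$ rather than on $K$ itself. So you are not competing against an argument in the paper; you are sketching the content of the cited reference.

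Your plan is indeed the Masser--W\"ustholz strategy: their Corollaries~1 and~2 are deduced from their isogeny and class-index estimates by exactly the kind of graph/quotient and complement constructions you describe. The obstacle you identify is genuine and is where all the work sits. In particular, the step ``composing forces $\ell^m\ll_g \max\{[L:\QQ],h(A)\}^{\gamma_0}$'' is not automatic: composing the quotient $A\times A\to A'$ with a bounded-degree $A'\to A\times A$ gives an element of $\End((A\times A)_L)$ whose degree is the \emph{product} of the two, so you do not directly bound $\ell^m$. The actual argument compares lattices and uses that the bounded-degree isogeny furnished by Masser--W\"ustholz must, for $\ell$ larger than its degree, factor through multiplication by $\ell$ on the relevant piece; extracting the denominator bound requires the more careful index/saturation analysis in their paper. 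Similarly, in your semisimplicity step the composite $\alpha\circ\pi$ has kernel $W$ on $A[\ell]$ once $\ell\nmid\deg\alpha$, but its image need not be disjoint from $W$; one gets a complement only after an additional manoeuvre (e.g.\ working with the dual, or iterating). None of this is fatal---it is exactly what the cited paper does---but your sketch underestimates these steps.

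One point where your route is cleaner than the paper's citation chain: the third bullet follows immediately from the first two. If $A[\ell]$ is a semisimple $\FF_\ell[\Gal_L]$-module and $\End(A_L)\otimes_\ZZ\FF_\ell\cong\End_{\FF_\ell[\Gal_L]}(A[\ell])$, then the latter is a product of matrix algebras over finite division rings by Schur's lemma, hence semisimple. The paper instead cites separate lemmas from Masser--W\"ustholz for this conclusion.
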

\begin{proof}
Since $h(A)=h(A_L)$, it suffices to prove the theorem in the special case $L=K$.   

The first two conclusions of the theorem follow from Corollaries 1 and 2 in \S1 of \cite{MR1336608}; see the last remark of their paper for the stated dependence on $K$.  By Lemmas~2.3 and 5.2 of \cite{MR1336608}, we deduce $\End(A)\otimes_\ZZ \FF_\ell$ is semisimple after suitable increasing $c$ and $\gamma$; as before, see the last remark of loc.~cit.~for the stated dependence on $K$. 
\end{proof}

\subsection{Computing \texorpdfstring{$\Phi_{A,\p}$}{Frobenius groups}}

Fix a non-zero prime ideal $\p$ of $\OO_K$ for which $A$ has good reduction.  Let  $\Phi_{A,\p}$ be the subgroup of $\CC^\times$ generated by the roots of $P_{A,\p}(x)$.  

Let $\pi_1,\ldots, \pi_{2g} \in \CC$ be the roots of $P_{A,\p}(x)$ with multiplicity.     Define the number field $L=\QQ(\pi_1,\ldots, \pi_{2g})$.   We have a surjective homomorphism 
\[
\varphi\colon \ZZ^{2g} \to \Phi_{A,\p},\quad e\mapsto \prod_{i=1}^{2g} \pi_i^{e_i}.
\]
To compute the group $\Phi_{A,\p}$, we need to describe the kernel of $\varphi$.  We first describe the $e \in \ZZ^{2g}$ for which $\varphi(e)$ is a root of unity.  For each non-zero prime $\lambda$ of $\OO_L$, let $v_\lambda\colon L^\times\twoheadrightarrow\ZZ$ be the $\lambda$-adic valuation.

\begin{lemma} \label{L:root of unity linear}
Take any $e\in \ZZ^{2g}$.  The following are equivalent:
\begin{alphenum}
\item \label{L:root of unity linear a}
$\varphi(e)$ is a root of unity,
\item \label{L:root of unity linear b}
$\sum_{i=1}^{2g}  v_\lambda(\pi_i) \cdot e_i= 0$ 
holds for all prime ideals $\lambda | N(\p)$ of $\OO_L$,
\end{alphenum}
\end{lemma}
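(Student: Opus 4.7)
My plan is to exploit two classical properties of the roots $\pi_1,\ldots,\pi_{2g}$ of $P_{A,\p}(x)$: each $\pi_i$ is an algebraic integer, and by the Weil bound every complex embedding $\sigma\colon L\hookrightarrow \CC$ satisfies $|\sigma(\pi_i)| = N(\p)^{1/2}$. Because $P_{A,\p}(x)\in\ZZ[x]$ has all its complex roots of absolute value $N(\p)^{1/2}$, for each index $i$ there is an index $j$ with $\pi_j = \bar{\pi}_i$ and $\pi_i \pi_j = N(\p)$; in particular complex conjugation, and more generally every $\sigma$, permutes the multiset $\{\pi_1,\ldots,\pi_{2g}\}$.

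The direction $(\ref{L:root of unity linear a})\Rightarrow(\ref{L:root of unity linear b})$ is immediate: a root of unity is a unit of $\OO_L$, so $v_\lambda(\varphi(e)) = \sum_i v_\lambda(\pi_i)\, e_i = 0$ for every prime $\lambda$ of $\OO_L$, and in particular for those dividing $N(\p)$.

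For $(\ref{L:root of unity linear b})\Rightarrow(\ref{L:root of unity linear a})$ I would proceed in three steps. First, show that $\varphi(e)\in\OO_L^\times$: for any prime $\lambda$ of $\OO_L$ not dividing $N(\p)$, the relation $\pi_i\pi_j = N(\p)$ together with the nonnegativity $v_\lambda(\pi_i), v_\lambda(\pi_j)\ge 0$ forces $v_\lambda(\pi_i)=0$ for every $i$; combined with hypothesis (\ref{L:root of unity linear b}) at the primes dividing $N(\p)$, one obtains $v_\lambda(\varphi(e))=0$ at every finite prime, so $\varphi(e)$ is a unit. Second, show that $|\sigma(\varphi(e))|=1$ for every embedding $\sigma\colon L\hookrightarrow\CC$: since $\sigma$ permutes the $\pi_i$, one has $|\sigma(\varphi(e))|=N(\p)^{(\sum_i e_i)/2}$, independent of $\sigma$; combined with $|N_{L/\QQ}(\varphi(e))|=1$ (as $\varphi(e)$ is a unit) this forces $\sum_i e_i = 0$, whence $|\sigma(\varphi(e))|=1$ for all $\sigma$. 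Third, invoke Kronecker's theorem: an algebraic integer all of whose Galois conjugates lie on the unit circle is a root of unity.

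I do not anticipate a real obstacle; the task is simply to organize the Weil-number input (algebraic integrality, the functional equation $\pi_i\pi_j=N(\p)$, and the archimedean bound $|\sigma(\pi_i)|=N(\p)^{1/2}$) into a form where Kronecker's theorem applies. The only delicate step is the passage from the hypothesis on valuations at primes above $N(\p)$ to control at archimedean places, which is handled cleanly by the product formula.
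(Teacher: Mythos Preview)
Your proposal is correct and follows essentially the same path as the paper's own proof: the implication $(\ref{L:root of unity linear a})\Rightarrow(\ref{L:root of unity linear b})$ is immediate, and for the converse the paper likewise uses $\pi_i\bbar{\pi}_i=N(\p)$ together with integrality to kill valuations away from $N(\p)$, then computes $|\iota(\alpha)|=N(\p)^{(\sum_i e_i)/2}$ for every embedding $\iota$, forces $\sum_i e_i=0$ via $|N_{L/\QQ}(\alpha)|=1$, and concludes by Kronecker. The only cosmetic difference is that the paper phrases the archimedean step through the norm rather than the product formula, which is of course the same thing.
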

\begin{proof}
For a fixed $e\in \ZZ^{2g}$, define $\alpha:=\varphi(e)=\prod_{i=1}^{2g} \pi_i^{e_i} \in L^\times$.    Observe that for a non-zero prime $\lambda$ of $\OO_L$, we have $v_\lambda(\alpha)=\sum_{i=1}^{2g} v_\lambda(\pi_i)\cdot e_i $.  If $\alpha$ is a root of unity, then we have $v_\lambda(\alpha)=0$ for all $\lambda$.   Therefore, (\ref{L:root of unity linear a}) implies (\ref{L:root of unity linear b}).

We now assume that (\ref{L:root of unity linear b}) holds, i.e., $v_\lambda(\alpha)=0$ for all prime ideals $\lambda|N(\p)$ of $\OO_L$.   We need to show that $\varphi(e)$ is a root of unity.

Take any non-zero prime ideal $\lambda \nmid N(\p)$ of $\OO_L$.  For each $\pi_i$, we have $\pi_i \bbar{\pi}_i = N(\p)$, where $\bbar{\pi}_i$ is the complex conjugate of $\pi_i$ under any complex embedding.   So $v_\lambda(\pi_i)+v_\lambda(\bbar\pi_i)=0$.   Since $\pi_i$ and $\bbar\pi_i$ are algebraic integers, we have $v_\lambda(\pi_i)\geq 0$ and $v_\lambda(\bbar\pi_i)\geq 0$, and hence $v_\lambda(\pi_i)=0$.   Therefore, $v_\lambda(\alpha)=0$.   Combing this with our assumption, we deduce that $v_\lambda(\alpha)=0$ for all non-zero prime ideals $\lambda$ of $\OO_L$.   This implies that $\alpha\in \OO_L^\times$.

Take any embedding $\iota\colon L\hookrightarrow \CC$.  From Weil, we know that each $\iota(\pi_i)$ has absolute value $N(\p)^{1/2}$.  Therefore, $|\iota(\alpha)|=   N(\p)^{(e_1+\cdots +e_{2g})/2}$ for any $\iota$ and hence $|N_{L/\QQ}(\alpha)| = N(\p)^{[L:\QQ]\,  (e_1+\cdots +e_{2g})/2}$.   We have $N_{L/\QQ}(\alpha)=\pm 1$ since $\alpha\in \OO_L^\times$, so $e_1+\cdots +e_{2g}=0$.    Therefore, $\alpha$ has absolute value $1$ under any embedding into $\CC$.   Since $\alpha$ is a unit in $\OO_L$ with absolute value $1$ under any embedding into $\CC$, we conclude that $\alpha$ is a root of unity.
\end{proof}

We now describe how to compute the kernel of $\varphi$.  
Let $M\subseteq \ZZ^{2g}$ be the group of $e\in \ZZ^{2g}$ for which $\sum_{i=1}^{2g} v_\lambda(\pi_i)\cdot e_i  = 0$ for all prime ideals $\lambda|N(\p)$ of $\OO_L$.   By Lemma~\ref{L:root of unity linear}, we have $\varphi^{-1}(\mu_L)=M$, where $\mu_L$ is the (finite) group of roots of unity in $L^\times$.    Define the homomorphism
\[
\varphi|_{M} \colon M \to \mu_L,\quad e\mapsto \prod_{i=1}^{2g} \pi^{e_i}.
\]
   Computing $\varphi$ on a basis of $M$, one can then explicitly compute $\ker \varphi =  \ker (\varphi|_M) \subseteq \ZZ^{2g}$.  

The following finiteness result will be used multiple times in our proofs.
\begin{prop} \label{P:Frob group possibilities}
Let $\p$ be a non-zero prime ideal of $\OO_K$ for which $A$ has good reduction.  Let $\pi_1,\ldots, \pi_{2g} \in \CC$ be the roots of $P_{A,\p}(x)$ with multiplicity.    Let $M$ be the group of $e \in \ZZ^{2g}$ for which $\prod_{i=1}^{2g} \pi_i^{e_i} = 1$.  There are a finite number of subgroups $M_1,\ldots, M_s$ of $\ZZ^{2g}$, depending only on $g$ (and not on $A$ and $\p$), such that $M$ equals one of the $M_i$.
\end{prop}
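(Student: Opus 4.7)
The plan is to split the analysis of $M$ into two pieces by inserting an intermediate subgroup. By Lemma~\ref{L:root of unity linear}, if we set $L=\QQ(\pi_1,\ldots,\pi_{2g})$ and define
\[
M^* := \Bigl\{ e\in \ZZ^{2g} : \sum_{i=1}^{2g} v_\lambda(\pi_i)\,e_i = 0 \text{ for all prime ideals } \lambda \mid N(\p) \text{ of } \OO_L \Bigr\},
\]
then $M\subseteq M^*$ and $\varphi|_{M^*}$ takes values in the group $\mu_L$ of roots of unity of $L$. Hence $M^*/M$ injects into $\mu_L$. Since $L$ is a splitting field of the degree-$2g$ polynomial $P_{A,\p}(x)$, we have $[L:\QQ]\le (2g)!$, and therefore $|\mu_L|$ is bounded by a constant $N(g)$ depending only on $g$. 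So it suffices to prove that $M^*$ belongs to a finite list of subgroups of $\ZZ^{2g}$ depending only on $g$: once we know this, each $M^*$ has only finitely many subgroups of index $\le N(g)$ (since $M^*$ has rank $\le 2g$), and $M$ ranges over this finite set.

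To bound the possibilities for $M^*$, observe that $M^* = V^\perp \cap \ZZ^{2g}$, where $V\subseteq \QQ^{2g}$ is the $\QQ$-linear span of the integer vectors
\[
w_\lambda := (v_\lambda(\pi_1),\ldots, v_\lambda(\pi_{2g})), \qquad \lambda \mid N(\p).
\]
Thus it suffices to show that $V$ is one of finitely many subspaces, depending only on $g$. Since rescaling a vector does not change its span, I may replace $w_\lambda$ by its normalization $\tilde w_\lambda := w_\lambda / v_\lambda(N(\p))$. The entries of $\tilde w_\lambda$ are exactly the Newton polygon slopes of $A_\p$ at $\lambda$, in some order.

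The key input is the Newton polygon / Dieudonn\'e theory of abelian varieties over finite fields: by Manin's classification of $F$-isocrystals, the Newton slopes of a $2g$-dimensional abelian variety in characteristic $p$ are rational numbers in $[0,1]$ whose denominators divide some integer $D(g)$ depending only on $g$ (in fact $D(g)\le (2g)!$ suffices, since the isocrystal has dimension $\le 2g$). Consequently each vector $\tilde w_\lambda$ lies in the explicit finite set
\[
\Sigma_g := \Bigl\{(s_1,\ldots,s_{2g}) \in \QQ^{2g} : s_i \in [0,1],\ D(g)\cdot s_i \in \ZZ \text{ for all } i\Bigr\},
\]
which depends only on $g$. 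Therefore the span $V = \QQ\langle \tilde w_\lambda : \lambda\mid N(\p)\rangle$ is a $\QQ$-subspace spanned by a subset of the finite set $\Sigma_g$, and so $V$ belongs to a finite collection of subspaces of $\QQ^{2g}$ depending only on $g$. This produces a finite list of possibilities for $M^* = V^\perp\cap\ZZ^{2g}$, and combined with the first paragraph finishes the proof.

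The main obstacle is verifying the bounded-denominator property for the Newton slopes; this is where Manin's classification enters, and it is genuinely what makes the finiteness independent of the residue characteristic of $\p$. The remaining ingredients (the index bound $[M^*:M]\le |\mu_L|$, the counting of finite-index subgroups of $\ZZ^{2g}$, and the elementary fact that an intersection of finitely many hyperplanes through the origin with $\ZZ^{2g}$ is determined by the rational span of the defining functionals) are routine.
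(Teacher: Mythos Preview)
Your argument is correct and shares the paper's two-step architecture: pass to the intermediate lattice $M^*$ (the paper calls it $N$) of exponent vectors making $\prod_i\pi_i^{e_i}$ a root of unity, bound $[M^*:M]\le|\mu_L|\ll_g 1$, and then show that $M^*$ itself ranges over a finite list depending only on $g$. The difference lies entirely in that last step. The paper asserts the raw inequality $0\le v_\lambda(\pi_i)\le [L:\QQ]$ and uses it to bound the integer coefficients of the defining linear forms directly; you instead rescale each form by $v_\lambda(N(\p))$ so that its entries become the Newton slopes of $A_\p$, and then invoke Dieudonn\'e--Manin to bound the slope denominators in terms of $g$. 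Your route costs an appeal to isocrystal theory but is the more robust one: the paper's inequality $v_\lambda(\pi_i)\le[L:\QQ]$ as literally written holds only when $N(\p)$ is prime (for an ordinary elliptic curve over $\FF_{p^f}$ with $p$ split in $L$ one gets $v_\lambda(\pi)=f$ while $[L:\QQ]=2$), so some rescaling is needed in any case, and the bounded-denominator property of the slopes---essentially the Honda--Tate dimension formula---is precisely what controls the primitive defining vectors after that rescaling.
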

\begin{proof}
Define the number field $L:=\QQ(\pi_1,\ldots,\pi_{2g})$.  Let $N$ be the group of $e \in \ZZ^{2g}$ for which $\prod_{i=1}^{2g} \pi_i^{e_i}$ is a root of unity.  Take any $e \in \ZZ^{2g}$.  By Lemma~\ref{L:root of unity linear}, we have $e\in N$ if and only if $\sum_{i=1}^{2g} v_\lambda(\pi_i) \cdot e_i = 0$ holds for prime ideals $\lambda|\ell$ of $\OO_L$.     We have $[L:\QQ]\ll_g 1$ since $L$ is the splitting field of a degree $2g$ polynomial and hence $0\leq v_\lambda(\pi_i) \leq [L:\QQ]\ll_g 1$.   So $N$ is defined in $\ZZ^{2g}$ by a finite number of linear equations with integer coefficients, where the number of equations and size of the coefficients can be bounded in terms of $g$.     So $N$ is one of a finite number of subgroups $N_1,\ldots, N_m$ of $\ZZ^{2g}$ that depend only on $g$.

The group $M$ is the kernel of $N\to \mu_L$, $e\to \prod_{i=1}^{2g} \pi_i^{e_i}$, where $\mu_L$ is the group of roots of unity in $L^\times$.   We have $|\mu_L|\ll_g 1$ since $[L:\QQ]\ll_g 1$.   So we have $[N: M]\leq C$ for some constant $C$ depending only on $g$.   The  proposition thus holds where $M_1,\ldots, M_s$ are the subgroups of $N_1,\ldots, N_m$ of index at most $C$.
\end{proof}

\subsection{Common rank}
The results in this section are due to Serre and details can be found in \cite{MR1441234}.
Fix a prime $\ell$ and denote by $r$ the rank of the reductive group $G_{A,\ell}^\circ$.

\begin{lemma} \label{L:pre common rank}
\begin{romanenum}
\item \label{L:pre common rank i}
Let $\p$ be a non-zero prime ideal of $\OO_K$ for which $A$ has good reduction.  If $\Phi_{A,\p}$ is a free abelian group, then $\p$ splits completely in $K_A^\conn$ and $\Phi_{A,\p}$ has rank at most $r$.
\item \label{L:pre common rank ii}
There is a set $S$ of prime ideals of $\OO_K$ with density $0$ such that $\Phi_{A,\p}$ is a free abelian group of rank $r$ for all $\p\notin S$ that split completely in $K_A^\conn$.
\end{romanenum}
\end{lemma}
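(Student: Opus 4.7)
For (i), consider the Zariski closure $H \subseteq \GL_{V_\ell(A)}$ of the cyclic subgroup generated by $\rho_{A,\ell}(\Frob_\p)$. Since $\rho_{A,\ell}(\Frob_\p)$ is semisimple (Frobenius acts semisimply on the Tate module of $A_\p$), the group $H$ is commutative and consists of semisimple elements, hence is diagonalizable over $\QQ_\ell$. Faithfulness of $H \hookrightarrow \GL_{V_\ell(A)}$ shows that the $2g$ eigenvalue characters of $H_{\overline{\QQ}_\ell}$ generate the character group $X(H)$; evaluation at the Zariski-dense element $\rho_{A,\ell}(\Frob_\p)$ is then injective on $X(H)$ with image $\Phi_{A,\p}$ (after fixing an embedding $\overline{\QQ}_\ell \hookrightarrow \CC$). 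Thus $X(H) \cong \Phi_{A,\p}$. If $\Phi_{A,\p}$ is free abelian, then $X(H)$ is torsion-free, so $H$ is a torus, hence connected, hence contained in $G^\circ_{A,\ell}$. This simultaneously forces $\rho_{A,\ell}(\Frob_\p) \in G^\circ_{A,\ell}(\QQ_\ell)$ (so $\p$ splits completely in $K_A^\conn$) and $\rank \Phi_{A,\p} = \dim H \leq r$, since $H$ is a subtorus of the rank-$r$ reductive group $G^\circ_{A,\ell}$.

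For (ii), take $\p$ splitting completely in $K_A^\conn$, so $\rho_{A,\ell}(\Frob_\p) \in G^\circ_{A,\ell}(\QQ_\ell)$ and the closure $H_\p$ sits inside $G^\circ_{A,\ell}$. By the setup for (i), $\Phi_{A,\p}$ is free of rank $r$ precisely when $H_\p$ is a maximal torus of $G^\circ_{A,\ell}$. Failures come in finitely many flavors depending only on $g$: by Proposition~\ref{P:Frob group possibilities}, the lattice of multiplicative relations among the $\pi_{i,\p}$ and the group of torsion relations are each one of finitely many possibilities, and $H_\p$ fails to be a maximal torus exactly when these lie outside the ``generic'' pattern for a maximal torus of $G^\circ_{A,\ell}$. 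Each such failure mode cuts out a proper closed subvariety $Z \subsetneq G^\circ_{A,\ell}$; propriety follows from Proposition~\ref{P:conn facts}(\ref{P:conn facts ii}), which gives that $\rho_{A,\ell}(\Gal_{K_A^\conn})$ is Zariski dense in $G^\circ_{A,\ell}$, combined with the elementary fact that the locus of elements of $G^\circ_{A,\ell}$ whose Zariski closure is a maximal torus is nonempty (indeed Zariski open).

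The main obstacle is then the density-zero claim: for each fixed proper closed $Z \subsetneq G^\circ_{A,\ell}$, the set of $\p$ with $\rho_{A,\ell}(\Frob_\p) \in Z(\QQ_\ell)$ should have density zero. The plan is to apply the Chebotarev density theorem to the finite quotients $\rho_{A,\ell} \bmod \ell^n$: using Proposition~\ref{P:conn facts}(\ref{P:conn facts ii}), the image of $\Gal_{K_A^\conn}$ in $\calG^\circ_{A,\ell}(\ZZ/\ell^n)$ occupies a proportion of that group bounded below uniformly in $n$, while the proportion of elements of $\calG^\circ_{A,\ell}(\ZZ/\ell^n)$ that reduce into $Z$ tends to zero as $n \to \infty$, by a dimension count since $Z$ has strictly smaller dimension than $G^\circ_{A,\ell}$. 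Letting $n \to \infty$ yields density zero for each $Z$, and taking the union over the finite list of bad subvarieties produces the required density-zero set $S$. This closely follows Serre's arguments in \cite{MR1441234}.
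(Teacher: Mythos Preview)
Your argument for part~(\ref{L:pre common rank i}) is essentially the paper's: both take the Zariski closure $T_\p$ of $\langle\rho_{A,\ell}(\Frob_\p)\rangle$, identify $X(T_\p)\cong\Phi_{A,\p}$ via evaluation at $\rho_{A,\ell}(\Frob_\p)$, and observe that freeness makes $T_\p$ a connected torus inside $G_{A,\ell}^\circ$. One omission: you never say which $\ell$ you are using. You need $\p\nmid\ell$ for $\rho_{A,\ell}(\Frob_\p)$ to be defined, and at this point in the paper the independence of $r$ from $\ell$ has not yet been established (it is Proposition~\ref{P:common rank}, deduced \emph{from} this lemma). The paper handles this by proving~(\ref{L:pre common rank i}) first only for $\p\nmid\ell$, then proving~(\ref{L:pre common rank ii}), using~(\ref{L:pre common rank ii}) to see that $r$ is independent of $\ell$, and finally returning to~(\ref{L:pre common rank i}) for the excluded primes $\p\mid\ell$.

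For part~(\ref{L:pre common rank ii}) the paper simply invokes Theorem~1.2 of \cite{MR1441234}, so your route is genuinely different: you attempt a direct proof via Proposition~\ref{P:Frob group possibilities} and Chebotarev. The strategy is sound, and is close in spirit to what the paper does much later (Lemma~\ref{L:GRH Bell new}) for effective purposes, but there are two gaps. First, your parenthetical claim that the locus of elements generating a maximal torus is ``indeed Zariski open'' is false: inside a fixed maximal torus $T$ this locus is the complement of the countable union $\bigcup_{0\neq\alpha\in X(T)}\ker\alpha$, which is not open. Fortunately you do not need openness; nonemptiness suffices once the bad locus has been trapped inside finitely many proper closed subvarieties, which is precisely what Proposition~\ref{P:Frob group possibilities} should buy you. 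Second, the assertion ``each such failure mode cuts out a proper closed subvariety $Z\subsetneq G_{A,\ell}^\circ$'' needs an actual argument. The clean way is to use conjugation-invariance: restrict to the regular semisimple locus (its complement is already proper closed), and note that a conjugation-invariant subset there is determined by its intersection with one maximal torus $T$. In $T$, the bad locus is where some extra relation $\prod_i\alpha_i^{m_i}=1$ holds with $m$ outside the weight-relation lattice $M_T$; Proposition~\ref{P:Frob group possibilities} bounds the possible $m$ to a finite list, so this is a finite union of proper subtori $\ker\beta_m\subsetneq T$. Its conjugation-saturation in $G_{A,\ell}^\circ$ is then a proper closed subvariety, and your Chebotarev argument finishes the job.
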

\begin{proof}
Take any non-zero prime ideal $\p\nmid \ell$ of $\OO_K$ for which $A$ has good reduction.   Let $T_\p$ be the Zariski closure in $G_{A,\ell}$ of the subgroup generated by the semisimple element $t_\p:=\rho_{A,\ell}(\Frob_\p)$.    Note that $T_\p$ is a commutative algebraic subgroup of $G_{A,\ell}$ and $T_\p^\circ$ is a torus.     Let $X(T_\p)$ be the group of characters $(T_\p)_{\Qbar_\ell} \to \GG_{m,\Qbar_\ell}$.  

We claim that the groups $X(T_\p)$ and $\Phi_{A,\p}$ are isomorphic.  Let $\Omega\subseteq X(T_\p)$ be the weights of $T_\p \subseteq \GL_{V_\ell(A)}$ acting on $V_\ell(A)$.    The set $\Omega$ generates $X(T_\p)$ since this action is faithful.  Since $T_\p$ is generated by $t_\p$, the homomorphism $f\colon X(T_\p) \to \Qbar_\ell^\times$, $\alpha\mapsto \alpha(t_\p)$ is injective.   The elements $\{\alpha(t_\p):\alpha\in \Omega\}$ are the roots of $P_{A,\p}(x)$ in $\Qbar_\ell$ and generate the image of $f$.   Therefore, we have an isomorphism $X(T_\p) \to \Phi_{A,\p}$, $\alpha\mapsto \tau(\alpha(t_\p))$, where $\tau\colon \Qbar_\ell \hookrightarrow \CC$ is a fixed embedding.   This proves the claim.

Suppose that $\Phi_{A,\p}$ is a free abelian group of rank $s$.   From the above claim, $X(T_\p)$ is free abelian of rank $s$ and hence $T_\p$ is a torus of rank $s$.    Since $T_\p$ is connected, we have $T_\p \subseteq G_{A,\ell}^\circ$.   We have $s\leq r$ since $G_{A,\ell}^\circ$ has rank $r$.   We have $\rho_{A,\ell}(\Frob_\p) \in T_\p(\QQ_\ell) \subseteq G_{A,\ell}^\circ(\QQ_\ell)$, so $\p$ splits completely in $K_A^\conn$ since $K_A^\conn$ is the fixed field in $\Kbar$ of (\ref{E:conn hom}).  This proves (\ref{L:pre common rank i}) for $\p\nmid \ell$.

The set of prime ideals $\p\subseteq\OO_K$ that split completely in $K_A^\conn$ for which $T_\p$ is \emph{not} a maximal torus of $G_{A,\ell}^\circ$ has density $0$; this follows from {Theorem~1.2} of \cite{MR1441234} which shows it under the assumption $K_A^\conn=K$.  Part (\ref{L:pre common rank ii}) is a direct consequence of the claim.

Finally, from (\ref{L:pre common rank ii}) we find that $r$ does not depend on the choice of $\ell$.  Therefore, (\ref{L:pre common rank i}) holds for the primes $\p|\ell$ that were excluded above.
\end{proof}

\begin{prop} \label{P:common rank}  
The rank $r$ of the reductive group $G_{A,\ell}^\circ$ does not depend on the prime $\ell$.
\end{prop}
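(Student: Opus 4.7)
The plan is to extract the common rank directly from the compatibility of the family $\{\rho_{A,\ell}\}_\ell$ via Lemma~\ref{L:pre common rank}(\ref{L:pre common rank ii}). For each prime $\ell$, write $r_\ell$ for the rank of the reductive group $G_{A,\ell}^\circ$. By that lemma, there is a set $S_\ell$ of non-zero prime ideals of $\OO_K$ of density $0$ such that $\Phi_{A,\p}$ is a free abelian group of rank $r_\ell$ for every $\p \notin S_\ell$ that splits completely in $K_A^\conn$.

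The key observation is that $\Phi_{A,\p} \subseteq \CC^\times$ is defined in terms of the Frobenius polynomial $P_{A,\p}(x) \in \ZZ[x]$ alone, so it depends only on $A$ and $\p$, not on $\ell$. Thus given two primes $\ell$ and $\ell'$, the set $S_\ell \cup S_{\ell'}$ still has density $0$, while by Chebotarev the set of prime ideals of $\OO_K$ that split completely in $K_A^\conn$ has positive density $1/[K_A^\conn:K]$. Choose any $\p$ that splits completely in $K_A^\conn$ but lies outside $S_\ell \cup S_{\ell'}$; then applying Lemma~\ref{L:pre common rank}(\ref{L:pre common rank ii}) at $\ell$ and at $\ell'$ shows that $\Phi_{A,\p}$ is a free abelian group whose rank is simultaneously $r_\ell$ and $r_{\ell'}$. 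Hence $r_\ell = r_{\ell'}$, and the proposition follows.

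There is no genuine obstacle here; the content has already been absorbed into the proof of Lemma~\ref{L:pre common rank}, whose final paragraph observes exactly this $\ell$-independence. The real work was in establishing Lemma~\ref{L:pre common rank}(\ref{L:pre common rank ii}), which relies on Serre's result that, outside a density-zero set, the Frobenius torus $T_\p$ is a maximal torus of $G_{A,\ell}^\circ$. Once that is in hand, Proposition~\ref{P:common rank} is a formal corollary obtained by comparing two different primes $\ell$ against the single, $\ell$-independent invariant $\Phi_{A,\p}$.
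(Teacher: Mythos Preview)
Your proof is correct and follows essentially the same approach as the paper: both deduce the proposition from Lemma~\ref{L:pre common rank}(\ref{L:pre common rank ii}) by noting that $\Phi_{A,\p}$ is an $\ell$-independent invariant, and you have simply written out the comparison between two primes $\ell$ and $\ell'$ that the paper's one-line proof leaves implicit. Your observation that this was already noted in the last paragraph of the proof of Lemma~\ref{L:pre common rank} is accurate.
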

\begin{proof}
This follows from Lemma~\ref{L:pre common rank}(\ref{L:pre common rank ii}) which gives a characterization of $r$ that does not depend on $\ell$.
\end{proof}

\subsection{The Mumford--Tate group} \label{SS:MT group}
Fix a field embedding $\Kbar \subseteq \CC$.  The homology group $V:=H_1(A(\CC),\QQ)$ is a vector space of dimension $2g$ over $\QQ$.   It is naturally endowed with a $\QQ$-Hodge structure of type $\{(-1,0),(0,-1)\}$ and hence a decomposition 
\[
V\otimes_\QQ\CC = H_1(A(\CC),\CC)=V^{-1,0} \oplus V^{0,-1}
\]
such that $V^{0,-1}=\bbar{V^{-1,0}}$.  Let
\[
\mu\colon \GG_{m,\CC} \to \GL_{V\otimes_\QQ\CC}  
\]
be the cocharacter such that for each $z\in\CC^\times=\GG_{m}(\CC)$, $\mu(z)$ is the automorphism of $V\otimes_\QQ\CC$ that is multiplication by $z$ on $V^{-1,0}$ and the identity on $V^{0,-1}$.

\begin{definition}
The \defi{Mumford--Tate group} of $A$ is the smallest algebraic subgroup of $\GL_V$ defined over $\QQ$ that contains $\mu(\GG_{m,\CC})$.  We will denote the Mumford--Tate group of $A$ by $\MT_{A}$.
\end{definition}

The endomorphism ring $\End(A_\CC)$ acts on $V$; this action preserves the Hodge decomposition, and hence commutes with $\mu$ and thus also $\MT_A$.  Moreover, the ring $\End(A_\CC)\otimes_\ZZ \QQ$ is naturally isomorphic to the commutant of $\MT_A$ in $\End_\QQ(V)$.   The group $\MT_A$ is reductive since the $\QQ$-Hodge structure for $V$ is pure and polarizable.  Using our fixed embedding $\Kbar\subseteq \CC$ and Proposition~\ref{P:conn facts}(\ref{P:conn facts iv}), we have a natural isomorphism $\End(A_\CC)\otimes_\ZZ \QQ = \End(A_{K_A^\conn})\otimes_\ZZ \QQ$.\\

The \defi{comparison isomorphism} $V_\ell(A)\cong V \otimes_\QQ \QQ_\ell$ induces an isomorphism $\GL_{V_\ell(A)} \cong \GL_{V,\,\QQ_\ell}$; we will use the comparison isomorphism as an identification.    The following conjecture says that $G_{A,\ell}^\circ$ and $(\MT_A)_{\QQ_\ell}$ are the same algebraic group under the comparison isomorphism, cf.~\cite{MR0476753}*{\S3}.

\begin{conj}[Mumford--Tate conjecture]   \label{C:MT}
For each prime $\ell$, we have $G_{A,\ell}^\circ= (\MT_A)_{\QQ_\ell}$.
\end{conj}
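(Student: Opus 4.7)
The Mumford--Tate conjecture is famously open in general, so realistically this is not a statement I could settle; I can only describe how one would try. The plan begins with the containment that \emph{is} known unconditionally, namely $G_{A,\ell}^\circ \subseteq (\MT_A)_{\QQ_\ell}$. Via the comparison isomorphism $V_\ell(A) \cong V\otimes_\QQ \QQ_\ell$, the tensors on $V^{\otimes m}\otimes (V^\vee)^{\otimes n}$ fixed by $\MT_A$ correspond to Hodge classes on powers of $A$, and Deligne's theorem that every Hodge class on a complex abelian variety is absolutely Hodge shows that these tensors are stable under $\rho_{A,\ell}(\Gal_K)$. Taking Zariski closures and then neutral components yields $G_{A,\ell}^\circ \subseteq (\MT_A)_{\QQ_\ell}$.

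For the reverse inclusion I would follow Serre's Frobenius-torus strategy. By Proposition~\ref{P:common rank} and Lemma~\ref{L:pre common rank}, the common rank $r$ of the groups $G_{A,\ell}^\circ$ can be detected by choosing a prime $\p$ of good reduction with $\Phi_{A,\p}$ free abelian of rank $r$, and for any such $\p$ the Zariski closure $T_\p$ of the cyclic subgroup of $G_{A,\ell}^\circ$ generated by $\rho_{A,\ell}(\Frob_\p)$ is a maximal torus with character group naturally isomorphic to $\Phi_{A,\p}$. Using the first-step containment, the same element $\rho_{A,\ell}(\Frob_\p)$ lies in $(\MT_A)_{\QQ_\ell}$ and its Zariski closure is a torus $T_\p'\supseteq T_\p$ whose character group is still a quotient of $\Phi_{A,\p}$, hence has rank at most $r$; since $\MT_A$ has rank at least $r$ (via Hodge-theoretic lower bounds) and $T_\p'$ already meets that bound, $T_\p'=T_\p$ and it is a maximal torus of $(\MT_A)_{\QQ_\ell}$ as well. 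Thus the two reductive groups share a common maximal torus and a common rank, and one would like to run through a set of $\p$ of positive density to show that the collection of Frobenius tori generates a Zariski-dense subgroup of $(\MT_A)_{\QQ_\ell}$, forcing equality with $G_{A,\ell}^\circ$.

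The main obstacle, and the source of the conjecture's difficulty, is that sharing ranks and a dense family of maximal tori with matching character groups does not force the ambient reductive groups to coincide: $G_{A,\ell}^\circ$ could in principle be a proper reductive subgroup of $(\MT_A)_{\QQ_\ell}$ of equal rank but smaller semisimple rank or with a strictly smaller root system, provided the subgroup still absorbs all the relevant Frobenius conjugacy classes. Larsen--Pink constrains the possible semisimple types of $G_{A,\ell}^\circ$ independently of $\ell$, and the identification of Frobenius tori forces the two groups to have the same formal character on the tautological representation $V_\ell(A)$, but promoting such invariants to an equality of root data requires genuinely new input — for instance, endomorphism information (yielding the CM and low-dimensional cases), Pink's analysis for certain Hodge types, or a handle on the weights of a Hodge cocharacter inside $G_{A,\ell}^\circ$. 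Lacking such input in general, the realistic deliverable of this plan is the reduction of Conjecture~\ref{C:MT} to a group-theoretic comparison of two connected reductive subgroups of $\GL_{V,\,\QQ_\ell}$ of equal rank sharing a Zariski-dense family of maximal tori.
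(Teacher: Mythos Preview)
The statement labeled Conjecture~\ref{C:MT} is precisely that --- a conjecture --- and the paper does not prove it. It is stated for context and motivation; the paper records only the unconditional inclusion $G_{A,\ell}^\circ \subseteq (\MT_A)_{\QQ_\ell}$ (Proposition~\ref{P:MT inclusion}, attributed to Deligne) and the unconditional equality of central tori (Proposition~\ref{P:same centers}). There is therefore no ``paper's own proof'' to compare against.

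Your proposal correctly identifies the conjecture as open and gives an accurate account of the known inclusion via absolute Hodge classes, matching what the paper cites. Your Frobenius-torus outline for the reverse direction is a reasonable summary of the standard heuristic picture, and you are right that equal rank plus shared Frobenius tori does not force equality of the ambient reductive groups --- this is exactly where the conjecture is stuck. So your write-up is honest and well-informed, but it is not a proof, nor does the paper claim one; no correction is needed beyond noting that no proof was expected here.
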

 
The following proposition says that one inclusion of the Mumford--Tate conjecture is known unconditionally, see Deligne's proof in \cite{MR654325}*{I, Prop.~6.2}.

\begin{prop}\label{P:MT inclusion}
For each prime $\ell$, we have $G_{A,\ell}^\circ \subseteq  (\MT_A)_{\QQ_\ell}$.
\end{prop}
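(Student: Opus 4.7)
The plan is to deduce the inclusion from Deligne's theorem that every Hodge cycle on an abelian variety is absolute Hodge, combined with the tensor-theoretic characterization of the Mumford--Tate group. Since $\MT_A$ is connected (being the smallest $\QQ$-algebraic subgroup of $\GL_V$ containing the connected set $\mu(\GG_{m,\CC})$), it suffices to establish the a priori stronger inclusion $G_{A,\ell}\subseteq(\MT_A)_{\QQ_\ell}$.

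By Chevalley's theorem applied to the reductive subgroup $\MT_A\subseteq\GL_V$, there is a finite family of $\QQ$-rational tensors $t_1,\ldots,t_N$, each $t_i$ lying in some mixed tensor space $T_i$ built from copies of $V$ and $V^\vee$, whose common pointwise stabilizer in $\GL_V$ is exactly $\MT_A$. A standard refinement of this construction, exploiting that $\MT_A$ contains the image of $\mu$ and therefore respects the Hodge decomposition on each $T_i$, allows one to arrange that each $t_i$ is a Hodge class in $T_i$; equivalently, each $t_i$ corresponds to a Hodge cycle on a suitable product of copies of $A$ and its dual.

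The comparison isomorphism $V\otimes_\QQ\QQ_\ell\cong V_\ell(A)$ extends functorially to the tensor spaces $T_i$, sending each $t_i$ to an $\ell$-adic tensor $t_{i,\ell}\in T_i\otimes_\QQ\QQ_\ell$. The critical input is Deligne's theorem from \cite{MR654325}*{I, Prop.~6.2} that every Hodge cycle on an abelian variety is absolute Hodge. Applied to our $t_i$, this implies that each $t_{i,\ell}$ is fixed by the natural $\Gal_K$-action. Consequently $\rho_{A,\ell}(\Gal_K)$ is contained in the common stabilizer of the $t_{i,\ell}$ inside $\GL_{V_\ell(A)}$; because Chevalley's characterization is preserved under the flat base change $\QQ\to\QQ_\ell$, this common stabilizer equals $(\MT_A)_{\QQ_\ell}(\QQ_\ell)$. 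Taking Zariski closure yields $G_{A,\ell}\subseteq(\MT_A)_{\QQ_\ell}$ and hence $G_{A,\ell}^\circ\subseteq(\MT_A)_{\QQ_\ell}$.

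The only nontrivial ingredient is Deligne's absolute Hodge theorem: without it there is no a priori mechanism linking the purely complex-analytic Hodge structure on $V$ to the $\ell$-adic Galois action on $V_\ell(A)$, and everything else in the argument is formal representation-theoretic bookkeeping flowing from Chevalley's tensor description of reductive subgroups of $\GL_V$.
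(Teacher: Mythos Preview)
Your overall strategy---realize $\MT_A$ as the stabilizer of finitely many Hodge tensors via Chevalley, then invoke Deligne's absolute Hodge theorem to transport Galois invariance to the $\ell$-adic side---is exactly the standard argument and is what the paper is citing. But there is a genuine gap.

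You assert that each $t_{i,\ell}$ is fixed by the full $\Gal_K$-action, and from this you deduce the \emph{stronger} inclusion $G_{A,\ell}\subseteq(\MT_A)_{\QQ_\ell}$. That inclusion is false in general. Consider an elliptic curve $A/\QQ$ with complex multiplication by an imaginary quadratic field $F$, the CM not being defined over $\QQ$. Then $\MT_A$ is the $2$-dimensional torus $\Res_{F/\QQ}\GG_m$, and the CM endomorphisms are among the Hodge tensors cutting it out; their $\ell$-adic images are \emph{not} $\Gal_\QQ$-invariant, only $\Gal_F$-invariant. Correspondingly $G_{A,\ell}$ is disconnected and strictly contains $(\MT_A)_{\QQ_\ell}$. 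What Deligne's theorem actually gives is that the $\ell$-adic realization of an absolute Hodge cycle on $A_\CC$ is fixed by an \emph{open} subgroup of $\Gal_K$, not by all of $\Gal_K$.

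The repair is minor. From the correct conclusion you obtain $\rho_{A,\ell}(\Gal_L)\subseteq(\MT_A)(\QQ_\ell)$ for some finite extension $L/K$. The Zariski closure of $\rho_{A,\ell}(\Gal_L)$ is a finite-index closed subgroup of $G_{A,\ell}$, hence contains $G_{A,\ell}^\circ$; since $(\MT_A)_{\QQ_\ell}$ is closed, this yields $G_{A,\ell}^\circ\subseteq(\MT_A)_{\QQ_\ell}$ as required. So your opening reduction (``it suffices to establish the a priori stronger inclusion $G_{A,\ell}\subseteq(\MT_A)_{\QQ_\ell}$'') should be dropped, and the argument should aim directly at the connected component.
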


One consequence of the Mumford--Tate conjecture is the central torus of the reductive group $G_{A,\ell}^\circ$ (i.e., the neutral component of the center) is independent of $\ell$.   This has been proved unconditionally.

\begin{prop} \label{P:same centers}
The central tori of $G_{A,\ell}^\circ$ and $(\MT_A)_{\QQ_\ell}$ agree for all $\ell$.
\end{prop}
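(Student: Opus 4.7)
The plan is to exploit the inclusion $G_{A,\ell}^\circ\subseteq(\MT_A)_{\QQ_\ell}$ from Proposition~\ref{P:MT inclusion} together with the fact that the two groups share the same commutant in $\End_{\QQ_\ell}(V_\ell(A))$. Put $E:=\End(A_{\Kbar})\otimes_\ZZ\QQ$. By Proposition~\ref{P:conn facts}(\ref{P:conn facts iii}), the commutant of $G_{A,\ell}^\circ$ in $\End_{\QQ_\ell}(V_\ell(A))$ is $E\otimes_\QQ\QQ_\ell$. Using the comparison isomorphism together with the identification $\End(A_\CC)\otimes_\ZZ\QQ=\End(A_{K_A^\conn})\otimes_\ZZ\QQ=E$ supplied by Proposition~\ref{P:conn facts}(\ref{P:conn facts iv}) and the fixed embedding $\Kbar\subseteq\CC$, the same holds for $(\MT_A)_{\QQ_\ell}$. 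Since the center of any reductive subgroup $H\subseteq\GL_{V_\ell(A)}$ is $H\cap C_{\GL_{V_\ell(A)}}(H)$, one obtains
\[
Z(G_{A,\ell}^\circ)=G_{A,\ell}^\circ\cap(E\otimes_\QQ\QQ_\ell)^\times,\qquad Z((\MT_A)_{\QQ_\ell})=(\MT_A)_{\QQ_\ell}\cap(E\otimes_\QQ\QQ_\ell)^\times,
\]
and the inclusion $G_{A,\ell}^\circ\subseteq(\MT_A)_{\QQ_\ell}$ forces $Z(G_{A,\ell}^\circ)\subseteq Z((\MT_A)_{\QQ_\ell})$, whence $Z(G_{A,\ell}^\circ)^\circ\subseteq Z((\MT_A)_{\QQ_\ell})^\circ$.

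For the reverse inclusion, it suffices to show $Z((\MT_A)_{\QQ_\ell})^\circ\subseteq G_{A,\ell}^\circ$, since the containment in $(E\otimes_\QQ\QQ_\ell)^\times$ is then automatic and taking neutral components places the central torus inside $Z(G_{A,\ell}^\circ)^\circ$. To achieve this, I plan to invoke the unconditional rank equality $\rank G_{A,\ell}^\circ=\rank(\MT_A)$, due to Serre (see \cite{MR1441234}). Granted this equality, any maximal torus of $G_{A,\ell}^\circ$ is a subtorus of $(\MT_A)_{\QQ_\ell}$ of dimension $\rank(\MT_A)_{\QQ_\ell}$ and is therefore also a maximal torus of $(\MT_A)_{\QQ_\ell}$. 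Applying Lemma~\ref{L:pre common rank}(\ref{L:pre common rank ii}), I can select a prime $\p$ of $K$ of good reduction that splits completely in $K_A^\conn$ with $\Phi_{A,\p}$ free abelian of rank $r$; the associated Frobenius torus $T_\p^\circ\subseteq G_{A,\ell}^\circ$ is then a maximal torus of $G_{A,\ell}^\circ$ and, by the preceding step, also a maximal torus of $(\MT_A)_{\QQ_\ell}$. Since the central torus of a connected reductive group is contained in each of its maximal tori, $Z((\MT_A)_{\QQ_\ell})^\circ\subseteq T_\p^\circ\subseteq G_{A,\ell}^\circ$, completing the argument.

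The principal obstacle is the rank equality $\rank G_{A,\ell}^\circ=\rank(\MT_A)$; this is the genuine input beyond the material recalled so far in the paper, and is not formally implied by the inclusion $G_{A,\ell}^\circ\subseteq(\MT_A)_{\QQ_\ell}$ together with the agreement of commutants (as one sees already from the toy example $\SL_n\subsetneq\GL_n$, where the commutants coincide but the central tori differ). Once this ingredient is imported from the literature, the rest is a short centralizer-plus-maximal-torus manipulation.
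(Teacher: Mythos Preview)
Your argument has a genuine gap at the step where you invoke the rank equality $\rank G_{A,\ell}^\circ=\rank\MT_A$. This equality is \emph{not} known unconditionally, and the reference \cite{MR1441234} does not contain it: that paper, and Serre's work on Frobenius tori more generally, establishes that $\rank G_{A,\ell}^\circ$ is independent of $\ell$ --- which is exactly Proposition~\ref{P:common rank} above --- but says nothing about its comparison with $\rank\MT_A$. The rank equality remains one of the open pieces of the Mumford--Tate conjecture. Your own $\SL_n\subsetneq\GL_n$ example already shows that the inclusion of Proposition~\ref{P:MT inclusion} together with equality of commutants cannot force it, and nothing else you have assembled supplies it either. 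So as written, the reverse inclusion $Z((\MT_A)_{\QQ_\ell})^\circ\subseteq G_{A,\ell}^\circ$ is unjustified.

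The paper itself does not argue the proposition; it simply cites \cite{MR2400251}*{Theorem~1.3.1} and \cite{MR3117743}*{Corollary~2.11}. The proofs in those references do not go through a rank comparison. They instead use the theory of locally algebraic abelian $\ell$-adic representations (Serre's framework, as developed later in \S\ref{S:abelian} of this paper): one shows that the abelianized Galois representation is governed by a fixed $\QQ$-torus coming from Hodge-theoretic data (essentially the image of the Hodge cocharacter in the abelianization), and identifies this torus both with the central torus of $G_{A,\ell}^\circ$ and with that of $\MT_A$. If you want to produce a self-contained argument along the lines of this paper, the machinery of \S\ref{S:abelian} --- the homomorphism $\beta_{A,\ell}$, the torus $Y$, and the Serre tori $S_\m$ --- is the right toolkit, not Frobenius tori.
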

\begin{proof}
See \cite{MR2400251}*{Theorem~1.3.1} or \cite{MR3117743}*{Corollary 2.11}.
\end{proof}

\subsection{Reduction to the connected case} \label{SS:reduction to connected case}

We now show that to prove our main theorem, we may assume that all the $\ell$-adic monodromy groups are connected.

\begin{lemma} \label{L:reduction to connected case}
To prove Theorem~\ref{T:main new}, it suffices to consider the case where all the groups $G_{A,\ell}$ are connected; equivalently, $K_A^\conn=K$.
\end{lemma}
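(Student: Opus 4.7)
The plan is to apply the connected-case version of Theorem~\ref{T:main new} to $A':=A_L$ over $L:=K_A^\conn$, and then translate each of the four conclusions back to $(A,K)$. The setup is favorable because Proposition~\ref{P:conn facts}(\ref{P:conn facts ii}) gives $G_{A',\ell}=G_{A,\ell}^\circ$, which is already connected; hence $K_{A'}^\conn=L$, and so $(A',L)$ falls in the connected case. Taking Zariski closures in $\GL_{T_\ell(A)}$ then identifies $\calG_{A',\ell}^\circ=\calG_{A',\ell}=\calG_{A,\ell}^\circ$, and clearly $\rho_{A',\ell}(\Gal_L)=\rho_{A,\ell}(\Gal_{K_A^\conn})$.

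The substantive step is to check that the numerical bound (\ref{E:main ell bound}) for the triple $(A,K,\q)$ implies the analogous bound for some $(A',L,\q')$ with constants depending only on $g$. I would assemble three inputs: Proposition~\ref{P:connected}(\ref{P:connected ii}) gives $[L:\QQ]\ll_g [K:\QQ]$; the semistable Faltings height is invariant under base change, so $h(A')=h(A)$; and Lemma~\ref{L:pre common rank}(\ref{L:pre common rank i}) ensures $\q$ splits completely in $L$. For any choice of prime $\q'\mid\q$ in $\OO_L$ we then have $N(\q')=N(\q)$, identical residue fields and reductions, $P_{A',\q'}(x)=P_{A,\q}(x)$, and $\Phi_{A',\q'}=\Phi_{A,\q}$, so $\q'$ inherits the rank-$r$ free-abelian condition. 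After enlarging the absolute constant $c$ by factors controlled by $g$, the hypothesis (\ref{E:main ell bound}) for $(A,K,\q)$ produces the analogous hypothesis for $(A',L,\q')$.

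With those identifications in hand the four conclusions fall out. Parts (\ref{T:main new c}) and (\ref{T:main new d}) for $A$ are literally the corresponding statements for $A'$, since the $\ZZ_\ell$-group schemes and the Galois images in play coincide. For (\ref{T:main new a}), Proposition~\ref{P:connected}(\ref{P:connected iii}) identifies $[\calG_{A,\ell}(\ZZ_\ell):\rho_{A,\ell}(\Gal_K)]$ with $[\calG_{A',\ell}(\ZZ_\ell):\rho_{A',\ell}(\Gal_L)]$, and the connected-case bound $\ll_{g,[L:\QQ]}1$ becomes $\ll_{g,[K:\QQ]}1$ via the degree bound. Part (\ref{T:main new b}) transfers in the same way, using that ``$\ell$ unramified in $K_A^\conn$'' is precisely ``$\ell$ unramified in $K_{A'}^\conn$.''

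The only non-routine point is arranging for the new prime $\q'$ to meet the rank-$r$ hypothesis of the theorem; I expect this to be the main obstacle in a first attempt, but it is dispatched by the complete splitting of $\q$ in $L=K_A^\conn$ furnished by Lemma~\ref{L:pre common rank}(\ref{L:pre common rank i}), after which everything reduces to careful bookkeeping.
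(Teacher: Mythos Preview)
Your proposal is correct and follows essentially the same route as the paper: base change to $L=K_A^\conn$, use Proposition~\ref{P:connected}(\ref{P:connected ii}) for the degree bound, the invariance of $h(A)$, Lemma~\ref{L:pre common rank}(\ref{L:pre common rank i}) to split $\q$ and produce $\q'$ with the same Frobenius group, and Proposition~\ref{P:connected}(\ref{P:connected iii}) to transfer the index bounds. The only cosmetic difference is that you explicitly invoke Proposition~\ref{P:conn facts}(\ref{P:conn facts ii}) to justify $G_{A',\ell}=G_{A,\ell}^\circ$, which the paper leaves implicit.
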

\begin{proof}
Assume that Theorem~\ref{T:main new} holds in the case where all the $\ell$-adic monodromy groups are connected.

Define $L:=K_A^\conn$ and let $A'$ be the base change of $A$ to $L$.  Note that $K_{A'}^\conn=L$.  By Proposition~\ref{P:connected}(\ref{P:connected ii}), we have $[L:\QQ]\ll_g [K:\QQ]$.    We also have $h(A')=h(A)$.  The prime ideal $\q \subseteq \OO_K$ splits completely in $L$ by Lemma~\ref{L:pre common rank}(\ref{L:pre common rank i}).   Let $\q'$ be a prime ideal of $\OO_L$ that divides $\q$.   The natural map $\FF_\q \to \FF_{\q'}$ is an isomorphism and in particular $N(\q)=N(\q')$.  Under this isomorphism, the abelian varieties $A_\q$ and $A'_{\q'}$ agree and hence $\Phi_{A',\q'}$ is also a free abelian group of rank $r$.  The assumption (\ref{E:main ell bound}) thus implies that
\[
 \ell \geq c \cdot \max(\{[L:\QQ],h(A'), N(\q')\})^\gamma,
\]
where $c$ is a possibly larger positive constant that depends only on $g$.  By the assumed connected case of Theorem~\ref{T:main new} and Proposition~\ref{P:connected}(\ref{P:connected iii}), we have
\[
[\calG_{A,\ell}(\ZZ_\ell):\rho_{A,\ell}(\Gal_K)] = [\calG_{A',\ell}(\ZZ_\ell):\rho_{A',\ell}(\Gal_{L})] \ll_{g,[L:\QQ]} 1.
\] 
Since $[L:\QQ]\ll_g [K:\QQ]$, we have $[\calG_{A,\ell}(\ZZ_\ell):\rho_{A,\ell}(\Gal_K)] \ll_{g,[K:\QQ]} 1$.   This shows that Theorem~\ref{T:main new}(\ref{T:main new a}) holds for $A/K$.  

By the assumed connected case of Theorem~\ref{T:main new}, the group $\calG_{A',\ell}^\circ=\calG_{A',\ell}$ is reductive.   So $\calG_{A,\ell}^\circ = \calG_{A',\ell}$ is reductive which shows that Theorem~\ref{T:main new}(\ref{T:main new c}) holds for $A/K$.   By the assumed connected case of Theorem~\ref{T:main new}, we have $\rho_{A',\ell}(\Gal_L)'=\calG_{A',\ell}^\circ(\ZZ_\ell)'$.  Therefore, $\rho_{A,\ell}(\Gal_L)' = \calG_{A,\ell}^\circ(\ZZ_\ell)'$ and so Theorem~\ref{T:main new}(\ref{T:main new d}) holds for $A/K$.  

Now assume further that $\ell$ is unramified in $K_A^\conn$.   So $\ell$ is unramified in $K_{A'}^\conn=L=K_A^\conn$.    By the assumed connected case of Theorem~\ref{T:main new} and Proposition~\ref{P:connected}(\ref{P:connected iii}), we have
\[
[\calG_{A,\ell}(\ZZ_\ell):\rho_{A,\ell}(\Gal_K)] = [\calG_{A',\ell}(\ZZ_\ell):\rho_{A',\ell}(\Gal_{L})] \ll_{g} 1.
\] 
Therefore, Theorem~\ref{T:main new}(\ref{T:main new b}) holds for $A/K$.  
\end{proof}

\section{Some group theory}  \label{S:some group theory}

We now review group theoretic results of Nori, Larsen and Pink.  They will be needed for the proofs in the next section.

\subsection{Nori theory}  \label{SS:semisimple approximation}

Fix a positive integer $m$ and a prime $\ell \geq m$; we will later apply this theory with $m=2g$, where $g\geq 1$ is the dimension of our abelian variety.

Fix a subgroup $\Gamma$ of $\GL_m(\FF_\ell)$.   Let $\Gamma_u$ be the set of elements in $\Gamma$ of order $\ell$; it is also the set of non-trivial unipotent elements in $\Gamma$ since $\ell \geq m$.   Let $\Gamma^+$ be the subgroup of $\Gamma$ generated by $\Gamma_u$.   The group $\Gamma^+$ is normal in $\Gamma$ and the quotient $\Gamma/\Gamma^+$ has cardinality relatively prime to $\ell$.  

For each $x\in \Gamma_u$, define the homomorphism 
\[
\varphi_x \colon \GG_a \to \GL_{m},\quad t \mapsto \exp(t \cdot \log x)
\]
of algebraic groups over $\FF_\ell$ where we use the truncated series
\[
\exp z= {\sum}_{i=0}^{m-1}{z^i}/{i!} \quad \text{and} \quad  \log z = - {\sum}_{i=1}^{m-1} {(1-z)^i}/{i}.
\]
To show that $\varphi_x$ is a homomorphism, note that $\log x$ is nilpotent and that $\exp(y+z)=\exp(y)\exp(z)$ for commuting nilpotent $y,z\in M_m(\FF_\ell)$.  

Let $\boldG_\Gamma$ be the algebraic subgroup of $\GL_{m,\FF_\ell}$ generated by the groups $\varphi_x(\GG_a)$ with $x\in \Gamma_u$.    The following theorem says that, for $\ell$ sufficiently large, we can recover $\Gamma^+$ from $\boldG_\Gamma$.

\begin{thm}[Nori] \label{T:Nori}  \cite{MR880952}*{Theorem~B}
There  is a constant $c_1(m)\geq 2m-2$, depending only on $m$, such that if $\ell> c_1(m)$ and $\Gamma$ is a subgroup of $\GL_{m}(\FF_\ell)$, then $\Gamma^+=\boldG_\Gamma(\FF_\ell)^+$. 
\end{thm}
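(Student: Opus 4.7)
The plan is to prove the two inclusions separately. For the easy direction $\Gamma^+\subseteq \boldG_\Gamma(\FF_\ell)^+$, fix $x\in \Gamma_u$ and write $x=I+u$ with $u\in M_m(\FF_\ell)$ nilpotent of index $\leq m$. Since $\ell>c_1(m)\geq 2m-2$ is larger than $m$ (the case $m=1$ being vacuous), every factorial appearing in the truncated $\exp$ and $\log$ series is invertible in $\FF_\ell$, so the formal identity $\exp(\log(1+t))=1+t$ combined with $u^m=0$ yields $\exp(\log x)=x$. Hence $x=\varphi_x(1)\in \boldG_\Gamma(\FF_\ell)$, and as a non-trivial unipotent element it lies in $\boldG_\Gamma(\FF_\ell)^+$, giving $\Gamma_u\subseteq \boldG_\Gamma(\FF_\ell)^+$ and therefore $\Gamma^+\subseteq \boldG_\Gamma(\FF_\ell)^+$.

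For the reverse inclusion I would first observe that for $x\in\Gamma_u$ and $n\in\FF_\ell$, since $\log x$ and $x$ lie in the commutative subring $\FF_\ell[\log x]\subseteq M_m(\FF_\ell)$ in which $\exp$ satisfies the usual addition rule on nilpotents, we have $\varphi_x(n)=\exp(n\log x)=\exp(\log x)^n=x^n$. Therefore $\varphi_x(\FF_\ell)=\langle x\rangle\subseteq \Gamma^+$, and the subgroup
\[
H:=\langle \varphi_x(\FF_\ell) : x\in\Gamma_u\rangle
\]
is already contained in $\Gamma^+$. One containment in the structural identity $\boldG_\Gamma(\FF_\ell)^+=H$ is immediate because each $\varphi_x(\FF_\ell)$ consists of unipotent elements of $\boldG_\Gamma(\FF_\ell)$; the reverse containment would give the desired $\boldG_\Gamma(\FF_\ell)^+\subseteq \Gamma^+$.

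The substantive step is therefore to show that every non-trivial unipotent element of $\boldG_\Gamma(\FF_\ell)$ belongs to $H$. My plan is to pass to the Lie algebra $\mathfrak{g}\subseteq \mathfrak{gl}_m(\FF_\ell)$ generated by $\{\log x : x\in\Gamma_u\}$: for $\ell$ sufficiently large in $m$, truncated $\exp$ and $\log$ should restrict to a bijection between the nilpotent elements of $\mathfrak{g}$ and the unipotent elements of $\boldG_\Gamma(\FF_\ell)$, reducing the task to showing $\exp(Y)\in H$ for every nilpotent $Y\in\mathfrak{g}$. This I would handle via an iterated Chevalley-type commutator identity of the shape
\[
\exp(t[X,Y]) = \prod_i \exp(a_i(t)X)\exp(b_i(t)Y)\exp(c_i(t)X)\exp(d_i(t)Y),
\]
of bounded word-length and with universal polynomial coefficients in $t$; iterating expresses the exponential of every bracket of generators as a bounded product of $\exp$'s of the individual generators $\log x$, all of which lie in $H$. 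The hard part will be making this argument uniform in $\Gamma$: one must bound the nilpotency class of $\mathfrak g$, the length and coefficients of the Chevalley words, and all denominators arising from Baker--Campbell--Hausdorff manipulations purely in terms of $m$. I would do this by invoking invariant-theoretic bounds on the complexity (dimension, degree, defining equations) of algebraic subgroups of $\GL_{m,\FF_\ell}$, which are functions of $m$ only; choosing $c_1(m)$ to exceed the resulting polynomial-in-$m$ bound guarantees that all factorials and structure constants remain invertible mod $\ell$, yielding a constant depending only on $m$.
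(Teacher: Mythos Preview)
The paper does not prove this theorem; it simply cites Nori's original article \cite{MR880952}*{Theorem~B}.  So there is no ``paper's proof'' to compare against, and the question is whether your sketch stands on its own.

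Your easy inclusion $\Gamma^+\subseteq \boldG_\Gamma(\FF_\ell)^+$ is fine, and the observation $\varphi_x(\FF_\ell)=\langle x\rangle$ is correct.  The gap is in the reverse inclusion.  Your plan hinges on the Lie algebra $\mathfrak g$ generated by $\{\log x:x\in\Gamma_u\}$ being \emph{nilpotent}, so that iterated brackets terminate and a finite BCH/Chevalley word expresses $\exp(Y)$ for every nilpotent $Y\in\mathfrak g$ as a product of the generators' exponentials.  But $\mathfrak g$ is essentially never nilpotent: already for $\Gamma=\SL_2(\FF_\ell)$ the nilpotents $\log x$ generate $\mathfrak{sl}_2$, which is simple.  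In particular, the bracket $[X,Y]$ of two nilpotent elements need not be nilpotent (take $X=E_{12}$, $Y=E_{21}$, so $[X,Y]$ is the diagonal element $H$), so the expression $\exp(t[X,Y])$ is not even a unipotent element and your proposed commutator identity has no meaning in this setting.  The phrase ``bound the nilpotency class of $\mathfrak g$'' thus has no content, and the whole BCH strategy collapses.

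Nori's actual argument is of a different nature: he shows that groups of the form $\boldG_\Gamma$ (those generated by one-parameter additive subgroups) form a restricted class whose structure over $\FF_\ell$ can be controlled uniformly in $m$, and he compares $\Gamma^+$ and $\boldG_\Gamma(\FF_\ell)^+$ via a reduction to the semisimple case together with representation-theoretic bounds, not via Lie-algebraic commutator formulas.  If you want to reconstruct a proof, you should look at Nori's structure theory for ``exponentially generated'' groups rather than attempt a BCH computation.
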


We now consider the case where $\Gamma$ is a semisimple subgroup of $\GL_{m}(\FF_\ell)$, i.e., the group $\Gamma$ acts semisimply on $\FF_\ell^{m}$ via the natural action.  The following lemma is shown during the proof of Corollary~B.4 in \cite{1008.3675}.  

\begin{lemma}  \label{L:Nori}  
Suppose that $\ell >  c_1(m)$ and that $\Gamma$ is a semisimple subgroup of $\GL_{m}(\FF_\ell)$.  Then $\Gamma^+ \subseteq \GL_{m}(\FF_\ell)$ is semisimple and $\boldG_\Gamma$ is a semisimple algebraic subgroup of $\GL_{m,\FF_\ell}$.
\end{lemma}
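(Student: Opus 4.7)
My plan is to handle the finite group $\Gamma^+$ first via Clifford's theorem and then to bootstrap to the algebraic group $\boldG_\Gamma$ using the polynomial-in-$x$ formula for $\varphi_x(t)$. First, conjugation in $\Gamma$ preserves the set $\Gamma_u$ of non-trivial unipotent elements, so $\Gamma^+$ is normal in $\Gamma$. Clifford's theorem for a normal subgroup of a finite group, which holds in arbitrary characteristic, then shows that the semisimple $\Gamma$-module $V := \FF_\ell^m$ remains semisimple on restriction to $\Gamma^+$. This gives the first assertion.

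The key observation for the algebraic-group statement is that, for each $x \in \Gamma_u$, the explicit formulas
\[
\varphi_x(t) = \sum_{i=0}^{m-1} \frac{t^i (\log x)^i}{i!}, \qquad \log x = -\sum_{i=1}^{m-1}\frac{(1-x)^i}{i}
\]
exhibit $\varphi_x(t)$ as a polynomial in $x$ with scalar coefficients. Consequently every $x$-invariant subspace of $V\otimes_{\FF_\ell}\FFbar_\ell$ is preserved by $\varphi_x(\GG_a)$, and because $\boldG_\Gamma$ is generated by the $\varphi_x(\GG_a)$ for $x \in \Gamma_u$, every $\Gamma^+$-invariant subspace of $V$ is automatically $\boldG_\Gamma$-invariant. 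Combined with the first step, this shows that $V$ is a semisimple $\boldG_\Gamma$-module.

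The group $\boldG_\Gamma$ is connected, being generated by the connected subgroups $\varphi_x(\GG_a)$. Since its action on $V$ is completely reducible and faithful, Kolchin's theorem implies that the unipotent radical $R_u(\boldG_\Gamma)$ acts trivially on $V$ and is therefore trivial, so $\boldG_\Gamma$ is reductive. To upgrade to semisimplicity, note that any character $\chi\colon \boldG_\Gamma \to \GG_m$ pulls back to the trivial homomorphism on each $\varphi_x(\GG_a)$, because $\operatorname{Hom}(\GG_a,\GG_m) = 0$; as the $\varphi_x(\GG_a)$ generate $\boldG_\Gamma$, this gives that $\boldG_\Gamma$ has no nontrivial characters. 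The abelianization of the connected reductive group $\boldG_\Gamma$ is a torus whose character group vanishes, hence is trivial, so $\boldG_\Gamma = [\boldG_\Gamma,\boldG_\Gamma]$ is semisimple.

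The main delicate point is the polynomial-in-$x$ identity for $\varphi_x(t)$ in the second paragraph; this is what transfers semisimplicity from the finite group $\Gamma^+$ to the algebraic group $\boldG_\Gamma$ and bridges the finite and algebraic-group settings. After that compatibility is in place, reductivity follows from Kolchin's theorem, and the passage from reductive to semisimple uses only that $\boldG_\Gamma$ is generated by $\GG_a$-subgroups together with standard structure theory of reductive algebraic groups.
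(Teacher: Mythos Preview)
Your proof is correct. The paper itself does not give an argument for this lemma but simply points to the proof of Corollary~B.4 in \cite{1008.3675}, so there is no in-paper proof to compare against; your self-contained argument is a valid substitute. The steps are all sound: Clifford's theorem handles the normal subgroup $\Gamma^+$; the truncated $\exp/\log$ formulas show that $\varphi_x(t)$ is a polynomial in $x$, so $\Gamma^+$-invariant subspaces and $\boldG_\Gamma$-invariant subspaces of $V$ coincide; a faithful semisimple representation forces the unipotent radical to be trivial; and $\operatorname{Hom}(\GG_a,\GG_m)=0$ kills all characters, upgrading reductive to semisimple.

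One small point you glossed over: to apply Kolchin's fixed-vector theorem to $R_u(\boldG_\Gamma)$ you implicitly pass to $\FFbar_\ell$, so you need $V\otimes_{\FF_\ell}\FFbar_\ell$ to remain semisimple as a $\Gamma^+$-module. This holds because $\FF_\ell$ is perfect: the image of $\FF_\ell[\Gamma^+]$ in $\End_{\FF_\ell}(V)$ is a semisimple $\FF_\ell$-algebra (since $V$ is faithful and semisimple over it), and semisimplicity of finite-dimensional algebras over perfect fields is preserved under base change. This is standard and does not affect the validity of your argument.
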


For a semisimple algebraic group $G$ defined over $\FF_\ell$, let $G^{\sc}\to G$ be its simply connected cover.

\begin{lemma} \label{L:finite lifting}
There is a finite collection $\{\varrho_i \colon G_i \to \GL_m\}_{i\in I}$ of $\ZZ$-representations of split simply connected Chevalley groups and a constant $c_2(m)\geq c_1(m)$ such that if $\ell > c_2(m)$ and $\Gamma \subseteq \GL_m(\FF_\ell)$ is a semisimple subgroup, then the representation
\[
(\boldG_{\Gamma}^{sc})_{\FFbar_\ell} \to (\boldG_{\Gamma})_{\FFbar_\ell} \hookrightarrow \GL_{m,\FFbar_\ell}
\]
is isomorphic to the fiber of $\varrho_i$ over $\FFbar_\ell$ for some $i\in I$.
\end{lemma}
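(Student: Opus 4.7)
The plan is first to bound the isomorphism type of $H := (\boldG_\Gamma^{\sc})_{\FFbar_\ell}$, then to bound the $m$-dimensional representation $\varrho \colon H \to \GL_{m,\FFbar_\ell}$ obtained from the inclusion $(\boldG_\Gamma)_{\FFbar_\ell} \hookrightarrow \GL_{m,\FFbar_\ell}$, and finally to realize each possibility as the base change of a $\ZZ$-representation of a split simply connected Chevalley group from a finite list.

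For the first step, I would take $\ell > c_1(m)$ so that Lemma~\ref{L:Nori} applies and $\boldG_\Gamma$ is semisimple over $\FF_\ell$. Then $H$ is a simply connected semisimple algebraic group over $\FFbar_\ell$ with $\dim H \leq \dim \boldG_\Gamma \leq m^2$ and rank at most $m$. The classification of root data (equivalently, of Dynkin diagrams of bounded total rank) yields a finite list $G_1,\ldots,G_N$ of split simply connected Chevalley group schemes over $\ZZ$, depending only on $m$, such that $H \cong (G_i)_{\FFbar_\ell}$ for some $i$.

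For the second step, I would use that once $\ell$ exceeds a constant depending on $m$, every representation of $H$ of dimension at most $m$ is semisimple; this is Serre's theorem that representations of a semisimple algebraic group in characteristic $\ell$ of dimension sufficiently small relative to $\ell$ are completely reducible. Each simple summand then has highest weight $\lambda$ whose coordinates in a basis of fundamental weights are bounded in terms of $m$, so the set of $\lambda$'s that can occur is finite and depends only on $G_i$. Enlarging the auxiliary constant to ensure $\ell$ exceeds all the relevant weight coordinates, each simple module $L(\lambda)_{\FFbar_\ell}$ coincides with the reduction modulo $\ell$ of the integral Weyl module $V(\lambda)_\ZZ$ for $G_i$. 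Enumerating the finitely many direct sums $V(\lambda_1)_\ZZ \oplus \cdots \oplus V(\lambda_k)_\ZZ$ of total rank $m$, across all $i$, produces the desired finite collection $\{\varrho_j \colon G_j \to \GL_m\}_{j\in I}$ and a constant $c_2(m) \geq c_1(m)$.

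The main obstacle is controlling modular representation theory uniformly: without assumptions on $\ell$, an $m$-dimensional representation of a simply connected semisimple group over $\FFbar_\ell$ can fail to be semisimple and the simple modules $L(\lambda)$ can have strictly smaller dimension than the Weyl modules $V(\lambda)$, so a uniform integral lift need not exist. Both pathologies disappear once $\ell$ is large compared to $m$, because the highest weights that arise are \emph{a priori} bounded; Serre's semisimplicity theorem together with the standard identification $L(\lambda) = V(\lambda)_{\FFbar_\ell}$ for restricted $\lambda$ in large enough characteristic then yields the clean picture needed to descend to $\ZZ$ and produce the finite list.
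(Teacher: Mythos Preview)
Your proposal is correct and sketches the standard argument. The paper itself does not give a proof: it simply cites Theorem~B.7 of Ellenberg--Hall--Kowalski \cite{1008.3675} and remarks that by Lemma~\ref{L:Nori} one may replace $\Gamma$ by $\Gamma^+$. Your outline---bounding the root datum of $H$ by rank considerations, then using that for $\ell$ large relative to $m$ the relevant highest weights lie in the lowest $\ell$-alcove so that $L(\lambda)=V(\lambda)_{\FFbar_\ell}$ and complete reducibility holds, and finally lifting the resulting direct sum of Weyl modules to $\ZZ$---is essentially how that cited result is established, so you have supplied the content behind the citation rather than taken a genuinely different route.
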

\begin{proof}
This is Theorem~B.7 in \cite{1008.3675}; by Lemma~\ref{L:Nori}, there is no harm in replacing $\Gamma$ by $\Gamma^+$.
\end{proof}

Finally consider a subgroup $H$ of $\GL_m(\ZZ_\ell)$ that is closed in the $\ell$-adic topology.   Let $S$ be the Zariski closure of $H$ in $\GL_{m,\QQ_\ell}$ and let $S^\circ$ be the connected component of the identity.  We define the \defi{Nori dimension} of $H$, which we denote by $\Ndim(H)$, to be the dimension of $\boldG_\Gamma$, where $\Gamma$ is the image of $H$ in $\GL_m(\FF_\ell)$.  The following is a special case of a theorem of Larsen, cf.~\cite{MR2832632}*{Theorem~7}.  

\begin{thm}[Larsen]  \label{T:Larsen exponential}
There are constants $c_3(m)$ and $c_4(m)$, depending only $m$, such that the following hold if $\ell \geq c_3(m)$.
\begin{romanenum}
\item \label{T:Larsen exponential i}
We have $\Ndim(H) \leq \dim S$.
\item \label{T:Larsen exponential ii}
If $S^\circ$ is semisimple and $\Ndim(H) = \dim S$, then $[S(\QQ_\ell)\cap \GL_m(\ZZ_\ell) : H] \leq c_4(m)$.
\end{romanenum}
\end{thm}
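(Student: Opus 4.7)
The plan is to compare the mod-$\ell$ Nori group $\boldG_\Gamma$ with the special fiber of the $\ZZ_\ell$-Zariski closure of $H$. Let $\calS\subseteq\GL_{m,\ZZ_\ell}$ denote that Zariski closure; since $H\subseteq \GL_m(\ZZ_\ell)$ the ideal cutting out $\calS$ is $\ell$-saturated, so $\calS$ is $\ZZ_\ell$-flat with $\dim \calS_{\FF_\ell}=\dim S$, and by construction $\Gamma\subseteq \calS_{\FF_\ell}(\FF_\ell)$. For part~(\ref{T:Larsen exponential i}) it suffices to show $\Lie\boldG_\Gamma \subseteq \Lie\calS^\circ_{\FF_\ell}$ inside $\mathfrak{gl}_m(\FF_\ell)$. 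Given $x\in \Gamma_u$, pick a lift $\tilde x\in H$ via the surjection $H\twoheadrightarrow\Gamma$; because $x$ has order $\ell$, the power $\tilde x^\ell$ lies in $\calS(\ZZ_\ell)\cap (1+\ell M_m(\ZZ_\ell))$, so the convergent $\ell$-adic logarithm sends it into $\ell\cdot \Lie\calS^\circ(\ZZ_\ell)$. An elementary identity (valid for $\ell>m$) identifies $\tfrac{1}{\ell}\log(\tilde x^\ell)$ modulo $\ell$ with Nori's truncated $\log x\in \mathfrak{gl}_m(\FF_\ell)$, placing every such $\log x$ in $\Lie \calS^\circ_{\FF_\ell}$. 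Since $\Lie \boldG_\Gamma$ is generated as a Lie algebra by these elements, $\dim \boldG_\Gamma \leq \dim \calS_{\FF_\ell}=\dim S$.

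For part~(\ref{T:Larsen exponential ii}), assume equality of dimensions and $S^\circ$ semisimple. The argument above sharpens to $\Lie\boldG_\Gamma=\Lie\calS^\circ_{\FF_\ell}$. For $\ell$ larger than a constant depending only on $m$, the semisimplicity of $S^\circ$ implies that $\calS^\circ$ is a smooth semisimple $\ZZ_\ell$-group scheme, so the $\ell$-adic $\exp$ induces a bijection $\ell\cdot \Lie\calS^\circ(\ZZ_\ell) \xrightarrow{\sim} \calS^\circ(\ZZ_\ell)_1$ with the first congruence subgroup, and the Baker--Campbell--Hausdorff series converges on $\ell\cdot\Lie\calS^\circ(\ZZ_\ell)$. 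Set $H_1:=H\cap \calS^\circ(\ZZ_\ell)_1$. The construction in~(\ref{T:Larsen exponential i}) shows $\log H_1$ surjects onto $\ell\cdot\Lie\calS^\circ_{\FF_\ell}$ after reducing modulo $\ell^2$; closing under bracket via BCH promotes $\log H_1$ to a $\ZZ_\ell$-Lie subalgebra, whence Nakayama forces $\log H_1 = \ell\cdot\Lie\calS^\circ(\ZZ_\ell)$, i.e.\ $H\supseteq \calS^\circ(\ZZ_\ell)_1$. The residual index $[\calS(\ZZ_\ell):H]$ is then controlled by the image $\Gamma$ of $H$ in $\calS_{\FF_\ell}(\FF_\ell)$: Nori's theorem gives $\Gamma\supseteq \calS^\circ_{\FF_\ell}(\FF_\ell)^+$, and both $\calS^\circ_{\FF_\ell}(\FF_\ell)/\calS^\circ_{\FF_\ell}(\FF_\ell)^+$ and $\calS_{\FF_\ell}/\calS^\circ_{\FF_\ell}$ are bounded in terms of $m$ via the finite list of Chevalley types in Lemma~\ref{L:finite lifting}.

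The principal difficulty is this last step: producing a bound depending only on $m$, not on $\ell$. Naively $[\calS^\circ(\ZZ_\ell):\calS^\circ(\ZZ_\ell)_1]=|\calS^\circ(\FF_\ell)|$ is polynomial in $\ell$, so merely sandwiching $H$ between $\calS^\circ(\ZZ_\ell)_1$ and $\calS(\ZZ_\ell)$ is insufficient. What saves us is that Nori guarantees the image of $H$ in $\calS^\circ(\FF_\ell)$ already contains $\calS^\circ(\FF_\ell)^+$, and the cokernel $\calS^\circ(\FF_\ell)/\calS^\circ(\FF_\ell)^+$ is governed by the fundamental group of the finitely many Chevalley types enumerated in Lemma~\ref{L:finite lifting}. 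This is precisely where the semisimplicity hypothesis on $S^\circ$ is essential: a nontrivial central torus in $S^\circ$ would contribute $\ZZ_\ell$-points not detected by unipotents in $\Gamma$, yielding an index growing with $\ell$.
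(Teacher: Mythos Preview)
The paper does not prove this theorem; it merely cites it as a special case of Larsen's \cite{MR2832632}*{Theorem~7}. Your sketch is in the spirit of Larsen's exponential–logarithm method, but there is a genuine gap in part~(\ref{T:Larsen exponential i}).

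The problematic step is the assertion that the $\ell$-adic logarithm sends $\tilde x^{\ell}\in\calS(\ZZ_\ell)\cap(1+\ell M_m(\ZZ_\ell))$ into $\ell\cdot\Lie\calS^\circ(\ZZ_\ell)$. This would follow if $\calS$ were smooth over $\ZZ_\ell$, but nothing in part~(\ref{T:Larsen exponential i}) provides that. Concretely, your computation gives $\tilde x^{\ell}\equiv 1+\ell\log x\pmod{\ell^2}$, hence a point of $\calS(\ZZ/\ell^2\ZZ)$ lying over $1\in\calS(\FF_\ell)$. But the tangent space $\Lie\calS_{\FF_\ell}$ is, by definition, computed via $\FF_\ell[\epsilon]/(\epsilon^2)$-points, and $\ZZ/\ell^2\ZZ$ and $\FF_\ell[\epsilon]/(\epsilon^2)$ are non-isomorphic thickenings of $\FF_\ell$. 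For a non-smooth $\calS$ these two first-order neighbourhoods can differ, so you have not shown $\log x\in\Lie\calS_{\FF_\ell}$, and the inclusion $\Lie\boldG_\Gamma\subseteq\Lie\calS^\circ_{\FF_\ell}$ (hence $\Ndim(H)\le\dim S$) is not established. A correct proof must either bypass Lie algebras or produce the one-parameter subgroups $\varphi_x(\GG_a)$ inside $(\calS_{\FF_\ell})_{\mathrm{red}}$ by another route.

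A related issue arises in part~(\ref{T:Larsen exponential ii}): you assert that semisimplicity of $S^\circ$ forces $\calS^\circ$ to be a smooth semisimple $\ZZ_\ell$-group scheme once $\ell$ is large. This is true, but it is itself a substantive statement about integral models of semisimple subgroups of $\GL_m$, comparable in depth to the theorem you are trying to prove; invoking it without proof is close to circular. Larsen's argument does not presuppose smoothness of the schematic closure.
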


\subsection{A theorem of Larsen and Pink}

A classic theorem of Jordan say that every finite subgroup $\Gamma$ of $\GL_m(\CC)$ has a normal abelian subgroup whose index can be bounded by some constant depending only on $m$.   Larsen and Pink have given a generalized version that holds for all finite subgroups of $\GL_m(k)$, where $k$ is an arbitrary field.  The following is  \cite{larsen-pink-finite_groups}*{Theorem~0.2} specialized to the subgroups of $\GL_{m}(\FF_\ell)$. 

\begin{thm}[Larsen-Pink]  \label{T:Larsen-Pink}
Let $\Gamma$ be a subgroup of $\GL_{m}(\FF_\ell)$.  Then there are normal subgroups $\Gamma_3\subseteq \Gamma_2\subseteq \Gamma_1$ of $\Gamma$ such that the following hold:
\begin{alphenum}
\item $[\Gamma:\Gamma_1]\leq J(m)$, where $J(m)$ is a constant depending only on $m$.
\item $\Gamma_1/\Gamma_2$ is a direct product of finite simple groups of Lie type in characteristic $\ell$.
\item $\Gamma_2/\Gamma_3$ is abelian and its order is relatively prime to $\ell$.
\item $\Gamma_3$ is an $\ell$-group.
\end{alphenum}
\end{thm}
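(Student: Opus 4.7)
The plan is to split on the size of $\ell$ relative to $m$. If $\ell$ is bounded by a function of $m$, then $|\Gamma|\leq |\GL_m(\FF_\ell)|$ is itself bounded in terms of $m$, so one may take $\Gamma_1=\Gamma_2=\Gamma_3=\{1\}$ and choose $J(m)$ large enough to absorb $|\GL_m(\FF_\ell)|$ for all such $\ell$. The substantive case is therefore $\ell\geq c(m)$ for a constant $c(m)$ large enough that Nori's theory is available.

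In this regime, I would associate an algebraic envelope to the unipotent part of $\Gamma$. Let $\Gamma^+$ be the subgroup of $\Gamma$ generated by elements of order $\ell$ and let $\boldG:=\boldG_\Gamma\subseteq \GL_{m,\FF_\ell}$ be the algebraic subgroup provided by Theorem~\ref{T:Nori}, so that $\Gamma^+=\boldG(\FF_\ell)^+$. Because $\boldG$ is constructed canonically from the set $\Gamma_u$ of order-$\ell$ elements, and $\Gamma$ preserves this set under conjugation, $\Gamma$ normalizes $\boldG$ and hence preserves every characteristic subgroup of $\boldG$. Writing $\boldU$ for the unipotent radical and $\boldR$ for the solvable radical of $\boldG$, I would set
\[
\Gamma_3:=\Gamma\cap \boldU(\FF_\ell),\quad \Gamma_2:=\Gamma\cap \boldR(\FF_\ell),\quad \Gamma_1:=\Gamma\cap \boldG(\FF_\ell);
\]
each is automatically a normal subgroup of $\Gamma$.

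Properties (b)--(d) would then follow from the standard structure theory of $\boldG$. Every element of $\boldU(\FF_\ell)$ is unipotent in characteristic $\ell$, hence has $\ell$-power order, giving (d). The natural map $\Gamma_2\to (\boldR/\boldU)(\FF_\ell)$ has kernel $\Gamma \cap \boldU(\FF_\ell)=\Gamma_3$, so $\Gamma_2/\Gamma_3$ injects into the $\FF_\ell$-points of the torus $\boldR/\boldU$, which is abelian of order prime to $\ell$; this gives (c). For (b), the quotient $\Gamma_1/\Gamma_2$ injects into $(\boldG/\boldR)(\FF_\ell)$, the $\FF_\ell$-points of a semisimple group that is itself generated by unipotent one-parameter subgroups (coming from Nori's construction). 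After enlarging $\Gamma_2$ by a central-extension factor of order bounded in $m$, one identifies $\Gamma_1/\Gamma_2$ with a direct product of finite simple groups of Lie type in characteristic $\ell$, handling (b).

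The main obstacle I anticipate is the uniform bound $[\Gamma:\Gamma_1]\leq J(m)$ in (a). Since $\Gamma$ normalizes $\boldG$ but need not be contained in $\boldG(\FF_\ell)$, the quotient $\Gamma/\Gamma_1$ embeds into the group of $\FF_\ell$-rational automorphisms of $\boldG$ modulo inner automorphisms coming from $\boldG(\FF_\ell)$, which is controlled by the outer automorphism group of the reductive quotient of $\boldG$ together with Frobenius-twist data. Lemma~\ref{L:finite lifting} guarantees that the Chevalley type of the semisimple quotient of $\boldG$ is drawn from a finite list depending only on $m$, so its outer automorphism group has size bounded in terms of $m$ alone. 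The remaining bookkeeping to package this uniform structural input together with the small-$\ell$ bound into a single constant $J(m)$ requires care, but all the necessary ingredients are available.
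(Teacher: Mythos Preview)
The paper does not prove this theorem; it simply quotes \cite{larsen-pink-finite_groups}*{Theorem~0.2} as a black box. So there is nothing to compare your argument to on the paper's side.

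As for the argument itself, the construction $\Gamma_1:=\Gamma\cap\boldG(\FF_\ell)$ does not give a subgroup of bounded index, and the claimed embedding of $\Gamma/\Gamma_1$ into an outer-automorphism group is false. Take $\Gamma$ to be any subgroup of a split maximal torus of $\GL_m(\FF_\ell)$; then $\Gamma$ has order prime to $\ell$, so $\Gamma^+=1$, $\boldG_\Gamma=1$, and your $\Gamma_1=\Gamma\cap\{1\}=1$. Thus $[\Gamma:\Gamma_1]=|\Gamma|$ can be as large as $(\ell-1)^m$, which is not bounded in terms of $m$. The error in the embedding argument is that the kernel of the conjugation map $\Gamma\to\Aut(\boldG)$ is $\Gamma\cap C_{\GL_m(\FF_\ell)}(\boldG)$, not $\Gamma\cap\boldG(\FF_\ell)$; when $\boldG$ is small its centralizer in $\GL_m$ is enormous, so many elements of $\Gamma$ act trivially on $\boldG$ without lying in $\boldG(\FF_\ell)$. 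In the toral example just given, $\Aut(\boldG)=1$ while $\Gamma/\Gamma_1=\Gamma$ is nontrivial.

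The underlying problem is that Nori's group $\boldG_\Gamma$ only sees the $\ell$-part of $\Gamma$; it carries no information about the prime-to-$\ell$ part, which is exactly what must be organized into the abelian layer $\Gamma_2/\Gamma_3$. Larsen and Pink instead build an algebraic envelope that genuinely contains (a bounded-index subgroup of) $\Gamma$, not just $\Gamma^+$, and it is this larger group whose radical and unipotent radical cut out $\Gamma_2$ and $\Gamma_3$. Your filtration by $\boldU$, $\boldR$, $\boldG$ is the right shape, but applied to the wrong algebraic group.
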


\section{Proof of Theorem~\ref{T:main new}(\ref{T:main new c}) and (\ref{T:main new d})}
\label{S:proof of c and d}

Fix an abelian variety $A$ of dimension $g\geq 1$ defined over a number field $K$.  
For each prime $\ell$, we have a representation 
\[
\rho_{A,\ell}\colon \Gal_K \to \calG_{A,\ell}(\ZZ_\ell) \subseteq \GL_{V_\ell(A)}(\QQ_\ell).
\] 
By Lemma~\ref{L:reduction to connected case}, we may assume that all the $\ell$-adic monodromy groups $G_{A,\ell}$ are connected.    In particular, $\calG_{A,\ell}^\circ=\calG_{A,\ell}$.

We can identify the special fiber of the $\ZZ_\ell$-group scheme $\GL_{T_\ell(A)}$ with the $\FF_\ell$-group scheme $\GL_{A[\ell]}$.   Denote by
\[
\bbar\rho_{A,\ell}\colon \Gal_K \to \calG_{A,\ell}(\FF_\ell)= \GL_{A[\ell]}(\FF_\ell)
\] 
the representation obtain by composing $\rho_{A,\ell}$ with reduction modulo $\ell$; equivalently, it describes the natural Galois action on $A[\ell]$.    

We fix a non-zero prime ideal $\q\subseteq \OO_K$ for which $\Phi_{A,\q}$ is a free abelian group of rank $r$, where $r$ is the common rank of the reductive groups $G_{A,\ell}^\circ$.  Now consider any prime satisfying
\[
\ell\geq c \cdot \max(\{[K:\QQ],h(A), N(\q)\})^\gamma,
\]
where $c\geq 1$ and $\gamma \geq 1$ are positive constants that depend only on $g$.    Throughout this section, we will repeatedly increase the constants $c$ and $\gamma$ to make sure that certain condition hold while always maintaining that they depend only on $g$.  In particular, the statement of all lemmas and propositions may require increasing $c$ and $\gamma$.

\subsection{Proof of Theorem~\ref{T:main new}(\ref{T:main new c})} \label{SS:new reductive proof}

We will make use of the following criterion of Wintenberger.

\begin{lemma} \cite{MR1944805}*{Th\'er\`eome~1} \label{L:Wintenberger reductive criterion}
Let $L$ be a free $\ZZ_\ell$-module of rank at most $\ell$ and define the $\QQ_\ell$-vector space $V:=L\otimes_{\ZZ_\ell} \QQ_\ell$.   Let $G$ be a reductive group over $\QQ_\ell$, $G\hookrightarrow \GL_V$ a faithful representation and $T$ a maximal torus of $G$.  Let $\calT$ and $\calG$ be the schematic closure of $T$ and $G$, respectively, in $\GL_L$.     Suppose further that $\calT$ is a torus over $\ZZ_\ell$.    Then $\calG$ is a smooth group scheme over $\ZZ_\ell$.  If also $\calG_{\FF_\ell}$ acts semisimply on $L/\ell L$, then $\calG$ is a reductive group scheme over $\ZZ_\ell$.
\end{lemma}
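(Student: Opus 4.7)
The plan is to mimic the structure theory of reductive groups integrally, using $\calT$ as an integral maximal torus of $\calG$ and lifting the root subgroups of $G$ via truncated exponentials; these make sense over $\ZZ_\ell$ precisely because $\rank_{\ZZ_\ell} L \leq \ell$.

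First I would note that $\calG$ is flat over $\ZZ_\ell$ (as the schematic closure of $G$ in the flat scheme $\GL_L$), so $\dim \calG_{\FF_\ell} = \dim G$ and $\Lie(\calG) \subseteq \End_{\ZZ_\ell}(L)$ is a free $\ZZ_\ell$-module of rank $\dim G$. Since $\calT$ is an honest torus one has $X(\calT) = X(T)$, and conjugation by $\calT$ yields an integral weight decomposition
\[
\Lie(\calG) = \Lie(\calT) \oplus \bigoplus_\alpha \Lie(\calG)_\alpha,
\]
where $\alpha$ ranges over the roots of $(G,T)$ and each weight space $\Lie(\calG)_\alpha$ is free of rank one. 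Choose a generator $X_\alpha$; it is nilpotent in $\End_{\ZZ_\ell}(L)$ with $X_\alpha^\ell = 0$ since its nilpotency index is at most $\rank_{\ZZ_\ell} L \leq \ell$. The truncated exponential
\[
\varphi_\alpha \colon \GG_{a, \ZZ_\ell} \to \GL_L, \qquad t \mapsto \sum_{i=0}^{\ell-1} \frac{t^i X_\alpha^i}{i!},
\]
is then a well-defined polynomial morphism over $\ZZ_\ell$ (the $i!$ are units for $i < \ell$), and the identity $\exp(y+z) = \exp(y)\exp(z)$ for commuting nilpotents makes it a group homomorphism. Its generic fiber is the root subgroup $U_\alpha \subseteq G$, so by flatness it factors through $\calG$, producing a smooth closed subgroup $\calU_\alpha \cong \GG_{a,\ZZ_\ell}$.

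Fixing a Borel $B \supseteq T$ with positive and negative root systems $\Phi^\pm$, I would then analyze the multiplication map
\[
m \colon \prod_{\alpha \in \Phi^-} \calU_\alpha \,\times\, \calT \,\times\, \prod_{\alpha \in \Phi^+} \calU_\alpha \longrightarrow \calG,
\]
whose source is smooth over $\ZZ_\ell$ of relative dimension $\dim G$ and whose generic fiber is the Bruhat big cell, an open immersion into $G$. The tangent map $dm$ at the identity section is an isomorphism onto $\Lie(\calG)$ over $\ZZ_\ell$ by the weight decomposition, hence remains an isomorphism modulo $\ell$; combined with smoothness of the source this forces $m$ to be étale at the identity on both fibers, exhibiting a smooth open neighborhood of the identity in $\calG$. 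Translating then proves $\calG$ is smooth over $\ZZ_\ell$.

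For the reductivity conclusion, assume further that $\calG_{\FF_\ell}$ acts semisimply on $L/\ell L$. Since $\calG$ is now smooth with connected generic fiber, the component group $\pi_0(\calG)$ is étale over $\ZZ_\ell$ with trivial generic fiber, hence trivial, so $\calG_{\FFbar_\ell}$ is connected. Its unipotent radical $R$ is normal, so by Clifford's theorem $L/\ell L \otimes_{\FF_\ell} \FFbar_\ell$ is semisimple as an $R$-module; a unipotent algebraic group acting semisimply on a vector space acts trivially, and faithfulness of the embedding $\calG \hookrightarrow \GL_L$ then forces $R = 1$. Thus $\calG$ is a reductive group scheme over $\ZZ_\ell$. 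The main obstacle I expect is justifying the étale/open-immersion property of $m$ at the identity integrally rather than merely on the generic fiber; this is where it is essential that $\calT$ be an honest torus so that the weight decomposition, and hence the integral big-cell structure, descends to $\ZZ_\ell$.
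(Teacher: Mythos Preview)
The paper does not supply its own proof of this lemma: it is quoted directly as Th\'eor\`eme~1 of Wintenberger \cite{MR1944805} and then applied as a black box in \S\ref{SS:new reductive proof}. Your sketch is essentially Wintenberger's original argument --- integral root-space decomposition under the torus $\calT$, truncated-exponential root groups (this is exactly where the hypothesis $\rank_{\ZZ_\ell} L \leq \ell$ enters), and an integral big cell giving a smooth open neighbourhood of the identity.

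One remark on the write-up: asserting at the outset that $\Lie(\calG)$ is free of rank $\dim G$ is close to assuming what you want, since for a merely flat group scheme the tangent space at the identity can jump on the special fibre. What you actually use is the lattice $\Lie(G)\cap\End_{\ZZ_\ell}(L)$, which is automatically free of rank $\dim G$ and $\calT$-stable; the weight decomposition and the construction of the $\calU_\alpha$ go through for it, and the big-cell map then shows \emph{a posteriori} that this lattice coincides with $\Lie(\calG)$. With that adjustment your outline matches Wintenberger's, and the step you flag --- upgrading the generic open immersion of the big cell to an integral one --- is precisely the technical point he works out.
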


We will apply Lemma~\ref{L:Wintenberger reductive criterion} with $L:=T_\ell(A)$, $V:=V_\ell(A)$ and $G:=G_{A,\ell} \subseteq \GL_{V_\ell(A)}$.  The group $G$ is connected by assumption and it is reductive by Proposition~\ref{P:conn facts}(\ref{P:conn facts i}).   Let $T$ be the Zariski closure in $G_{A,\ell}$ of the subgroup generated by the semisimple element $\rho_{A,\ell}(\Frob_\q)$.     
Observe that $T$ is a maximal torus of $G_{A,\ell}$ if and only if $\Phi_{A,\q}$  is a free abelian group whose rank equals the rank of $G_{A,\ell}$.    Therefore, $T$ is a maximal torus of $G_{A,\ell}$ by our choice of $\q$.

 Denote by $\calT$ the Zariski closure of $T$ in $\GL_{T_\ell(A)}$; it is a $\ZZ_\ell$-group scheme.  It is straightforward to verify that $\calT$ and $\calG:=\calG_{A,\ell}$ are also the scheme-theoretic closures of $T$ and $G_{A,\ell}$, respectively, in $\GL_{T_\ell(A)}/\ZZ_\ell$.  
 
 \begin{lemma}
The group $\calG_{\FF_\ell}$ acts semisimply on $L/\ell L =A[\ell]$. 
\end{lemma}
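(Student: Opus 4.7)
The plan is to deduce semisimplicity of the $\calG_{\FF_\ell}$-action on $A[\ell]$ from Theorem~\ref{T:MW} together with a Nori-theoretic reconstruction of the identity component of $\calG_{\FF_\ell}^{\mathrm{red}}$ as the product of the Nori envelope of $\Gamma:=\bbar\rho_{A,\ell}(\Gal_K)$ and the Zariski closure of the cyclic group generated by $\bbar\rho_{A,\ell}(\Frob_\q)$. As permitted by the setup, I will freely enlarge $c$ and $\gamma$ (keeping them dependent only on $g$) so that the hypothesis $\ell\geq c\cdot\max\{[K:\QQ],h(A),N(\q)\}^\gamma$ exceeds all numerical thresholds below.

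First, Theorem~\ref{T:MW} applied with $L=K$ makes $A[\ell]$ a semisimple $\FF_\ell[\Gal_K]$-module, so $\Gamma\subseteq\calG_{\FF_\ell}(\FF_\ell)$ acts semisimply on $A[\ell]$. By Lemma~\ref{L:Nori}, the Nori envelope $\boldG_\Gamma\subseteq\GL_{A[\ell]}$ is then a smooth connected semisimple algebraic group. Each $\varphi_x(\GG_a)$ with $x\in\Gamma_u$ is the Zariski closure of the cyclic subgroup $\langle x\rangle\subseteq\Gamma\subseteq\calG_{\FF_\ell}(\FF_\ell)$, so $\varphi_x(\GG_a)\subseteq\calG_{\FF_\ell}^{\mathrm{red}}$, and therefore $\boldG_\Gamma\subseteq\calG_{\FF_\ell}^{\mathrm{red}}$.

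By our choice of $\q$ together with the argument in Lemma~\ref{L:pre common rank}\,(\ref{L:pre common rank i}), $\rho_{A,\ell}(\Frob_\q)$ generates a maximal torus of $G_{A,\ell}$; combining this with Proposition~\ref{P:Frob group possibilities} and the bound $\ell\gg N(\q)^\gamma$, the multiplicative relations among its eigenvalues survive reduction modulo $\ell$, so the Zariski closure $\bar T\subseteq\calG_{\FF_\ell}^{\mathrm{red}}$ of $\langle\bbar\rho_{A,\ell}(\Frob_\q)\rangle$ is a smooth torus of rank $r$. The product $\boldG_\Gamma\cdot\bar T\subseteq\calG_{\FF_\ell}^{\mathrm{red}}$ is then a smooth connected reductive subgroup; once we know $\dim\boldG_\Gamma=\dim G_{A,\ell}^{\mathrm{der}}$, its dimension matches $\dim G_{A,\ell}=\dim\calG_{\FF_\ell}$ (the last equality by flatness of $\calG$ over $\ZZ_\ell$), forcing $\boldG_\Gamma\cdot\bar T=\calG_{\FF_\ell}^{\mathrm{red},\circ}$. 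A reductive group scheme over $\FF_\ell$ with a faithful representation of dimension $2g<\ell$ acts semisimply, so $\calG_{\FF_\ell}^{\mathrm{red},\circ}$ acts semisimply on $A[\ell]$; a tangent-space/dimension comparison then shows $\calG_{\FF_\ell}$ itself is smooth, and semisimplicity of the action follows for all of $\calG_{\FF_\ell}$.

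The main obstacle is the dimension identity $\dim\boldG_\Gamma=\dim G_{A,\ell}^{\mathrm{der}}$: Theorem~\ref{T:Larsen exponential}(\ref{T:Larsen exponential i}) supplies only the upper bound, and since $G_{A,\ell}^\circ$ is reductive rather than semisimple, Theorem~\ref{T:Larsen exponential}(\ref{T:Larsen exponential ii}) is not directly applicable. The matching lower bound has to come from explicitly detecting sufficiently many unipotent elements in $\Gamma$, using the openness of $\rho_{A,\ell}(\Gal_K)$ in $\calG(\ZZ_\ell)$ together with Lemma~\ref{L:finite lifting} to identify $\boldG_\Gamma$ with the expected Chevalley model attached to the semisimple part of $G_{A,\ell}$.
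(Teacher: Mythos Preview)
Your approach has a genuine gap, and it is precisely the one you flag: the equality $\dim\boldG_\Gamma=\dim G_{A,\ell}^{\mathrm{der}}$. Your proposed fix---openness of $\rho_{A,\ell}(\Gal_K)$ in $\calG_{A,\ell}(\ZZ_\ell)$ plus Lemma~\ref{L:finite lifting}---does not close it. Bogomolov's openness gives no uniform control on the reduction $\bbar\rho_{A,\ell}(\Gal_K)$: the index $[\calG_{A,\ell}(\ZZ_\ell):\rho_{A,\ell}(\Gal_K)]$ could a priori be divisible by a large power of $\ell$, so the image modulo $\ell$ could be much smaller than $\calG_{A,\ell}(\FF_\ell)$ and $\boldG_\Gamma$ could have strictly smaller dimension. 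In fact the dimension identity you need is exactly the content of Theorem~\ref{T:equal groups}, whose proof in the paper (via Lemmas~\ref{L:rank inequality}--\ref{L:same commutant} and \S\ref{SS:same rank proof}) \emph{presupposes} that $\calG_{A,\ell}$ is already known to be reductive over $\ZZ_\ell$; and that reductivity is established in \S\ref{SS:new reductive proof} using the very lemma you are trying to prove. So your route is circular.

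The paper's proof avoids all of this with a two-line commutant argument. By Theorem~\ref{T:MW}, $A[\ell]$ is a semisimple $\FF_\ell[\Gal_K]$-module and the commutant of $\bbar\rho_{A,\ell}(\Gal_K)$ in $\End_{\FF_\ell}(A[\ell])$ is $\End(A)\otimes_\ZZ\FF_\ell$. Since $\End(A)$ commutes with the $\Gal_K$-action on $T_\ell(A)$, it commutes with the Zariski closure $\calG$, hence with $\calG_{\FF_\ell}$; so the commutant of $\calG_{\FF_\ell}$ equals that of $\bbar\rho_{A,\ell}(\Gal_K)$. A subgroup containing a semisimply-acting subgroup with the same commutant acts semisimply as well (double centralizer), and the lemma follows. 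No Nori theory, no Frobenius torus, no dimension count is needed at this stage; those tools enter only later, once reductivity of $\calG_{A,\ell}$ is in hand.
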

\begin{proof}   
Theorem~\ref{T:MW} implies (after appropriately increasing $c$ and $\gamma$) that $A[\ell]$ is a semisimple $\FF_\ell[\Gal_{K}]$-module and that the commutant $R_1$ of $\bbar\rho_{A,\ell}(\Gal_{K})$ in $\End_{\FF_\ell}(A[\ell])$ is naturally isomorphic to $\End(A) \otimes_\ZZ \FF_\ell$.  Let $R_2$ be the commutant of $\calG_{\FF_\ell}$ in $\End_{\FF_\ell}(A[\ell])$.

We have $\bbar\rho_{A,\ell}(\Gal_{K}) \subseteq \calG(\FF_\ell)$, so to prove that $\calG_{\FF_\ell}$ acts semisimply on $A[\ell]$, it suffices to show that $R_2=R_1$.     We have $R_2 \subseteq R_1 =\End(A) \otimes_\ZZ \FF_\ell$ since $\bbar\rho_{A,\ell}(\Gal_{K}) \subseteq \calG(\FF_\ell)$.   To prove the other inclusion, it thus suffices to show that $\calG_{\FF_\ell}$ and $\End(A) \otimes_\ZZ \FF_\ell$ commute.   This is immediate since $\calG$ is the Zariski closure of $\rho_{A,\ell}(\Gal_{K})$ and the actions of $\Gal_{K}$ and $\End(A)$ on $T_\ell(A)$ commute.
\end{proof}

By increasing $c$, we may assume that $\ell > 2g = \rank_{\ZZ_\ell} T_\ell(A)$.   By the criterion of Lemma~\ref{L:Wintenberger reductive criterion}, it thus suffices to show that $\calT$ is a torus.

Let $F$ be a splitting field of $P_{A,\q}(x)$ over $\QQ_\ell$ and denote by $\lambda_1,\ldots,\lambda_m \in F$ the distinct roots of $P_{A,\q}(x)$.  Let $R$ be the valuation ring of the local field $F$ and denote its residue field by $\FF$.   Define $D:=\prod_{1\leq i < j \leq m} (\lambda_i-\lambda_j)^2\neq 0$; it is an integer since $P_{A,\q}(x)$ has coefficients in $\ZZ$. 
Since the roots of $P_{A,\q}(x)$ have absolute value $N(\q)^{1/2}$ under any embedding $F\hookrightarrow \CC$, we find that $|D|< c N(\q)^{\gamma}$ after increasing $c$ and $\gamma$ appropriately.   If $\q$ divides $\ell$, then $\ell \leq N(\q)$.      So by increasing $c$ and $\gamma$ appropriately, we may assume that $\q\nmid \ell$ and $D\not\equiv 0 \pmod{\ell}$.   This implies that $F/\QQ_\ell$ is an unramified  extension.

Since $\Spec R \to \Spec \ZZ_\ell$ is faithfully flat, it suffices to prove that $\calT_R$ is a torus.  To show this we will use the following lemma.

\begin{lemma} \label{L:diagonalize}
Take any matrix $B \in \GL_{2g}(R)$ that is semisimple in $\GL_{2g,F}$ and has characteristic polynomial $P_{A,\q}(x)$. Then the Zariski closure in $\GL_{2g,R}$ of the subgroup generated by $B$ is a split torus over $R$.
\end{lemma}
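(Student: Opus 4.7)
The plan is to diagonalize $B$ over $R$, using that the distinct eigenvalues of $B$ remain distinct modulo $\ell$, and then to identify the Zariski closure with the split torus attached to the quotient $\ZZ^m/N$, where $N$ records the multiplicative relations among these eigenvalues.

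First I would factor and decompose.  Since $D\not\equiv 0\pmod\ell$, the distinct roots $\lambda_1,\ldots,\lambda_m$ of $P_{A,\q}(x)$ (which lie in $R$ since they are algebraic integers of $F$) remain pairwise distinct in the residue field $\FF$, so the monic factors $(x-\lambda_i)^{n_i}\in R[x]$ of $P_{A,\q}(x)=\prod_{i=1}^m(x-\lambda_i)^{n_i}$ are pairwise coprime in $R[x]$.  Combined with $P_{A,\q}(B)=0$, a standard B\'ezout argument produces a $B$-stable decomposition $R^{2g}=\bigoplus_{i=1}^m M_i$, where $M_i=\ker(B-\lambda_i)^{n_i}$ is a free $R$-module of rank $n_i$.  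The semisimplicity of $B$ over $F$ forces $B$ to act on $M_i\otimes_R F$ as the scalar $\lambda_i$; by torsion-freeness of $M_i$, $B$ acts on $M_i$ itself as $\lambda_i$.  Adapting the basis of $R^{2g}$ to this decomposition makes $B$ diagonal over $R$.  Moreover each $\lambda_i\in R^\times$, since $\prod_i\lambda_i^{n_i}=\det B\in R^\times$ together with $v(\lambda_i)\geq 0$ forces $v(\lambda_i)=0$.

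After this change of basis (inside $\GL_{2g}(R)$), $\ang{B}$ lies in the closed split subtorus $T'\subseteq\GL_{2g,R}$ of block-scalar diagonal matrices, canonically isomorphic to $\GG_{m,R}^m$.  Hence the Zariski closure $\calT_0$ of $\ang{B}$ in $\GL_{2g,R}$ equals its closure in $T'$, and the generic fiber $(\calT_0)_F$ corresponds, under the character-lattice description of closed subgroup schemes of $T'$, to the quotient $\ZZ^m/N$ with $N=\{e\in\ZZ^m:\prod_i\lambda_i^{e_i}=1\}$.  The hypothesis that $\Phi_{A,\q}$ is free abelian of rank $r$ is precisely the statement that $\ZZ^m/N\cong\Phi_{A,\q}$ is free abelian of rank $r$, so $(\calT_0)_F\cong\GG_{m,F}^r$.

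To conclude, the split $R$-torus $\Spec R[\ZZ^r]\cong\GG_{m,R}^r$, embedded into $T'=\Spec R[\ZZ^m]$ via the surjection $\ZZ^m\twoheadrightarrow\ZZ^m/N\cong\ZZ^r$, is $R$-flat with generic fiber $(\calT_0)_F$; by flatness it is the schematic closure of $(\calT_0)_F$ in $T'$.  Therefore $\calT_0\cong\GG_{m,R}^r$ is a split torus over $R$.  The main obstacle---and the single place where the free-abelian hypothesis on $\Phi_{A,\q}$ is indispensable---is precisely this last identification: were $\ZZ^m/N$ to have any torsion, the schematic closure would be an extension of a split torus by a nontrivial finite group scheme of multiplicative type, and in particular would not itself be a torus.
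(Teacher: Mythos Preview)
Your proof is correct and follows the same two-step strategy as the paper: first diagonalize $B$ over $R$ using that the distinct eigenvalues remain distinct in the residue field, then identify the Zariski closure inside a split diagonal torus via its character lattice and the hypothesis that $\Phi_{A,\q}$ is torsion-free.  The technical execution differs in two places.  For the diagonalization, the paper works directly with the eigenspaces $L_i=\ker(B-\lambda_i)$, invokes the structure theorem over a PID to see each $L_i$ is a direct summand, and then applies Nakayama's lemma to conclude $\bigoplus_i L_i=R^{2g}$; your route through the Chinese Remainder Theorem (the comaximality of the $(x-\lambda_i)^{n_i}$ in $R[x]$, followed by semisimplicity and torsion-freeness) reaches the same conclusion more directly and avoids Nakayama.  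For the closure step, the paper embeds $B$ into $\GG_{m,R}^{2g}$ and simply asserts that the equations cutting out $\calT_F$ ``thus define $\calT$''; your version embeds into the smaller block-scalar torus $T'\cong\GG_{m,R}^m$ and spells out the flatness argument that identifies the schematic closure with $\Spec R[\ZZ^m/N]\cong\GG_{m,R}^r$, which makes the role of the torsion-freeness of $\Phi_{A,\q}$ more transparent.
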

\begin{proof}
  Take any $1\leq i \leq m$.   Let $L_i$ be the $R$-submodule of $R^{2g}$ consisting of those $v\in R^{2g}$ that satisfy $Bv=\lambda_i v$.   Since $B$ is semisimple, it acts on $L_i$ as multiplication by $\lambda_i$.       Let $d_i$ be the rank of the $R$-module $L_i$.  By the assumption of the lemma, $B$ is diagonalizable in $\GL_{2g}(F)$ and hence $F^{2g}=\oplus_{i=1}^m L_i\otimes_R F$.  In particular, we have $2g=\sum_{i=1}^m d_i$ by taking dimensions.
  
Let $\varphi\colon R^{2g} \to \FF^{2g}$ be the reduction map.    For each $1\leq i \leq m$, define the $\FF$-vector space $V_i:= \varphi(L_i)$.

We claim that $\dim_\FF V_i = d_i$.  By the structure theorem for finitely generated modules over a PID, there is a basis $e_1,\ldots, e_{2g}$ of the $R$-module $R^{2g}$ such that
\[
L_i = R\pi^{a_1} \cdot e_1 \oplus \cdots \oplus R\pi^{a_{d_i}}\cdot e_{d_i}
\]
with integers $a_j \geq 0$, where $\pi$ is a uniformizer of $R$.   Note that if $\pi^a b$ is in $L_i$ for some $b\in R^{2g}$ and $a\geq 0$, then we have $b\in L_i$.    Therefore, $a_1=\cdots = a_{d_i}=0$.  The claim follows since we find that $V_i=\varphi(L_i)$ is a vector space over $\FF$ with basis $\varphi(e_1),\ldots, \varphi(e_{d_i})$.  

Let $\bbar{B} \in \GL_{2g}(\FF)$ be the reduction of $B$.  The matrix $\bbar{B}$ acts on $V_i$ as scalar multiplication by $\bbar{\lambda}_i$, where $\bbar{\lambda}_i$ is the image of $\lambda_i$ in $\FF$ (we have $\lambda_i \in R$ since it is a root of the monic polynomial $P_{A,\q}(x)\in \ZZ[x]$).   For distinct $1\leq i < j \leq m$, we have $\bbar{\lambda}_i\neq \bbar{\lambda}_j$ since otherwise the image of $D=\prod_{1\leq i < j \leq m} (\lambda_i-\lambda_j)^2$ in $\FF$ is $0$ which contradicts that $\ell\nmid D$.   Since $\bbar{\lambda}_1,\ldots, \bbar{\lambda}_m$ are distinct eigenvalues of $\bbar{B}$ and $\sum_{i=1}^m \dim_\FF V_i = \sum_{i=1}^m d_i = 2g$, we deduce that $\oplus_{i=1}^m V_i =\FF^{2g}$.    Therefore, $\varphi(\oplus_{i=1}^m L_i) = \FF^{2g}$.   By Nakayama's lemma, we have $\oplus_{i=1}^m L_i =R^{2g}$.  Therefore, $B$ is conjugate in $\GL_2(R)$ to a diagonal matrix.   

So without loss of generality, we may assume that $B$ is a diagonal matrix.  In particular, we can view $B$ as an $R$-point of the diagonal subgroup $\GG_{m,R}^{2g}$ of $\GL_{2g,R}$.   Let $\calT$ be the Zariski closure in $\GG_{m,R}^{2g}$ of the subgroup generated by $B$.    The fiber $\calT_F$ is a torus of rank $r$ since the eigenvalues in $F^\times$ of $B$ generate a free abelian group of rank $r$ (this uses that the characteristic polynomial of $B$ is $P_{A,\q}(x)$ and our choice of $\q$).     There is thus a set of equations of the form $\prod_{i=1}^{2g} x_i^{a_{i}}=1$, with integers $a_i$, that cut out $\calT_F$ in $\GG_{m,F}^{2g}$.   These equations thus define $\calT$ and hence it is a subtorus of $\GG_{m,R}^{2g}$.
\end{proof}

We now complete the proof by verifying that $\calT_R$ is a torus.   Set $B:=\rho_{A,\ell}(\Frob_\q)$; it semisimple and has characteristic polynomial $P_{A,\q}(x)$.  From our choice of $\q$, the Zariski closure in $\GL_{V_\ell(A)}$ of the group generated by $B$ is a maximal torus of $G_{A,\ell}$.   The group $\calT_R$ is the Zariski closure in $(\GL_{T_\ell(A)})_ R=\GL_{T_\ell(A)\otimes_{\ZZ_\ell} R}$ of the group generated by $B$.   By choosing an $R$-basis of $T_\ell(A)\otimes_{\ZZ_\ell} R$, Lemma~\ref{L:diagonalize} implies that $\calT_R$ is a torus.

\subsection{The reductive group \texorpdfstring{$H_{A,\ell}$}{} of Serre}
Recall that $\calG_{A,\ell}$ is reductive by \S\ref{SS:new reductive proof}.  Let  $\calC_{A,\ell}$ be the central torus $\calG_{A,\ell}$  and denote by $C_{A,\ell} \subseteq \GL_{A[\ell]}$ the torus obtained by base changing $\calC_{A,\ell}$ to $\FF_\ell$.  Since $\calC_{A,\ell}$ commutes with $\rho_{A,\ell}(\Gal_K) \subseteq \calG_{A,\ell}(\ZZ_\ell)$, we find that $C_{A,\ell}$ commutes with $\bbar\rho_{A,\ell}(\Gal_K)$.   
Let $\calS_{A,\ell}$ be the derived subgroup of $\calG_{A,\ell}$; it is semisimple group scheme over $\ZZ_\ell$.

We now consider the algebraic subgroup $\boldG_\Gamma \subseteq \GL_{A[\ell]}$ constructed as in  \S\ref{SS:semisimple approximation} with $\Gamma:=\bbar\rho_{A,\ell}(\Gal_K)$.   Note that after possibly increasing the constants $c$ and $\gamma$, Theorem~\ref{T:MW} implies that $A[\ell]$ is a semisimple $\Gamma$-module.   

\begin{lemma} \label{L:S is semisimple}
The group $\boldG_\Gamma$ is semisimple and commutes with $C_{A,\ell}$.  
\end{lemma}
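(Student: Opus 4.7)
The plan is to prove the two assertions separately. For the first, I would verify the hypotheses of Lemma~\ref{L:Nori} applied with $m = 2g$. By possibly enlarging $c$ and $\gamma$, Theorem~\ref{T:MW} guarantees that $A[\ell]$ is a semisimple $\FF_\ell[\Gal_K]$-module, so $\Gamma = \bbar\rho_{A,\ell}(\Gal_K)$ is a semisimple subgroup of $\GL_{A[\ell]}(\FF_\ell) \cong \GL_{2g}(\FF_\ell)$; enlarging $c$ further, we may assume $\ell > c_1(2g)$. Then Lemma~\ref{L:Nori} directly yields that $\boldG_\Gamma$ is a semisimple algebraic subgroup of $\GL_{A[\ell],\,\FF_\ell}$.

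For the second assertion, I would show that $C_{A,\ell}$ centralizes each of the one-parameter subgroups $\varphi_x(\GG_a)$ with $x \in \Gamma_u$, which by construction generate $\boldG_\Gamma$. Since $C_{A,\ell}$ is the special fiber of the central torus $\calC_{A,\ell}$ of $\calG_{A,\ell}$, every point of $C_{A,\ell}$ (over any $\FF_\ell$-algebra) commutes with $\calG_{A,\ell}(\FF_\ell)$, and hence with $\Gamma \subseteq \calG_{A,\ell}(\FF_\ell)$. In particular, $C_{A,\ell}$ centralizes each unipotent element $x \in \Gamma_u$ inside $\GL_{A[\ell]}$.

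The key observation is now that the truncated series $\log x = -\sum_{i=1}^{2g-1}(1-x)^i/i$ and $\exp(t \log x) = \sum_{j=0}^{2g-1}(t\log x)^j/j!$ are polynomials in $x$ (with coefficients in $\FF_\ell[t]$), so any element of $\GL_{A[\ell]}$ that commutes with $x$ automatically commutes with $\varphi_x(t) = \exp(t\log x)$ for every $t$. Thus $C_{A,\ell}$ centralizes $\varphi_x(\GG_a)$ for each $x \in \Gamma_u$, and since $\boldG_\Gamma$ is generated by these subgroups, $C_{A,\ell}$ commutes with $\boldG_\Gamma$.

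I do not expect any real obstacle here: the semisimplicity of $\boldG_\Gamma$ is an immediate citation once Masser--W\"ustholz is invoked to put us in the semisimple setting, and the commutation claim reduces to the elementary fact that centralizers are preserved under taking polynomial expressions in a matrix, applied to the explicit polynomial formulas defining $\log$ and $\exp$ used in the construction of $\boldG_\Gamma$.
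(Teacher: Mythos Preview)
Your proposal is correct and follows essentially the same approach as the paper. The paper's proof is terser: it cites Lemma~\ref{L:Nori} for semisimplicity (the semisimplicity of $\Gamma$ on $A[\ell]$ having already been noted via Theorem~\ref{T:MW} just before the lemma) and then simply states that commutation of $\boldG_\Gamma$ with $C_{A,\ell}$ ``is an easy consequence of $\Gamma$ commuting with $C_{A,\ell}$''; your argument spells out exactly why this consequence holds, via the polynomial nature of $\varphi_x$.
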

\begin{proof}
After possibly increasing $c$, Lemma~\ref{L:Nori} implies that $\boldG_\Gamma$ is a semisimple subgroup of $\GL_{A[\ell]}$.   That $\boldG_\Gamma$ commutes with $C_{A,\ell}$ is an easy consequence of $\Gamma$ commuting with $C_{A,\ell}$.
\end{proof}

Define the algebraic subgroup 
\[
H_{A,\ell} := C_{A,\ell} \cdot \boldG_\Gamma
\] 
of $\GL_{A[\ell]}$.  The group $H_{A,\ell}$, for $\ell$ sufficiently large, was first defined by Serre \cite{MR1730973}*{136}.

The group $H_{A,\ell}$ is reductive by Lemma~\ref{L:S is semisimple}.  From the image of the representation $\rho_{A,\ell}$, we have constructed two reductive subgroups $H_{A,\ell}$ and $(\calG_{A,\ell})_{\FF_\ell}$ of $\GL_{A[\ell]}$.   We now prove that they agree; this is the main result of this section.

\begin{thm} \label{T:equal groups}
We have $H_{A,\ell}=(\calG_{A,\ell})_{\FF_\ell}$.  In particular, we have $\boldG_\Gamma=(\calS_{A,\ell})_{\FF_\ell}$.
\end{thm}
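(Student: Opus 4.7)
The proof splits into two inclusions, one straightforward and one delicate, and hinges on an equality of dimensions between $\boldG_\Gamma$ and $(\calS_{A,\ell})_{\FF_\ell}$.

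\textbf{Easy inclusion $H_{A,\ell}\subseteq(\calG_{A,\ell})_{\FF_\ell}$.} Since $C_{A,\ell}\subseteq(\calG_{A,\ell})_{\FF_\ell}$ by construction, it suffices to show $\boldG_\Gamma\subseteq(\calS_{A,\ell})_{\FF_\ell}$. For each $x\in\Gamma_u$, the element $x$ is unipotent in the reductive $\FF_\ell$-group $(\calG_{A,\ell})_{\FF_\ell}$, so (using $\ell>2g$, which makes the truncated $\log$ and $\exp$ valid) $\log(x)$ is a nilpotent element of its Lie algebra. This Lie algebra decomposes as $\Lie(C_{A,\ell})\oplus\Lie((\calS_{A,\ell})_{\FF_\ell})$, and since $C_{A,\ell}\subseteq\GL_{A[\ell]}$ is a torus, its Lie algebra is a toral subalgebra of $\End(A[\ell])$ consisting of semisimple matrices. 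Being nilpotent, $\log(x)$ must lie entirely in $\Lie((\calS_{A,\ell})_{\FF_\ell})$, whence $\varphi_x(\GG_a)\subseteq(\calS_{A,\ell})_{\FF_\ell}$. Taking the group generated by all such images gives $\boldG_\Gamma\subseteq(\calS_{A,\ell})_{\FF_\ell}$, and combining with $C_{A,\ell}$ yields the inclusion $H_{A,\ell}\subseteq(\calG_{A,\ell})_{\FF_\ell}$.

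\textbf{Reducing the reverse inclusion to a dimension count.} Both $H_{A,\ell}$ and $(\calG_{A,\ell})_{\FF_\ell}$ are connected reductive subgroups of $\GL_{A[\ell]}$ sharing the central torus $C_{A,\ell}$. Since $\boldG_\Gamma\cap C_{A,\ell}\subseteq(\calS_{A,\ell})_{\FF_\ell}\cap C_{A,\ell}$ is finite, $\dim H_{A,\ell}=\dim C_{A,\ell}+\dim\boldG_\Gamma$, and the reverse inclusion reduces to the equality $\dim\boldG_\Gamma=\dim(\calS_{A,\ell})_{\FF_\ell}$. The upper bound $\dim\boldG_\Gamma\leq\dim(\calS_{A,\ell})_{\FF_\ell}$ follows directly from the easy inclusion above (and can alternatively be read off Larsen's Theorem~\ref{T:Larsen exponential}(i), since $\dim \boldG_\Gamma = \Ndim(\rho_{A,\ell}(\Gal_K))\leq\dim G_{A,\ell}$).

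\textbf{The main obstacle: lower bound on $\dim\boldG_\Gamma$.} The crux is to produce the lower bound $\dim\boldG_\Gamma\geq\dim(\calS_{A,\ell})_{\FF_\ell}$. I would apply the Larsen--Pink theorem (Theorem~\ref{T:Larsen-Pink}) to $\Gamma\subseteq\GL_{2g}(\FF_\ell)$, obtaining normal subgroups $\Gamma\supseteq\Gamma_1\supseteq\Gamma_2\supseteq\Gamma_3$ with $[\Gamma:\Gamma_1]\leq J(2g)$, $\Gamma_1/\Gamma_2$ a product of finite simple groups of Lie type in characteristic $\ell$, $\Gamma_2/\Gamma_3$ abelian of order coprime to $\ell$, and $\Gamma_3$ an $\ell$-group. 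The entire $\ell$-part of $\Gamma/\Gamma_3$ sits in $\Gamma_1/\Gamma_2$. To make the bound effective, one invokes Masser--W\"ustholz (Theorem~\ref{T:MW}) to identify the commutant of $\Gamma$ in $\End_{\FF_\ell}(A[\ell])$ with $\End(A)\otimes\FF_\ell$, which is the same commutant as that of $(\calG_{A,\ell})_{\FF_\ell}$; this prevents $\Gamma$ from being trapped in a small reductive subgroup of $(\calG_{A,\ell})_{\FF_\ell}$ and, by the classification of finite simple groups of Lie type, forces the simple factors of $\Gamma_1/\Gamma_2$ to be $\FF_\ell$-points of simple algebraic subgroups of $(\calS_{A,\ell})_{\FF_\ell}$ whose Lie algebras span $\Lie((\calS_{A,\ell})_{\FF_\ell})$. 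Combining with Nori's theorem (Theorem~\ref{T:Nori}) which gives $\boldG_\Gamma(\FF_\ell)^+=\Gamma^+$, and the standard estimate $|\boldG_\Gamma(\FF_\ell)^+|\ll_g\ell^{\dim\boldG_\Gamma}$, we obtain the reverse inequality $\dim\boldG_\Gamma\geq\dim(\calS_{A,\ell})_{\FF_\ell}$.

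\textbf{Conclusion.} Having established $\dim\boldG_\Gamma=\dim(\calS_{A,\ell})_{\FF_\ell}$, together with $\boldG_\Gamma\subseteq(\calS_{A,\ell})_{\FF_\ell}$ from Step~1 and the connectedness of both groups, we conclude $\boldG_\Gamma=(\calS_{A,\ell})_{\FF_\ell}$. The equality $H_{A,\ell}=C_{A,\ell}\cdot\boldG_\Gamma=C_{A,\ell}\cdot(\calS_{A,\ell})_{\FF_\ell}=(\calG_{A,\ell})_{\FF_\ell}$ then follows from the reductive decomposition of $(\calG_{A,\ell})_{\FF_\ell}$. The hardest step is unquestionably the lower bound in Step~3; the technical heart is matching the Larsen--Pink composition factors of $\Gamma$ with the simple factors of $(\calS_{A,\ell})_{\FF_\ell}$ in an effective way, using Masser--W\"ustholz to control the commutant.
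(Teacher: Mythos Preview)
Your Step~3 contains a genuine gap. The claim that matching commutants ``prevents $\Gamma$ from being trapped in a small reductive subgroup'' and ``forces the simple factors of $\Gamma_1/\Gamma_2$ to span $\Lie((\calS_{A,\ell})_{\FF_\ell})$'' is false as stated: two connected reductive subgroups of $\GL_V$ can have identical commutants yet different dimensions. A standard example is $\SL_2$ inside $\SL_n$ via the $(n-1)$-th symmetric power (say over $\FFbar_\ell$ with $\ell$ large); both have scalar commutant, but the dimensions differ. So knowing that $\Gamma$, $H_{A,\ell}$, and $(\calG_{A,\ell})_{\FF_\ell}$ share the commutant $\End(A)\otimes\FF_\ell$ does not by itself produce the lower bound $\dim\boldG_\Gamma\geq\dim(\calS_{A,\ell})_{\FF_\ell}$. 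Your cardinality estimate $|\boldG_\Gamma(\FF_\ell)^+|\ll_g\ell^{\dim\boldG_\Gamma}$ only helps once you already know $|\Gamma^+|$ is large enough, and nothing in your argument establishes that.

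What the paper actually does is different: it proves separately that $H_{A,\ell}$ and $(\calG_{A,\ell})_{\FF_\ell}$ have the \emph{same rank} (Lemma~\ref{L:same rank}), and then applies Wintenberger's criterion (Lemma~\ref{L:reductive inclusion}) that same rank plus same commutant center forces equality of reductive groups. The rank equality is the technical heart you are missing. It is proved in \S\ref{SS:same rank proof} by an effective argument that genuinely uses the auxiliary prime $\q$: one bounds the ``complexity'' $\scrF(H_{A,\ell})$ in terms of $g$ (Lemma~\ref{L:new bounded formal character}), and then shows that if $\rank(H_{A,\ell})<r$ one could write down explicit nonzero integers $\beta_M$, bounded by $cN(\q)^\gamma$, that would have to be divisible by $\ell$ --- contradicting the lower bound on $\ell$. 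Your proposal never invokes $\q$, which is a sign that this ingredient is absent. The Larsen--Pink and Masser--W\"ustholz inputs you cite are indeed used in the paper, but their role is to establish Lemma~\ref{L:hard inclusion} (a bounded-index subgroup of $\Gamma$ lands in $H_{A,\ell}(\FF_\ell)$), which feeds into the commutant comparison (Lemma~\ref{L:same commutant}) rather than a direct dimension count.
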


We will use the following criterion to verify that the reductive groups $H_{A,\ell}$ and $(\calG_{A,\ell})_{\FF_\ell}$ are equal.   

\begin{lemma}  \cite{MR1944805}*{Lemma~7} \label{L:reductive inclusion}
Let $F$ be a perfect field whose characteristic is $0$ or at least $5$.   Let $G_1 \subseteq G_2$ be reductive groups defined over $F$ that have the same rank.   Suppose we have a faithful linear representation $G_2 \hookrightarrow \GL_V$, where $V$ is a finite dimension $F$-vector space, such that the centers of the commutants of $G_1$ and $G_2$ in $\End_F(V)$ are the same $F$-algebra $R$.  Suppose further that the commutative $F$-algebra $R$ is semisimple.   Then $G_1=G_2$. 
 \qed
\end{lemma}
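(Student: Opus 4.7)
The plan is to apply Wintenberger's reductive inclusion criterion (Lemma~\ref{L:reductive inclusion}) to the pair of reductive $\FF_\ell$-groups $G_1 := H_{A,\ell}$ and $G_2 := (\calG_{A,\ell})_{\FF_\ell}$ inside $\GL_{A[\ell]}$.  Reductivity of $G_1$ is Lemma~\ref{L:S is semisimple} and of $G_2$ is Theorem~\ref{T:main new}(\ref{T:main new c}) proved in \S\ref{SS:new reductive proof}, so the remaining hypotheses to verify are: (i) the inclusion $G_1\subseteq G_2$; (ii) equality of the ranks of $G_1$ and $G_2$; and (iii) that the centers of the commutants of $G_1$ and $G_2$ in $\End_{\FF_\ell}(A[\ell])$ agree and form a semisimple $\FF_\ell$-algebra.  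Once these three items are in hand, the lemma gives $H_{A,\ell} = (\calG_{A,\ell})_{\FF_\ell}$; taking derived subgroups (both sides share the central torus $C_{A,\ell}$) then yields $\boldG_\Gamma = (\calS_{A,\ell})_{\FF_\ell}$, the second assertion of the theorem.

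I would first settle (i) and (iii).  For (i), $C_{A,\ell} = (\calC_{A,\ell})_{\FF_\ell}$ is by construction contained in $(\calG_{A,\ell})_{\FF_\ell}$, and for each $x \in \Gamma_u \subseteq (\calG_{A,\ell})_{\FF_\ell}(\FF_\ell)$ the smoothness of $(\calG_{A,\ell})_{\FF_\ell}$ together with $\ell > 2g$ places $\log x$ inside $\Lie((\calG_{A,\ell})_{\FF_\ell})$, so the one-parameter subgroup $\varphi_x(\GG_a)$ factors through the smooth closed subgroup $(\calG_{A,\ell})_{\FF_\ell}$, forcing $\boldG_\Gamma \subseteq (\calG_{A,\ell})_{\FF_\ell}$.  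For (iii), Faltings' theorem (Theorem~\ref{T:Tate conjecture}(\ref{T:Tate conjecture ii})) shows that the commutant of $\calG_{A,\ell}$ in $\End_{\ZZ_\ell}(T_\ell(A))$ is $\End(A)\otimes_\ZZ\ZZ_\ell$, and the smoothness of $\calG_{A,\ell}$ makes this computation commute with reduction mod $\ell$, so the commutant of $(\calG_{A,\ell})_{\FF_\ell}$ equals $\End(A)\otimes_\ZZ\FF_\ell$, semisimple by Theorem~\ref{T:MW}.  For $H_{A,\ell}$: Theorem~\ref{T:MW} identifies the commutant of $\Gamma$ with the same algebra, Nori (Theorem~\ref{T:Nori}) gives $\boldG_\Gamma(\FF_\ell)^+ = \Gamma^+$ as a normal prime-to-$\ell$-index subgroup of $\Gamma$, and $C_{A,\ell}\subseteq H_{A,\ell}$ acts through a single weight on each $\Gamma$-irreducible constituent of $A[\ell]$; together these force the $H_{A,\ell}$-isotypic decomposition of $A[\ell]$ to agree with the $(\calG_{A,\ell})_{\FF_\ell}$-isotypic decomposition, giving the required coincidence of centers of commutants.

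The main obstacle is (ii).  Because $\boldG_\Gamma$ is connected semisimple (Lemma~\ref{L:S is semisimple}) its image in the abelian quotient $(\calG_{A,\ell})_{\FF_\ell}/(\calS_{A,\ell})_{\FF_\ell}$ is trivial, so $\boldG_\Gamma \subseteq (\calS_{A,\ell})_{\FF_\ell}$; Larsen's dimension bound (Theorem~\ref{T:Larsen exponential}(\ref{T:Larsen exponential i})) supplies $\dim\boldG_\Gamma \leq \dim G_{A,\ell}$.  To upgrade this to $\rank\boldG_\Gamma = \rank(\calS_{A,\ell})_{\FF_\ell}$ I would exhibit a maximal torus of $(\calS_{A,\ell})_{\FF_\ell}$ inside $\boldG_\Gamma$: the maximal torus $\calT \subseteq \calG_{A,\ell}$ from \S\ref{SS:new reductive proof} (Zariski closure of $\rho_{A,\ell}(\Frob_\q)$) reduces to a maximal torus $\calT_{\FF_\ell}$ of $(\calG_{A,\ell})_{\FF_\ell}$; decomposing $\calT_{\FF_\ell} = C_{A,\ell}\cdot T'$ with $T'$ a maximal torus of $(\calS_{A,\ell})_{\FF_\ell}$, I would analyze the image of $\bbar\rho_{A,\ell}(\Frob_\q)$ modulo $C_{A,\ell}$ and use Nori's exponential description together with the Larsen dimension bound to capture $T'$ (up to finite index, which is enough for ranks) inside $\boldG_\Gamma$.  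This gives $\rank H_{A,\ell} = \dim C_{A,\ell} + \rank\boldG_\Gamma = r = \rank(\calG_{A,\ell})_{\FF_\ell}$, completing the hypotheses of Lemma~\ref{L:reductive inclusion} and hence the proof.
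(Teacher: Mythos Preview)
You have not written a proof of the stated lemma at all. Lemma~\ref{L:reductive inclusion} is a general structural result about reductive groups over an arbitrary perfect field: given $G_1\subseteq G_2$ reductive of equal rank inside $\GL_V$, with the centers of their commutants in $\End_F(V)$ coinciding and semisimple, one must conclude $G_1=G_2$. There is no abelian variety, no Galois representation, no $H_{A,\ell}$ or $\calG_{A,\ell}$ anywhere in the hypotheses. What you wrote is instead a sketch of a proof of Theorem~\ref{T:equal groups}, which \emph{applies} Lemma~\ref{L:reductive inclusion} to the particular pair $H_{A,\ell}\subseteq(\calG_{A,\ell})_{\FF_\ell}$. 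In the paper itself, Lemma~\ref{L:reductive inclusion} is not proved; it is quoted from \cite{MR1944805}*{Lemma~7} and closed with a \qed. A proof of the lemma would proceed entirely within the theory of reductive groups (isotypic decompositions of $V$ under $G_1$ and $G_2$, comparison of root systems inside a common maximal torus, etc.), with no reference to the arithmetic objects of this paper.

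Even regarded as an attempt at Theorem~\ref{T:equal groups}, your sketch has real gaps. Your argument for (iii) does not match the paper's and is not complete: you need the commutant of $H_{A,\ell}$, not of $\Gamma$, and ``the $H_{A,\ell}$-isotypic decomposition agrees with the $(\calG_{A,\ell})_{\FF_\ell}$-isotypic decomposition'' is exactly the content to be proved, not something that follows from the ingredients you list. The paper instead establishes (Lemma~\ref{L:hard inclusion}) that $\bbar\rho_{A,\ell}(\Gal_L)\subseteq H_{A,\ell}(\FF_\ell)$ for a bounded-degree extension $L/K$, and then invokes Masser--W\"ustholz over $L$. Your treatment of (ii) is even more problematic: the paper does not exhibit a maximal torus of $(\calS_{A,\ell})_{\FF_\ell}$ inside $\boldG_\Gamma$; instead it bounds the ``complexity'' $\scrF(H_{A,\ell})$ uniformly in $g$ (Lemmas~\ref{L:complexity C}--\ref{L:new bounded formal character}) and shows that a strict rank drop would force $\ell$ to divide one of finitely many nonzero integers $\beta_M$ built from the Frobenius eigenvalues at $\q$ (Lemmas~\ref{L:betaM congruences}--\ref{L:betaM nonzero}). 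Your proposed route via ``capturing $T'$ inside $\boldG_\Gamma$'' has no mechanism: $\bbar\rho_{A,\ell}(\Frob_\q)$ is a semisimple element of order prime to $\ell$, so it contributes nothing to $\Gamma_u$ and hence nothing directly to $\boldG_\Gamma$ through Nori's construction.
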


We first show that one of our reductive groups is a subgroup of the other.

\begin{lemma} \label{L:rank inequality}
There is an inclusion $H_{A,\ell} \subseteq (\calG_{A,\ell})_{\FF_\ell}$.
\end{lemma}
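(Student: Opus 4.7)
The plan is to decompose $H_{A,\ell} = C_{A,\ell} \cdot \boldG_\Gamma$ and verify that each factor sits inside $(\calG_{A,\ell})_{\FF_\ell}$ separately. The first inclusion $C_{A,\ell} \subseteq (\calG_{A,\ell})_{\FF_\ell}$ is immediate from the definitions: $\calC_{A,\ell}$ is by construction a $\ZZ_\ell$-subgroup scheme of $\calG_{A,\ell}$ (its central torus), and base-changing to $\FF_\ell$ gives $C_{A,\ell} \subseteq (\calG_{A,\ell})_{\FF_\ell}$.

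For the second inclusion, since $\boldG_\Gamma$ is generated as an algebraic group by the subgroups $\varphi_x(\GG_a)$ with $x\in\Gamma_u$, it suffices to show $\varphi_x(\GG_a) \subseteq (\calG_{A,\ell})_{\FF_\ell}$ for every $x\in\Gamma_u$. Fix such an $x$. First I would observe that $x \in (\calG_{A,\ell})_{\FF_\ell}(\FF_\ell)$: this follows by reducing $\rho_{A,\ell}(\Gal_K) \subseteq \calG_{A,\ell}(\ZZ_\ell)$ modulo $\ell$ to get $\Gamma = \bbar\rho_{A,\ell}(\Gal_K) \subseteq (\calG_{A,\ell})_{\FF_\ell}(\FF_\ell)$.

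Next I would consider the closed subscheme $Z := \varphi_x^{-1}\bigl((\calG_{A,\ell})_{\FF_\ell}\bigr) \subseteq \GG_{a,\FF_\ell}$. Since $\varphi_x$ is a group homomorphism, $\varphi_x(n) = x^n \in \Gamma \subseteq (\calG_{A,\ell})_{\FF_\ell}(\FF_\ell)$ for every integer $n$, and reducing $n$ modulo $\ell$ exhausts $\FF_\ell$, so $\FF_\ell \subseteq Z(\FF_\ell)$ and $|Z(\FF_\ell)| \geq \ell$. On the other hand, if $Z \neq \GG_{a,\FF_\ell}$, then $Z$ is zero-dimensional, cut out by a single nonzero polynomial $f \in \FF_\ell[t]$. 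Because the entries of the matrix $\varphi_x(t)$ are polynomials of degree at most $2g-1$ in $t$, and the defining equations of $(\calG_{A,\ell})_{\FF_\ell} \subseteq \GL_{2g,\FF_\ell}$ may be taken of degree bounded by a constant $d(g)$ depending only on $g$, one can arrange $\deg f \leq (2g-1)\cdot d(g) = O_g(1)$. For $\ell$ sufficiently large in terms of $g$, this forces $\deg f < \ell$, contradicting $|Z(\FF_\ell)| \geq \ell$. Hence $Z = \GG_{a,\FF_\ell}$, as required.

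The hard part is justifying the uniform degree bound $d(g)$ for the defining equations of $(\calG_{A,\ell})_{\FF_\ell}$, independent of $\ell$. This should follow from uniform boundedness results for reductive subgroups of $\GL_{2g,\FF_\ell}$ (with $\ell > c(g)$), in the spirit of Lemma~\ref{L:finite lifting}. A cleaner alternative would invoke a general result in Nori's framework: for a smooth closed subgroup $H \subseteq \GL_{m,\FF_\ell}$ with $\ell > m$ and any unipotent $u \in H(\FF_\ell)$, one has $\log u \in \Lie H$, so that $\varphi_u \colon \GG_a \to \GL_m$ factors through $H$ automatically. Either route yields the desired inclusion $H_{A,\ell} \subseteq (\calG_{A,\ell})_{\FF_\ell}$ for $\ell$ large in terms of $g$.
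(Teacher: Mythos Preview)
Your decomposition $H_{A,\ell}=C_{A,\ell}\cdot\boldG_\Gamma$ and your handling of $C_{A,\ell}$ match the paper exactly. The issue is the argument for $\boldG_\Gamma$.

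Your primary route has a real gap. The uniform bound $d(g)$ on the degrees of defining equations of $(\calG_{A,\ell})_{\FF_\ell}$ is precisely the missing ingredient, and your proposed justification via Lemma~\ref{L:finite lifting} does not supply it: that lemma constrains the representation $(\boldG_\Gamma^{\sc})_{\FFbar_\ell}\hookrightarrow\GL_{2g}$, i.e.\ the group built from $\Gamma$ by Nori's construction, not the target $(\calG_{A,\ell})_{\FF_\ell}$. At this stage all you know about the latter is that it is reductive (part~(\ref{T:main new c})); its identification with $H_{A,\ell}$ is the content of Theorem~\ref{T:equal groups}, toward which this lemma is a step, so invoking that identification here would be circular. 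Without the degree bound the counting argument genuinely fails: the closed subscheme $Z\subseteq\GG_{a,\FF_\ell}$ could in principle be cut out by $t^\ell-t$, which has $\ell$ points over $\FF_\ell$ yet is not all of $\GG_a$.

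Your alternative route is essentially what the paper does, but the hypothesis ``smooth'' is not the right one and you should not assert it without proof. The paper instead uses that $(\calS_{A,\ell})_{\FF_\ell}$ is \emph{semisimple}: every order-$\ell$ element $x\in\Gamma\subseteq\calG_{A,\ell}(\FF_\ell)$ already lies in $(\calS_{A,\ell})_{\FF_\ell}(\FF_\ell)$, and then Lemma~6 of \cite{MR1944805} (valid for semisimple groups with $\ell$ large) gives $\varphi_x(\GG_a)\subseteq(\calS_{A,\ell})_{\FF_\ell}$ directly. This yields the slightly sharper inclusion $\boldG_\Gamma\subseteq(\calS_{A,\ell})_{\FF_\ell}$ with no degree bookkeeping at all.
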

\begin{proof}
The reductive groups $H_{A,\ell}$ and $(\calG_{A,\ell})_{\FF_\ell}$ both have the same central torus $C_{A,\ell}$, so it suffices to prove that $\boldG_\Gamma\subseteq (\calS_{A,\ell})_{\FF_\ell}$.   After possibly increasing $c$, we may assume that $\ell \geq 2g$.  Take any $x\in \Gamma$ of order $\ell$ and let $\varphi_x \colon \GG_a \to \GL_{A[\ell]}$ be the homomorphism $t\mapsto \exp(t\cdot \log x)$ as in \S\ref{SS:semisimple approximation}.     Using that $\calS_{A,\ell}$ is semisimple and by possibly increasing $c$, Lemma~6 of \cite{MR1944805} implies that the image of $\varphi_x$ is contained in $(\calS_{A,\ell})_{\FF_\ell}$.    Since $x$ is an arbitrary element of $\Gamma$ of order $\ell$, we deduce that $\boldG_{\Gamma}$ is contained in $(\calS_{A,\ell})_{\FF_\ell}$.    
\end{proof}

The following lemma, which we will prove in \S\ref{SS:hard inclusion}, shows that $H_{A,\ell}(\FF_\ell)$ contains a large subgroup of $\bbar\rho_{A,\ell}(\Gal_K)$.

\begin{lemma} \label{L:hard inclusion}
There is a constant $b \geq 1$, depending only on $g$, such that $\bbar\rho_{A,\ell}(\Gal_{L}) \subseteq H_{A,\ell}(\FF_\ell)$ for some extension $L/K$ with $[L:K]\leq b$.
\end{lemma}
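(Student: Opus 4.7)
The plan is to bound the index $[\Gamma : \Gamma \cap H_{A,\ell}(\FF_\ell)]$ in terms of $g$ only, where $\Gamma := \bbar\rho_{A,\ell}(\Gal_K)$. Taking $L$ to be the subfield of $\Kbar$ fixed by $\bbar\rho_{A,\ell}^{-1}(\Gamma \cap H_{A,\ell}(\FF_\ell)) \subseteq \Gal_K$, one has $[L:K] = [\Gamma : \Gamma \cap H_{A,\ell}(\FF_\ell)]$ and $\bbar\rho_{A,\ell}(\Gal_L) \subseteq H_{A,\ell}(\FF_\ell)$, so such a bound yields the lemma.

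First, apply the Larsen--Pink theorem (Theorem~\ref{T:Larsen-Pink}) to $\Gamma \subseteq \GL_{A[\ell]}(\FF_\ell)$ to obtain normal subgroups $\Gamma_3 \subseteq \Gamma_2 \subseteq \Gamma_1 \subseteq \Gamma$ with $[\Gamma:\Gamma_1]\leq J(2g)\ll_g 1$. Since that first index is already absorbed, it remains to bound $[\Gamma_1 : \Gamma_1 \cap H_{A,\ell}(\FF_\ell)]$. Enlarge $c$ so that $\ell > 2g$; every $\ell$-power-order element of $\GL_{A[\ell]}(\FF_\ell)$ then has order $1$ or $\ell$, since its unipotent part has nilpotency index at most $2g < \ell$. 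Hence $\Gamma_3 \subseteq \Gamma_u \cup \{1\} \subseteq \Gamma^+$, and Theorem~\ref{T:Nori} gives $\Gamma^+ = \boldG_\Gamma(\FF_\ell)^+ \subseteq H_{A,\ell}(\FF_\ell)$. Each simple factor of $\Gamma_1/\Gamma_2$, being of Lie type in characteristic $\ell$, is generated by its elements of order $\ell$ (unipotent root subgroups); Sylow theory applied to the preimage in $\Gamma_1$ of such a cyclic subgroup produces lifts of $\ell$-power order (hence of order $1$ or $\ell$, by the previous sentence), so these lifts lie in $\Gamma^+$. Therefore $\Gamma_1 = (\Gamma^+\cap\Gamma_1)\cdot \Gamma_2$, reducing the task to bounding $[\Gamma_2 : \Gamma_2 \cap H_{A,\ell}(\FF_\ell)]$.

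Applying Schur--Zassenhaus to the normal $\ell$-subgroup $\Gamma_3 \triangleleft \Gamma_2$, I write $\Gamma_2 = \Gamma_3 \rtimes A$ with $A$ abelian of order coprime to $\ell$; since $\Gamma_3 \subseteq H_{A,\ell}(\FF_\ell)$, what remains is the bound $[A : A \cap H_{A,\ell}(\FF_\ell)] \ll_g 1$. This step for the abelian complement $A$ is the main obstacle. The elements of $A$ are semisimple in $\calG_{A,\ell,\FF_\ell}(\FF_\ell)$ and normalize the semisimple algebraic group $\boldG_\Gamma$. Conjugation gives a homomorphism $A \to \Aut(\boldG_\Gamma)$ whose image modulo inner automorphisms has order bounded by the outer automorphism group of $\boldG_\Gamma$, which is in turn controlled by the root-system type of $\boldG_\Gamma$ (bounded by $g$ via the finiteness in Lemma~\ref{L:finite lifting}). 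After passing to a bounded-index subgroup $A_0 \subseteq A$, every $a \in A_0$ acts on $\boldG_\Gamma$ as an inner automorphism, so $a \in \boldG_\Gamma(\FFbar_\ell) \cdot \calZ(\FFbar_\ell)$ where $\calZ := Z_{\GL_{A[\ell]}}(\boldG_\Gamma)$. A Galois descent controlled by $H^1(\Gal(\FFbar_\ell/\FF_\ell), Z(\boldG_\Gamma))$, again bounded by the root-system type, brings this to an $\FF_\ell$-decomposition $a = g_a \cdot z_a$ with $g_a \in \boldG_\Gamma(\FF_\ell)$ and $z_a \in \calZ(\FF_\ell)$, modulo a further bounded-index subgroup.

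To finish, I must show that the centralizer factor $z_a$ lies in $C_{A,\ell}(\FF_\ell)$ up to bounded index; equivalently, that the identity component of $\calZ \cap \calG_{A,\ell,\FF_\ell}$ coincides with the central torus $C_{A,\ell}$ up to a finite subgroup of order bounded by $g$. This is where Theorem~\ref{T:MW} enters decisively: it identifies the commutant of $\Gamma$ in $\End(A[\ell])$ with the semisimple $\FF_\ell$-algebra $\End(A)\otimes\FF_\ell$, whose $\FF_\ell$-dimension is $\ll_g 1$. Combined with Lemma~\ref{L:rank inequality} ($\boldG_\Gamma \subseteq \calS_{A,\ell,\FF_\ell}$) and the structure theory of reductive groups (centralizers of semisimple subgroups in reductive groups are reductive, with central tori pinned down by the commutant algebra), this controls $\calZ \cap \calG_{A,\ell,\FF_\ell}$ tightly enough to conclude. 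Aggregating the bounded-index steps at each layer of the filtration yields the required constant $b$ depending only on $g$.
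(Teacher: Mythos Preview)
Your reduction to the abelian complement $A$ is fine, but the last paragraph does not close. You write $a=g_a z_a$ with $z_a$ centralizing $\boldG_\Gamma$ and then assert that the centralizer $\calZ':=Z_{(\calG_{A,\ell})_{\FF_\ell}}(\boldG_\Gamma)$ has neutral component $C_{A,\ell}$ up to bounded index. But at this point in the argument you only know $\boldG_\Gamma\subseteq(\calS_{A,\ell})_{\FF_\ell}$ (Lemma~\ref{L:rank inequality}), not equality; if $\boldG_\Gamma$ is a proper subgroup of the derived group, $\calZ'$ can be much larger than the center. The equality $\boldG_\Gamma=(\calS_{A,\ell})_{\FF_\ell}$ is precisely Theorem~\ref{T:equal groups}, whose proof \emph{uses} Lemma~\ref{L:hard inclusion}, so invoking it here is circular. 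Theorem~\ref{T:MW} pins down the commutant of $\Gamma$ (equivalently of $\Gamma_1$), not of $\boldG_\Gamma$; since $z_a$ a priori only centralizes $\Gamma^+$, MW tells you nothing about $z_a$ directly, and the phrase ``central tori pinned down by the commutant algebra'' does not bridge this gap.

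The paper's route avoids this by first observing $\Gamma_3=1$ (it is a normal $\ell$-subgroup of the semisimple $\Gamma$, hence both unipotent and semisimple on $A[\ell]$ by Clifford), so $\Gamma_2$ is itself abelian, and then proving the non-obvious fact that $\Gamma_2$ is the \emph{center} of $\Gamma_1$. The argument: if conjugation $\Gamma_1/\Gamma_2\to\Aut(\Gamma_2)$ were non-trivial, a simple Lie-type factor in characteristic $\ell$ would act faithfully on some $\FF_p$-vector space of dimension $\leq 2g$ (here $p\mid|\Gamma_2|$), and Larsen--Pink applied again would force it to be simultaneously of Lie type in characteristic $p$, impossible for $\ell>7$. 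Once $\Gamma_2$ is central in $\Gamma_1=\bbar\rho_{A,\ell}(\Gal_L)$, Theorem~\ref{T:MW} (applied over $L$) puts $\Gamma_2$ inside $(\End(A)\otimes_\ZZ\FF_\ell)^\times$, and since that ring commutes with all of $(\calG_{A,\ell})_{\FF_\ell}$ one gets $\Gamma_2\subseteq Z((\calG_{A,\ell})_{\FF_\ell})(\FF_\ell)$ directly; the index $[Z:C_{A,\ell}]$ is then bounded by the root datum. Your outer-automorphism and $H^1$ machinery is unnecessary once this centrality is in hand.
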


The following lemma, which we will prove in \S\ref{SS:same rank proof}, shows that our groups have the same rank.  

\begin{lemma} \label{L:same rank}
The reductive groups $H_{A,\ell}$ and $(\calG_{A,\ell})_{\FF_\ell}$ have the same rank.
\end{lemma}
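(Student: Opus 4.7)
The rank of $(\calG_{A,\ell})_{\FF_\ell}$ equals $r$ since $\calG_{A,\ell}$ is reductive over $\ZZ_\ell$ by part~(\ref{T:main new c}) established in \S\ref{SS:new reductive proof}, so the generic and special fibers share the same rank. Combined with Lemma~\ref{L:rank inequality}, this gives $\rank H_{A,\ell} \leq r$, and the task reduces to producing a torus of rank $r$ sitting inside $H_{A,\ell}$.

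My plan is to transfer the Frobenius-torus construction of \S\ref{SS:new reductive proof} to the bounded-degree extension furnished by Lemma~\ref{L:hard inclusion}. Let $L/K$ be the extension with $[L:K]\leq b$ depending only on $g$, and pick $\q'\mid \q$ in $\OO_L$ with residue degree $f=f(\q'/\q)\leq b$. The element $\rho_{A,\ell}(\Frob_{\q'})=\rho_{A,\ell}(\Frob_\q)^f$ then lies in $\rho_{A,\ell}(\Gal_L)$, and its eigenvalues are the $f$-th powers of the roots of $P_{A,\q}(x)$; since raising to the $f$-th power is injective on the free abelian group $\Phi_{A,\q}$, these generate a subgroup of $\CC^\times$ of rank $r$. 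The discriminant of the characteristic polynomial of $\rho_{A,\ell}(\Frob_{\q'})$ is bounded by $N(\q)^{O_g(1)}$, so after enlarging $c$ and $\gamma$ we may assume it is a unit mod $\ell$. Applying Lemma~\ref{L:diagonalize} to $\rho_{A,\ell}(\Frob_{\q'})$ then shows that the scheme-theoretic closure $\calT$ of $\langle \rho_{A,\ell}(\Frob_{\q'})\rangle$ in $\calG_{A,\ell}$, after base change to the valuation ring of a finite unramified extension of $\QQ_\ell$, is a split torus of rank $r$; hence $\calT_{\FF_\ell}$ is a torus of rank $r$ in $(\calG_{A,\ell})_{\FF_\ell}$.

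The crux of the proof is to establish $\calT_{\FF_\ell}\subseteq H_{A,\ell}$, after which $\rank H_{A,\ell}\geq r$ is immediate. My intended route is via dimension comparison: since both $H_{A,\ell}$ and $(\calG_{A,\ell})_{\FF_\ell}$ are connected reductive with common central torus $C_{A,\ell}$, and $\boldG_\Gamma\subseteq (\calS_{A,\ell})_{\FF_\ell}$ by Lemma~\ref{L:rank inequality}, it suffices to prove $\dim \boldG_\Gamma\geq \dim(\calS_{A,\ell})_{\FF_\ell}$, which upgrades this inclusion to an equality of connected semisimple groups and forces the ranks to agree. From Lemma~\ref{L:hard inclusion} and the decomposition $H_{A,\ell}=C_{A,\ell}\cdot \boldG_\Gamma$, the commutator subgroup of $\bbar\rho_{A,\ell}(\Gal_L)$ is contained in $\boldG_\Gamma(\FF_\ell)$. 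Using Theorem~\ref{T:MW} (after further increases of $c$ and $\gamma$), $A[\ell]$ is a semisimple $\Gal_L$-module whose commutant in $\End(A[\ell])$ equals $\End(A_L)\otimes\FF_\ell$, coinciding with the commutant of $(\calG_{A,\ell})_{\FF_\ell}$. The bi-commutant argument then identifies the $\FF_\ell$-algebra spanned by $\bbar\rho_{A,\ell}(\Gal_L)$ with that spanned by $(\calG_{A,\ell})_{\FF_\ell}$, providing a supply of elements of order $\ell$ inside $\bbar\rho_{A,\ell}(\Gal_L)$ to which Nori's theorem~\ref{T:Nori} applies; combined with Lemma~\ref{L:finite lifting} this upgrades the span equality to the desired dimensional lower bound on $\boldG_\Gamma$.

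The principal obstacle will be this last upgrade: passing from linear-algebraic data (equal $\FF_\ell$-spans, via Theorem~\ref{T:MW} and bi-commutance) to the genuinely algebraic-group statement $\dim\boldG_\Gamma\geq \dim(\calS_{A,\ell})_{\FF_\ell}$. One must produce enough unipotent elements in $\bbar\rho_{A,\ell}(\Gal_L)$ whose one-parameter subgroups $\varphi_x(\GG_a)$ jointly span a subspace of $\Lie(\calS_{A,\ell})_{\FF_\ell}$ of full dimension, and do so uniformly in $\ell$. Exploiting the $\ell$-adic openness of $\rho_{A,\ell}(\Gal_L)$ in $G_{A,\ell}(\QQ_\ell)$ together with Nori's reconstruction, and perhaps a careful use of Theorem~\ref{T:Larsen exponential}(\ref{T:Larsen exponential ii}) applied to a suitable projection to the derived quotient, seems the cleanest path—though one must take care to avoid any circular appeal to the index bound Theorem~\ref{T:main new}(\ref{T:main new a}) that is the paper's ultimate goal.
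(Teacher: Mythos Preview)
Your proposal contains a genuine circularity. The route you sketch---prove $\dim \boldG_\Gamma \geq \dim(\calS_{A,\ell})_{\FF_\ell}$, hence $\boldG_\Gamma = (\calS_{A,\ell})_{\FF_\ell}$, hence the ranks agree---is equivalent to proving Theorem~\ref{T:equal groups} itself, and Lemma~\ref{L:same rank} is precisely one of the three ingredients (alongside Lemmas~\ref{L:rank inequality} and~\ref{L:same commutant}) fed into Lemma~\ref{L:reductive inclusion} to obtain Theorem~\ref{T:equal groups}. You recognize this in your final paragraph, but none of the suggested escapes work: $\ell$-adic openness gives no effective bound on the index; Theorem~\ref{T:Larsen exponential}(\ref{T:Larsen exponential ii}) needs $\Ndim(H)=\dim S$ as a hypothesis, which is the very statement at issue; and the bi-commutant argument only controls the $\FF_\ell$-span in $\End(A[\ell])$, which says nothing about the dimension of the algebraic group (a maximal torus and the full group can have the same span). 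Your Frobenius-torus construction is also a dead end as stated: knowing $\bbar\rho_{A,\ell}(\Frob_{\q'})\in H_{A,\ell}(\FF_\ell)$ does not place the rank-$r$ torus $\calT_{\FF_\ell}$ inside $H_{A,\ell}$, since $\calT_{\FF_\ell}$ is the special fiber of a $\ZZ_\ell$-closure, not the Zariski closure over $\FF_\ell$ of a finite cyclic group.

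The paper's argument is entirely different and hinges on an ingredient you did not use: the complexity bound $\scrF(H_{A,\ell})\ll_g 1$ established in \S4.4 (Lemma~\ref{L:new bounded formal character}). This says that any maximal torus of $H_{A,\ell}$, conjugated into the diagonal, is cut out by monomial equations $\prod_i x_i^{m_i}=1$ with $|m_i|\leq B$ for some $B$ depending only on $g$. If $\rank H_{A,\ell}<r$, these equations cut down to rank $<r$, i.e.\ the lattice they generate has rank $>2g-r$. But the semisimple part of $\bbar\rho_{A,\ell}(\Frob_\q)$ lies (up to conjugacy) in such a torus, so its eigenvalues---the reductions mod~$\ell$ of the roots $\pi_i$ of $P_{A,\q}$---satisfy each such relation. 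In characteristic zero the $\pi_i$ satisfy no multiplicative relation outside those forced by $\Phi_{A,\q}\cong\ZZ^r$, so each extra relation $m$ produces a nonzero integer $\beta_M$ (a sum of norms of differences $\prod_{m_i>0}\pi_i^{m_i}-\prod_{m_i<0}\pi_i^{-m_i}$) with $|\beta_M|\ll_g N(\q)^{O_g(1)}$ that must vanish mod~$\ell$. Taking $\ell \geq c\,N(\q)^\gamma$ rules this out. The finiteness of the set of possible relation-sets $M$ (depending only on $g$, via the complexity bound) is what makes this uniform.
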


\begin{lemma} \label{L:same commutant}
The commutants of $H_{A,\ell}$ and $(\calG_{A,\ell})_{\FF_\ell}$ in $\End_{\FF_\ell}(A[\ell])$ agree and their common center is a semisimple $\FF_\ell$-algebra.
\end{lemma}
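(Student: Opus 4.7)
The plan is to identify both commutants with $\End(A)\otimes_\ZZ\FF_\ell$ via two applications of Theorem~\ref{T:MW}. Write $R$ for the commutant of $(\calG_{A,\ell})_{\FF_\ell}$ and $R'$ for the commutant of $H_{A,\ell}$, both taken inside $\End_{\FF_\ell}(A[\ell])$. Since $H_{A,\ell}\subseteq(\calG_{A,\ell})_{\FF_\ell}$ by Lemma~\ref{L:rank inequality}, we automatically have $R\subseteq R'$, so only the reverse inclusion requires real work.

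The identity $R=\End(A)\otimes_\ZZ\FF_\ell$ is essentially already established inside the proof in \S\ref{SS:new reductive proof} that $\calG_{\FF_\ell}$ acts semisimply on $A[\ell]$: after enlarging $c$ and $\gamma$, Theorem~\ref{T:MW} applied to $A/K$ identifies the commutant of $\bbar\rho_{A,\ell}(\Gal_K)$ with $\End(A)\otimes_\ZZ\FF_\ell$, and the latter algebra commutes with the full Zariski closure $\calG_{A,\ell}$ of $\rho_{A,\ell}(\Gal_K)$, hence with $(\calG_{A,\ell})_{\FF_\ell}$. So these two commutants coincide and equal $\End(A)\otimes_\ZZ\FF_\ell$.

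For the reverse inclusion $R'\subseteq R$, I will invoke Lemma~\ref{L:hard inclusion} to produce a finite extension $L/K$ of degree at most $b$, with $b$ depending only on $g$, such that $\bbar\rho_{A,\ell}(\Gal_L)\subseteq H_{A,\ell}(\FF_\ell)$. Every element of $R'$ then commutes with $\bbar\rho_{A,\ell}(\Gal_L)$, so $R'$ is contained in the commutant of $\bbar\rho_{A,\ell}(\Gal_L)$ in $\End_{\FF_\ell}(A[\ell])$. Applying Theorem~\ref{T:MW} now to $A_L$---using $h(A_L)=h(A)$ and $[L:\QQ]\leq b[K:\QQ]$, and absorbing the factor $b$ into $c$ and $\gamma$ while keeping them functions of $g$ alone---this latter commutant equals $\End(A_L)\otimes_\ZZ\FF_\ell$ and is semisimple. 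The key final input is that by Lemma~\ref{L:reduction to connected case} we may assume $K_A^\conn=K$, so Proposition~\ref{P:conn facts}(\ref{P:conn facts iv}) forces $\End(A_L)=\End(A_{\Kbar})=\End(A)$; hence $R'\subseteq\End(A)\otimes_\ZZ\FF_\ell=R$, completing the equality of commutants.

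Having established $R=R'=\End(A)\otimes_\ZZ\FF_\ell$, the center assertion comes for free: this common commutant is a semisimple $\FF_\ell$-algebra by Theorem~\ref{T:MW}, and the center of a finite-dimensional semisimple algebra over a field is itself semisimple, being a finite product of fields. The main care required---more a bookkeeping nuisance than a genuine conceptual obstacle---is verifying that the hypothesis on $\ell$ in terms of $[K:\QQ]$, $h(A)$ and $N(\q)$ remains sufficient for applying Theorem~\ref{T:MW} to $A_L$ once $[L:\QQ]\leq b[K:\QQ]$ is plugged in via Lemma~\ref{L:hard inclusion}; the genuinely new content is funneling a Galois image into $H_{A,\ell}$ via Lemma~\ref{L:hard inclusion}, while the rest is a repackaging of Masser--W\"ustholz.
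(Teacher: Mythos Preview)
Your proof is correct and follows essentially the same approach as the paper: both arguments sandwich the two commutants between $\End(A)\otimes_\ZZ\FF_\ell$ (via the $\Gal_K$--$\End(A)$ commutation on $T_\ell(A)$) and the commutant of $\bbar\rho_{A,\ell}(\Gal_L)$ (via Lemma~\ref{L:hard inclusion} and Theorem~\ref{T:MW} applied to $A_L$), then collapse the sandwich using $\End(A_L)=\End(A)$ from the connected-monodromy assumption and Proposition~\ref{P:conn facts}(\ref{P:conn facts iv}). The only cosmetic difference is the naming of the intermediate rings.
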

\begin{proof}
Let $L$ be the extension of $K$ from Lemma~\ref{L:hard inclusion}.   After possibly increasing $c$, Theorem~\ref{T:MW} and $[L:K]\ll_g 1$ implies that the commutant $R$ of $\bbar\rho_{A,\ell}(\Gal_L)$ in $\End(A[\ell])$ is naturally isomorphic to $\End(A_L)\otimes_\ZZ \FF_\ell$ and that $R$ is a semisimple $\FF_\ell$-algebra.   In fact, we have a natural isomorphism $R=\End(A)\otimes_\ZZ \FF_\ell$ since $\End(A)=\End(A_{\Kbar})$ by Proposition~\ref{P:conn facts} and our  assumption that $G_{A,\ell}$ is connected.  Since $R$ is semisimple, the center of $R$ is semisimple.

Let $R_1$ and $R_2$ be the commutants of $H_{A,\ell}$ and $(\calG_{A,\ell})_{\FF_\ell}$, respectively, in $\End_{\FF_\ell}(A[\ell])$.   We have inclusion $R_2 \subseteq R_1 \subseteq R$ since $\bbar\rho_{A,\ell}(\Gal_L) \subseteq  H_{A,\ell}(\FF_\ell)$ by our choice of $L$ and since $H_{A,\ell} \subseteq (\calG_{A,\ell})_{\FF_\ell}$ by Lemma~\ref{L:rank inequality}.  So it suffices to show that $R=\End(A)\otimes_\ZZ \FF_\ell$ is a subring of $R_2$. Equivalently, it suffices to show that $(\calG_{A,\ell})_{\FF_\ell}$ commutes with $\End(A)\otimes_\ZZ \FF_\ell$; this is immediate since the actions of $\Gal_K$ and $\End(A)$ on $T_\ell(A)$ commute. 
\end{proof}

\begin{proof}[Proof of Theorem~\ref{T:equal groups}]
We may assume that $\ell\geq 5$ after possibly increasing $c$.   Set $G_1:=H_{A,\ell}$ and $G_2:=(\calG_{A,\ell})_{\FF_\ell}$.   Using Lemma~\ref{L:reductive inclusion} with Lemmas~\ref{L:rank inequality}, \ref{L:same rank} and \ref{L:same commutant}, we deduce that $G_1=G_2$.
\end{proof}

\subsection{Proof of Lemma~\ref{L:hard inclusion}}  \label{SS:hard inclusion}

Let $\Gamma_3\subseteq \Gamma_2\subseteq \Gamma_1$ be the normal subgroups of $\Gamma:=\bbar\rho_{A,\ell}(\Gal_K)$ as in Theorem~\ref{T:Larsen-Pink} with $m=2g$.    By increasing $c$, we may assume that $\ell >\max\{J(2g),2g,7\}$.  

Since $\Gamma=\bbar\rho_{A,\ell}(\Gal_K)$ acts semisimply on the $\FF_\ell$-vector space $A[\ell]$ and $\Gamma_3$ is a normal subgroup of $\Gamma$, we deduce by Clifford's theorem \cite{MR632548}*{11.1} that $\Gamma_3$ also acts semisimply on $A[\ell]$.  Since $\ell>2g$ and $\Gamma_3$ is an $\ell$-group, the action of $\Gamma_3$ on $A[\ell]$ is unipotent.  Since the action of $\Gamma_3$ on $A[\ell]$ is unipotent and semisimple, we must have $\Gamma_3=1$.   In particular, $\Gamma_2$ is an abelian group with cardinality relatively prime to $\ell$.

\begin{lemma} \label{L:gen bound of 2g}
The group $\Gamma_2$ has a set of generators with cardinality at most $2g$. 
\end{lemma}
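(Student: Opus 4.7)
The plan is to use what has already been extracted about $\Gamma_2$ and then diagonalize over the algebraic closure to reduce the question to generating a finite subgroup of a $2g$-dimensional torus.

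From the discussion just preceding the statement, we have that $\Gamma_2$ is abelian, has order coprime to $\ell$, and sits inside $\GL_{A[\ell]}(\FF_\ell) \cong \GL_{2g}(\FF_\ell)$. First I would apply Maschke's theorem to the finite group $\Gamma_2$ (whose order is invertible in $\bar\FF_\ell$) to conclude that the $\bar\FF_\ell[\Gamma_2]$-module $A[\ell]\otimes_{\FF_\ell} \bar\FF_\ell$ is semisimple. Since $\Gamma_2$ is abelian and $\bar\FF_\ell$ is algebraically closed, every simple $\bar\FF_\ell[\Gamma_2]$-module is one-dimensional (Schur's lemma), so $\Gamma_2$ is simultaneously diagonalizable over $\bar\FF_\ell$. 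In other words, after conjugating inside $\GL_{2g}(\bar\FF_\ell)$, we may assume $\Gamma_2 \subseteq (\bar\FF_\ell^\times)^{2g}$.

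Next I would bound the number of generators of $\Gamma_2$ using this embedding. For every prime $p$, the $p$-primary part of the multiplicative group $\bar\FF_\ell^\times$ is locally cyclic (it is either $\mathbf{Q}_p/\ZZ_p$ for $p\neq \ell$ or trivial for $p=\ell$), so the $p$-primary part of any finite subgroup of $\bar\FF_\ell^\times$ is cyclic. Consequently, the $p$-primary part of any finite subgroup of $(\bar\FF_\ell^\times)^{2g}$ is a product of at most $2g$ cyclic groups, i.e., has $p$-rank at most $2g$. In particular $\Gamma_2$ has $p$-rank $\leq 2g$ for every prime $p$, and since the minimal number of generators of a finite abelian group equals the maximum of its $p$-ranks, $\Gamma_2$ can be generated by at most $2g$ elements.

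There is no real obstacle here; the only thing to be careful about is ensuring that semisimplicity of $\Gamma_2$ acting on $A[\ell]$ (which in fact also follows from Clifford's theorem applied to the semisimple $\FF_\ell[\Gamma]$-module $A[\ell]$, exactly as was used to force $\Gamma_3=1$) persists after extending scalars to $\bar\FF_\ell$ so that Schur's lemma produces one-dimensional simple factors. Maschke settles this cleanly since $\gcd(|\Gamma_2|,\ell)=1$, and the rest is a structural fact about finite subgroups of tori.
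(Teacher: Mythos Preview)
Your proposal is correct and follows essentially the same approach as the paper: use that $\Gamma_2$ is abelian of order prime to $\ell$ to simultaneously diagonalize it, embed it into a $2g$-fold product of multiplicative groups, and then invoke the structure theorem for finite abelian groups. The only cosmetic difference is that the paper diagonalizes over a finite splitting field $\FF$ of $\FF_\ell$, so that $\FF^\times$ is already cyclic and $(\FF^\times)^{2g}$ is visibly generated by $2g$ elements, whereas you pass to $\bar\FF_\ell$ and use local cyclicity of the $p$-primary parts of $\bar\FF_\ell^\times$; both routes yield the same bound with the same underlying idea.
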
  
\begin{proof}
Since $\Gamma_2$ is abelian and has order relatively prime to $\ell$, it must lie in a maximal torus $\T$ of $\GL_{A[\ell]} \cong \GL_{2g,\FF_\ell}$.  Let $\FF$ be a finite extension of $\FF_\ell$ over which $\T$ is split.  So $\Gamma_2$ is a subgroup of $\T(\FF) \cong (\FF^\times)^{2g}$.    Since $\FF^\times$ is cyclic, $\Gamma_2$ lies in a finite abelian group generated by $2g$ elements.  The lemma is now easy from the structure theorem for finite abelian groups.
\end{proof}

\begin{lemma}  \label{L:center of Gamma1}
The center of $\Gamma_1$ is $\Gamma_2$.
\end{lemma}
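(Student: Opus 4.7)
My plan is to prove each inclusion in $\Gamma_2 = Z(\Gamma_1)$ separately, the forward one by a direct structural argument and the reverse by ruling out cross-characteristic embeddings.

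For $Z(\Gamma_1) \subseteq \Gamma_2$: the quotient $\Gamma_1/\Gamma_2$ is a direct product of nonabelian finite simple groups of Lie type in characteristic $\ell$ by Theorem~\ref{T:Larsen-Pink}, hence is centerless. The image of $Z(\Gamma_1)$ in the quotient is central and therefore trivial, giving the inclusion.

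For $\Gamma_2 \subseteq Z(\Gamma_1)$, I consider the conjugation action $\phi\colon \Gamma_1 \to \Aut(\Gamma_2)$; since $\Gamma_2$ is abelian, $\phi$ factors through $\Gamma_1/\Gamma_2 = S_1 \times \cdots \times S_m$. By simplicity of each $S_j$, the restriction $\phi|_{S_j}$ is either trivial or injective, and I want only the trivial case to occur. Lemma~\ref{L:gen bound of 2g} gives that $\Gamma_2$ has at most $2g$ generators with exponent $e$ coprime to $\ell$, so $\Aut(\Gamma_2) \hookrightarrow \GL_{2g}(\ZZ/e\ZZ)$. By the Chinese Remainder Theorem combined with reduction modulo a prime $p\mid e$ (whose kernel is a $p$-group and therefore must intersect the simple $S_j$ trivially since $p\neq \ell$), a faithful $\phi|_{S_j}$ would yield an embedding $S_j \hookrightarrow \GL_{2g}(\FF_p)$ with $p\neq \ell$---a cross-characteristic embedding of a finite simple group of Lie type in characteristic $\ell$. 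By the Landazuri--Seitz theorem, the minimal dimension of such a faithful representation tends to infinity with $\ell$; after increasing the constants $c$ and $\gamma$ (depending only on $g$) so that this minimal dimension exceeds $2g$ for every type of finite simple group of Lie type in characteristic $\ell$, the embedding is impossible. Hence $\phi|_{S_j}$ is trivial for every $j$, $\phi$ itself is trivial, and $\Gamma_2 \subseteq Z(\Gamma_1)$.

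The main obstacle is that $|\Aut(\Gamma_2)|$ can genuinely have nontrivial $\ell$-part (for instance when some prime $p\mid |\Gamma_2|$ satisfies $\ord_\ell(p) \leq 2g$), so a naive order comparison of the $\ell$-parts of $|S_j|$ and $|\Aut(\Gamma_2)|$ does not suffice; the cross-characteristic representation bound is the essential input that absorbs these arithmetic coincidences into the asymptotic-in-$\ell$ hypothesis.
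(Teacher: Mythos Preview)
Your proof is correct in outline and largely parallels the paper's argument. Both show $Z(\Gamma_1)\subseteq\Gamma_2$ via the centerless quotient, then rule out a nontrivial conjugation action $\Gamma_1/\Gamma_2\to\Aut(\Gamma_2)$ by producing, from a simple factor $S$ acting nontrivially, an embedding $S\hookrightarrow\GL_{2g}(\FF_p)$ for some prime $p\neq\ell$. One step is phrased imprecisely: the claimed embedding $\Aut(\Gamma_2)\hookrightarrow\GL_{2g}(\ZZ/e\ZZ)$ is not obvious and likely false as stated, since for a non-free $\ZZ/e\ZZ$-module the automorphism group is only a \emph{quotient} of a subring of $M_{2g}(\ZZ/e\ZZ)$, not a subgroup of its units. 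Your argument does not actually need this: applying CRT directly to $\Gamma_2=\prod_p\Gamma_2(p)$ gives $\Aut(\Gamma_2)=\prod_p\Aut(\Gamma_2(p))$, and then the reduction $\Aut(\Gamma_2(p))\to\Aut(\Gamma_2(p)/p\Gamma_2(p))\subseteq\GL_{2g}(\FF_p)$ has $p$-group kernel, which is what you use anyway. The paper reaches the same embedding by an explicit filtration argument on the $p$-Sylow, finding a layer $\calW/p\calW$ on which $S$ acts nontrivially.

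The genuine difference is in how the contradiction is extracted from $S\hookrightarrow\GL_{2g}(\FF_p)$. You invoke the Landazuri--Seitz lower bounds on cross-characteristic representations, which force the dimension to exceed $2g$ once $\ell\gg_g 1$. The paper instead reapplies the Larsen--Pink theorem (Theorem~\ref{T:Larsen-Pink}) with the prime $p$ in place of $\ell$: since $\ell>J(2g)$ divides $|S|$, that theorem forces $S$ to be of Lie type in characteristic $p$ as well, and one then appeals to the classification fact that for $\ell>7$ no finite simple group is of Lie type in two distinct characteristics. The paper's route is more self-contained (Larsen--Pink is already in the toolkit and the exceptional-isomorphism check is elementary), while your route imports an external but standard bound; both require enlarging the constants $c,\gamma$ by an amount depending only on $g$.
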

\begin{proof}
Since $\Gamma_1/\Gamma_2$ is a product of non-abelian simple groups, we find that the cardinality of the center of $\Gamma_1$ divides $|\Gamma_2|$.  It thus suffices to show that $\Gamma_2$ lies in the center of $\Gamma_1$; equivalently, that the homomorphism $\varphi\colon \Gamma_1/\Gamma_2 \to \Aut(\Gamma_2)$ arising from conjugation is trivial.  

Suppose that $\varphi\neq 1$.     Then for some prime $p$ dividing $|\Gamma_2|$, there is a finite simple group $S$ of Lie type in characteristic $\ell$ that acts non-trivially on the $p$-Sylow subgroup $W$ of $\Gamma_2$.    We view $W$ as an additive group.   Define $\calW:= p^i W$, where $i\geq 0$ is the largest integer such that $S$ acts nontrivially on $\calW$ and trivially on $p\calW$.

Suppose that $S$ acts trivially on the quotient group $\calW/p\calW$.    Take any $w\in \calW$.    We thus have a well-defined map $\xi_w\colon S \to p\calW$, $h \mapsto h(w)-w$.   Using that $S$ acts trivially on $p\calW$, we find that 
\[
\xi_w(h_1 h_2) = h_1h_2(w) - w= h_1(w+\xi_w(h_2)) - w= h_1(w)-w + \xi_w(h_2)= \xi_w(h_1) + \xi_w(h_2)
\]
for all $h_1,h_2\in S$.   Therefore, $\xi_w\colon S \to p\calW$ is a homomorphism.    Since the group $S$ is nonabelian and simple and $p\calW$ is abelian, we must have $\xi_w = 0$.   Since $w\in \calW$ was arbitrary, we deduce that $S$ acts trivially on $\calW$ which contradicts our choice of $\calW$.     

Therefore, $S$ acts non-trivially on the $\FF_p$-vector space $\calW/p \calW$.  From Lemma~\ref{L:gen bound of 2g}, the dimension of $\calW/p\calW$ as an $\FF_p$-vector space is at most $2g$.   Since $S$ is simple, we deduce that $S$ is isomorphic to a subgroup of $\GL_{2g}(\FF_p)$.  
   
Since $\ell$ divides $|S|$ and $\ell > J(2g)$, Theorem~\ref{T:Larsen-Pink} (with $m=2g$ and $\ell$ replaced by $p$) implies that $S$ is a finite simple group of Lie type in characteristic $p$.    However, there is no finite simple group $S$ that is of Lie type in two distinct characteristics $p$ and $\ell>7$.   (We need $\ell>7$ to avoid the exceptional isomorphism $\PSL_2(\FF_7)\cong \PSL_3(\FF_2)$.)   This contradiction ensures that $\varphi=1$.
\end{proof}

Let $\psi\colon \Gal_{K} \to \bbar\rho_{A,\ell}(\Gal_K)/\Gamma_1$ be the homomorphism obtained by composing $\bar\rho_{A,\ell}$ with the obvious quotient map.   We define $L$ to be the fixed field in $\Kbar$ of $\ker(\psi)$; we have $\bbar\rho_{A,\ell}(\Gal_L)=\Gamma_1$.  The field $L$ is a Galois extension of $K$ which satisfies $[L:K]\leq J(2g)$.  Since $\ell >J(2g)$, we have $\bbar\rho_{A,\ell}(\Gal_L)^+=\Gamma^+$. 

\begin{lemma} \label{L:Gamma gen}
The group $\Gamma_1$ is generated by $\Gamma_2$ and $\Gamma^+$.
\end{lemma}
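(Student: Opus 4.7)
The plan is to establish two things: first that $\Gamma^+\subseteq \Gamma_1$, and second that $\Gamma^+$ surjects onto the quotient $Q:=\Gamma_1/\Gamma_2$. Together these imply $\Gamma_1=\Gamma_2\cdot \Gamma^+$, which is the lemma.

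The first containment is free: by Theorem~\ref{T:Larsen-Pink}(a) we have $[\Gamma:\Gamma_1]\le J(2g)$, and we have already arranged $\ell>J(2g)$, so any element of $\Gamma$ of order $\ell$ has trivial image in $\Gamma/\Gamma_1$ and hence lies in $\Gamma_1$. (This is consistent with the identity $\bbar\rho_{A,\ell}(\Gal_L)^+=\Gamma^+$ recorded just before the lemma.) For the second claim, I would exploit that $Q$ is a direct product of finite simple groups of Lie type in characteristic $\ell$: a standard structural fact for such groups is that they are generated by their root subgroups, each of which is an $\FF_\ell$-vector group whose nontrivial elements have order $\ell$. Hence $Q$ is generated by its elements of order $\ell$, and it suffices to show that every such element admits a lift to $\Gamma_1$ of order exactly $\ell$; any such lift automatically lies in $\Gamma^+$ by the first step.

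For the lifting, take $\bar{x}\in Q$ of order $\ell$ and any lift $y\in \Gamma_1$. Write the order of $y$ in $\Gamma_1$ as $\ell^a b$ with $\gcd(\ell,b)=1$, and set $z:=y^b$, so $z$ has order $\ell^a$ and projects to $\bar{x}^b\in Q$, which is still of order $\ell$. Then $z^\ell$ lies in the kernel $\Gamma_2$ of $\Gamma_1\to Q$; but $z^\ell$ has order a power of $\ell$ while $|\Gamma_2|$ is coprime to $\ell$, forcing $z^\ell=1$. Thus $z$ has order exactly $\ell$, and choosing $c$ with $bc\equiv 1\pmod \ell$, the element $z^c$ is an order-$\ell$ lift of $\bar{x}$ itself. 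This puts $\bar{x}$ in the image of $\Gamma^+\to Q$, and ranging over all $\bar{x}$ of order $\ell$ gives surjectivity, completing the proof.

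There is no real obstacle here; the only nontrivial ingredient is the generation of simple groups of Lie type in characteristic $\ell$ by their order-$\ell$ elements, and the lifting step is handled cleanly using coprimality of $|\Gamma_2|$ with $\ell$ (which was established before the lemma by combining $\Gamma_3=1$ with the Larsen--Pink statement for $\Gamma_2/\Gamma_3$).
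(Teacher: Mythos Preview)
Your proof is correct and follows essentially the same approach as the paper: show $\Gamma^+\subseteq\Gamma_1$, then use that $\Gamma_1/\Gamma_2$ is generated by elements of order $\ell$ together with $\ell\nmid|\Gamma_2|$ to deduce surjectivity of $\Gamma^+\to\Gamma_1/\Gamma_2$. The paper compresses the lifting step into the single phrase ``since $\ell\nmid|\Gamma_2|$,'' whereas you spell out explicitly how to produce an order-$\ell$ lift of a given order-$\ell$ element of the quotient; this is a matter of detail rather than a difference in strategy.
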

\begin{proof}
The group $\Gamma^+=\bbar\rho_{A,\ell}(\Gal_L)^+$ is a contained in $\Gamma_1=\bbar\rho_{A,\ell}(\Gal_L)$.   The homomorphism $\Gamma^+ \to \Gamma_1/\Gamma_2$ is surjective since $\ell\nmid|\Gamma_2|$ and since any finite simple group of Lie type in characteristic $\ell$ is generated by its elements of order $\ell$.   Therefore, $\Gamma_1$ is generated by $\Gamma^+$ and $\Gamma_2$.   
\end{proof}

Let $Z$ be the center of $(\calG_{A,\ell})_{\FF_\ell}$; we have $Z^\circ=C_{A,\ell}$.    

\begin{lemma} \label{L:in Z}
The group $\Gamma_2$ is contained in $Z(\FF_\ell)$.
\end{lemma}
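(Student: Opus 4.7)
The plan is to locate $\Gamma_2$ simultaneously inside the commutant of $\bbar\rho_{A,\ell}(\Gal_L)$ (via Theorem~\ref{T:MW}) and inside $(\calG_{A,\ell})_{\FF_\ell}(\FF_\ell)$, and then observe that these two conditions force $\Gamma_2$ into the scheme-theoretic center of $(\calG_{A,\ell})_{\FF_\ell}$.

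First I would invoke Lemma~\ref{L:center of Gamma1}, which says $\Gamma_2$ is the center of $\Gamma_1 = \bbar\rho_{A,\ell}(\Gal_L)$; hence $\Gamma_2 \subseteq \End_{\FF_\ell[\Gal_L]}(A[\ell])^\times$. Because $[L:K] \leq J(2g)$ we have $[L:\QQ] \ll_g [K:\QQ]$, and $h(A_L) = h(A)$, so after enlarging $c$ and $\gamma$ (still depending only on $g$) Theorem~\ref{T:MW} may be applied with $K$ replaced by $L$, identifying $\End_{\FF_\ell[\Gal_L]}(A[\ell])$ with $\End(A_L) \otimes_\ZZ \FF_\ell$. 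The standing hypothesis $K_A^\conn = K$ combined with Proposition~\ref{P:conn facts}(\ref{P:conn facts iv}) gives $\End(A_L) = \End(A_{\Kbar}) = \End(A)$, so $\Gamma_2 \subseteq (\End(A) \otimes_\ZZ \FF_\ell)^\times$ as a subset of $\End_{\FF_\ell}(A[\ell])$.

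For the second ingredient I would use that $\rho_{A,\ell}(\Gal_K)$ commutes elementwise with $\End(A) \otimes_\ZZ \ZZ_\ell$ in $\End(T_\ell(A))$; since commutation is a closed condition this passes to Zariski closures, so $\calG_{A,\ell}$ and $\End(A)\otimes_\ZZ \ZZ_\ell$ commute scheme-theoretically, and base-changing to $\FF_\ell$ the same holds for $(\calG_{A,\ell})_{\FF_\ell}$ and $\End(A)\otimes_\ZZ \FF_\ell$ inside $\End_{\FF_\ell}(A[\ell])$. Since $\Gamma_2 \subseteq \bbar\rho_{A,\ell}(\Gal_K) \subseteq (\calG_{A,\ell})_{\FF_\ell}(\FF_\ell)$ and simultaneously $\Gamma_2 \subseteq (\End(A)\otimes_\ZZ\FF_\ell)^\times$, every element of $\Gamma_2$ is an $\FF_\ell$-point of $(\calG_{A,\ell})_{\FF_\ell}$ that commutes with the whole group scheme, and therefore lies in $Z(\FF_\ell)$.

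The only technical point I would need to monitor is the effective application of Theorem~\ref{T:MW} over $L$ rather than $K$, but this is harmless because $[L:K] \leq J(2g)$ depends only on $g$, so the original bound on $\ell$ still suffices after absorbing the loss into $c$ and $\gamma$. No further group-theoretic input beyond what has already been assembled is required.
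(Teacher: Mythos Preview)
Your proof is correct and follows essentially the same route as the paper's: use Lemma~\ref{L:center of Gamma1} to place $\Gamma_2$ in the commutant of $\Gamma_1=\bbar\rho_{A,\ell}(\Gal_L)$, identify that commutant with $\End(A)\otimes_\ZZ\FF_\ell$ via Theorem~\ref{T:MW}, and then observe that this ring commutes with $(\calG_{A,\ell})_{\FF_\ell}$ while $\Gamma_2$ sits inside $\calG_{A,\ell}(\FF_\ell)$. Your write-up is slightly more explicit about why the commutation passes to the Zariski closure and to the special fiber, and about invoking $K_A^\conn=K$ to get $\End(A_L)=\End(A)$, but the argument is the same.
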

\begin{proof}
After possibly increasing $c$ and $\gamma$, Theorem~\ref{T:MW} implies that the centralizer of $\Gamma_1=\bbar\rho_{A,\ell}(\Gal_L)$ in $\End_{\FF_\ell}(A[\ell])$ is $\End(A_L)\otimes_\ZZ \FF_\ell = \End(A)\otimes_\ZZ \FF_\ell$ (recall that $[L:k]\leq J(2g)$).   The group $\Gamma_2$ is contained in the center of $\Gamma_1$ by Lemma~\ref{L:center of Gamma1}, so we can identify $\Gamma_2$ with a subgroup of $(\End(A)\otimes_\ZZ \FF_\ell)^\times$.    The group $(\End(A)\otimes_\ZZ \FF_\ell)^\times$, and hence also $\Gamma_2$, commutes with $(\calG_{A,\ell})_{\FF_\ell}$.  Since $\Gamma_2 \subseteq \bbar\rho_{A,\ell}(\Gal_k) \subseteq \calG_{A,\ell}(\FF_\ell)$, we deduce that $\Gamma_2$ lies in the center of $(\calG_{A,\ell})_{\FF_\ell}$.
\end{proof}

Let $\widetilde{H}$ be the algebraic subgroup of $\GL_{A[\ell]}$ generated by $Z$ and $\boldG_\Gamma$; note that $Z$ and $\boldG_\Gamma$ commute since $\bbar\rho_{A,\ell}(\Gal_k)$ commutes with $Z$.   Observe that the neutral component of $\widetilde{H}$ is precisely our reductive group $H_{A,\ell}$. By Lemma~\ref{L:in Z},  we have $\Gamma_2 \subseteq \widetilde{H}(\FF_\ell)$.    Since $\boldG_\Gamma(\FF_\ell)^+=\Gamma^+$ by Theorem~\ref{T:Nori}, after possibly increasing $c$, we find that $\Gamma^+$ is contained $\widetilde{H}(\FF_\ell)$.   Therefore, $\Gamma_1$ is a subgroup of $\widetilde{H}(\FF_\ell)$ by Lemma~\ref{L:Gamma gen}.

\begin{lemma} \label{L:center bound}
The index of $H_{A,\ell}$ in $\widetilde{H}$ can be bounded by a constant depending only on $g$.
\end{lemma}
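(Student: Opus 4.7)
The plan is to recognize $H_{A,\ell}$ as the identity component of $\widetilde{H}$, so that $[\widetilde{H}:H_{A,\ell}]$ becomes the order of the component group of $\widetilde{H}$, and then to bound this quantity by $|Z/Z^\circ|$, which in turn is controlled by the root datum of the reductive group $G:=(\calG_{A,\ell})_{\FF_\ell}$.

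First I would check that $H_{A,\ell}$ is connected. The group $\boldG_\Gamma$ is semisimple by Lemma~\ref{L:Nori}, hence connected, and $C_{A,\ell}=Z^\circ$ is connected by definition. Since $C_{A,\ell}$ is central in $G$, the two factors commute, so $H_{A,\ell}=C_{A,\ell}\cdot\boldG_\Gamma$ is connected. Because $\widetilde{H}$ is generated by the commuting subgroups $Z$ and $\boldG_\Gamma$, one has $\widetilde{H}=Z\cdot H_{A,\ell}$; in particular $H_{A,\ell}$ is normal in $\widetilde{H}$ and the natural map $Z/Z^\circ \twoheadrightarrow \widetilde{H}/H_{A,\ell}$ is surjective. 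Consequently
\[
[\widetilde{H}:H_{A,\ell}] \;\leq\; |Z/Z^\circ|,
\]
reducing the problem to bounding $|Z/Z^\circ|$ in terms of $g$.

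For this last bound I would use that $\calG_{A,\ell}$ is reductive, as established in \S\ref{SS:new reductive proof}, so $G$ is connected reductive and its center $Z$ is a group of multiplicative type. Picking a maximal torus $T\supseteq Z$ of $G$ with root system $\Phi\subseteq X(T)$, the identity $Z=\bigcap_{\alpha\in\Phi}\ker\alpha$ dualizes to $X(Z)=X(T)/\ZZ\Phi$, and $|Z/Z^\circ|$ equals the order of the torsion subgroup of this quotient. Decomposing $X(T)$ into its semisimple and central-torus parts via the isogeny $G^{\mathrm{der}}\times Z^\circ \to G$, the torsion part of $X(T)/\ZZ\Phi$ is identified with $X(T)_0/\ZZ\Phi$ for a sublattice $X(T)_0$ trapped between $\ZZ\Phi$ and the weight lattice $P$ of $\Phi$; hence its order divides $|P/\ZZ\Phi|$, the fundamental group of $\Phi$. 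Since $G\hookrightarrow \GL_{2g,\FF_\ell}$, the rank of $\Phi$ is at most $2g$, and the classification of root systems yields $|P/\ZZ\Phi|\ll_g 1$. This gives $|Z/Z^\circ|\ll_g 1$, completing the bound.

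The only real obstacle is the conceptual bookkeeping in identifying $\widetilde{H}^\circ$ with $H_{A,\ell}$; no new arithmetic input is needed beyond the reductivity of $\calG_{A,\ell}$ which is already in place, and the final estimate on $|Z/Z^\circ|$ is a purely structural fact about reductive groups of bounded rank.
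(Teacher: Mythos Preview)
Your proposal is correct and follows essentially the same approach as the paper. Both arguments reduce to bounding $|Z/Z^\circ|=[Z:C_{A,\ell}]$ via the root datum of a reductive group of rank at most $2g$; the paper phrases this by passing to the semisimple quotient $(\calG_{A,\ell})_{\FFbar_\ell}/(C_{A,\ell})_{\FFbar_\ell}$ and bounding its center (noting it is at most $2^r$), while you unwind the same bound directly through the inclusions $\ZZ\Phi\subseteq X(T)\cap\QQ\Phi\subseteq P$.
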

\begin{proof}
Equivalently, we need to bound the index of $C_{A,\ell}$ in $Z$.  It suffices to bound the cardinality of the center of the semisimple group $G:=(\calG_{A,\ell})_{\FFbar_\ell}/(C_{A,\ell})_{\FFbar_\ell}$.   This is clear since $G$ is a semisimple group whose rank is bounded in terms of $g$; the cardinality of the center can be bounded in terms of the root datum of $G$ and there are only finitely many root datum for semisimple groups of bounded rank.  (Moreover, the cardinality of the center of a semisimple group of rank $r$ is bounded above by $2^r$.)
\end{proof}  

We have shown that $\Gamma_1=\bbar\rho_{A,\ell}(\Gal_L)$ is a subgroup of $\widetilde{H}(\FF_\ell)$.  Let $L'$ be the smallest extension of $L$ for which $\bbar\rho_{A,\ell}(\Gal_{L'})\subseteq H_{A,\ell}(\FF_\ell)$.  By Lemma~\ref{L:center bound}, the degree $[L':L]$ can be bounded in terms of $g$.   We have already seen that $[L:K]$ can be bounded in terms of $g$.  Therefore, we have $\bbar\rho_{A,\ell}(\Gal_{L'})\subseteq H_{A,\ell}(\FF_\ell)$ with $[L':K] \ll_g 1$.

\subsection{Complexity\texorpdfstring{ of $H_{A,\ell}$}{}}

Define the $\ZZ$-torus $D:=\GG_m^{2g}$; we will identify it with the diagonal torus of $\GL_{2g,\ZZ}$.      Let $X(D)$ be the group of characters $D\to \GG_m$.  We have an isomorphism
\[
\ZZ^{2g} \xrightarrow{\sim} X(D),\quad m \mapsto \alpha_m,
\]
where $\alpha_m$ is the character given by $(x_1,\ldots, x_{2g}) \mapsto \prod_i x_i^{m_i}$.  \\

Consider a field $F$ and a connected and reductive subgroup $G$ of $\GL_{2g,F}$.   Choose an algebraically closed extension $F'/F$ and a torus $T$ in the diagonal torus $D_{F'}\subseteq \GL_{2g,F'}$ that is conjugate to a maximal torus of $G_{F'}$ by some element of $\GL_{2g}(F')$.  
We define the \defi{complexity} of $G$ to be the smallest integer $\scrF(G)\geq 1$
satisfying
\[
T=\bigcap_{m\in M} \ker \alpha_m
\]
in $D_{F'}$ for some subset $M\subseteq \ZZ^{2g}$ with $|m_i|\leq \scrF(G)$ for all $m\in M$ and $1\leq i \leq 2g$.   We can identify the character group of $T$ with $\ZZ^{2g}/\ZZ M$.  Note that $\scrF(G)$ does not depend on the choice of $F'$ or $T$.    

The goal of this section is to show that $\scrF(H_{A,\ell})\ll_g 1$.
 
\begin{lemma} \label{L:complexity G}
We have $\scrF(G_{A,\ell}) \ll_g 1$.
\end{lemma}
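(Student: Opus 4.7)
The plan is to exhibit an explicit maximal torus of $G_{A,\ell}$, namely the one generated by the Frobenius at our chosen prime $\q$, show that after conjugation it sits inside the diagonal torus of $\GL_{2g,\Qbar_\ell}$, and then apply Proposition~\ref{P:Frob group possibilities} to bound the height of its defining equations uniformly in $A$ and $\ell$.

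First, I would set $t_\q := \rho_{A,\ell}(\Frob_\q)$ and let $T_\q$ denote the Zariski closure of $\langle t_\q\rangle$ in $G_{A,\ell}$.  By our choice of $\q$ and the argument used in the proof of Lemma~\ref{L:pre common rank}(\ref{L:pre common rank i}), the character group $X(T_\q)$ is isomorphic to $\Phi_{A,\q}$, which is a free abelian group of rank $r$ equal to the rank of $G_{A,\ell}^\circ=G_{A,\ell}$.  Hence $T_\q$ is a maximal torus of $G_{A,\ell}$.

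Next, I would base change to $F':=\Qbar_\ell$.  Since $t_\q$ is semisimple, there exists $h\in\GL_{2g}(F')$ conjugating it into diagonal form, so $T:=h\,(T_\q)_{F'}\,h^{-1}$ is a subtorus of the diagonal torus $D_{F'}\subseteq\GL_{2g,F'}$.  Identifying $X(D_{F'})$ with $\ZZ^{2g}$ via $m\mapsto \alpha_m$, the restriction map
\[
X(D_{F'})=\ZZ^{2g}\longrightarrow X(T)\cong \Phi_{A,\q}
\]
becomes $e\mapsto \prod_{i=1}^{2g}\pi_i^{e_i}$, where $\pi_1,\dots,\pi_{2g}$ are the roots of $P_{A,\q}(x)$ (with multiplicity) read off the diagonal of $h t_\q h^{-1}$.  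Its kernel is exactly the subgroup
\[
M=\Bigl\{e\in\ZZ^{2g}:\prod_{i=1}^{2g}\pi_i^{e_i}=1\Bigr\}
\]
appearing in Proposition~\ref{P:Frob group possibilities}, and consequently
\[
T=\bigcap_{m\in M}\ker\alpha_m
\]
inside $D_{F'}$.

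Finally, Proposition~\ref{P:Frob group possibilities} tells us that $M$ is one of a finite list of subgroups $M_1,\dots,M_s$ of $\ZZ^{2g}$ depending only on $g$.  Each $M_i$ admits a finite generating set whose coordinates are bounded in absolute value by some constant $B_i(g)$; letting $B(g)=\max_i B_i(g)$ gives a uniform bound.  Replacing $M$ by such a generating set does not affect the intersection, so we obtain $\scrF(G_{A,\ell})\leq B(g)\ll_g 1$, as required.  There is no serious obstacle: the entire content lies in Proposition~\ref{P:Frob group possibilities}, which reduces the problem to finitely many possibilities for $M$.
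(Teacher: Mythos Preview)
Your proof is correct and follows essentially the same approach as the paper: both take the Zariski closure of $\langle\rho_{A,\ell}(\Frob_\q)\rangle$ as the maximal torus, diagonalize over $\Qbar_\ell$, identify the defining equations of the torus with the relation group $M=\{e\in\ZZ^{2g}:\prod_i\pi_i^{e_i}=1\}$, and invoke Proposition~\ref{P:Frob group possibilities} to bound the generators of $M$ in terms of $g$. Your write-up is slightly more explicit about the restriction map $X(D_{F'})\to X(T)$, but the argument is the same.
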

\begin{proof}
Let $T$ be the Zariski closure in $G_{A,\ell}$ of the group generated by $\rho_{A,\ell}(\Frob_\q)$.  As observed in \S\ref{SS:new reductive proof}, $T$ is a maximal torus of $G_{A,\ell}$.  Take $t\in D(\Qbar_\ell)$ that is conjugate to $\rho_{A,\ell}(\Frob_\q)$.   We have $t=(\pi_1,\ldots, \pi_{2g})$, where the $\pi_i$ are the roots of $P_{A,\q}(x)$ in $\Qbar_\ell$ with multiplicity.   
Let $N\subseteq \ZZ^{2g}$ be the group of $m\in \ZZ^{2g}$ for which $\prod_{i=1}^{2g} \pi_i^{m_i}=1$.  Choose a finite set $M$ of generators for $N$ with $B:=\max\{|m_i|: m\in M, 1\leq i \leq 2g\}$ minimal.  We have $\scrF(G_{A,\ell})=\scrF(T)\leq B$.   Proposition~\ref{P:Frob group possibilities} says that there are only finitely many possibilities for the group $N$ in terms of $g$.   We can thus bound $B$ in terms of $g$.  
\end{proof}

\begin{lemma} \label{L:complexity C}
We have $\scrF(C_{A,\ell}) \ll_g 1$.
\end{lemma}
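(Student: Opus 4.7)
The plan is to exploit the containment $C_{A,\ell} \subseteq T$, where $T$ is the maximal torus of $G_{A,\ell}$ from the proof of Lemma~\ref{L:complexity G}, together with the structural description $C_{A,\ell} = Z(G_{A,\ell})^\circ$, which cuts $C_{A,\ell}$ out of $T$ via the root system of $G_{A,\ell}$. Since $\scrF(T)$ has already been bounded in terms of $g$, it will suffice to control the additional characters of $D$ needed to descend from $T$ to $C_{A,\ell}$.

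Following the setup of Lemma~\ref{L:complexity G}, I would pass to a splitting field of $P_{A,\q}(x)$ and replace $\rho_{A,\ell}(\Frob_\q)$ by a conjugate $t = (\pi_1,\dots,\pi_{2g}) \in D(\Qbar_\ell)$. A suitable conjugate of $T$ then sits inside $D$, cut out by the characters $\alpha_m$ for $m$ in the group $N \subseteq \ZZ^{2g}$ of relations among the $\pi_i$, and $N$ admits generators with $g$-bounded coordinates by Proposition~\ref{P:Frob group possibilities}. Since $C_{A,\ell}$ is the identity component of $\bigcap_\alpha \ker\alpha$, with $\alpha$ running over the roots of $G_{A,\ell}$ relative to $T$, the subgroup $N^\sharp \subseteq X(D) = \ZZ^{2g}$ of characters vanishing on $C_{A,\ell}$ equals the preimage in $\ZZ^{2g}$ of the saturation in $X(T) = \ZZ^{2g}/N$ of the root lattice $Q$ of $G_{A,\ell}$. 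As $G_{A,\ell} \subseteq \GL_{V_\ell(A)}$, each root of $G_{A,\ell}$ is a weight of $T$ on $\Lie(G_{A,\ell}) \subseteq V_\ell(A) \otimes V_\ell(A)^{*}$, hence a difference $\epsilon_i - \epsilon_j$ of weights of $T$ acting on $V_\ell(A)$; since $\epsilon_i$ is simply the restriction to $T$ of the standard character $\alpha_{e_i}$ of $D$, each root lifts to $e_i - e_j \in \ZZ^{2g}$, a vector with coordinates in $\{-1,0,1\}$. Combined with the bounded generators of $N$, this shows the preimage $\tilde Q \subseteq \ZZ^{2g}$ of $Q$ admits generators with $g$-bounded coordinates.

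It remains to pass from $\tilde Q$ to $N^\sharp$. The quotient $N^\sharp/\tilde Q$ is the torsion subgroup of $X(T)/Q$, a finite abelian group whose order is bounded in terms of $g$ via the standard classification of root data of rank at most $g$ (equivalently, by the component group of the center of $G_{A,\ell}$). A routine Hermite/Smith normal form computation then converts a bounded basis of $\tilde Q$ together with the bound on $[N^\sharp : \tilde Q]$ into a basis of $N^\sharp$ whose coordinates are bounded purely in terms of $g$, yielding $\scrF(C_{A,\ell}) \ll_g 1$. The main obstacle is controlling this last saturation step quantitatively; everything preceding it amounts to bookkeeping with weights and roots.
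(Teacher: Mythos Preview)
Your argument is correct and takes a genuinely different route from the paper. One small bookkeeping point: $C_{A,\ell}$ is by definition the special fibre of $\calC_{A,\ell}$ over $\FF_\ell$, whereas the maximal torus $T$ of Lemma~\ref{L:complexity G} lives in $G_{A,\ell}$ over $\QQ_\ell$, so the inclusion ``$C_{A,\ell}\subseteq T$'' is not literal. The paper handles this by first observing that $\scrF(C_{A,\ell})=\scrF(C)$ for $C:=(\calC_{A,\ell})_{\QQ_\ell}$, which holds because $\calC_{A,\ell}$ is a $\ZZ_\ell$-torus; you should insert this reduction at the start and then bound $\scrF(C)$.

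The paper's proof realises the center $Z$ of $G_{A,\ell}$ as an intersection $T_0\cap\cdots\cap T_r$ of $r+1$ maximal tori, applies the bound $\scrF(T_i)=\scrF(G_{A,\ell})\ll_g 1$ to each, and then saturates to pass from $Z$ to $C=Z^\circ$. You instead stay inside a \emph{single} maximal torus $T$ and cut out $Z$ by the roots of $(G_{A,\ell},T)$, using that every root is a nonzero weight of $T$ on $\Lie(G_{A,\ell})\subseteq\End_{\QQ_\ell}(V_\ell(A))$ and hence a difference of weights on $V_\ell(A)$, lifting to some $e_i-e_j\in\ZZ^{2g}$ with entries in $\{-1,0,1\}$. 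Your approach is more concrete and makes the dependence on the root datum explicit; it also sidesteps any question of how several maximal tori interact after conjugation into $D$. Both arguments end with the same saturation step. Note that your invocation of Smith normal form can be replaced by the blunter finiteness observation (which the paper also uses) that the box $\{m\in\ZZ^{2g}:\max_i|m_i|\le B\}$ is finite, so there are only finitely many possible $\tilde Q$ and hence finitely many possible saturations $N^\sharp$; this bypasses the need to bound $[N^\sharp:\tilde Q]$ separately.
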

\begin{proof}
Define $C:=(\calC_{A,\ell})_{\QQ_\ell}$; it is the central torus of $G_{A,\ell}$.  Since $\calC_{A,\ell}$ is a torus with special fiber $C_{A,\ell}$, we have $\scrF(C_{A,\ell})=\scrF(C)$.  So it suffices to prove $\scrF(C)\ll_g 1$.

The center of $G_{A,\ell}$ is the intersection of all of its maximal tori.   Since $G_{A,\ell}$ has rank $r$, there are maximal tori $T_0,\ldots, T_r$ of $G$ such that the neutral component of $Z:=T_0\cap \cdots \cap T_r$ is $C$.   

Using that $\scrF(T_i)=\scrF(G_{A,\ell})\ll_g 1$ by Lemma~\ref{L:complexity G}, we find that there an integer $B\geq 1$ depending only on $g$ and subset $M\subseteq \ZZ^{2g}$ such that
\[
Z= \bigcap_{m\in M} \ker \alpha_m
\]
and $|m_i|\leq B$ for all $m\in M$ and $1\leq i \leq 2g$.    We thus have $C=\cap_{m\in M'} \ker \alpha_m$, where $M'$ is any finite set that generates the smallest group $\ZZ M \subseteq N \subseteq \ZZ^{2g}$ for which $\ZZ^{2g}/N$ is torsion-free.     We can choose $M'$ so that $B':=\max\{|m_i|: m \in M',\, 1\leq i \leq 2g\}$ is minimal.  Since there are only finitely many possible $M$ for a given $g$, we find that $B'\ll_g 1$.  Therefore, $\scrF(C)\leq B'\ll_g 1$.
\end{proof}

\begin{lemma} \label{L:bounded formal character ss}
We have $\scrF(\boldG_\Gamma)\ll_g 1$.
\end{lemma}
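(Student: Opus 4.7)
The plan is to reduce to finitely many cases by invoking Lemma~\ref{L:finite lifting}. After possibly enlarging $c$, I may assume $\ell > c_2(2g)$, so applying Lemma~\ref{L:finite lifting} with $m = 2g$ to the semisimple subgroup $\Gamma = \bbar\rho_{A,\ell}(\Gal_K)$ of $\GL_{2g}(\FF_\ell)$ yields a finite collection of $\ZZ$-representations $\{\varrho_i \colon G_i \to \GL_{2g}\}_{i \in I}$ of split simply connected Chevalley groups, depending only on $g$, together with an index $i \in I$ such that the composition $(\boldG_\Gamma^{\sc})_{\FFbar_\ell} \twoheadrightarrow (\boldG_\Gamma)_{\FFbar_\ell} \hookrightarrow \GL_{2g,\FFbar_\ell}$ is isomorphic as a representation to the base change $(\varrho_i)_{\FFbar_\ell}$.

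Since the simply connected cover is surjective onto $\boldG_\Gamma$, the image of this composition equals $(\boldG_\Gamma)_{\FFbar_\ell}$, and an isomorphism of faithful representations into $\GL_{2g,\FFbar_\ell}$ identifies their images up to conjugation. Therefore $(\boldG_\Gamma)_{\FFbar_\ell}$ and $\varrho_i((G_i)_{\FFbar_\ell})$ are conjugate subgroups of $\GL_{2g,\FFbar_\ell}$. The invariant $\scrF$ is defined by placing a maximal torus into diagonal form via $\GL_{2g}$-conjugation and examining the resulting sublattice of $\ZZ^{2g}$, so it depends only on the $\GL_{2g}$-conjugacy class of the ambient subgroup; hence $\scrF(\boldG_\Gamma) = \scrF(\varrho_i((G_i)_{\FFbar_\ell}))$.

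To conclude, I would observe that for each fixed $i \in I$ the complexity $\scrF(\varrho_i((G_i)_{\FFbar_\ell}))$ is a finite constant $B_i$ that does not depend on $\ell$. Indeed, $\varrho_i$ is defined over $\ZZ$ with $G_i$ split, so $\varrho_i$ carries the standard split maximal torus of $G_i$ onto a split $\ZZ$-torus in $\GL_{2g,\ZZ}$; after a $\ZZ$-conjugation into the diagonal, its defining sublattice of $\ZZ^{2g}$ is a fixed subgroup $N_i$ whose minimal coefficient-bound $B_i$ is preserved under any base change to an algebraically closed field. Setting $B := \max_{i \in I} B_i$ then gives $\scrF(\boldG_\Gamma) \le B$, and $B \ll_g 1$ since the index set $I$ depends only on $g$.

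The main obstacle is conceptual rather than technical: one has to recognize that Lemma~\ref{L:finite lifting} is tailored precisely to reduce questions about Nori's algebraic envelope $\boldG_\Gamma$ to questions about a finite fixed list of integral Chevalley representations. Once that reduction is in hand, conjugation invariance of $\scrF$ and the manifest $\ell$-independence of each $B_i$ render the bound essentially automatic; the only point requiring a moment of care is verifying that the image of the simply connected cover in $\GL_{2g,\FFbar_\ell}$ genuinely equals $(\boldG_\Gamma)_{\FFbar_\ell}$, so that conjugation of representations yields conjugation of the subgroups whose complexity we are comparing.
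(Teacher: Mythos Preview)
Your proof is correct and follows essentially the same approach as the paper: invoke Lemma~\ref{L:finite lifting} (after enlarging $c$ so that $\ell>c_2(2g)$), deduce that a maximal torus of $(\boldG_\Gamma)_{\FFbar_\ell}$ is conjugate in $\GL_{2g,\FFbar_\ell}$ to the image of a split maximal torus under one of the finitely many $\varrho_i$, and then observe that the resulting complexity is an $\ell$-independent constant $f_i$, so $\scrF(\boldG_\Gamma)\le\max_i f_i\ll_g 1$. The only cosmetic difference is that the paper works directly with the image $H_i:=\varrho_i(T_i)_{\FFbar_\ell}$ of a fixed split maximal torus rather than first arguing conjugacy of the full images; also note that the word ``faithful'' in your second paragraph is unnecessary (and not quite accurate for the composition through the simply connected cover), since isomorphic representations have conjugate images regardless.
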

\begin{proof}
After possibly increasing $c$, we may assume that $\ell > c_2(2g)$, with $c_2(2g)$ as in Lemma~\ref{L:finite lifting}.    Let $\{\varrho_i \colon G_i \to \GL_{2g}\}_{i\in I}$ be the representations of Lemma~\ref{L:finite lifting} with $m=2g$; they depend only on $g$.    

Fix a split maximal torus $T_i$ of $G_i$ and define $H_i := \varrho_i(T_i)_{\FFbar_\ell}$.     We have $\scrF(H_i)=\scrF(\varrho_i(G_i)_{\Qbar})=:f_i$, which does not depend on $\ell$.  Lemma~\ref{L:finite lifting} implies that any maximal torus $T\subseteq (\boldG_\Gamma)_{\FFbar_\ell}$ is conjugate in $\GL_{2g,\FFbar_\ell}$ to $H_i$ for some $i\in I$.  Therefore,
\[
\scrF(\boldG_\Gamma)=\scrF(T)=\scrF(H_i)=f_i.
\]   
Since $I$ is finite, we have $\scrF(\boldG_\Gamma) \leq \max \{ f_i : i \in I \} \ll_g 1$.
\end{proof}

Combining the previous two lemmas, we deduce the following.

\begin{lemma} \label{L:new bounded formal character}
We have $\scrF(H_{A,\ell}) \leq B$, where $B$ is a positive integer depending only on $g$.
\end{lemma}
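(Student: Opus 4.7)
The plan is to combine Lemmas~\ref{L:complexity C} and~\ref{L:bounded formal character ss} by analyzing a single maximal torus of $H_{A,\ell}$ that simultaneously contains $C_{A,\ell}$ and a maximal torus of $\boldG_\Gamma$. Since $\boldG_\Gamma$ commutes with $C_{A,\ell}$ by Lemma~\ref{L:S is semisimple}, for any maximal torus $T_S$ of $\boldG_{\Gamma,\FFbar_\ell}$ the product $T := C_{A,\ell,\FFbar_\ell}\cdot T_S$ is a maximal torus of $H_{A,\ell,\FFbar_\ell}$. First I will choose $g\in \GL_{2g}(\FFbar_\ell)$ with $gTg^{-1}\subseteq D_{\FFbar_\ell}$; then both factors $gC_{A,\ell,\FFbar_\ell}g^{-1}$ and $gT_Sg^{-1}$ are subtori of $D_{\FFbar_\ell}$ whose product equals $gTg^{-1}$.

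Next, I will apply Lemmas~\ref{L:complexity C} and~\ref{L:bounded formal character ss} to obtain subsets $M_1, M_2 \subseteq \ZZ^{2g}$, with coordinates bounded by a constant depending only on $g$, cutting out these two factors inside $D_{\FFbar_\ell}$. The subtle point, and what I expect to be the main obstacle, is that these lemmas guarantee the existence of \emph{some} conjugate into $D_{\FFbar_\ell}$ with a bounded cutting-out set, not necessarily the one produced by our fixed $g$. I plan to resolve this by observing that any two conjugates of a given torus into $D_{\FFbar_\ell}$ are automatically $S_{2g}$-conjugate: if $T_1, T_2\subseteq D$ are $\GL_{2g}$-conjugate via some $h$, then $D$ and $hDh^{-1}$ are both maximal tori of the centralizer $C_{\GL_{2g}}(T_2)$, so after multiplying $h$ by a suitable element of this centralizer one may assume $h \in N_{\GL_{2g}}(D) = D\rtimes S_{2g}$. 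The resulting $S_{2g}$-action on $X(D) = \ZZ^{2g}$ merely permutes coordinates and hence preserves the max-coordinate bound, so bounded $M_1,M_2$ do exist for our specific $g$-conjugates.

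Finally, with $N_i \subseteq \ZZ^{2g}$ denoting the saturated subgroup generated by $M_i$, the saturated subgroup of characters of $D_{\FFbar_\ell}$ vanishing on $gTg^{-1} = gC_{A,\ell,\FFbar_\ell}g^{-1} \cdot gT_Sg^{-1}$ is precisely $N_1 \cap N_2$ (a character of $D$ vanishes on a product of subtori iff it vanishes on each factor). To bound the max-coordinates of a generating set of this intersection uniformly in $\ell$, I will invoke the same finiteness argument used in the proof of Lemma~\ref{L:complexity C}: only finitely many subgroups of $\ZZ^{2g}$ can be generated by vectors with coordinates bounded by a fixed constant, so only finitely many possibilities exist for $N_1$, $N_2$, and hence for $N_1 \cap N_2$, all depending only on $g$. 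Fixing a generating set for each possible intersection once and for all and taking the maximum of their max-coordinates produces the required constant $B$, giving $\scrF(H_{A,\ell}) \leq B$ and completing the proof.
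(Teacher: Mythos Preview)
Your proposal is correct and follows essentially the same approach as the paper: decompose a maximal torus of $H_{A,\ell}$ as a product of $C_{A,\ell}$ and a maximal torus of $\boldG_\Gamma$, invoke Lemmas~\ref{L:complexity C} and~\ref{L:bounded formal character ss} to get bounded cutting-out sets $M_1,M_2$, take the intersection $\ZZ M_1\cap\ZZ M_2$, and finish with the finiteness argument. Your treatment of the $S_{2g}$-conjugacy issue is more explicit than the paper's---the paper absorbs this into the remark after the definition of $\scrF(G)$ that ``$\scrF(G)$ does not depend on the choice of $F'$ or $T$''---but the underlying reasoning is the same.
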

\begin{proof}
Let $T$ be a subtorus of $D_{\FFbar_\ell}$ that is conjugate in $\GL_{2g,\FFbar_\ell}\cong \GL_{A[\ell],\FFbar_\ell}$ to a maximal torus of $(H_{A,\ell})_{\FFbar_\ell}$.     We have $T=T_1\cdot T_2$, where $T_1$ and $T_2$ are conjugate to $(C_{A,\ell})_{\FFbar_\ell}$ and to a maximal torus of $(\boldG_\Gamma)_{\FFbar_\ell}$, respectively.    By Lemmas~\ref{L:complexity C} and \ref{L:bounded formal character ss}, we have $\calF(T_1)\ll_g 1$ and $\calF(T_2)\ll_g 1$.  

So there are subsets $M_1$ and $M_2$, with $\max\{|m_i| : m \in M_1\cup M_2, 1\leq i \leq 2g\} \ll_g 1$ such that $T_i = \cap_{m \in M_i } \ker \alpha_m$ for $1\leq i  \leq 2$.    We then have $T= \cap_{m \in M} \ker \alpha_m$, where $M$ is a finite set of generators of the subgroup $\ZZ M_1 \cap \ZZ M_2$ of $\ZZ^{2g}$.   We can choose $M$ so that $B:=\max\{|m_i|: m \in M,\, 1\leq i \leq 2g\}$ is minimal.  Since there are only finitely many possible $M_1$ and $M_2$ for a given $g$, we find that $B\ll_g 1$.  Therefore, $\scrF(H_{A,\ell})=\scrF(T)\leq B\ll_g 1$.
\end{proof}

\subsection{Proof of Lemma~\ref{L:same rank}}
\label{SS:same rank proof}

Take $B$ as in Lemma~\ref{L:new bounded formal character}.  With our fixed prime $\q$, let $\pi_1,\ldots, \pi_{2g} \in \Qbar$ be the roots of $P_{A,\q}(x)$ with multiplicity and define the number field $F:=\QQ(\pi_1,\ldots, \pi_{2g})$.  Let $\calM$ be the (finite) set of subsets $M\subseteq \ZZ^{2g}$ such that
\[
\max \{ |m_i| : m \in M, \, 1\leq i \leq 2g\} \leq B
\]
and such that $\rank_\ZZ (\ZZ M)>2g-r$.   For each $M\in \calM$, define
\[
\beta_M:=\sum_{m\in M} N_{F/\QQ}\Big( \prod_{1\leq i \leq 2g,\, m_i>0}\, \pi_{i}^{m_i} -  \prod_{1\leq i \leq 2g,\, m_i<0}\, \pi_{i}^{-m_i}\Big)^2;
\]
it is an integer since all the $\pi_i$ are algebraic integers.     

\begin{lemma} \label{L:betaM congruences}
If $\rank (H_{A,\ell}) \neq r$, then $\beta_M \equiv 0 \pmod{\ell}$ for some $M\in \calM$.
\end{lemma}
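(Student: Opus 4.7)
The plan is to translate the hypothesis $\rank H_{A,\ell}<r$ into a multiplicative relation among the reductions $\bbar\pi_i$ of the Frobenius eigenvalues, and then observe that exactly such a relation forces $\beta_M\equiv 0\pmod\ell$.

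Set $t:=\bbar\rho_{A,\ell}(\Frob_\q)$; it is a semisimple element of $\calG_{A,\ell}(\FF_\ell)$ whose eigenvalues in $\FFbar_\ell$ are the reductions $\bbar\pi_1,\dots,\bbar\pi_{2g}$ of the roots of $P_{A,\q}(x)$, and these lie in $\FFbar_\ell^\times$ since $\q\nmid\ell$. By Lemma~\ref{L:hard inclusion} the subgroup $\bbar\rho_{A,\ell}(\Gal_K)\cap H_{A,\ell}(\FF_\ell)$ has index at most $b$ in $\bbar\rho_{A,\ell}(\Gal_K)$, so a pigeonhole on left cosets produces an integer $n\leq b$ with $t^n\in H_{A,\ell}(\FF_\ell)$.

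Being semisimple, $t^n$ lies in some maximal torus $T$ of $(H_{A,\ell})_{\FFbar_\ell}$; by hypothesis $\dim T=\rank H_{A,\ell}<r$. After conjugating by a suitable $P\in\GL_{2g}(\FFbar_\ell)$ I may assume $PTP^{-1}\subseteq D_{\FFbar_\ell}$, and since any two $\GL_{2g}$-conjugate subtori of $D$ differ only by a permutation of coordinates, Lemma~\ref{L:new bounded formal character} supplies a set $M_0\subseteq\ZZ^{2g}$ with $|m_i|\leq B$ satisfying $PTP^{-1}=\bigcap_{m\in M_0}\ker\alpha_m$; automatically $\rank_\ZZ(\ZZ M_0)=2g-\dim T>2g-r$. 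The matrix $Pt^nP^{-1}$ is a diagonal element of $PTP^{-1}$, and its diagonal entries are a permutation $\bbar\pi_{\tau(1)}^n,\dots,\bbar\pi_{\tau(2g)}^n$ of the eigenvalues of $t^n$; evaluating $\alpha_m$ at this element yields $\prod_k \bbar\pi_k^{n\,m_{\tau^{-1}(k)}}=1$ in $\FFbar_\ell^\times$ for every $m\in M_0$.

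Setting $M:=\{(n\,m_{\tau^{-1}(1)},\dots,n\,m_{\tau^{-1}(2g)}) : m\in M_0\}$, we get $|m'_k|\leq nB\leq bB$ and $\rank_\ZZ(\ZZ M)=\rank_\ZZ(\ZZ M_0)>2g-r$, so $M\in\calM$ provided the constant $B$ has been enlarged to absorb the factor $b$ (harmless since $b$ depends only on $g$). For each $m'\in M$ the relation $\prod_k \bbar\pi_k^{m'_k}=1$ in $\FFbar_\ell$ is the reduction modulo some prime $\lambda\mid\ell$ of $\OO_F$ of the congruence $\prod_{k:m'_k>0}\pi_k^{m'_k}\equiv\prod_{k:m'_k<0}\pi_k^{-m'_k}\pmod\lambda$, so the element $\prod_{k:m'_k>0}\pi_k^{m'_k}-\prod_{k:m'_k<0}\pi_k^{-m'_k}\in\OO_F$ lies in $\lambda$ and its $F/\QQ$-norm is divisible by $\ell$. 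Squaring and summing over $m'\in M$ gives $\beta_M\equiv 0\pmod\ell$.

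The main obstacle is bookkeeping: the power $n\leq b$ required to push $t$ into $H_{A,\ell}$ multiplies every exponent, and the permutation $\tau$ from diagonalizing $t^n$ permutes indices, so the set $M$ only satisfies $|m_j|\leq bB$ rather than $B$. Since $b$ depends only on $g$ we are free to enlarge the constant $B$ of Lemma~\ref{L:new bounded formal character} (and hence of $\calM$) by this factor at the outset, and after that the complexity bound does essentially all of the work.
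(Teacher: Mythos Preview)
Your argument is correct and follows the paper's overall strategy: place the reduced Frobenius eigenvalues inside a diagonal model of a maximal torus of $H_{A,\ell}$, extract a bounded multiplicative relation via Lemma~\ref{L:new bounded formal character}, and deduce $\beta_M\equiv 0\pmod\ell$. The one substantive difference is how the Frobenius is moved into $H_{A,\ell}$. The paper asserts directly that the semisimple part of $\bbar\rho_{A,\ell}(\Frob_\q)$ is $\GL_{2g}(\FFbar_\ell)$-conjugate to an element of $T(\FFbar_\ell)$, without spelling out why an element that a priori lies only in $\calG_{A,\ell}(\FF_\ell)$ should be conjugate into a maximal torus of the smaller group $H_{A,\ell}$. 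You instead invoke Lemma~\ref{L:hard inclusion} and pigeonhole on cosets to produce a power $t^n\in H_{A,\ell}(\FF_\ell)$ with $n\le b$, which then honestly sits in a maximal torus; the resulting factor of $n\le b$ in the exponents is absorbed into the constant $B$, as you note, and your observation that two $\GL_{2g}$-conjugate subtori of $D$ differ by a coordinate permutation is exactly what is needed to transport the complexity bound to your particular $PTP^{-1}$. Your route therefore makes explicit a step that the paper leaves to the reader, at the modest cost of enlarging $B$. One minor point: your claim that $t=\bbar\rho_{A,\ell}(\Frob_\q)$ is already semisimple deserves a word---it follows from \S\ref{SS:new reductive proof}, where $\rho_{A,\ell}(\Frob_\q)$ was shown to be diagonalizable over $R$, so its reduction is diagonalizable; alternatively one may, as the paper does, pass to the semisimple part of $t^n$, which still lies in $H_{A,\ell}(\FF_\ell)$ and has the same eigenvalues.
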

\begin{proof}
Suppose that $\rank (H_{A,\ell}) \neq r$.   From Lemma~\ref{L:rank inequality}, we have an inclusion  $H_{A,\ell} \subseteq (\calG_{A,\ell})_{\FF_\ell}$ in $\GL_{A[\ell]}$ of reductive groups.    Therefore, 
\[
\rank(H_{A,\ell}) \leq \rank((\calG_{A,\ell})_{\FF_\ell})=\rank(G_{A,\ell})=r.
\]
Our assumption implies that $\rank (H_{A,\ell}) < r$.

Let $T$ be a subtorus of $D_{\FFbar_\ell}$ that is conjugate in $\GL_{2g,\FFbar_\ell}\cong \GL_{A[\ell],\FFbar_\ell}$ to a maximal torus of $(H_{A,\ell})_{\FFbar_\ell}$.  By Lemma~\ref{L:new bounded formal character}, there is a set $M\subseteq \ZZ^{2g}$ such that $T=\cap_{m\in M} \ker \alpha_m$ and such that $|m_i|\leq B$ for all $m\in M$ and $1\leq i \leq 2g$.  We have $X(T)\cong \ZZ^{2g}/M$ and hence $\dim T = 2g - \rank_{\ZZ}(\ZZ M)$.  Therefore,
 \[
 \rank_\ZZ(\ZZ M) = 2g-\dim T=2g-\rank (H_{A,\ell}) >2g-r,
 \]  
and hence $M\in \calM$.

After possibly increasing $c$ and $\gamma$, we may assume that $\ell > N(\q)$ and hence $\q\nmid \ell$.   So $\bbar\rho_{A,\ell}$ is unramified at $\q$ and $\bbar\rho_{A,\ell}(\Frob_\q)$ has characteristic polynomial $P_{A,\q}(x)$ modulo $\ell$.   The semisimple component of $\bbar\rho_{A,\ell}(\Frob_\q)$ is conjugate in $\GL_{2g}(\FFbar_\ell)$ to an element $t\in T(\FFbar_\ell)$.  Let $b_1,\ldots, b_{2g} \in \FFbar_\ell$ be the diagonal entries of $t$.  Let $\lambda$ be a prime ideal of $\OO_F$ dividing $\ell$ and choose an embedding $\FF_\lambda \hookrightarrow \FFbar_\ell$.    After first possibly conjugating $T$ and $t$ by some element of $\GL_{2g}(\FFbar_\ell)$, we may assume that the image of $\pi_i$ modulo $\lambda$ is $b_i$.
  
Take any $m\in M$.  We have $t\in T(\FFbar_\ell)$, so $\prod_{i} b_i^{m_i}=1$ and hence
\[
\prod_{i,\, m_i>0} b_i^{m_i} - \prod_{i,\, m_i<0} b_i^{-m_i} = 0.
\]
Therefore,  
\[
\prod_{1\leq i \leq 2g,\, m_i>0}\, \pi_{i}^{m_i} -  \prod_{1\leq i \leq 2g,\, m_i<0}\, \pi_{i}^{-m_i} \equiv 0 \pmod{\lambda}
\]
for all $m\in M$.   The lemma is now clear since $\beta_M$ is the sum of integers divisible by $\ell$.
\end{proof}

\begin{lemma} \label{L:betaM nonzero}
The integer $\beta_M$ is nonzero for all $M\in \calM$.
\end{lemma}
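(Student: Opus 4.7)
My plan is to use the fact that $\beta_M$ is manifestly a sum of squares of rational integers (each $N_{F/\QQ}$ of an algebraic integer is an ordinary integer), so $\beta_M \geq 0$, with equality if and only if each individual summand vanishes. The summand for a given $m \in M$ vanishes precisely when
\[
\prod_{1\leq i\leq 2g,\, m_i>0} \pi_i^{m_i} = \prod_{1\leq i\leq 2g,\, m_i<0} \pi_i^{-m_i},
\]
i.e.\ when $\prod_{i=1}^{2g} \pi_i^{m_i}=1$ in $\CC^\times$. So $\beta_M=0$ is equivalent to $M$ being contained in the kernel $N$ of the homomorphism $\varphi\colon \ZZ^{2g}\to \CC^\times$, $e\mapsto \prod_i \pi_i^{e_i}$.

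Next I would compute the rank of $N$. The image of $\varphi$ is by definition $\Phi_{A,\q}$, and by our standing choice of $\q$ this group is free abelian of rank $r$. The short exact sequence
\[
0\to N \to \ZZ^{2g}\xrightarrow{\varphi} \Phi_{A,\q}\to 0
\]
splits (since $\Phi_{A,\q}\cong \ZZ^r$ is free), so $N$ is a direct summand of $\ZZ^{2g}$ of rank $2g-r$.

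To conclude: suppose for contradiction that $\beta_M=0$ for some $M\in\calM$. Then $M\subseteq N$, hence $\ZZ M\subseteq N$, hence $\rank_\ZZ(\ZZ M)\leq \rank_\ZZ(N)=2g-r$. But by the definition of $\calM$ we have $\rank_\ZZ(\ZZ M)>2g-r$, a contradiction. Therefore $\beta_M\neq 0$ for every $M\in \calM$.

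There is no real obstacle here; the lemma is essentially a bookkeeping consequence of the assumption that $\Phi_{A,\q}$ is free of rank exactly $r$, which forces the relation lattice $N$ to have rank exactly $2g-r$, whereas every $M\in\calM$ was built to exceed this rank. The only thing to be careful about is pointing out that $\beta_M$ is a sum of squares of integers (not merely of algebraic numbers), which justifies that vanishing of $\beta_M$ forces vanishing of each summand.
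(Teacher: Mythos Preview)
Your proof is correct and follows essentially the same approach as the paper: both argue that $\beta_M=0$ forces $\prod_i \pi_i^{m_i}=1$ for every $m\in M$, which makes the rank of the relation lattice at least $\rank_\ZZ(\ZZ M)>2g-r$, contradicting that $\Phi_{A,\q}$ is free of rank $r$. You simply make explicit the sum-of-squares reasoning (each summand is the square of the rational integer $N_{F/\QQ}(\cdot)$) that the paper leaves implicit.
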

\begin{proof}
Suppose that there is a set $M\in\calM$ such that $\beta_M=0$.  We thus have $\prod_i \pi_i^{m_i} = 1$ for all $m\in M$.  Therefore, the subgroup $\Phi_{A,\q}$ of $\Qbar^\times$ generated by $\pi_1,\ldots, \pi_{2g}$ is an abelian group of rank at most $2g - \rank_\ZZ (\ZZ M)$.   By our definition of $\calM$, $\Phi_{A,\q}$ has rank strictly less than $r$.    This is a contradiction since $\Phi_{A,\q}$ has rank $r$ by our initial choice of $\q$. Therefore, $\beta_M$ is nonzero for all $M\in \calM$.
\end{proof}

From Weil, we know that each $\pi_i$ has complex absolute value $N(\q)^{1/2}$ under any embedding $F\subseteq \CC$.    We can bound $[F:\QQ]$ in terms of $g$, so there are positive constants $c'$ and $\gamma'$, depending only on $g$, such that 
\[
|\beta_M| < c' N(\q)^{\gamma'}
\]
for all $M\in \calM$.   We may thus assume that $c$ and $\gamma$ are taken so that $|\beta_M| < c N(\q)^{\gamma}$ holds for all $M\in \calM$.

Now suppose that $\rank(H_{A,\ell})<r$.    By Lemmas~\ref{L:betaM congruences} and \ref{L:betaM nonzero}, we have $\ell \leq |\beta_M|$ for some $M\in \calM$.   Therefore, $\ell < c N(\q)^{\gamma}$.   However, this contradicts $\ell \geq c \cdot \max(\{[K:\QQ],h(A), N(\q)\})^\gamma \geq c N(\q)^{\gamma}$, so we must have $\rank(H_{A,\ell})=r$.

\subsection{The derived subgroup\texorpdfstring{ of $\calG_{A,\ell}$}{}} \label{SS:derived calG}

We have already proved that the $\ZZ_\ell$-group scheme $\calG_{A,\ell}$ is reductive.   Let $\calS_{A,\ell}$ be the derived subgroup of $\calG_{A,\ell}$; it is semisimple group scheme over $\ZZ_\ell$.

By Theorem~\ref{T:equal groups}, there is a subgroup $\Gamma$ of $\GL_{A[\ell]}(\FF_\ell)$ that acts semisimply on $A[\ell]$ such that $\boldG_{\Gamma} = (\calS_{A,\ell})_{\FF_\ell}$, where $\boldG_{\Gamma}$ is defined as in \S\ref{SS:semisimple approximation}.   By Theorem~\ref{T:Nori}, we have $\Gamma^+= \calS_{A,\ell}(\FF_\ell)^+$.  Since $\Gamma^+$ is a normal subgroup of $\Gamma$, we find that $\Gamma^+$ acts semisimply on $A[\ell]$ by Clifford's theorem \cite{MR632548}*{11.1}.   We have $\boldG_{\Gamma}=\boldG_{\Gamma^+}$, so we may take  $\Gamma := \calS_{A,\ell}(\FF_\ell)^+$.

\begin{lemma} \label{L:calS mod ell basics}
\begin{romanenum}
\item  \label{L:calS mod ell basics i}
Every simple quotient of $\calS_{A,\ell}(\FF_\ell)^+$ is a finite simple group of Lie type in characteristic $\ell$.  In particular, $\calS_{A,\ell}(\FF_\ell)^+$ is perfect.
\item \label{L:calS mod ell basics ii}
The quotient group $\calS_{A,\ell}(\FF_\ell)/\calS_{A,\ell}(\FF_\ell)^+$ is abelian and its cardinality can be bounded in terms of $g$.
\item \label{L:calS mod ell basics iii}
The group $\calS_{A,\ell}(\FF_\ell)^+$ is the commutator subgroup of $\calS_{A,\ell}(\FF_\ell)$.
\end{romanenum}
\end{lemma}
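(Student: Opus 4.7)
The plan is to apply Theorem~\ref{T:Larsen-Pink} with $m=2g$ to $\Gamma := \calS_{A,\ell}(\FF_\ell)$, viewed as a subgroup of $\GL_{A[\ell]}(\FF_\ell)$; after increasing $c$ so that $\ell > \max\{J(2g), 2g, 7\}$, this produces normal subgroups $\Gamma_3 \subseteq \Gamma_2 \subseteq \Gamma_1$ of $\Gamma$. The semisimplicity of $A[\ell]$ as a $(\calG_{A,\ell})_{\FF_\ell}$-module (established in \S\ref{SS:new reductive proof}) transfers to the normal subgroup $(\calS_{A,\ell})_{\FF_\ell}$ by Clifford's theorem, so $\Gamma$ also acts semisimply on $A[\ell]$. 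The same semisimple-plus-unipotent argument as in \S\ref{SS:hard inclusion} then forces $\Gamma_3=1$, leaving $\Gamma_2$ abelian with $|\Gamma_2|$ coprime to $\ell$, $\Gamma_1/\Gamma_2=\prod_i S_i$ a direct product of finite simple groups of Lie type in characteristic $\ell$, and $[\Gamma:\Gamma_1]\leq J(2g)<\ell$.

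For part~(\ref{L:calS mod ell basics i}), I would first observe that $\Gamma^+\subseteq\Gamma_1$ since $\gcd(\ell,[\Gamma:\Gamma_1])=1$.  A standard power-decomposition argument (using $\gcd(\ell,|\Gamma_2|)=1$) lifts every $\ell$-element of $\Gamma_1/\Gamma_2$ to an $\ell$-element of $\Gamma_1$; since each $S_i$ is generated by its elements of order $\ell$, this forces $\Gamma^+\cdot\Gamma_2=\Gamma_1$, and hence $\Gamma^+/(\Gamma^+\cap\Gamma_2)\cong\prod_i S_i$.  Now for any nontrivial simple quotient $Q=\Gamma^+/N$: either $N\supseteq\Gamma^+\cap\Gamma_2$, in which case $Q$ is a simple quotient of $\prod_i S_i$ and hence equal to some $S_i$; or else $N\cdot(\Gamma^+\cap\Gamma_2)=\Gamma^+$ by simplicity, forcing $Q$ to be a nontrivial quotient of the abelian group $\Gamma^+\cap\Gamma_2$ of order coprime to $\ell$ --- but $\Gamma^+$ is generated by elements of order $\ell$, so every nontrivial quotient has order divisible by $\ell$, a contradiction.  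Perfection of $\Gamma^+$ follows at once, since a finite group with nontrivial abelianization always admits an abelian simple quotient.

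For part~(\ref{L:calS mod ell basics ii}), the identity $\Gamma_1=\Gamma^+\Gamma_2$ immediately yields $\Gamma_1/\Gamma^+\cong\Gamma_2/(\Gamma_2\cap\Gamma^+)$, which is abelian.  The main obstacle will be upgrading this to the statement that $\Gamma/\Gamma^+$ itself is abelian of order bounded in terms of $g$, since $\Gamma/\Gamma_1$ need not commute with $\Gamma_1/\Gamma^+$ a priori.  I plan to pass to the simply connected cover $\calS_{A,\ell}^{\sc}\to\calS_{A,\ell}$, whose kernel $Z$ is a finite multiplicative group scheme of order bounded in terms of $g$.  For $\ell$ sufficiently large (in particular coprime to $|Z|$), lifting of unipotent one-parameter subgroups shows that the image of $\calS_{A,\ell}^{\sc}(\FF_\ell)\to\calS_{A,\ell}(\FF_\ell)$ contains every $\ell$-element of $\calS_{A,\ell}(\FF_\ell)$, and hence contains $\Gamma^+$; conversely, perfection and $\ell$-generation of $\calS_{A,\ell}^{\sc}(\FF_\ell)$ (a classical Chevalley--Steinberg fact for simply connected semisimple groups over large enough finite fields) imply that this image is itself generated by $\ell$-elements, and so is contained in $\Gamma^+$.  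Lang's theorem applied to the isogeny then identifies $\Gamma/\Gamma^+$ with a subgroup of the abelian group $H^1(\FF_\ell,Z)$, whose cardinality is at most $|Z|\ll_g 1$.

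Part~(\ref{L:calS mod ell basics iii}) is a formal consequence of the first two: perfection from~(\ref{L:calS mod ell basics i}) gives $\Gamma^+=[\Gamma^+,\Gamma^+]\subseteq[\Gamma,\Gamma]$, while abelianness of $\Gamma/\Gamma^+$ from~(\ref{L:calS mod ell basics ii}) gives $[\Gamma,\Gamma]\subseteq\Gamma^+$.
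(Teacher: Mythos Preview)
Your proposal is correct and the overall architecture is sound, but the route differs from the paper's in two notable ways.

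For part~(\ref{L:calS mod ell basics i}), the paper applies Larsen--Pink directly to $\calS_{A,\ell}(\FF_\ell)^+$ (which equals $\bbar\rho_{A,\ell}(\Gal_K)^+$ by the preamble to \S\ref{SS:derived calG}) rather than to the full $\calS_{A,\ell}(\FF_\ell)$; since that group is already generated by $\ell$-elements, one has $\Gamma_1=\Gamma$ automatically and the case analysis is slightly shorter. One caution in your write-up: the step ``$(\calS_{A,\ell})_{\FF_\ell}$ acts semisimply $\Rightarrow$ $\calS_{A,\ell}(\FF_\ell)$ acts semisimply'' is not automatic---semisimplicity for an algebraic group does not formally descend to its group of $\FF_\ell$-points. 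The clean fix is to note that $\Gamma_3$, being an $\ell$-group, lies inside $\calS_{A,\ell}(\FF_\ell)^+$, which the paper's preamble already shows acts semisimply via Clifford for finite groups; then $\Gamma_3\trianglelefteq\Gamma^+$ and the unipotent-plus-semisimple argument goes through.

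For part~(\ref{L:calS mod ell basics ii}), your route is genuinely different and more structural: the paper simply cites Nori's statement~3.6(v) (using that $(\calS_{A,\ell})_{\FF_\ell}=\boldG_\Gamma$), whereas you pass to the simply connected cover $\pi\colon\calS_{A,\ell}^{\sc}\to\calS_{A,\ell}$ and use Lang's theorem to identify $\Gamma/\Gamma^+$ with $H^1(\FF_\ell,Z)$. This is exactly the alternate description the paper records in the Remark following Proposition~\ref{P:calS}, and it has the advantage of making the abelianness and the bound $|\Gamma/\Gamma^+|\leq |Z|\ll_g 1$ transparent without appealing to Nori's internal results. Part~(\ref{L:calS mod ell basics iii}) is handled identically in both.
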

\begin{proof}
Let $G$ be a finite simple quotient of $\calS_{A,\ell}(\FF_\ell)^+$.  By increasing $c$, we may assume that $\ell > J(2g)$ with $J(2g)$ as in Theorem~\ref{T:Larsen-Pink}.     The group $\calS_{A,\ell}(\FF_\ell)^+$, and hence also $G$, is generated by elements of order $\ell$, so Theorem~\ref{T:Larsen-Pink} implies that $G$ is either a finite (nonabelian) simple group of Lie type in characteristic $\ell$ or a cyclic group of order $\ell$.

Suppose that $G$ is a cyclic group of order $\ell$.   Theorem~\ref{T:Larsen-Pink}  implies that $\Gamma=\calS_{A,\ell}(\FF_\ell)^+$ contains a normal subgroup $U\neq 1$ that is an $\ell$-group.  So $U$ is a unipotent subgroup of $\GL_{A[\ell]}(\FF_\ell)$.     Since $U$ is a normal subgroup of $\Gamma$, we find that $U$ acts semisimply on $A[\ell]$ by Clifford's theorem \cite{MR632548}*{11.1}.   Since the action of $U$ on $A[\ell]$ is unipotent and semisimple, we must have $U=1$.  This contradiction proves part (\ref{L:calS mod ell basics i}).

Since $(\calS_{A,\ell})_{\FF_\ell}$ is of the form $\boldG_\Gamma$, part (\ref{L:calS mod ell basics ii}) follows from statement 3.6(v) of \cite{MR880952}.

From (\ref{L:calS mod ell basics ii}), we find that $\calS_{A,\ell}(\FF_\ell)' \subseteq \calS_{A,\ell}(\FF_\ell)^+$ and that $\calS_{A,\ell}(\FF_\ell)^+/\calS_{A,\ell}(\FF_\ell)'$ is abelian.  Since $\calS_{A,\ell}(\FF_\ell)^+$ has no abelian quotients by (\ref{L:calS mod ell basics i}), we deduce that $\calS_{A,\ell}(\FF_\ell)^+=\calS_{A,\ell}(\FF_\ell)'$.   This proves (\ref{L:calS mod ell basics iii}). 
\end{proof}

Let $\calB$ be the inverse image of $\calS_{A,\ell}(\FF_\ell)^+$ under the reduction modulo $\ell$ map $\calS_{A,\ell}(\ZZ_\ell) \to \calS_{A,\ell}(\FF_\ell)$.

\begin{lemma}  \label{L:subgroup exp} 
Let $H$ be a closed subgroup of $\calS_{A,\ell}(\ZZ_\ell)$ whose image in $\calS_{A,\ell}(\FF_\ell)$ contains $\calS_{A,\ell}(\FF_\ell)^+$.   Then $H\supseteq \calB$.  
\end{lemma}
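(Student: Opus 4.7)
The plan is to show that $[\calS_{A,\ell}(\ZZ_\ell) : H]$ is bounded by a constant depending only on $g$, and then to exploit the fact that the first congruence subgroup $\calK_1 := \ker(\calS_{A,\ell}(\ZZ_\ell) \to \calS_{A,\ell}(\FF_\ell))$ is a pro-$\ell$ group in order to conclude $\calK_1 \subseteq H$ once $\ell$ exceeds this bound. After replacing $H$ by $H \cap \calB$ if necessary, I may assume $H \subseteq \calB$; the hypothesis is preserved because $\calB$ is exactly the preimage of $\calS_{A,\ell}(\FF_\ell)^+$ in $\calS_{A,\ell}(\ZZ_\ell)$.

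For the index bound, let $\Gamma$ denote the image of $H$ in $\calS_{A,\ell}(\FF_\ell) \subseteq \GL_{A[\ell]}(\FF_\ell)$. Every element of order $\ell$ in $\calS_{A,\ell}(\FF_\ell)$ lies in $\calS_{A,\ell}(\FF_\ell)^+$ by definition, so $\Gamma^+ = \calS_{A,\ell}(\FF_\ell)^+$; consequently $\boldG_\Gamma = \boldG_{\Gamma^+}$ coincides with $(\calS_{A,\ell})_{\FF_\ell}$, as recorded in the discussion preceding the lemma (and ultimately relying on Theorem~\ref{T:equal groups}). Thus $\Ndim(H) = \dim \calS_{A,\ell}$. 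The Zariski closure $S$ of $H$ in $\GL_{V_\ell(A)}$ is contained in $(\calS_{A,\ell})_{\QQ_\ell}$, so $\dim S \leq \dim \calS_{A,\ell}$; combined with Theorem~\ref{T:Larsen exponential}(i), this forces $\dim S = \dim \calS_{A,\ell}$, and since $(\calS_{A,\ell})_{\QQ_\ell}$ is connected, $S = S^\circ = (\calS_{A,\ell})_{\QQ_\ell}$. In particular $S^\circ$ is semisimple and $\Ndim(H) = \dim S$, so Theorem~\ref{T:Larsen exponential}(ii) yields $[\calS_{A,\ell}(\ZZ_\ell):H] \leq c_4(2g)$.

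Since $\calS_{A,\ell}$ is smooth over $\ZZ_\ell$, the kernel $\calK_1$ is a pro-$\ell$ group (its successive congruence quotients are $\FF_\ell$-vector spaces), so every open subgroup of $\calK_1$ has index a power of $\ell$. Applying the standard inequality $[\calK_1 : H \cap \calK_1] \leq [\calS_{A,\ell}(\ZZ_\ell) : H] \leq c_4(2g)$ and increasing $c$ if necessary so that $\ell > c_4(2g)$, the index $[\calK_1 : H \cap \calK_1]$ must equal $1$, i.e., $\calK_1 \subseteq H$. Combined with the hypothesis that $H$ surjects onto $\calS_{A,\ell}(\FF_\ell)^+ = \calB/\calK_1$, this immediately gives $H = \calB$. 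The only subtle point is the Nori-group identification $\boldG_\Gamma = (\calS_{A,\ell})_{\FF_\ell}$ for the new subgroup $\Gamma$; once one observes that $\boldG_\Gamma$ depends only on $\Gamma^+$, this reduces cleanly to the already-established case $\Gamma^+ = \calS_{A,\ell}(\FF_\ell)^+$.
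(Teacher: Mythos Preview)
Your proof is correct and follows essentially the same route as the paper's: reduce to the case where $H$ has image exactly $\calS_{A,\ell}(\FF_\ell)^+$, identify $\boldG_\Gamma$ with $(\calS_{A,\ell})_{\FF_\ell}$ to compute $\Ndim(H)$, apply Larsen's Theorem~\ref{T:Larsen exponential} to bound $[\calS_{A,\ell}(\ZZ_\ell):H]$ by $c_4(2g)$, and then use that the first congruence kernel is pro-$\ell$ together with $\ell > c_4(2g)$ to force $\calK_1 \subseteq H$. The only differences from the paper are expository (you name $\calK_1$ explicitly and spell out the index comparison $[\calK_1:H\cap\calK_1]\leq[\calS_{A,\ell}(\ZZ_\ell):H]$).
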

\begin{proof}
Without loss of generality, we may assume that the image of $H$ in $\calS_{A,\ell}(\FF_\ell)$ is $\calS_{A,\ell}(\FF_\ell)^+=\Gamma$.   We have $\boldG_\Gamma=(\calS_{A,\ell})_{\FF_\ell}$, so $\Ndim(H) = \dim (\calS_{A,\ell})_{\FF_\ell}$.

Let $S$ be the Zariski closure of $H$ in $G_{A,\ell}$.   We have $S \subseteq (\calS_{A,\ell})_{\QQ_\ell}$.   By Theorem~\ref{T:Larsen exponential}(\ref{T:Larsen exponential i}), we have $\Ndim(H) \leq \dim S \leq \dim (\calS_{A,\ell})_{\QQ_\ell}$.   So we have
\[
\Ndim(H) \leq \dim S \leq  \dim (\calS_{A,\ell})_{\QQ_\ell} = \dim (\calS_{A,\ell})_{\FF_\ell}=\Ndim(H).
\]
Therefore,  $\dim S =  \dim (\calS_{A,\ell})_{\QQ_\ell}$ and hence $S = (\calS_{A,\ell})_{\QQ_\ell}$ since $(\calS_{A,\ell})_{\QQ_\ell}$ is connected.   In particular, $S$ is connected and semisimple.   Applying Theorem~\ref{T:Larsen exponential}(\ref{T:Larsen exponential ii}), we find that
\[
[S(\QQ_\ell)\cap \calG_{A,\ell}(\ZZ_\ell) : H] \leq c_4(2g).
\]
After possibly increasing $c$, we may assume that $\ell > c_4(2g)$ and thus $[\calS_{A,\ell}(\ZZ_\ell) : H] < \ell$.  Therefore, $H$ contains the pro-$\ell$ group that is the kernel of the reduction map $\calS_{A,\ell}(\ZZ_\ell) \to \calS_{A,\ell}(\FF_\ell)$.     It is now clear that $H$ is equal to $\calB$.
\end{proof}

\begin{lemma} \label{L:der prop}
The group $\calB$ is perfect and is equal to $\calS_{A,\ell}(\ZZ_\ell)'$.
\end{lemma}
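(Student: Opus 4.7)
The plan is to split the statement into the two assertions: (i) $\calB$ is perfect and (ii) $\calB=\calS_{A,\ell}(\ZZ_\ell)'$. The easy containment $\calS_{A,\ell}(\ZZ_\ell)' \subseteq \calB$ will be immediate from the definition of $\calB$ together with Lemma~\ref{L:calS mod ell basics}(\ref{L:calS mod ell basics ii}), after which the whole statement reduces to proving (i). To establish perfectness I will apply Lemma~\ref{L:subgroup exp} to the closed subgroup $H:=\calB'$; this lemma is the engine of the argument, and the entire task becomes checking its hypotheses for $H$.

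For the easy direction, the defining diagram
\[
\calS_{A,\ell}(\ZZ_\ell)/\calB \hookrightarrow \calS_{A,\ell}(\FF_\ell)/\calS_{A,\ell}(\FF_\ell)^+
\]
is injective by the preimage definition of $\calB$, and the target is abelian by Lemma~\ref{L:calS mod ell basics}(\ref{L:calS mod ell basics ii}). Hence $\calS_{A,\ell}(\ZZ_\ell)/\calB$ is abelian and $\calS_{A,\ell}(\ZZ_\ell)' \subseteq \calB$. Once I show $\calB=\calB'$, this gives $\calB=\calB'\subseteq \calS_{A,\ell}(\ZZ_\ell)'$, and combining the two containments yields (ii).

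To show $\calB$ is perfect, set $H:=\calB'$, which is closed in $\calS_{A,\ell}(\ZZ_\ell)$ by the convention in \S\ref{SS:notation}. Since $\calS_{A,\ell}$ is semisimple, hence smooth, over $\ZZ_\ell$, Hensel's lemma makes the reduction map $\calS_{A,\ell}(\ZZ_\ell)\to \calS_{A,\ell}(\FF_\ell)$ surjective, so the image of $\calB$ is exactly $\calS_{A,\ell}(\FF_\ell)^+$. Reduction is continuous and the target is finite (hence discrete), so the image of $H=\calB'$ equals the commutator subgroup of $\calS_{A,\ell}(\FF_\ell)^+$; by Lemma~\ref{L:calS mod ell basics}(\ref{L:calS mod ell basics i}) this group is perfect, so its commutator is $\calS_{A,\ell}(\FF_\ell)^+$ itself. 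Thus the hypotheses of Lemma~\ref{L:subgroup exp} hold for $H$, and we conclude $H\supseteq \calB$. Combined with $H=\calB'\subseteq \calB$, this proves $\calB=\calB'$, and hence both parts of the lemma. The only nontrivial ingredient beyond bookkeeping is Lemma~\ref{L:subgroup exp} itself; surjectivity of reduction from smoothness and the image-of-commutator computation in the discrete quotient are routine.
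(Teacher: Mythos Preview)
Your proof is correct and follows essentially the same approach as the paper: both arguments hinge on applying Lemma~\ref{L:subgroup exp} to a closed subgroup whose mod-$\ell$ image is shown, via Lemma~\ref{L:calS mod ell basics}, to be $\calS_{A,\ell}(\FF_\ell)^+$. The only cosmetic difference is that the paper applies Lemma~\ref{L:subgroup exp} in parallel to both $H=\calB'$ and $H=\calS_{A,\ell}(\ZZ_\ell)'$, whereas you apply it once to $H=\calB'$ and then obtain $\calB=\calS_{A,\ell}(\ZZ_\ell)'$ by the sandwich $\calS_{A,\ell}(\ZZ_\ell)'\subseteq\calB=\calB'\subseteq\calS_{A,\ell}(\ZZ_\ell)'$.
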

\begin{proof}
Let $H$ be the group $\calB'$ or $\calS_{A,\ell}(\ZZ_\ell)'$.  By Lemma~\ref{L:calS mod ell basics}, we find that the image of $H$ in $\calS_{A,\ell}(\FF_\ell)$ is $\calS_{A,\ell}(\FF_\ell)^+$.   Therefore, $H=\calB$ by Lemma~\ref{L:subgroup exp}.  This proves the lemma.
\end{proof}

  We now summarize some important properties of the groups $\calS_{A,\ell}(\ZZ_\ell)$.
\begin{prop} \label{P:calS}
\begin{romanenum}
\item  \label{P:calS i}
The group $\calS_{A,\ell}(\ZZ_\ell)'$ agrees with the inverse image of $\calS_{A,\ell}(\FF_\ell)'$ under the reduction modulo $\ell$ homomorphism $\calS_{A,\ell}(\ZZ_\ell)\to \calS_{A,\ell}(\FF_\ell)$.
\item  \label{P:calS ii}
The groups $\calS_{A,\ell}(\ZZ_\ell)'$ and $\calS_{A,\ell}(\FF_\ell)'$ are perfect.
\item \label{P:calS iii}
The cardinality of the group $\calS_{A,\ell}(\ZZ_\ell)/\calS_{A,\ell}(\ZZ_\ell)'\cong \calS_{A,\ell}(\FF_\ell)/\calS_{A,\ell}(\FF_\ell)'$ can be bounded in terms of $g$.
\item  \label{P:calS iv}
The finite simple quotients of $\calS_{A,\ell}(\ZZ_\ell)'$ are all groups of Lie type in characteristic $\ell$.
\item  \label{P:calS v}
We have $\calG_{A,\ell}(\ZZ_\ell)'=\calS_{A,\ell}(\ZZ_\ell)'$.  
\end{romanenum}
\end{prop}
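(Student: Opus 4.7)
I would prove the five assertions in order, drawing principally on Lemma~\ref{L:calS mod ell basics} and Lemma~\ref{L:der prop}, together with the fact that $\calS_{A,\ell}$ is semisimple and $\calG_{A,\ell}/\calS_{A,\ell}$ is a torus. Throughout, let $\pi\colon\calS_{A,\ell}(\ZZ_\ell)\to\calS_{A,\ell}(\FF_\ell)$ denote reduction modulo $\ell$, with kernel $K_0$ (a pro-$\ell$ group).

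For (i), Lemma~\ref{L:der prop} identifies $\calS_{A,\ell}(\ZZ_\ell)'$ with $\calB=\pi^{-1}(\calS_{A,\ell}(\FF_\ell)^+)$, and Lemma~\ref{L:calS mod ell basics}(iii) gives $\calS_{A,\ell}(\FF_\ell)^+=\calS_{A,\ell}(\FF_\ell)'$; these combine to the desired identification. Part (ii) is immediate from Lemma~\ref{L:der prop} (perfectness of $\calB$) and Lemma~\ref{L:calS mod ell basics}(i) applied through (iii). For (iii), since $K_0\subseteq\calB$, the map $\pi$ descends to an isomorphism $\calS_{A,\ell}(\ZZ_\ell)/\calS_{A,\ell}(\ZZ_\ell)'\xrightarrow{\sim}\calS_{A,\ell}(\FF_\ell)/\calS_{A,\ell}(\FF_\ell)'$, and the cardinality bound is Lemma~\ref{L:calS mod ell basics}(ii).

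For (iv), let $G$ be a finite simple quotient of $\calS_{A,\ell}(\ZZ_\ell)'$, written as $\calS_{A,\ell}(\ZZ_\ell)'/N$ for a closed normal subgroup $N$, and let $K=K_0\cap\calS_{A,\ell}(\ZZ_\ell)'$, which is pro-$\ell$. Simplicity of $G$ forces the normal image $KN/N$ to be either trivial or all of $G$. In the first case $G$ is a simple quotient of $\calS_{A,\ell}(\ZZ_\ell)'/K\cong\calS_{A,\ell}(\FF_\ell)^+$, which by Lemma~\ref{L:calS mod ell basics}(i) is of Lie type in characteristic $\ell$. In the second case $K$ surjects onto $G$, so $G$ is a finite simple $\ell$-group, necessarily cyclic of order $\ell$; this contradicts perfectness of $\calS_{A,\ell}(\ZZ_\ell)'$ from~(ii).

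For (v), the inclusion $\calS_{A,\ell}(\ZZ_\ell)'\subseteq\calG_{A,\ell}(\ZZ_\ell)'$ is formal. For the reverse inclusion, the torus quotient $\calG_{A,\ell}/\calS_{A,\ell}$ being abelian yields $\calG_{A,\ell}(\ZZ_\ell)'\subseteq\calS_{A,\ell}(\ZZ_\ell)$, and smoothness of $\calG_{A,\ell}$ makes reduction modulo $\ell$ surjective, so the image of $\calG_{A,\ell}(\ZZ_\ell)'$ under $\pi$ is $\calG_{A,\ell}(\FF_\ell)'$. By part~(i) it thus suffices to show $\calG_{A,\ell}(\FF_\ell)'\subseteq\calS_{A,\ell}(\FF_\ell)'$. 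To establish this, I would use the almost-direct-product decomposition $\calG_{A,\ell}=\calZ\cdot\calS_{A,\ell}$, where $\calZ$ is the scheme-theoretic center: over $\bbar{\FF}_\ell$ every element decomposes as $g=\bar z\cdot\bar s$, and since the $\bar z$'s are central, $[g_1,g_2]=[\bar s_1,\bar s_2]\in\calS_{A,\ell}(\bbar{\FF}_\ell)'$. The obstruction to descending the decomposition to $\FF_\ell$-points lies in $H^1(\FF_\ell,\calZ\cap\calS_{A,\ell})$, which is finite and controlled in terms of~$g$; combined with the identification from~(i) and the fact that $\calG_{A,\ell}(\FF_\ell)'$ is a group of $\FF_\ell$-points lying in both $\calS_{A,\ell}(\FF_\ell)$ and the closed subgroup $\calS_{A,\ell}(\bbar{\FF}_\ell)'$, this yields the desired containment.

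\textbf{Main obstacle.} Parts (i)--(iv) are essentially bookkeeping on top of what has already been proved. The real work is in~(v), specifically verifying $\calG_{A,\ell}(\FF_\ell)'\subseteq\calS_{A,\ell}(\FF_\ell)'$, equivalently that the natural extension $1\to\calS_{A,\ell}(\FF_\ell)/\calS_{A,\ell}(\FF_\ell)'\to\calG_{A,\ell}(\FF_\ell)/\calS_{A,\ell}(\FF_\ell)'\to\calG_{A,\ell}(\FF_\ell)/\calS_{A,\ell}(\FF_\ell)\to1$ is abelian. This is a purely reductive-group-theoretic statement whose cleanest proof goes through the almost-direct-product of $\calG_{A,\ell}$ by its center, together with a Lang-type surjectivity argument that may require increasing $c$ and $\gamma$ once more so that the relevant finite cohomology group is controlled purely in terms of $g$.
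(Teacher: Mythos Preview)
Your treatment of parts (\ref{P:calS i})--(\ref{P:calS iv}) is correct and matches the paper's argument essentially line for line: the paper also derives (\ref{P:calS i}) and (\ref{P:calS ii}) directly from Lemmas~\ref{L:calS mod ell basics} and~\ref{L:der prop}, gets (\ref{P:calS iii}) from (\ref{P:calS i}) together with surjectivity of reduction (smoothness) and Lemma~\ref{L:calS mod ell basics}(\ref{L:calS mod ell basics ii}), and handles (\ref{P:calS iv}) by the same pro-$\ell$/prosolvable dichotomy you describe.

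The gap is in part (\ref{P:calS v}). Your almost-direct-product argument over $\FFbar_\ell$ shows only that each commutator $[g_1,g_2]$ with $g_i\in\calG_{A,\ell}(\FF_\ell)$ lies in $\calS_{A,\ell}(\FFbar_\ell)'$. But for $\ell$ large the semisimple group $\calS_{A,\ell}(\FFbar_\ell)$ is perfect, so $\calS_{A,\ell}(\FFbar_\ell)'=\calS_{A,\ell}(\FFbar_\ell)$, and intersecting with $\FF_\ell$-points yields nothing beyond the already-known containment $\calG_{A,\ell}(\FF_\ell)'\subseteq\calS_{A,\ell}(\FF_\ell)$. The $H^1(\FF_\ell,\calZ\cap\calS_{A,\ell})$ obstruction governs whether the decomposition $g=zs$ descends to $\FF_\ell$, but even a nonzero bounded $H^1$ does not let you conclude that $[g_1,g_2]$ is a commutator of $\FF_\ell$-rational elements of $\calS_{A,\ell}$, which is what you would need. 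In short, your argument never produces the refined containment $\calG_{A,\ell}(\FF_\ell)'\subseteq\calS_{A,\ell}(\FF_\ell)'=\calS_{A,\ell}(\FF_\ell)^+$.

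The paper closes this gap by a different route: it invokes \cite{MR3566639}*{Proposition~1.1}, which asserts that for a connected reductive group $G$ over $\FF_\ell$ the quotient $G(\FF_\ell)/G(\FF_\ell)^+$ is abelian. This gives $\calG_{A,\ell}(\FF_\ell)'\subseteq\calG_{A,\ell}(\FF_\ell)^+=\calS_{A,\ell}(\FF_\ell)^+$ immediately (the equality holding because unipotents of $\calG_{A,\ell}$ lie in $\calS_{A,\ell}$). From there the argument is exactly as you outline: $H:=\calG_{A,\ell}(\ZZ_\ell)'$ has image modulo $\ell$ contained in $\calS_{A,\ell}(\FF_\ell)^+$ and containing it (since $H\supseteq\calS_{A,\ell}(\ZZ_\ell)'$), whence $H=\calB=\calS_{A,\ell}(\ZZ_\ell)'$ by Lemma~\ref{L:der prop}. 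So the missing ingredient in your proof is precisely a Nori-type statement about $G(\FF_\ell)/G(\FF_\ell)^+$ for reductive (not just semisimple) $G$, which does not follow from the central-isogeny/$H^1$ considerations you sketch.
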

\begin{proof}
By Lemma~\ref{L:calS mod ell basics}, $\calS_{A,\ell}(\FF_\ell)'$ is perfect and equals $\calS_{A,\ell}(\FF_\ell)^+$.   Parts (\ref{P:calS i}) and (\ref{P:calS ii}) are immediate consequences of Lemma~\ref{L:der prop}.   Since $\calS_{A,\ell}$ is smooth, the reduction modulo $\ell$ map $\calS_{A,\ell}(\ZZ_\ell) \to \calS_{A,\ell}(\FF_\ell)$ is surjective; part (\ref{P:calS iii}) now follows from part (\ref{P:calS i}) and Lemma~\ref{L:calS mod ell basics}(\ref{L:calS mod ell basics ii}).

Let $G$ be a finite simple quotient of $\calS_{A,\ell}(\ZZ_\ell)'$; it is nonabelian since $\calS_{A,\ell}(\ZZ_\ell)'$ is perfect.   The kernel of the reduction modulo $\ell$ map $\calS_{A,\ell}(\ZZ_\ell)'\twoheadrightarrow \calS_{A,\ell}(\ZZ_\ell)'$ is a pro-$\ell$ group.  Since pro-$\ell$ groups are prosolvable, we find that $G$ must be a quotient of $\calS_{A,\ell}(\FF_\ell)'$.  By Lemma~\ref{L:calS mod ell basics}(\ref{L:calS mod ell basics i}), the group $G$ is a finite simple group of Lie type in characteristic $\ell$.

Define $H:=\calG_{A,\ell}(\ZZ_\ell)'$; it is a closed subgroup of $\calS_{A,\ell}(\ZZ_\ell)$.   With $G:=(\calG_{A,\ell})_{\FF_\ell}$, the quotient $G(\FF_\ell)/G(\FF_\ell)^+$ is abelian by \cite{MR3566639}*{Proposition~1.1}.  This implies that $H$ modulo $\ell$ is a subgroup of $\calG_{A,\ell}(\FF_\ell)^+=\calS_{A,\ell}(\FF_\ell)^+$.   Since $H\supseteq \calS_{A,\ell}(\ZZ_\ell)'$, we find by Lemma~\ref{L:der prop} that $H$ modulo $\ell$ contains the group $\calS_{A,\ell}(\FF_\ell)^+$.    Therefore, $H=\calB$.  By Lemma~\ref{L:der prop}, we deduce that $\calG_{A,\ell}(\ZZ_\ell)'=\calS_{A,\ell}(\ZZ_\ell)'$. 
\end{proof}

\begin{remark}
There is an alternate description of the commutator subgroups of $\calS_{A,\ell}(\ZZ_\ell)$ and $\calS_{A,\ell}(\FF_\ell)$.  Let $\pi\colon \calS_{A,\ell}^\sc \to \calS_{A,\ell}$ be the simply connected cover of $\calS_{A,\ell}$.   We have $\calS_{A,\ell}(\ZZ_\ell)' = \pi( \calS_{A,\ell}^\sc(\ZZ_\ell) )$ and $\calS_{A,\ell}(\FF_\ell)' = \pi( \calS_{A,\ell}^\sc(\FF_\ell) )$; as noted in \S1.2 of \cite{MR1944805}, $\pi( \calS_{A,\ell}^\sc(\FF_\ell) )$ is the commutator subgroup of $\calS_{A,\ell}(\FF_\ell)$ and $\pi( \calS_{A,\ell}^\sc(\ZZ_\ell) )$ is the subgroup of $\calS_{A,\ell}(\ZZ_\ell)$ whose reduction modulo $\ell$ lies in $\pi( \calS_{A,\ell}^\sc(\FF_\ell) )$.
\end{remark}

\subsection{Proof of Theorem~\ref{T:main new}(\ref{T:main new d})}
With $\Gamma:=\bbar\rho_{A,\ell}(\Gal_K)$, we have $\boldG_\Gamma= (\calS_{A,\ell})_{\FF_\ell}$ by Theorem~\ref{T:equal groups}.  By Theorem~\ref{T:Nori}, we have $\Gamma^+= \boldG_\Gamma(\FF_\ell)^+=\calS_{A,\ell}(\FF_\ell)^+$ and hence $ \calS_{A,\ell}(\FF_\ell)^+$ is a subgroups of $\Gamma= \bbar\rho_{A,\ell}(\Gal_K)$.   

Since $\calS_{A,\ell}(\FF_\ell)^+$ is perfect by Lemma~\ref{L:calS mod ell basics}(\ref{L:calS mod ell basics i}), we have $\calS_{A,\ell}(\FF_\ell)^+ \subseteq \bbar\rho_{A,\ell}(\Gal_K)'$. Therefore, $\bbar\rho_{A,\ell}(\Gal_K)' \subseteq \calG_{A,\ell}(\FF_\ell)' = \calS_{A,\ell}(\FF_\ell)'$, where the last equality follows from Lemma~\ref{P:calS}(\ref{P:calS v}).   By Lemma~\ref{L:calS mod ell basics}(\ref{L:calS mod ell basics iii}), we deduce that
\[
\bbar\rho_{A,\ell}(\Gal_K)' = \calS_{A,\ell}(\FF_\ell)' = \calS_{A,\ell}(\FF_\ell)^+.
\]

So $\rho_{A,\ell}(\Gal_K)'$ is a closed subgroup of $\calS_{A,\ell}(\ZZ_\ell)$ whose reduction modulo $\ell$ is equal to $\calS_{A,\ell}(\FF_\ell)^+$.   By Lemmas \ref{L:subgroup exp}  and \ref{L:der prop}, we have $\rho_{A,\ell}(\Gal_K)' = \calS_{A,\ell}(\ZZ_\ell)'$.   Therefore, $\rho_{A,\ell}(\Gal_K)'=\calG_{A,\ell}(\ZZ_\ell)'$ by Lemma~\ref{P:calS}(\ref{P:calS v}).

\section{Abelian representations} \label{S:abelian}
Fix an abelian variety $A$ of dimension $g\geq 1$ defined over a number field $K$.  Assume that all the $\ell$-adic monodromy groups $G_{A,\ell}$ are connected; equivalently, $K_A^\conn=K$.     

Fix an isogeny 
\[
\iota\colon A\to{\prod}_{i=1}^s A_i,
\] 
where $A_i=B_i^{e_i}$ with $e_i\geq 1$ and the $B_i$ are simple abelian varieties over $K$ that are pairwise non-isogenous.  By Proposition~\ref{P:conn facts}(\ref{P:conn facts iv}), each $B_i$ is geometrically simple and they are pairwise non-isogenous.   The isogeny $\iota$ induces an isomorphism $\End(A)\otimes_\ZZ \QQ = \prod_{i=1}^s \End(A_i)\otimes_\ZZ \QQ$.  Note that none of the results of this section will depend on the choice of $\iota$.

Let $L$ be the center of $\End(A)\otimes_\ZZ \QQ$.   Let $L_i$ be the center of $\End(A_i)\otimes_\ZZ \QQ$; we can also identify it with the center of $\End(B_i)\otimes_\ZZ \QQ$.   We have $L=\prod_{i=1}^s L_i$ and each $L_i$ is a number field.   

Let $T_L$ be the torus defined over $\QQ$ for which $T_L(R)=(L\otimes_\QQ R)^\times$ for any $\QQ$-algebra $R$ with the obvious functoriality.    We have $T_L = \prod_{i=1}^s T_{L_i}$, where $T_{L_i}$ is equal to the restriction of scalars $\Res_{L_i/\QQ}(\GG_{m,L_i})$. 

\subsection{The homomorphism \texorpdfstring{$\beta_{A,\ell}$}{beta}}

Take any prime $\ell$. The isogeny $\iota$ induces an isomorphism $V_\ell(A)=\oplus_{i=1}^s V_\ell(A_i)$ of $\QQ_\ell[\Gal_K]$-modules.   Note that each $V_\ell(A_i)$ is a free $L_i\otimes_\QQ \QQ_\ell$-module of rank $d_i:=2\dim A_i/[L_i:\QQ]$, cf.~\cite{MR0457455}*{II Theorem~2.1.1}.  Since the $\Gal_K$ and $L$ actions  on $V_\ell(A)$ commute, we have
\[
\rho_{A,\ell}(\Gal_K) \subseteq   \Aut_{L\otimes_\QQ \QQ_\ell}(V_\ell(A)) =  {\prod}_{i=1}^s \Aut_{L_{i} \otimes_{\QQ} \QQ_\ell}(V_\ell(A_i))\cong \prod_{i=1}^s \GL_{d_i}(L_i\otimes_\QQ \QQ_\ell).
\] 
By taking determinants, we obtain from $\rho_{A,\ell}$ a homomorphism
\[
\beta_{A,\ell}\colon \Gal_K \to \prod_{i=1}^s (L_i \otimes_\QQ \QQ_\ell)^\times= \prod_{i=1}^s T_{L_i}(\QQ_\ell)= T_L(\QQ_\ell).
\]
Using $\prod_{i=1}^s T_{L_i}= T_L$, we clearly have $\prod_{i=1}^s \beta_{A_i,\ell}=\beta_{A,\ell}$.   The homomorphism $\beta_{A,\ell}\colon \Gal_K\to T_L(\QQ_\ell)$ is unramified at all non-zero prime ideals $\p\nmid \ell$ of $\OO_K$ for which $A$ has good reduction since $\rho_{A,\ell}$ has this property. 

 \begin{lemma} \label{L:potential good}
There is a field extension $F/K$ with $[F:K]\ll_g 1$ such that for all primes $\ell$, the homomorphism $\beta_{A,\ell}|_{\Gal_{F}}\colon \Gal_F \to T_L(\QQ_\ell)$ is unramified at all non-zero prime ideals $\p\nmid \ell$ of $\OO_F$.
\end{lemma}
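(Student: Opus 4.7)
The plan is to choose $F/K$ over which $A$ acquires semistable reduction at every non-zero prime ideal of $\OO_F$, and then to exploit the fact that determinants annihilate unipotent automorphisms. Concretely, I would take $F := K(A[12])$. By the Grothendieck--Raynaud semistable reduction criterion, if $m\geq 3$ and $A[m]\subseteq A(K')$, then $A_{K'}$ has semistable reduction at every prime of $K'$ whose residue characteristic is coprime to $m$. Applying this with $m=3$ above primes of residue characteristic $2$ and with $m=4$ above all other primes shows that $A_F$ has semistable reduction everywhere, and the bound $[F:K] \leq |\GL_{2g}(\ZZ/12\ZZ)|$ depends only on $g$.

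Next I would reduce the problem to unipotence of inertia. Semistability is an isogeny-invariant condition (unipotence of the inertia image on the rational Tate module is intrinsic to the $\QQ_\ell$-representation), so each isogeny factor $A_{i,F}$ is also semistable at every prime of $\OO_F$. Consequently, for every prime $\p\nmid \ell$ of $\OO_F$, the image $\rho_{A_i,\ell}(I_\p)$ consists of unipotent automorphisms of $V_\ell(A_i)$. Because $\Gal_F$ commutes with the $L_i$-action on $V_\ell(A_i)$, this image in fact lies in the smaller group $\Aut_{L_i\otimes_\QQ\QQ_\ell}(V_\ell(A_i)) \cong \GL_{d_i}(L_i\otimes_\QQ\QQ_\ell)$.

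To finish, I would decompose $L_i\otimes_\QQ\QQ_\ell = \prod_{\lambda\mid \ell}(L_i)_\lambda$. Unipotence projects to unipotence on each factor, so the component of $\beta_{A_i,\ell}(I_\p)$ in $(L_i)_\lambda^\times$ is the determinant of a unipotent element of $\GL_{d_i}((L_i)_\lambda)$, which is trivial. Hence $\beta_{A_i,\ell}(I_\p)=1$ in $(L_i\otimes_\QQ\QQ_\ell)^\times$; taking the product over $i$ then gives $\beta_{A,\ell}(I_\p)=1$, which is exactly the unramifiedness we want. I do not anticipate a genuine obstacle beyond the choice of $F$; the only point that merits care is checking that the two values $m\in\{3,4\}$ together cover every residue characteristic, which is immediate since every rational prime is coprime to either $3$ or $4$.
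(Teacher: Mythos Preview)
Your proposal is correct and follows essentially the same approach as the paper: both take $F=K(A[12])$, invoke Raynaud's criterion to get everywhere semistable reduction, and then use Grothendieck's result that inertia acts unipotently on $V_\ell$ in the semistable case to kill the image in the torus $T_L$. The only cosmetic difference is that the paper concludes in one line (``since $T_L$ is a torus, $\beta_{A,\ell}(I_\p)=1$''), whereas you unpack this by passing to each isogeny factor $A_i$ and each $\lambda$-component and computing the determinant explicitly; both arguments are valid and equivalent.
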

\begin{proof}
Define the number field $F=K(A[12])$; we have $[F: K] \leq 12^{(2g)^2}\ll_g 1$.  By a criterion of Raynaud (see Proposition 4.7 of \cite{MR0354656}*{Expose IX}), the abelian variety $A_F$ has semistable reduction at all non-zero prime ideals of $\OO_F$.  

Take any prime $\ell$.  Take any non-zero prime ideal $\p\nmid\ell$ of $\OO_F$ and denote by $I_\p$ an inertia subgroup of $\Gal_F$ at $\p$.  Since $A$ has semistable reduction at $\p$, we know from Grothendieck (Proposition 3.5 of \cite{MR0354656}*{Expose IX}) that the group $\rho_{A,\ell}(I_\p)$ consists of unipotent elements in $\GL_{V_\ell}(\QQ_\ell)$.   Since $T_L$ is a torus, we deduce that $\beta_{A,\ell}(I_\p) =1$.    Therefore, $\beta_{A,\ell}|_{\Gal_F}$ is unramified at $\p$.
\end{proof}

\subsection{The torus \texorpdfstring{$Y$}{Y}}\label{S:abelian setup}

Fix notation as in \S\ref{SS:MT group} and let $V_i$ be the homology group of $A_i$.  The isogeny $\iota$ induces an isomorphism $V= \oplus_{i=1}^s V_i$.    Since $\End(A)\otimes_\ZZ \QQ$ acts faithfully on $V$, we have a faithful action of $T_L$ on $V$ and we may thus identify $T_L$ with an algebraic subgroup of $\GL_V$.

 Let $\mathscr G$ be the algebraic subgroup of $\GL_V$ over $\QQ$ that satisfies
\[
\mathscr G(R)=  {\prod}_{i=1}^s \Aut_{L_{i} \otimes_{\QQ} R}(V_i \otimes_{\QQ} R)
\]
for any $\QQ$-algebra $R$ with the obvious functoriality.  Since $V=\oplus_{i=1}^s V_i$, we find that $T_L \subseteq \mathscr G \subseteq \GL_V$.   The $L_i$-vector space $V_i$ has dimension $d_i=2\dim A_i /[L_i:\QQ]$.  We define
\[
{\det}_{L} \colon \mathscr G \to T_{L}
\]
to be the homomorphism that for each $\QQ$-algebra $R$ takes $(g_1,\ldots, g_s) \in \mathscr G(R)\cong \prod_{i=1}^s \GL_{d_i}(L_i \otimes_\QQ R)$ to $(\det g_1,\ldots, \det g_s) \in \prod_{i=1}^s (L_i \otimes_\QQ R)^\times = (L\otimes_\QQ R)^\times  = T_L(R)$.    In particular, $\det_L$ gives an isogeny $T_L\to T_L$ of tori of degree $d_1\cdots d_s$.

Observe that $\MT_A\subseteq \calG$ since the Mumford--Tate group $\MT_A$ commutes with the action of $L$ on $V$.  Let $C_A$ be the central torus of the reductive group $\MT_A$.  One knows that the commutant of $\MT_A$ in $\End_\QQ(V)$ is naturally isomorphic to $\End(A)\otimes_\ZZ\QQ$.  We have $C_A\subseteq T_L$ since $C_A$ commutes with $\MT_A$ and $\End(A)\otimes_\ZZ\QQ$.   Define the algebraic group \[
Y:={\det}_L(C_A);
\] 
it is a subtorus of $T_L$ defined over $\QQ$.  Observe that $\det_L|_{C_A}\colon C_A \to Y$ is an isogeny of degree at most $d_1\cdots d_s$.    For later, note that $d_1\cdots d_s \ll_g 1$.

The following gives some important information on the image of $\beta_{A,\ell}$.

\begin{prop} \label{P:Zariski dense in Y}
We have $\beta_{A,\ell}(\Gal_K) \subseteq Y(\QQ_\ell)$.  Moreover,  $\beta_{A,\ell}(\Gal_K)$ is Zariski dense in $Y_{\QQ_\ell}$ and ${\det}_L(G_{A,\ell})=Y_{\QQ_\ell}$.
\end{prop}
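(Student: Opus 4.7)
The plan is to reduce everything to the single observation that any homomorphism from a connected reductive group to a torus factors through its central torus, combined with Proposition~\ref{P:same centers} and Proposition~\ref{P:MT inclusion}.

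First I would verify that $\beta_{A,\ell}$ is literally the composition $\det_L \circ \rho_{A,\ell}$. The image $\rho_{A,\ell}(\Gal_K)$ commutes with the $L\otimes_\QQ\QQ_\ell$-action on $V_\ell(A)$ (the $\Gal_K$ and $\End(A)$ actions commute on the Tate module), so it sits inside $\mathscr{G}(\QQ_\ell)$, and unwinding the definitions of $\beta_{A,\ell}$ and $\det_L$ shows $\beta_{A,\ell} = \det_L \circ \rho_{A,\ell}$. Taking Zariski closures, we also get $G_{A,\ell} \subseteq \mathscr{G}_{\QQ_\ell}$, so $\det_L$ restricts to a homomorphism $G_{A,\ell} \to T_{L,\QQ_\ell}$. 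Since $\rho_{A,\ell}(\Gal_K)$ is Zariski dense in $G_{A,\ell}$, its image $\beta_{A,\ell}(\Gal_K)$ is Zariski dense in $\det_L(G_{A,\ell})$; so everything reduces to computing this image.

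The key computation is that $\det_L(G_{A,\ell}) = Y_{\QQ_\ell}$. By our assumption $K_A^\conn = K$ we have $G_{A,\ell} = G_{A,\ell}^\circ$, which is connected reductive by Proposition~\ref{P:conn facts}(\ref{P:conn facts i}). Write $G_{A,\ell} = Z^\circ \cdot [G_{A,\ell}, G_{A,\ell}]$, where $Z^\circ$ is its central torus. The derived subgroup $[G_{A,\ell},G_{A,\ell}]$ is semisimple, hence perfect and equal to its own commutator subgroup, so its image under any homomorphism to the torus $T_{L,\QQ_\ell}$ is trivial. Consequently $\det_L(G_{A,\ell}) = \det_L(Z^\circ)$. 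Now by Proposition~\ref{P:same centers}, the central torus of $G_{A,\ell}$ coincides with the central torus of $(\MT_A)_{\QQ_\ell}$, i.e.\ with $(C_A)_{\QQ_\ell}$ (using Proposition~\ref{P:MT inclusion} to ensure this central torus actually lives inside $G_{A,\ell}$). Thus $\det_L(G_{A,\ell}) = \det_L((C_A)_{\QQ_\ell}) = Y_{\QQ_\ell}$ by the very definition of $Y$.

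Combining: $\beta_{A,\ell}(\Gal_K) = \det_L(\rho_{A,\ell}(\Gal_K)) \subseteq \det_L(G_{A,\ell})(\QQ_\ell) = Y(\QQ_\ell)$, and this image is Zariski dense in $Y_{\QQ_\ell}$ because $\rho_{A,\ell}(\Gal_K)$ is Zariski dense in $G_{A,\ell}$ and $\det_L$ is a morphism of algebraic groups. There is no real obstacle here — the only subtle point is invoking Proposition~\ref{P:same centers} at the right moment to convert the abstract statement ``central torus of $G_{A,\ell}$'' into the concrete object $(C_A)_{\QQ_\ell}$ appearing in the definition of $Y$; without that one would be stuck with $\det_L(\MT_A)_{\QQ_\ell}$, which is a priori larger.
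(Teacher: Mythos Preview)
Your proposal is correct and follows essentially the same route as the paper: identify $\beta_{A,\ell}=\det_L\circ\rho_{A,\ell}$, observe that a homomorphism from a connected reductive group to a torus factors through its central torus, and then invoke Proposition~\ref{P:same centers} to identify that central torus with $(C_A)_{\QQ_\ell}$, giving $\det_L(G_{A,\ell})=Y_{\QQ_\ell}$ and hence both the containment and the Zariski density. One small remark: your parenthetical appeal to Proposition~\ref{P:MT inclusion} is not actually needed---Proposition~\ref{P:same centers} already asserts that the central torus of $G_{A,\ell}$ \emph{is} $(C_A)_{\QQ_\ell}$, so it automatically sits inside $G_{A,\ell}$; the paper instead uses Proposition~\ref{P:MT inclusion} only as an alternative route to the containment $\beta_{A,\ell}(\Gal_K)\subseteq Y(\QQ_\ell)$ via $\det_L(\MT_A)=Y$, before separately establishing the equality $\det_L(G_{A,\ell})=Y_{\QQ_\ell}$.
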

\begin{proof}
 Using comparison isomorphisms, we have
\[
\mathscr G(\QQ_\ell)=  {\prod}_{i=1}^s \Aut_{L_{i} \otimes_{\QQ} \QQ_\ell}(V_\ell(A_i)) = \Aut_{L\otimes_\QQ \QQ_\ell}(V_\ell(A))
\] 
and hence $\rho_{A,\ell}(\Gal_K)\subseteq \mathscr G(\QQ_\ell)$.  In particular, $G_{A,\ell}\subseteq \mathscr{G}_{\QQ_\ell}$.  The homomorphism
\[
\beta_{A,\ell}\colon \Gal_K \to T_L(\QQ_\ell)
\]
can be obtained by composing $\rho_{A,\ell}\colon \Gal_K \to \mathscr G (\QQ_\ell)$ with $\det_L \colon \mathscr{G}(\QQ_\ell)\to T_L(\QQ_\ell)$.  

We have $\beta_{A,\ell}(\Gal_K)=\det_L(\rho_{A,\ell}(\Gal_K)) \subseteq \det_L(G_{A,\ell}(\QQ_\ell))$.   By Proposition~\ref{P:MT inclusion}, we have  $\beta_{A,\ell}(\Gal_K) \subseteq \det_L(\MT_{A}(\QQ_\ell))$.   Since $\MT_A$ is reductive and $T_L$ is a torus, we have $\det_L(\MT_A)=\det_L(C_A)=Y$.  Therefore, $\beta_{A,\ell}(\Gal_K)\subseteq Y(\QQ_\ell)$.

By Proposition~\ref{P:same centers},  $(C_{A})_{\QQ_\ell}$ is the central torus of the reductive group $G_{A,\ell}$.   Since $G_{A,\ell}$ is reductive, we have ${\det}_L(G_{A,\ell})={\det}_L((C_{A})_{\QQ_\ell})=Y_{\QQ_\ell}$.  Since $\rho_{A,\ell}(\Gal_K)$ is Zariski dense in $G_{A,\ell}$, we deduce that $\beta_{A,\ell}(\Gal_K)=\det_L(\rho_{A,\ell}(\Gal_K))$ is Zariski dense in $Y_{\QQ_\ell}$.
\end{proof}

Since $\beta_{A,\ell}$ is continuous and $\Gal_K$ is compact, we have $\beta_{A,\ell}(\Gal_K) \subseteq Y(\QQ_\ell)_c$, where $Y(\QQ_\ell)_c$ is the maximal compact subgroup of $Y(\QQ_\ell)$ with respect to the $\ell$-adic topology.   That $Y(\QQ_\ell)$ has a unique maximal compact subgroup uses that $Y(\QQ_\ell)$ is abelian.  The following theorem, which we will prove in \S\ref{SS:proof of main abelian}, is the main result of \S\ref{S:abelian}.

\begin{thm} \label{T:main abelian}
\begin{romanenum}
\item \label{T:main abelian i}
For any prime $\ell$, we have $[Y(\QQ_\ell)_c:\beta_{A,\ell}(\Gal_K)] \ll_{g,[K:\QQ]} 1$.
\item \label{T:main abelian ii}
For any prime $\ell$ that is unramified in $K$, we have $[Y(\QQ_\ell)_c:\beta_{A,\ell}(\Gal_K)] \ll_{g} 1$.
\end{romanenum}
\end{thm}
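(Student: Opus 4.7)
The plan is to analyze $\beta_{A,\ell}$ as a continuous abelian $\ell$-adic representation into the torus $Y$ and apply Serre's theory of locally algebraic abelian representations to bound its image. First I replace $K$ by the extension $F$ from Lemma~\ref{L:potential good}, so that $\beta_{A,\ell}|_{\Gal_F}$ is unramified at every prime of $\OO_F$ not lying above $\ell$; since $[F:K]\ll_g 1$, we have $[F:\QQ]\ll_g[K:\QQ]$ and $[\beta_{A,\ell}(\Gal_K):\beta_{A,\ell}(\Gal_F)]\ll_g 1$, so it suffices to bound $[Y(\QQ_\ell)_c:\beta_{A,\ell}(\Gal_F)]$. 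Note that if $\ell$ is unramified in $K$ then, up to finitely many exceptional small primes absorbable into the implicit constants, $\ell$ remains unramified in $F=K(A[12])$ as well, so part~(\ref{T:main abelian ii}) will follow from the unramified case of the index estimate below.

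The central step is Serre's theorem on abelian $\ell$-adic Galois representations of number fields: such a representation into a commutative linear algebraic group, being automatically semisimple here because the target is a torus, is locally algebraic. Since $\beta_{A,\ell}(\Gal_F)$ is Zariski dense in $Y_{\QQ_\ell}$ by Proposition~\ref{P:Zariski dense in Y}, Serre's theorem produces an algebraic homomorphism
\[
\psi_\ell\colon \Res_{F/\QQ}(\GG_m)_{\QQ_\ell} \longrightarrow Y_{\QQ_\ell}
\]
such that, via local class field theory, the restriction of $\beta_{A,\ell}$ to the inertia subgroup at any $\lambda\mid\ell$ of $F$ coincides with $\psi_\ell$ on an open subgroup of $\OO_{F_\lambda}^\times$. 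The crucial point, which I expect to be the main obstacle, is that $\psi_\ell$ is the base change of a single $\QQ$-rational homomorphism $\psi\colon\Res_{F/\QQ}(\GG_m)\to Y$ independent of $\ell$. This $\psi$ is built from the Hodge cocharacter $\mu$ of \S\ref{SS:MT group} by summing its $\Gal(\Qbar/\QQ)$-translates and composing with $\det_L$; that $\psi_\ell=(\psi)_{\QQ_\ell}$ uses the compatibility of $\ell$-adic étale and Betti realizations of $A$ together with Propositions~\ref{P:MT inclusion} and~\ref{P:same centers}, which pin down the central torus of $G_{A,\ell}^\circ$ as $(C_A)_{\QQ_\ell}$ and thus force the image of the local type to lie in $Y_{\QQ_\ell}$ rather than merely in $(T_L)_{\QQ_\ell}$.

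Given the $\ell$-independent $\psi$, the index bound becomes mechanical. On the one hand, $\psi((\OO_F\otimes\ZZ_\ell)^\times)$ is an open subgroup of $Y(\QQ_\ell)_c$ whose index is controlled by (i) the degree of the isogeny $\det_L|_{C_A}\colon C_A\to Y$, which divides $d_1\cdots d_s\ll_g 1$, and (ii) the absolute ramification indices $e(\lambda|\ell)$ for $\lambda\mid\ell$ in $F$. When $\ell$ is unramified in $F$, all $e(\lambda|\ell)$ equal $1$ and the smooth $\ZZ_\ell$-models of the tori involved give index $\ll_g 1$; otherwise the ramification contribution is bounded by $[F:\QQ]\ll_g[K:\QQ]$. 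On the other hand, global class field theory applied to $\beta_{A,\ell}|_{\Gal_F}$ (which is unramified outside $\ell$) implies that $\beta_{A,\ell}(\Gal_F)$ contains $\psi((\OO_F\otimes\ZZ_\ell)^\times)$ up to a finite subgroup whose order is bounded by the size of a certain character group of $\OO_F^\times$; by Dirichlet's unit theorem this order is bounded in terms of $\rank_\ZZ\OO_F^\times=O([F:\QQ])$ and $\dim Y\ll_g 1$. Combining the two directions yields the index bounds $\ll_{g,[K:\QQ]} 1$ in general and $\ll_g 1$ in the unramified case, completing the plan.
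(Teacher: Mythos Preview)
Your overall strategy---use Serre's theory of locally algebraic abelian representations to obtain a $\QQ$-rational homomorphism from a Weil-restriction torus into $Y$, then bound the cokernel on $\ell$-adic integral points---is exactly the paper's. But two of your reductions have genuine gaps.

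First, passing to $F=K(A[12])$ breaks part~(\ref{T:main abelian ii}). The extension $K(A[12])/K$ ramifies at primes of bad reduction of $A$; these depend on $A$, not only on $g$, so ``$\ell$ unramified in $K$'' does not force ``$\ell$ unramified in $F$'' outside a set of primes bounded in terms of $g$. The paper avoids this by keeping $K$ fixed and absorbing the bad-reduction contribution into the bounded-index open subgroup $U$ of Proposition~\ref{P:U in Tc}; the ``unramified'' condition then refers only to the splitting behaviour of $(T_K)_{\QQ_\ell}$, which is governed precisely by the ramification of $\ell$ in $K$. Relatedly, your Dirichlet-unit step is a red herring: once $\beta_{A,\ell}(\pi_\ell(u))=\psi(u)^{-1}$ on $U$, the image $\beta_{A,\ell}(\Gal_K)$ already contains $\psi(U)$ with no global-unit obstruction, so invoking $\rank\OO_F^\times$ introduces a spurious $[K:\QQ]$-dependence.

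Second, and more seriously, your bound on $[Y(\QQ_\ell)_c : \psi((\OO_F\otimes\ZZ_\ell)^\times)]$ is not justified. You invoke the degree of $\det_L|_{C_A}\colon C_A\to Y$, but the relevant obstruction is the component group of $W:=\ker(\psi\colon T_F\to Y)$, and there is no reason $|W/W^\circ|$ is bounded by that isogeny degree: even if $\psi$ factored through $C_A$, the kernel of $T_F\to C_A$ need not be connected. In the paper this is the content of Lemma~\ref{L:W cardinality}, which rests on the explicit bound $|n_{i,j}|\le 2g$ for the matrix of $\varphi$ in Lemma~\ref{L: X bases}; that bound in turn is extracted from tame inertia weights at auxiliary primes splitting completely in $K$ and $L$ (Lemma~\ref{L:tame inertia weights}, appealing to Caruso). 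Once $|W/W^\circ|\ll_g 1$ is in hand, the paper passes to a local extension over which $T_K$ becomes unramified, bounds the index there by Lang's theorem (Lemma~\ref{L:F bound}), and descends via a norm and $H^1$ argument (Lemmas~\ref{L:new e power}--\ref{L:H1 finite}). Your Hodge-cocharacter construction of $\psi$ could in principle yield the matrix bound directly (the Hodge weights lie in $\{0,1\}$), but you would have to actually carry this out and, as you acknowledge, verify that the Hodge-built $\psi$ coincides with the $\psi_\ell$ produced by local algebraicity; the paper sidesteps this comparison entirely by building $\varphi$ from Frobenius compatibility (Lemma~\ref{L:locally algebraic}) rather than from Hodge theory.
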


\subsection{\texorpdfstring{$\lambda$}{lambda}-adic representations} \label{SS:lambda-adic reps}

Throughout \S\ref{SS:lambda-adic reps}, we shall further assume that $A$ is a power of a simple abelian variety and hence $L$ is a number field.

We have $L_\ell:=L\otimes_\QQ \QQ_\ell = \prod_{\lambda | \ell} L_\lambda$, where $\lambda$ runs over the prime ideals of $\OO_L$ dividing $\ell$ and $L_\lambda$ is the $\lambda$-adic completion of $L$.  For each $\lambda$, define $V_\lambda(A):= V_\ell(A) \otimes_{L_\ell}  L_\lambda$; it is a $L_\lambda$-vector space of dimension $d=2g/[L:\QQ]$.   We have an isomorphism $V_\ell(A)=\oplus_{\lambda|\ell} V_\lambda(A)$ of $\QQ_\ell[\Gal_K]$-algebras.   The Galois action on $V_\lambda(A)$ gives a representation
\[
\rho_{A,\lambda} \colon \Gal_K\to \Aut_{L_\lambda}(V_\lambda(A)) \cong \GL_d(L_\lambda).
\]
By composing $\rho_{A,\lambda}$ with the determinant map, we obtain a homomorphism \[
\beta_{A,\lambda}\colon \Gal_K\to L_\lambda^\times.
\]
Using $T_L(\QQ_\ell)=(L\otimes_\QQ \QQ_\ell)^\times=\prod_{\lambda|\ell} L_\lambda^\times$, we find that $\beta_{A,\ell}=\prod_{\lambda|\ell} \beta_{A,\lambda}$.

\begin{lemma} \label{L:lambda compatible}
Take any non-zero prime ideal $\p$ of $\OO_K$ for which $A$ has good reduction.    Then there is an element $F_\p \in L^\times$ such that $\beta_{A,\lambda}$ is unramified at $\p$ and satisfies
\[
\beta_{A,\lambda}(\Frob_\p)=F_\p
\]
for all non-zero prime ideals $\lambda$ of $\OO_F$ that do not divide the characteristic of $\FF_\p$.
\end{lemma}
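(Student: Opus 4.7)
The plan is to identify each $\beta_{A,\lambda}(\Frob_\p)$ with the $L_\lambda$-determinant of the Frobenius endomorphism $\pi_\p \in \End(A_\p)$ acting on $V_\lambda(A)$, and then to exhibit this common value as the image of an intrinsically defined element $F_\p \in L^\times$.

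First I would translate $\beta_{A,\lambda}(\Frob_\p)$ into a statement about the Frobenius endomorphism. Since $A$ has good reduction at $\p$, reduction modulo $\p$ yields a Galois-equivariant identification $T_\ell(A) \cong T_\ell(A_\p)$ (for $\ell$ coprime to $\mathrm{char}(\FF_\p)$) under which $\Frob_\p$ acts as $\pi_\p \in \End(A_\p)$. The element $\pi_\p$ is defined over $\FF_\p$, so it commutes with every $\FF_\p$-rational endomorphism of $A_\p$, and in particular with the image of the reduction injection $\End(A) \hookrightarrow \End(A_\p)$. Hence $\pi_\p$ commutes with $L$ and acts $L \otimes_\QQ \QQ_\ell$-linearly on $V_\ell(A)$, preserving the decomposition $V_\ell(A) = \bigoplus_{\lambda \mid \ell} V_\lambda(A)$. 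In particular $\rho_{A,\lambda}$ is unramified at $\p$, and $\beta_{A,\lambda}(\Frob_\p) = \det_{L_\lambda}(\pi_\p \mid V_\lambda(A))$.

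Next I would produce the common rational value. The free $L \otimes_\QQ \QQ_\ell$-module $V_\ell(A)$ of rank $d := 2g/[L:\QQ]$ carries an action of the finite-dimensional commutative $L$-subalgebra $L[\pi_\p] \subseteq \End(A_\p) \otimes \QQ$ (finite-dimensional because $\pi_\p$ satisfies $P_{A,\p}(x) \in \ZZ[x]$). The central rationality claim is that the characteristic polynomial of $\pi_\p$ as an $L \otimes_\QQ \QQ_\ell$-endomorphism of $V_\ell(A)$ comes from a polynomial $Q_\p(x) \in L[x]$ of degree $d$ that is independent of $\ell$. Granting this, the element $F_\p := (-1)^d Q_\p(0) \in L^\times$ (nonzero because $\pi_\p$ is an isogeny) is well-defined, and for every $\lambda$ of residue characteristic different from that of $\FF_\p$, applying $L \hookrightarrow L_\lambda$ to the characteristic-polynomial identity yields $\beta_{A,\lambda}(\Frob_\p) = F_\p$, which is the desired conclusion.

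The main obstacle is the rationality and $\ell$-independence of $Q_\p(x)$. The approach is to exploit the $\End(A) \otimes \QQ$-module structure on $V_\ell(A)$: since $A$ is a power of a simple abelian variety $B$ with $\End(B)\otimes \QQ = D$ a central division $L$-algebra, Faltings' isogeny theorem together with the freeness of rank $d$ of $V_\ell(A)$ over $L \otimes \QQ_\ell$ (already invoked in the paper) pin down the $D \otimes \QQ_\ell$-module $V_\ell(A)$ up to an $L$-rational shape that depends only on $D$ and $d$. The action of $\pi_\p$, which lies in the centralizer of $L$ in $\End(A_\p) \otimes \QQ$, then has a characteristic polynomial defined intrinsically over $L$---namely, the reduced characteristic polynomial of $\pi_\p$ in that centralizer---and this polynomial specializes over each $L_\lambda$ to the $\lambda$-adic characteristic polynomial. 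This rationality is the classical compatibility statement for $\lambda$-adic representations attached to abelian varieties (cf.\ the work of Shimura--Taniyama, Tate, and Chi), and once it is in place the lemma follows at once.
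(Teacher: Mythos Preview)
Your proposal is correct and follows essentially the same route as the paper: both reduce the lemma to the existence of a single polynomial $Q_\p(x)\in L[x]$, independent of $\lambda$, that computes $\det_{L_\lambda}(xI-\rho_{A,\lambda}(\Frob_\p))$ for every $\lambda\nmid \mathrm{char}(\FF_\p)$, and then set $F_\p=(-1)^d Q_\p(0)$. The paper simply cites this rationality/compatibility statement from Ribet \cite{MR0457455}*{II, Theorem~2.1.1} (the same reference already used for the freeness of $V_\ell(A)$ over $L\otimes_\QQ\QQ_\ell$), whereas you sketch its content; note that invoking Faltings is unnecessary here, since Ribet's result is earlier and purely algebraic in nature.
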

\begin{proof}
Let $\lambda$ be a non-zero prime ideal of $\OO_F$ that does not divide the characteristic of $\FF_\p$.  Theorem~2.1.1 of \cite{MR0457455}*{II~\S1} says that there is a polynomial $Q_\p(x)\in L[x]$ such that $\det(xI -\rho_{A,\lambda}(\Frob_\p)) = Q_\p(x)$; the polynomial $Q_\p(x)$ does not depend on the choice of $\lambda$.  Therefore, $\beta_{A,\lambda}(\Frob_\p)= \det(\rho_{A,\lambda}(\Frob_\p))=(-1)^d Q_\p(0)$ which is an element of $L^\times$ that does not depend on $\lambda$.
\end{proof}

For each prime $\ell$, let $\chi_\ell\colon \Gal_K \to \ZZ_\ell^\times$ be the $\ell$-adic cyclotomic character;  we have $\sigma(\zeta)=\zeta^{\chi_\ell(\sigma) \bmod{\ell^n}}$ for any $\ell^n$-th root of unity $\zeta\in \Kbar$ and $\sigma\in \Gal_K$.    Reducing modulo $\ell$, we obtain a homomorphism $\bbar{\chi}_\ell\colon \Gal_K \to \FF_\ell^\times$.

For any non-zero prime ideal $\lambda$ of $\OO_L$, the image of $\beta_{A,\lambda}$ is compact so it lies in $\OO_\lambda^\times$.   Reducing gives a homomorphism 
\[
\bbar\beta_{A,\lambda}\colon \Gal_K \to \FF_\lambda^\times.
\] 

\begin{lemma} \label{L:tame inertia weights}
Fix a prime $\ell\geq 5$ that splits completely in $L$ and is unramified in $K$ and let $\lambda$ be a prime ideal of $\OO_L$ dividing $\ell$.    Let $\p|\ell$ be a prime ideal of $\OO_K$ for which $A$ has good reduction at $\p$ and let $I_\p$ be an inertia subgroup of $\Gal_K$ for the prime $\p$.   Then there is an integer $0\leq b \leq 2g/[L:\QQ]$ such that $\bbar{\beta}_{A,\lambda}(\sigma)=\bbar\chi_\ell(\sigma)^b$ holds for all $\sigma\in I_\p$.
\end{lemma}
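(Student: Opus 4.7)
The plan is to analyze $\bbar\beta_{A,\lambda}|_{I_\p}$ via Raynaud's theorem on finite flat group schemes of type $(\ell,\ldots,\ell)$, together with the classification of tame inertia characters via fundamental characters.

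Since $\ell$ splits completely in $L$, we have $L_\lambda=\QQ_\ell$ and $\FF_\lambda=\FF_\ell$, so $A[\lambda]$ is an $\FF_\ell$-vector space of dimension $d:=2g/[L:\QQ]$ and $\bbar\beta_{A,\lambda}|_{I_\p}$ is precisely the determinant of the $I_\p$-action on $A[\lambda]$. Because $\ell$ is unramified in $K$, the absolute ramification index of $K_\p/\QQ_\ell$ is $e=1$, hence $e<\ell-1$ since $\ell\geq 5$. As $A$ has good reduction at $\p$, the $\ell$-torsion of the N\'eron model of $A_{K_\p}$ is a finite flat commutative group scheme over $\OO_{K_\p}$, and the schematic closure of $A[\lambda]$ in it gives a finite flat subgroup scheme (with its induced $\OO_L/\lambda$-action extended via functoriality of the N\'eron model). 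Invoking Raynaud's theorem, because $e<\ell-1$, wild inertia acts trivially on $A[\lambda]\otimes_{\FF_\ell}\FFbar_\ell$, and the semisimplification of this $I_\p^t$-representation decomposes as $\bigoplus_{i=1}^d\chi_i$, where each $\chi_i\colon I_\p^t\to\FFbar_\ell^\times$ can be written, after passing to a common level $h$ (divisible by the levels of all the $\chi_i$), as $\chi_i=\prod_{k=1}^h\psi_k^{a_{i,k}}$ with $\psi_1,\ldots,\psi_h$ the fundamental characters of level $h$ and every exponent $a_{i,k}\in\{0,1\}$.

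The determinant is then
\[
\bbar\beta_{A,\lambda}|_{I_\p}=\prod_{i=1}^d\chi_i=\prod_{k=1}^h\psi_k^{c_k},\quad c_k:=\sum_{i=1}^d a_{i,k}\in[0,d].
\]
Because $\bbar\beta_{A,\lambda}|_{I_\p}$ takes values in $\FF_\ell^\times$, which is the group of elements of $\FFbar_\ell^\times$ fixed by the $\ell$-power Frobenius, it is Frobenius-invariant. Frobenius cyclically permutes the fundamental characters via $\psi_k\mapsto\psi_k^\ell=\psi_{k+1}$ (indices modulo $h$), so Frobenius-invariance forces $c_1=c_2=\cdots=c_h=:b$, with $0\leq b\leq d=2g/[L:\QQ]$. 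Finally, $\bbar\beta_{A,\lambda}|_{I_\p}=(\psi_1\psi_2\cdots\psi_h)^b=\bbar\chi_\ell^b$, using the standard identity $\psi_1\psi_2\cdots\psi_h=\bbar\chi_\ell|_{I_\p}$ (the product being the norm from $\FF_{\ell^h}^\times$ to $\FF_\ell^\times$ of a level-$h$ fundamental character, which agrees with the mod-$\ell$ cyclotomic character).

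The main delicate point is applying Raynaud's theorem in the correct form: specifically, that for $e<\ell-1$ and a finite flat group scheme over $\OO_{K_\p}$ killed by $\ell$, the characters of $I_\p^t$ appearing in its geometric generic fiber have exponents bounded by $e$ in any expression via fundamental characters. Ensuring that this applies to $A[\lambda]$ rather than only to $A[\ell]$ requires, as outlined above, promoting $A[\lambda]$ to a finite flat subgroup scheme of the N\'eron model using the $\OO_L$-action; the passage to a common level $h$ preserves the digit bound $a_{i,k}\in\{0,1\}$ because norms of fundamental characters between levels distribute the base-$\ell$ digits periodically.
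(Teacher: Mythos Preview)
Your approach is essentially the paper's: both rest on the tame inertia weight bound (you invoke Raynaud, the paper cites Caruso's theorem) together with the identity $\psi_1\cdots\psi_h=\bbar\chi_\ell|_{I_\p}$. The paper organizes the computation differently, treating each irreducible $\FF_\ell[I_\p]$-constituent $\calV$ of dimension $n$ separately: the $\FF_\ell$-determinant of the action on $\calV$ is the norm $N_{\FF_{\ell^n}/\FF_\ell}$ of the associated $\FF_{\ell^n}^\times$-valued character, which equals $\bbar\chi_\ell^{e_1+\cdots+e_n}$ with each $e_i\in\{0,1\}$; summing over constituents gives $0\le b\le d$. You instead pass to a common level $h$ over $\FFbar_\ell$ and argue globally.

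There is a gap in the Frobenius step. The assertion ``Frobenius-invariance forces $c_1=c_2=\cdots=c_h$'' does not follow from the hypotheses as written. Writing $\prod_k\psi_k^{c_k}=\psi_1^N$ with $N=\sum_{k=1}^h c_k\ell^{k-1}$, Frobenius-invariance of the product says only that $(\ell-1)N\equiv 0\pmod{\ell^h-1}$, i.e.\ $S\mid N$ where $S=1+\ell+\cdots+\ell^{h-1}$. Uniqueness of base-$\ell$ digits would give $c_k$ constant only when $d\le\ell-1$, which is not assumed (the lemma requires only $\ell\ge 5$). Two easy fixes: either observe that Frobenius in fact permutes the individual $\chi_i$ (they are the geometric constituents of an $\FF_\ell$-representation, hence occur in Frobenius orbits), and from $\chi_i^\ell=\chi_{\sigma(i)}$ one gets $c_k=c_{k-1}$ by summing; or bypass the claim entirely and note that $S\mid N$ together with $0\le N\le dS$ already gives $N=bS$ with $0\le b\le d$, whence $\psi_1^N=(\psi_1^S)^b=\bbar\chi_\ell^b$.

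A smaller point: promoting ``$A[\lambda]$'' to a finite flat subgroup scheme of the N\'eron model presumes that the idempotent of $L\otimes_\QQ\QQ_\ell$ cutting out the $\lambda$-factor lies in $\End(A)\otimes_\ZZ\ZZ_\ell$, which need not hold since $L$ is only the center of $\End(A)\otimes_\ZZ\QQ$. The paper avoids this by choosing an arbitrary $I_\p$-stable $\OO_\lambda$-lattice $M\subset V_\lambda(A)$ and observing that the $\FF_\ell[I_\p]$-constituents of $M/\lambda M$ already occur among those of $A[\ell]$, so the weight bound still applies.
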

\begin{proof}
Since $\ell$ splits completely in $L$, we have $\FF_\lambda=\FF_\ell$ and hence $\bbar\beta_{A,\lambda}\colon \Gal_K \to \FF_\lambda^\times=\FF_\ell^\times$.

There is a $\OO_\lambda$-submodule $M$ of $V_\lambda(A)$ of rank $\dim_{L_\lambda} V_\lambda(A)$ that is stable under the action of $I_\p$.   Let $W$ be the semi-simplification of $M/\lambda M$ as a module over $\FF_\lambda[I_\p]=\FF_\ell[I_\p]$.  The character $\bbar\beta_{A,\lambda}|_{I_\p}$ thus arises by taking the determinant of the action of $I_\p$ on the vector space $W$ over $\FF_\lambda=\FF_\ell$.

Take any irreducible $\FF_\ell[I_\p]$-submodule $\calV$ of $W$ and set $n=\dim_{\FF_\ell}\calV$.   Let $Z$ be the ring of endomomorphisms of $\calV$ as an $\FF_\ell[I_\p]$-module.  Since $\calV$ is irreducible, $Z$ is a division algebra of finite dimension over $\FF_\ell$.  Therefore, $Z$ is a finite field and $\calV$ is a vector space of dimension $1$ over $Z$.   Choose an isomorphism $Z\cong \FF_{\ell^n}$ of fields.   The action of $I_\p$ on $\calV$ corresponds to a character $\alpha\colon I_\p \to Z^\times\cong \FF_{\ell^n}^\times$.  Let $\varepsilon_1,\ldots, \varepsilon_n \colon I_\p \to \FF_{\ell^n}^\times$ be the \defi{fundamental character of level $n$}, cf.~\cite{MR0387283}*{\S1.7}.    There are unique integers $e_i\in \{0,1,\ldots, \ell-1\}$ such that $\alpha= \varepsilon_1^{e_1}\cdots \varepsilon_n^{e_n}$.  These integers $e_1,\ldots,e_n$ are called the \defi{tame inertia weights} of $\calV$.     

Note that $\calV$ will arise in the semi-simplification of $A[\ell]$ as an $\FF_\ell[I_\p]$-module since $V_\lambda(A)\subseteq V_\ell(A)$.  From \cite{MR2372809}*{Th\'eor\`eme 1.2}, we find that all of the integers $e_i$ are either $0$ or $1$ (this uses that $A[\ell]$ is isomorphic as an $\FF_\ell[\Gal_K]$-module to the dual of $H^1_{\text{\'et}}(A_{\Kbar},\ZZ/\ell\ZZ)$ and the conditions on $\ell$ and $\p$ in the statement of the lemma). 

Take any $\sigma\in I_\p$.   The determinant of $\varepsilon_i(\sigma)\in \FF_{\ell^n}^\times \subseteq \Aut_{\FF_\ell}(\FF_{\ell^n})$ is equal to $N_{\FF_{\ell^n}/\FF_\ell}(\varepsilon_i(\sigma))=\bbar{\chi}_\ell(\sigma)$, where the last equality uses Proposition~8 of \cite{MR0387283}.  Viewing $\alpha(\sigma)$ as an endomorphism of $\calV$, we thus have $\det(\alpha(\sigma))= \bbar\chi_\ell(\sigma)^{e_1+\cdots+e_n}$.  Since $e_i\in \{0,1\}$, there is an integer $0\leq b \leq \dim_{\FF_\ell} \calV$ such that $\det(\sigma|\calV)=\det(\alpha(\sigma))=\bbar\chi_\ell(\sigma)^b$ for all $\sigma\in I_\p$.   

Therefore, there is an integer $0\leq b \leq \dim_{\FF_\ell} W = \dim V_\lambda(A)=2g/[L:\QQ]$ such that $\bbar{\beta}_{A,\lambda}(\sigma)=\bbar\chi_\ell(\sigma)^b$ for all $\sigma\in I_\p$.
\end{proof}

\subsection{Serre tori} \label{SS:Serre tori}

We now recall the families of compatible abelian Galois representations described by Serre in Chapter II of \cite{MR1484415}; for statements generalized to $\lambda$-adic representations see section I of \cite{MR0457455}.  In Lemma~\ref{L:locally algebraic}, we will see how these representations give rise to our $\beta_{A.\ell}$.   

Define the torus 
\[
T_K:=\Res_{K/\QQ}(\GG_{m,K}),
\] 
where we are taking restriction of scalars from $K$ to $\QQ$.  Let $I_K$ be the group of ideles of $K$.  

Fix a \emph{modulus} $\m$, i.e., a sequence $\{\m_v\}_v$ of non-negative integers indexed by the places $v$ of $K$ satisfying $\m_v=0$ for all but finitely many places $v$.   
For a finite place $v$ with $\m_v=0$, we define $U_{v,\m}=\OO_v^\times$. For a finite place $v$ with $\m_v>0$, we define $U_{v,\m}$ to be the subgroup of $x\in \OO_v^\times$ for which the $v$-adic valuation of $1-x$ is at least $\m_v$.   For an infinite place $v$, let $U_{v,\m}$ be the connected component of $K_v^\times$ containing the identity if $\m_v\geq 1$ and $K_v^\times$ otherwise.  Define $U_\m:=\prod_v U_{v,\m}$; it is an open subgroup of $I_K$.   Set $I_\m:=I_K/U_\m$.   

  Let $E_\m$ be the group of $x\in K^\times$ for which $x \in U_\m$.   We let $T_\m$ be the quotient of $T_K$ by the Zariski closure of $E_\m \subseteq K^\times = T_K(\QQ)$.  In \cite{MR1484415}*{II}, Serre constructs a commutative algebraic group $S_\m$ over $\QQ$ whose neutral component is $T_\m$ and for which the quotient $S_\m/T_\m$ equals the finite group $C_\m:=I_K/(I_\m K^\times)$.   He also constructs a homomorphism 
  \[
  \varepsilon\colon I_\m \to S_\m(\QQ)  
  \]   
for which we have a commutative diagram
\[
\xymatrix{
1 \ar[r] & K^\times/E_\m \ar[r]\ar[d] & I_\m \ar[r]\ar[d]^\varepsilon & C_\m \ar@{=}[d] \ar[r] &1 \\
1 \ar[r] & T_\m(\QQ) \ar[r] & S_\m(\QQ) \ar[r] & C_\m \ar[r] & 1.
}
\]

 They are characterized by the following universal property: for any field extension $k/\QQ$, homomorphism $f'\colon T_{\m,k}\to A$ of commutative algebraic groups over $k$ and homomorphism $\varepsilon'\colon I_\m \to A(k)$ such that the following diagram commutes
\[
\xymatrix{
K^\times/E_\m \ar[d] \ar[r] &I_\m\ar[d]^{\varepsilon'}\\
T_\m(k) \ar[r]^{f'} & A(k),}
\]
 there is a unique homomorphism $g\colon S_{\m,k}\to A$ which induces $f'$ and $\varepsilon'$ (i.e., $f'$ is obtained by composing $T_\m \hookrightarrow S_\m$ and $g$ and $\varepsilon'$ is obtained by composing $\varepsilon$ with $S_\m(\QQ) \hookrightarrow S_\m(k) \xrightarrow{g} A(k)$). \\
 
Take any prime $\ell$.  Let $\alpha_\ell \colon I_K \to S_\m(\QQ_\ell)$ be the homomorphism obtained by composing the natural projection $I_K\to \prod_{v|\ell}K_v^\times = (K\otimes_\QQ \QQ_\ell)^\times = T_K(\QQ_\ell)$ with the homomorphism $T_K(\QQ_\ell)\to S_\m(\QQ_\ell)$.   Composing the quotient map $I_K\to I_\m$ with $\varepsilon$ gives a homomorphism $I_K\to S_\m(\QQ)$ that we shall also denote by $\varepsilon$.     For all $x\in K^\times$, we have $\alpha_\ell(x)=\varepsilon(x)$, cf.~\cite{MR1484415}*{II~2.3}.     We thus have a continuous homomorphism 
\[
\varepsilon_\ell \colon I_K \to S_\m(\QQ_\ell),\quad x\mapsto \varepsilon(x)\alpha_\ell(x)^{-1}
\]
that vanishes on $K^\times$.    We shall also denote by 
\[
\varepsilon_\ell \colon \Gal_K \to S_\m(\QQ_\ell)
\]
the homomorphism arising from $\varepsilon_\ell$ via class field theory.  

We now observe that the homomorphisms $\varepsilon_\ell$ are compatible.  Take any non-zero prime $\p\nmid \ell$ of $\OO_K$ such that $\m_v=0$, where $v$ is the corresponding place.    Let $\pi_\p$ be an element of $I_K$ that is a uniformizer at the place $v$ and is $1$ at the other places.   Therefore,
\[
\varepsilon_\ell(\Frob_\p) = \varepsilon_\ell(\pi_\p) = \varepsilon(\pi_\p) \alpha_\ell(\pi_\p)^{-1} = \varepsilon(\pi_\p);
\]
this is an element of $S_\m(\QQ)$ that is independent of $\ell$ and the choice of $\pi_\p$.\\

We now show that our homomorphisms $\beta_{A,\ell}$ arise from Serre's $\varepsilon_\ell$ for some modulus $\m$.

\begin{lemma} \label{L:locally algebraic}
There is a modulus $\m$ and a homomorphism $\phi\colon S_{\m} \to T_L$ of algebraic groups over $\QQ$ such that each $\beta_{A,\ell}$ is equal to the composition of $\varepsilon_\ell \colon \Gal_K\to S_\m(\QQ_\ell)$ with the homomorphism $S_\m(\QQ_\ell)\xrightarrow{\phi} T_L(\QQ_\ell)$.
\end{lemma}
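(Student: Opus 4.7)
The plan is to realize $\beta_{A,\ell}$ as a locally algebraic abelian representation and invoke the universal property of Serre's group $S_\m$. First, Lemma~\ref{L:lambda compatible} applies to each simple factor $A_i$ (each $L_i$ is a number field), so for every non-zero prime $\p\subseteq\OO_K$ at which $A$ has good reduction there is an element $F_{i,\p}\in L_i^\times$, independent of $\ell$, with $\beta_{A_i,\ell}(\Frob_\p)=F_{i,\p}$ for all $\p\nmid\ell$. Setting $F_\p:=(F_{1,\p},\ldots,F_{s,\p})\in L^\times = T_L(\QQ)$, we have $\beta_{A,\ell}(\Frob_\p)=F_\p$ for every good prime $\p\nmid\ell$.

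I next produce the algebraic piece. The Hodge cocharacter $\mu\colon \GG_{m,\CC}\to \GL_{V,\CC}$ commutes with $\End(A_\CC)\otimes_\ZZ \QQ$, so it factors through $\mathscr G_\CC$. Composing with ${\det}_L$ yields a cocharacter
\[
\nu := {\det}_L\circ\mu \colon \GG_{m,\CC}\to T_{L,\CC},
\]
which is defined over a number field contained in $\Kbar$. Applying Serre's construction \cite{MR1484415}*{II, \S2} to $\nu$ produces a $\QQ$-algebraic homomorphism $h_\nu\colon T_K\to T_L$ whose restriction to $K^\times\subseteq T_K(\QQ)$ is the classical infinity-type formula attached to $\nu$. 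Now pick a modulus $\m$ whose finite support contains every prime of $\OO_K$ at which $A$ has bad reduction, with $\m_v$ taken large enough that $h_\nu$ kills the image of $U_{v,\m}$ for all such $v$, and with $\m_v=1$ at every archimedean place. Then $h_\nu$ descends to an algebraic homomorphism $f\colon T_\m\to T_L$ over $\QQ$. Define $\varepsilon'\colon I_\m\to T_L(\QQ)$ by sending the idele class of a uniformizer $\pi_\p$, for each good prime $\p$ outside the support of $\m$, to $F_\p$; this extends uniquely to $I_\m$ because such uniformizers topologically generate $I_\m$.

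To invoke the universal property of $S_\m$, I must check that $f$ and $\varepsilon'$ agree on $K^\times/E_\m$, i.e.\ that $\prod_\p F_\p^{v_\p(x)} = h_\nu(x)$ for every $x\in K^\times$ coprime to the support of $\m$. This is the Shimura--Taniyama-type identity relating Frobenius eigenvalues to the Hodge cocharacter on each $V_\ell(A_i)$, and constitutes the main obstacle of the argument; it follows by a local computation at each prime $\lambda|\ell$ of $L$, comparing the $\lambda$-adic valuation of both sides using that the Hodge--Tate weights on the $L_i$-components of $V_\ell(A_i)$ are exactly those read off from $\nu$ (cf.\ Lemma~\ref{L:tame inertia weights} for the modulo-$\lambda$ version of precisely this statement). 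Granted this compatibility, the universal property supplies a unique $\QQ$-algebraic morphism $\phi\colon S_\m\to T_L$ inducing $f$ and $\varepsilon'$. Finally, both $\phi\circ\varepsilon_\ell$ and $\beta_{A,\ell}$ are continuous homomorphisms $\Gal_K\to T_L(\QQ_\ell)$ sending $\Frob_\p\mapsto F_\p$ for every good prime $\p\nmid\ell$; Chebotarev density forces them to coincide.
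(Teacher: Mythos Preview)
Your approach has a genuine gap at the step you yourself flag as ``the main obstacle.'' The identity $\prod_\p F_\p^{v_\p(x)} = h_\nu(x)$ for $x\in K^\times$ is precisely the statement that $\beta_{A,\ell}$ is \emph{locally algebraic} with algebraic part $h_\nu^{-1}$; this is the entire content of the lemma, and you have not proved it. Your proposed justification---comparing $\lambda$-adic valuations via Hodge--Tate weights---cannot work: matching valuations at all $\lambda$ determines each side only up to a global unit in $\OO_L^\times$, not as an element of $L^\times$. Lemma~\ref{L:tame inertia weights} gives only the reduction modulo $\ell$ of the restriction to inertia, which is far weaker than the $\ell$-adic identity you need. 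For a general (non-CM) abelian variety there is no elementary Shimura--Taniyama relation between Frobenius and the Hodge cocharacter; the local algebraicity of compatible abelian $\ell$-adic representations is a genuine theorem (Henniart, \cite{MR693314}*{Th\'eor\`eme~2}) whose proof rests on transcendence results of Waldschmidt. The paper invokes exactly this theorem as a black box for a single $\beta_{A,\lambda'}$, then uses Ribet's framework \cite{MR0457455}*{I.6} to produce $\psi\colon S_{\m,L}\to\GG_{m,L}$, and finally propagates to all $\lambda$ via compatibility and Chebotarev.

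There is a second, related problem: your construction of $\varepsilon'\colon I_\m\to T_L(\QQ)$ is not well-defined. The classes of uniformizers $\pi_\p$ do \emph{not} topologically generate $I_\m=I_K/U_\m$; the subgroup $K^\times/E_\m$ sits inside $I_\m$, and specifying $\varepsilon'$ on it is exactly the compatibility you are trying to check afterward. So the definition of $\varepsilon'$ and the verification of the commutative square are circular as written. The paper avoids this by first establishing local algebraicity (via Henniart), which by definition gives the required map from $S_\m$.
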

\begin{proof}
Using $T_L=\prod_{i=1}^s T_{L_i}$ and $\beta_{A,\ell}=\prod_{i=1}^s \beta_{A_i,\ell}$, it suffices to prove the lemma for each $A_i$; we can find a common modulus $\m$ by increasing the values $\m_v$ appropriately.  We may thus assume that $A$ is a power of a simple abelian variety and hence $L$ is a field.    

Since $L$ is a number field, we have a natural isomorphism $L\otimes_\QQ \QQ_\ell = \prod_{\lambda|\ell} L_\lambda$, where the product is over the non-zero prime ideals of $\OO_L$ that divide $\ell$.   We have a homomorphism
\[
\beta_{A,\ell}\colon \Gal_K \to T_L(\QQ_\ell)=(L\otimes_\QQ \QQ_\ell)^\times = {\prod}_{\lambda|\ell} L_\lambda^\times.
\]
We thus have $\beta_{A,\ell} :=\prod_{\lambda|\ell} \beta_{A,\lambda}$, where $\beta_{A,\lambda}\colon \Gal_K \to L_\lambda^\times$ is obtained by composing $\beta_{A,\ell}$ with the obvious projection.    Note that these agree with the representations $\beta_{A,\lambda}$ defined in \S\ref{SS:lambda-adic reps}.  By Lemma~\ref{L:lambda compatible}, for every prime $\p$ for which $A$ has good reduction, there is an element $F_\p \in L^\times=T(\QQ)$ such that $\beta_{A,\lambda}(\Frob_\p)=F_\p$ for all $\lambda$ that do not divide the characteristic of $\FF_\p$.

Fix a non-zero prime ideal $\lambda'$ of $\OO_L$ and let $\ell'$ be the rational prime it divides.   Th\'eor\`em~2 of \cite{MR693314} now applies and says that $\beta_{A,\lambda'}$ is \emph{locally algebraic}.   The main theorem of section I.6 of \cite{MR0457455} then implies that there is a modulus $\m$ and a  homomorphism $\psi\colon S_{\m,L} \to \GG_{m,L}$ such that $\beta_{A,\lambda'}$ agrees with the composition
\[
\Gal_K\xrightarrow{\varepsilon_{\ell'}} S_\m(\QQ_{\ell'}) \subseteq S_\m(L_{\lambda'}) \xrightarrow{\psi} \GG_m(L_{\lambda'})=L_{\lambda'}^\times.
\] 
Now take any non-zero prime ideal $\lambda$ of $\OO_L$ dividing a prime $\ell$.   Take any non-zero prime $\p \nmid \ell \ell'$ for which $A$ has good reduction.    We have $\varepsilon_\ell(\Frob_\p) = \varepsilon_{\ell'}(\Frob_\p)$ in $S_\m(\QQ)$, so 
\[
\beta_{A,\lambda}(\Frob_\p) = F_\p = \beta_{A,\lambda'}(\Frob_\p)=\psi(\varepsilon_{\ell'}(\Frob_\p))=\psi(\varepsilon_{\ell}(\Frob_\p)).
\]
By the Chebotarev density theorem, we deduce that $\beta_{A,\lambda}$ is the composition of $\varepsilon_\ell$ with the homomorphism $S_\m(\QQ_{\ell}) \subseteq S_\m(L_{\lambda}) \xrightarrow{\psi} \GG_m(L_{\lambda})$.  We take $\phi$ to be the composition of the natural morphism $S_\m\to \Res_{L/\QQ}(S_{\m,L})$ with the morphism $\Res_{L/\QQ}(S_{\m,L})\to \Res_{L/\QQ}(\GG_{m,L})=T_L$ induced by $\psi$.  Since $\beta_{A,\ell}=\prod_\lambda \beta_{A,\lambda}$, one can now check that the lemma holds with this $\phi$.\end{proof}
 
We define 
\[
\varphi\colon T_K \to T_L
\]
to be the homomorphism obtained by composing the quotient map $T_K\to T_\m$ with the homomorphism $\phi$ in Lemma~\ref{L:locally algebraic}.

Take any prime $\ell$.  We have $T_K(\QQ_\ell)=(K\otimes_\QQ \QQ_\ell)^\times= \prod_{v|\ell} K_v^\times$ and $T_K(\QQ_\ell)_c=\prod_{v | \ell} \OO_v^\times$, where $T_K(\QQ_\ell)_c$ is the maximal compact subgroup of $T_K(\QQ_\ell)$ with respect to the $\ell$-adic topology.   Let $\pi_\ell\colon T_K(\QQ_\ell)\hookrightarrow I_K$ be the homomorphism that extends an element of $\prod_{v|\ell} K_v^\times$ to an idele by setting $1$ at the places of $K$ that do not divide $\ell$.  Since $\beta_{A,\ell}$ has abelian image, class field theory gives a homomorphism $I_K \to T_L(\QQ_\ell)$ corresponding to $\beta_{A,\ell}$; we will also denote it by $\beta_{A,\ell}$. 

\begin{prop} \label{P:U in Tc}
  There is an open subgroup $U\subseteq T_K(\QQ_\ell)_c$ with $[T_K(\QQ_\ell)_c:U]\ll_g 1$ such that 
\[
\beta_{A,\ell}(\pi_\ell(u))=\varphi(u)^{-1}
\] 
holds for all $u\in U$.   We can take $U=T_K(\QQ_\ell)_c$ when $\ell$ is not divisible by any non-zero prime ideal $\p\subseteq \OO_K$ for which $A$ has bad reduction.
\end{prop}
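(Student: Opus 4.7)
The plan is to invoke Lemma~\ref{L:locally algebraic} and then compute directly. That lemma provides a modulus $\m$ and a morphism $\phi\colon S_\m\to T_L$ of $\QQ$-groups with $\beta_{A,\ell}=\phi\circ\varepsilon_\ell$, where $\varepsilon_\ell(x)=\varepsilon(x)\,\alpha_\ell(x)^{-1}$ for $x\in I_K$. I would first choose $\m$ so that $\m_v=0$ at every finite place $v$ of good reduction of $A$; this is permissible because each $\beta_{A,\lambda}$ is unramified at such $v$ (for $\lambda\nmid v$) and the universal property of Serre's tori allows replacement of $\m$ by any modulus through which $\phi$ and $\varepsilon$ factor.

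For $u\in T_K(\QQ_\ell)$, I would then unwind $\varepsilon_\ell(\pi_\ell(u))$. Since the idele $\pi_\ell(u)$ has component $u_v$ at each $v\mid\ell$ and $1$ at all other places, the projection of $\pi_\ell(u)$ to $\prod_{v\mid\ell}K_v^\times=T_K(\QQ_\ell)$ is $u$ itself. Consequently $\alpha_\ell(\pi_\ell(u))$ is the image of $u$ under the canonical map $T_K\hookrightarrow S_\m$ on $\QQ_\ell$-points, and applying $\phi$ gives $\varphi(u)$ by the very definition of $\varphi$. Therefore
\[
\beta_{A,\ell}(\pi_\ell(u)) \;=\; \phi(\varepsilon(\pi_\ell(u)))\cdot\varphi(u)^{-1},
\]
so the task reduces to arranging $\phi(\varepsilon(\pi_\ell(u)))=1$ on a sufficiently large $U$.

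Since $\varepsilon\colon I_K\to S_\m(\QQ)$ factors through $I_\m=I_K/U_\m$, it vanishes on $U_\m$. The condition $\pi_\ell(u)\in U_\m$ reduces to $u_v\in U_{v,\m}$ for every $v\mid\ell$, as the components of $\pi_\ell(u)$ away from $\ell$ are trivial and automatically lie in the corresponding $U_{v,\m}$. I would therefore set $U:=\prod_{v\mid\ell}U_{v,\m}\subseteq T_K(\QQ_\ell)_c$. By our choice of $\m$, one has $U_{v,\m}=\OO_v^\times$ at every place of good reduction, so $U=T_K(\QQ_\ell)_c$ whenever no $v\mid\ell$ is a bad prime of $A$; this yields the second assertion immediately.

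The main obstacle is the uniform index bound $[T_K(\QQ_\ell)_c:U]=\prod_{v\mid\ell,\,v\text{ bad}}[\OO_v^\times:U_{v,\m}]\ll_g 1$, since a priori $[\OO_v^\times:U_{v,\m}]$ grows with the local conductor exponent $\m_v$ of $\beta_{A,\ell}$ at $v$. To handle this I would pass to $F=K(A[12])$, over which $A$ becomes semistable (the criterion of Raynaud used already in Lemma~\ref{L:potential good}); for semistable $A_F$ the local conductor exponents of the $\lambda$-adic determinant characters are bounded in terms of $g$ alone, and a controlled descent back to $K$—using that $[F:K]$ and the ramification of $F/K$ are both $\ll_g 1$—produces a $g$-dependent bound on each $\m_v$ at bad primes of $A$, and hence the required uniform bound on the index. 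I expect this bookkeeping with conductors and ramification under $F/K$ to be the most delicate part of the argument.
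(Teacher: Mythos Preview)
Your computation reducing the claim to the vanishing of $\phi(\varepsilon(\pi_\ell(u)))$ is correct, and the good-reduction case is fine. The gap is in the uniform index bound. Your subgroup $U=\prod_{v\mid\ell}U_{v,\m}$ satisfies
\[
[T_K(\QQ_\ell)_c:U]=\prod_{v\mid\ell}[\OO_v^\times:U_{v,\m}],
\]
and for a bad prime $v\mid\ell$ with $\m_v\geq 1$ one already has $[\OO_v^\times:U_{v,\m}]\geq|\FF_v^\times|\geq\ell-1$, which is not bounded in terms of $g$ alone. Controlling the conductor exponents $\m_v$ via the semistable model over $F$ cannot repair this: even the minimal nonzero value $\m_v=1$ contributes a factor that grows with $\ell$. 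So the descent step you flag as ``the most delicate part'' in fact cannot deliver the required uniform bound, no matter how carefully the bookkeeping is done.

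The paper sidesteps this entirely by never touching $U_\m$. It picks an auxiliary prime $\ell'\neq\ell$ and notes that $\alpha_{\ell'}(\pi_\ell(u))=1$ for every $u\in T_K(\QQ_\ell)_c$ (the $\ell'$-components of $\pi_\ell(u)$ are trivial), so that
\[
(\phi\circ\varepsilon)\bigl(\pi_\ell(T_K(\QQ_\ell)_c)\bigr)=\beta_{A,\ell'}\bigl(\pi_\ell(T_K(\QQ_\ell)_c)\bigr)=:G,
\]
which by class field theory is exactly the group generated by the inertia images $\beta_{A,\ell'}(I_\p)$ for $\p\mid\ell$. Lemma~\ref{L:potential good} then gives $|G|\leq[F:K]\ll_g 1$ immediately. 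One takes $U$ to be the kernel of the resulting map $T_K(\QQ_\ell)_c\to G$, obtaining $[T_K(\QQ_\ell)_c:U]=|G|\ll_g 1$; on this $U$ the term $\phi(\varepsilon(\pi_\ell(u)))$ vanishes and your displayed identity finishes the proof. The trick you are missing is to switch to a different prime $\ell'$ so that the $\alpha$-factor drops out and the $\varepsilon$-factor becomes a visible inertia image with a size bound already in hand.
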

\begin{proof}
Define the group $G:=(\phi\circ \varepsilon)(\pi_\ell(T_K(\QQ_\ell)_c)) \subseteq T_L(\QQ)$.

Take any prime $\ell'\neq \ell$.   By Lemma~\ref{L:locally algebraic}, we have $\beta_{A,\ell'}=\phi\circ (\varepsilon \cdot \alpha_{\ell'}^{-1})$.  We have $\alpha_{\ell'}(\pi_\ell(T_K(\QQ_\ell)_c))=1$ since $\ell\neq \ell'$, so  \[
\beta_{A,\ell'}(\pi_\ell(T_K(\QQ_\ell)_c))= (\phi\circ \varepsilon)( \pi_\ell(T_K(\QQ_\ell)_c))=G.
\] 
By class field theory, $G$ is thus the group generated by $\beta_{A,\ell'}(I_\p)$ where $I_\p$ are inertia groups at primes $\p | \ell$.   By Lemma~\ref{L:potential good} and $\ell\neq \ell'$, we find that $G$ is finite and $|G|\leq [F:K] \ll_g 1$. 

There is thus an open subgroup $U\subseteq T_K(\QQ_\ell)_c$ with $(\phi\circ \varepsilon)(\pi_\ell(U))=1$ and $[T_K(\QQ_\ell)_c:U]=|G|\ll_g 1$.   For each $u\in U$, we have
\[
\beta_{A,\ell}(\pi_\ell(u))=(\phi\circ \varepsilon_\ell)(\pi_\ell(u))=(\phi\circ \varepsilon)(\pi_\ell(u))\cdot \phi(\alpha_\ell(\pi_\ell(u)))^{-1}= \phi(\alpha_\ell(\pi_\ell(u)))^{-1},
\]
where we have used Lemma~\ref{L:locally algebraic} in the first equality.  The first part of the lemma follows by noting that $\phi(\alpha_\ell(\pi_\ell(u)))=\varphi(u)$ for all $u\in T_K(\QQ_\ell)_c$.\\

Now suppose that $\ell$ is not divisible by any non-zero prime ideal $\p\subseteq \OO_K$ for which $A$ has bad reduction.     Take any prime $\ell'\neq \ell$.    By our choice of $\ell$, the representation $\rho_{A,\ell'}$, and hence also $\beta_{A,\ell'}$, is unramified at all primes that divide $\ell$.   By class field theory, this is equivalent to having $\beta_{A,\ell'}(\pi_1(T_K(\QQ_\ell)_c))=1$.  From the work above, this implies that 
$1=\beta_{A,\ell'}(\pi_\ell(T_K(\QQ_\ell)_c))= (\phi\circ \varepsilon)( \pi_\ell(T_K(\QQ_\ell)_c))=G$.  Therefore, $U=T_K(\QQ_\ell)_c$ since $[T_K(\QQ_\ell)_c:U]=|G|=1$.
\end{proof}

The following lemma, which we will prove in \S\ref{SS:X bases proof}, gives some additional information on $\varphi$.   

\begin{lemma} \label{L: X bases}
Set $d:=[K:\QQ]$ and $e:=[L:\QQ]$. 
There are bases $\alpha_1,\ldots, \alpha_{d}$ of $X(T_K)$ and $\gamma_1,\ldots, \gamma_{e}$ of $X(T_L)$ such that
\[
\gamma_j \circ \varphi = \prod_{i=1}^{d} \alpha_i^{n_{i,j}}
\]
holds for all $1\leq j \leq e$, where $n_{i,j}$ are integers satisfying $|n_{i,j}|\leq 2g$.   Moreover, the two bases are stable under the $\Gal_\QQ$-actions on $X(T_K)$ and $X(T_L)$.
\end{lemma}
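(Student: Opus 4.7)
My plan is to use the natural embedding-indexed bases of $X(T_K)$ and $X(T_L)$, identify the matrix of $\varphi^\ast$ in these bases with Hodge-theoretic multiplicities, and bound those multiplicities by $g$.

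First I will set up the bases. For $T_K=\Res_{K/\QQ}\GG_m$ the character group is $X(T_K)=\bigoplus_{\sigma\in\operatorname{Hom}(K,\Qbar)}\ZZ\cdot\alpha_\sigma$, where $\alpha_\sigma$ denotes the character $x\mapsto\sigma(x)$ of $K^\times=T_K(\QQ)$. The natural $\Gal_\QQ$-action permutes these characters by precomposition, so this basis is $\Gal_\QQ$-stable. Applying the same recipe to each factor of $T_L=\prod_i \Res_{L_i/\QQ}\GG_m$ yields a $\Gal_\QQ$-stable basis $\{\gamma_\tau\}_{\tau\in\operatorname{Hom}(L,\Qbar)}$ of $X(T_L)$. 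In these bases $\gamma_\tau\circ\varphi=\prod_\sigma\alpha_\sigma^{n_{\sigma,\tau}}$ for integers $n_{\sigma,\tau}$, so the task reduces to bounding $|n_{\sigma,\tau}|$ by $2g$.

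The crucial step is to identify $\varphi$, after base change to $\CC$, with the Hodge cocharacter. By Lemma~\ref{L:locally algebraic}, $\varphi$ is the restriction to $T_K$ of Serre's algebraic envelope $\phi\colon S_\m\to T_L$ for the compatible family $\{\beta_{A,\ell}\}_\ell$. The general theory of locally algebraic abelian $\ell$-adic representations (cf.~\cite{MR0457455}, Chapter~II) combined with the Mumford--Tate formalism of \S\ref{SS:MT group} and Deligne's Proposition~\ref{P:MT inclusion} identifies $\varphi_\CC$ with a Hodge cocharacter: for each embedding $\sigma\colon K\hookrightarrow\CC$, the composition
\[
\GG_{m,\CC}\xrightarrow{\sigma\text{-th factor}}T_{K,\CC}\xrightarrow{\varphi_\CC}T_{L,\CC}
\]
coincides (up to inverting $\mu$) with $\det_L\circ\mu_\sigma$, where $\mu_\sigma$ is the Hodge cocharacter attached to the complex abelian variety $A_\sigma:=A\otimes_{K,\sigma}\CC$. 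The $L$-action decomposes the Hodge structure $H_1(A_\sigma(\CC),\QQ)\otimes\CC=\bigoplus_\tau V_{\sigma,\tau}$, and this decomposition respects the Hodge filtration, giving $V_\sigma^{-1,0}=\bigoplus_\tau V_{\sigma,\tau}^{-1,0}$. Pairing $\mu_\sigma$ against the character $\gamma_\tau$ then yields $|n_{\sigma,\tau}|=\dim_\CC V_{\sigma,\tau}^{-1,0}$, a non-negative integer bounded by $\dim_\CC V_\sigma^{-1,0}=g$. This gives $|n_{\sigma,\tau}|\leq g<2g$, as required.

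The hard part will be this second step: tracing through Serre's definition of $S_\m$ and the morphism $\phi$ and matching it explicitly with the Mumford--Tate cocharacter $\mu$ of \S\ref{SS:MT group} across the different complex embeddings $\sigma$, while keeping track of sign conventions (cocharacters versus dual characters, and whether $\mu$ or $\mu^{-1}$ appears). Once this identification is in hand, the bound reduces to the elementary inequality $\dim_\CC V_{\sigma,\tau}^{-1,0}\leq \dim_\CC V_\sigma^{-1,0}=g$.
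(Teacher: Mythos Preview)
Your approach is genuinely different from the paper's, and the difference is worth spelling out. The paper never touches the archimedean Hodge cocharacter. Instead it argues purely $\ell$-adically: it picks a prime $\ell$ splitting completely in $K$ and $L$, uses Proposition~\ref{P:U in Tc} to identify the $\p$-component of $\varphi$ with the inertia action at $\p\mid\ell$, and then invokes Lemma~\ref{L:tame inertia weights} (Caruso's tame inertia theorem) to show that each $n_{\p,\lambda}$ is congruent modulo $\ell-1$ to an integer of absolute value at most $2g$. Since the $n_{i,j}$ are fixed integers and this congruence holds for infinitely many $\ell$, the bound $|n_{i,j}|\le 2g$ follows. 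Your route, by contrast, would compute the matrix of $\varphi$ once and for all via Hodge multiplicities, and would even yield the sharper bound $g$.

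The gap in your proposal is exactly where you flag it: the identification of the $\sigma$-component of $\varphi_\CC$ with $\det_L\circ\mu_\sigma$. Neither \cite{MR0457455} nor Proposition~\ref{P:MT inclusion} gives this. What you actually need is that the algebraic morphism $\varphi$ encodes, at each finite place $v\mid\ell$, the Hodge--Tate cocharacter of $V_\ell(A)$ at $v$ (this is the content of Serre's locally-algebraic formalism together with Tate's Hodge--Tate decomposition for $p$-divisible groups), and then that these Hodge--Tate multiplicities agree with the complex Hodge numbers $\dim V_{\sigma,\tau}^{-1,0}$ (a comparison statement, classical for abelian varieties). Once you supply those two ingredients your argument goes through; but as written the proposal asserts the identification rather than proving it. The paper's $\ell$-adic route trades this comparison for Caruso's theorem plus the ``congruence for infinitely many $\ell$'' trick, which is arguably no lighter in terms of external input but is more self-contained within the $\ell$-adic framework already set up in \S\ref{S:abelian}.
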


\begin{lemma} \label{L:W cardinality}
Let $W$ be the kernel of $\varphi\colon T_K\to T_L$.   Then $W/W^\circ$ is a finite group scheme whose cardinality can be bounded in terms of $g$.
\end{lemma}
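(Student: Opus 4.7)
The plan is to pass to character groups and reduce the claim to an elementary-divisor calculation with an explicit integer matrix. Since $W$ is a closed subscheme of the torus $T_K$, it is diagonalizable, and dualizing the exact sequence $1 \to W \to T_K \to \varphi(T_K) \to 1$ identifies the character group $X(W)$ with the cokernel of the pullback $\varphi^*\colon X(T_L) \to X(T_K)$. For any diagonalizable group $D$ in characteristic zero one has $|D/D^\circ| = |X(D)_{\tors}|$, so the assertion reduces to bounding the torsion subgroup of $X(T_K)/\varphi^*(X(T_L))$ in terms of $g$.

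With this reduction in hand, I would invoke Lemma~\ref{L: X bases} to represent $\varphi^*$ by an integer matrix $N$ of size $[K:\QQ] \times [L:\QQ]$ all of whose entries satisfy $|n_{i,j}| \leq 2g$. Writing $r := \mathrm{rank}(N)$, the Smith normal form shows that the torsion subgroup of $\ZZ^{[K:\QQ]}/N\ZZ^{[L:\QQ]}$ has cardinality equal to the greatest common divisor of the $r \times r$ minors of $N$, and in particular is bounded above by the absolute value of any single nonzero $r \times r$ minor. By Hadamard's inequality this quantity is at most $r^{r/2}(2g)^{r}$.

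The last step is to verify that $r$ itself is bounded in terms of $g$; since $r \leq [L:\QQ]$, it suffices to show $[L:\QQ] \ll_g 1$. Recall $L = \prod_{i=1}^s L_i$, where $L_i$ is the center of $\End(B_i)\otimes_\ZZ \QQ$ for a geometrically simple abelian variety $B_i$ isogenous to a factor of $A$. Albert's classification of endomorphism algebras of simple abelian varieties gives $[L_i:\QQ] \leq 2\dim B_i$, and combining this with $s \leq g$ and $\sum_i e_i \dim B_i = g$ yields $[L:\QQ] \leq 2g^{2}$. The only nontrivial ingredient is the entry-size bound from Lemma~\ref{L: X bases}; once that is available the rest is bookkeeping, and no serious obstacle remains.
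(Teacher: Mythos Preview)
Your argument is correct and follows essentially the same route as the paper: both pass to character groups, identify $|W/W^\circ|$ with the torsion of $\operatorname{coker}(\varphi^*)$, represent $\varphi^*$ by an integer matrix with entries bounded by $2g$ via Lemma~\ref{L: X bases}, and bound the product of elementary divisors using that the rank is at most $[L:\QQ]\ll_g 1$. The only cosmetic differences are that the paper cites the Smith normal form/gcd-of-minors identity directly rather than invoking Hadamard, and it uses the slightly sharper bound $[L:\QQ]\leq 2g$ (which your own inequalities in fact yield, since $\sum_i [L_i:\QQ]\leq 2\sum_i \dim B_i\leq 2g$).
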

\begin{proof}
Set $d:=[K:\QQ]$ and $e:=[L:\QQ]$.   Define the homomorphism $\varphi^*\colon X(T_L)\to X(T_K)$, $\gamma\mapsto \gamma\circ \varphi$.  The group $X(W)$ is isomorphic to the cokernel of $\varphi^*$.  The cardinality $m$ of the finite group scheme $W/W^\circ$ is equal to the cardinality of the torsion subgroup of the cokernel of $\varphi^*$.

Let $A\in M_{d,e}(\ZZ)$ be a matrix that represents $\varphi^*$ with respect to the bases of $X(T_L)$ and $X(T_K)$ from Lemma~\ref{L: X bases}.   In particular, all the entries of $A$ have absolute value at most $2g$.   Let $D \in M_{d,e}(\ZZ)$ be the \emph{Smith Normal Form} of $A$.   There is an integer $ 0\leq k \leq \min\{d,e\}$ such that 
$D_{i,j}>0$ if $1\leq i=j \leq k$ and $D_{i,j}=0$ otherwise.  Moreover, the integer $D_{i,i}$ divides $D_{i+1,i+1}$ for all $1\leq i \leq k-1$.    Therefore, $m=\prod_{i=1}^k D_{i,i}$.

Note that $m=\prod_{i=1}^k D_{i,i}$ is equal to the greatest common divisor of all determinants of $k\times k$ minors of $A$, cf.~\cite{MR2544610}*{Proposition~8.1}.     Since the entries of $A$ are bounded in terms of $g$ and $k\leq e \leq 2g$, we conclude that $m\ll_g 1$.
\end{proof}

\subsection{Proof of Lemma~\ref{L: X bases}} \label{SS:X bases proof}

We first assume that $L$ is a field.  

For a prime $\ell$, we have an isomorphism $(T_K)_{\QQ_\ell} = \prod_{\p|\ell} \Res_{K_\p/\QQ_\ell}(\GG_{m,K_\p})$, where the product is over prime ideals $\p|\ell$ of $\OO_K$.  Similarly, we have $(T_L)_{\QQ_\ell} = \prod_{\lambda|\ell} \Res_{L_\lambda/\QQ_\ell}(\GG_{m,L_\lambda})$, where the product is over prime ideals $\lambda|\ell$ of $\OO_L$.

Now fix any prime $\ell\geq 5$ that splits completely in the number fields $K$ and $L$ and is not divisible by any prime for which $A$ has bad reduction.  For the rest of the proof $\p$ and $\lambda$ will denote prime ideals of $\OO_K$ and $\OO_L$, respectively, that divide $\ell$. 

The tori $(T_K)_{\QQ_\ell}$ and $(T_L)_{\QQ_\ell}$ are split since $\ell$ splits completely in $K$ and $L$.   In particular, we have isomorphisms $(T_K)_{\QQ_\ell} = \prod_{\p|\ell} \GG_{m,\QQ_\ell}$ and $(T_L)_{\QQ_\ell} = \prod_{\lambda|\ell} \GG_{m,\QQ_\ell}$.
Denote by $\alpha_\p \colon (T_K)_{\QQ_\ell} \to \GG_{m,\QQ_\ell}$ and $\gamma_\lambda \colon (T_L)_{\QQ_\ell} \to \GG_{m,\QQ_\ell}$ the character obtained by projecting onto the corresponding factor.   Note that $\{\alpha_\p\}_{\p|\ell}$ and $\{\gamma_\lambda\}_{\lambda|\ell}$ are bases of $X((T_K)_{\QQ_\ell})$ and $X((T_L)_{\QQ_\ell})$, respectively.   

The homomorphism $\varphi\colon (T_K)_{\QQ_\ell} \to (T_L)_{\QQ_\ell}$ is thus of the form
\[
\varphi\Big((x_\p)_{\p|\ell}\Big) = \Big(\prod_{\p|\ell} \,x_\p^{n_{\p,\lambda}} \Big)_{\lambda|\ell}
\]
for unique integers $n_{\p,\lambda}$.  In particular, we have
\begin{align}\label{E:gamma vs alpha 1}
\gamma_\lambda \circ \varphi = \prod_{\p|\ell} \alpha_\p^{n_{\p,\lambda}}
\end{align}
for all $\lambda|\ell$.   The following lemma gives some constraints on the integers $n_{\p,\lambda}$.

\begin{lemma} \label{L:mod ell-1}
Each integer $n_{\p,\lambda}$ is congruent modulo $\ell-1$ to an integer that has absolute value at most $2g$.
\end{lemma}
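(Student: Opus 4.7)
The plan is to extract the constraint on $n_{\p,\lambda}$ by comparing two independent descriptions of the reduction mod $\ell$ of $\beta_{A,\lambda}$ restricted to an inertia subgroup $I_\p$ at $\p$: one obtained by feeding Proposition~\ref{P:U in Tc} through the formula (\ref{E:gamma vs alpha 1}) that defines the $n_{\p,\lambda}$, and one obtained from the tame inertia weight bound of Lemma~\ref{L:tame inertia weights}.

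First, since $\ell$ is not divisible by any prime of bad reduction for $A$, Proposition~\ref{P:U in Tc} gives $\beta_{A,\ell}(\pi_\ell(u)) = \varphi(u)^{-1}$ for every $u \in T_K(\QQ_\ell)_c = \prod_{\p'|\ell} \OO_{\p'}^\times$. Fix $\p | \ell$ and take $u \in \OO_\p^\times$, viewed as an element of $T_K(\QQ_\ell)_c$ extended by $1$ at the other places. Because $\ell$ splits completely in both $K$ and $L$, we have $\OO_\p \cong \ZZ_\ell$ and $L_\lambda \cong \QQ_\ell$, and under these identifications the $\alpha_{\p'}$ and $\gamma_{\lambda'}$ are just the coordinate characters. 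Projecting (\ref{E:gamma vs alpha 1}) to the $\lambda$ factor and using that $\alpha_{\p'}(u) = 1$ for $\p' \neq \p$, I obtain
\[
\beta_{A,\lambda}(\pi_\ell(u)) \;=\; (\gamma_\lambda \circ \varphi)(u)^{-1} \;=\; u^{-n_{\p,\lambda}}.
\]

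Next, I translate this idelic identity into an inertia statement via local class field theory. The local reciprocity map sends $\OO_\p^\times$ onto the image of $I_\p$ in the abelianized local Galois group, and under this identification the restriction of $\chi_\ell$ to $I_\p$ corresponds to the natural inclusion $\OO_\p^\times \hookrightarrow \ZZ_\ell^\times$ up to sign. Writing $\sigma_u \in I_\p$ for the element corresponding to $u$, reduction mod $\ell$ yields
\[
\bbar\beta_{A,\lambda}(\sigma_u) \;=\; \bar u^{-n_{\p,\lambda}} \qquad \text{and} \qquad \bbar\chi_\ell(\sigma_u) \;=\; \bar u^{\pm 1}
\]
in $\FF_\ell^\times$, as $u$ ranges over $\ZZ_\ell^\times$.

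Finally, since $\ell \geq 5$ splits completely in $L$ and is unramified in $K$, and since $A$ has good reduction at $\p | \ell$, Lemma~\ref{L:tame inertia weights} supplies an integer $0 \leq b \leq 2g/[L:\QQ]$ with $\bbar\beta_{A,\lambda}|_{I_\p} = \bbar\chi_\ell|_{I_\p}^{b}$. Comparing with the two displayed formulas above, and using that $\bar u$ sweeps out all of $\FF_\ell^\times$, I conclude $-n_{\p,\lambda} \equiv \pm b \pmod{\ell-1}$. Since $|{\pm b}| \leq 2g/[L:\QQ] \leq 2g$, this exhibits $n_{\p,\lambda}$ as congruent modulo $\ell-1$ to an integer of absolute value at most $2g$, which is the assertion of the lemma. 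The only subtlety is lining up the sign/normalization conventions for the local reciprocity map and for the identification of $\alpha_\p$ with a coordinate character under $\OO_\p = \ZZ_\ell$; since we only require an absolute value bound modulo $\ell-1$, these sign ambiguities are harmless.
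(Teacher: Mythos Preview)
Your proposal is correct and follows essentially the same route as the paper: feed Proposition~\ref{P:U in Tc} through the coordinate description (\ref{E:gamma vs alpha 1}) to compute $\beta_{A,\lambda}$ on $\OO_\p^\times$, pass to inertia via local class field theory, and compare with the tame inertia bound of Lemma~\ref{L:tame inertia weights}. The paper fixes its reciprocity normalization once and writes $\beta_{A,\lambda}(\sigma)=\chi_\ell(\sigma)^{-n_{\p,\lambda}}$ on $I_\p$ without your $\pm$, but as you note this sign is irrelevant for the absolute value bound modulo $\ell-1$.
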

\begin{proof}
Take any prime ideal $\lambda|\ell$ of $\OO_L$.  We have defined a representation $\beta_{A,\lambda}\colon \Gal_K \to L_\lambda^\times$.  The image of $\beta_{A,\lambda}$ is contained in $\OO_\lambda$ since it is continuous and $\Gal_K$ is compact.   By class field theory, we may view $\beta_{A,\lambda}$ as a homomorphism $I_K\to \OO_\lambda^\times$.    By Proposition~\ref{P:U in Tc} and our choice of $\ell$, we have $\beta_{A,\ell}(\pi_\ell(u))=\varphi(u)^{-1}$ for all $u\in T_K(\QQ_\ell)_c$.    Therefore, $\beta_{A,\lambda}(\pi_\ell(u))=\gamma_\lambda(\varphi(u)^{-1})=\prod_{\p|\ell} \alpha_\p(u)^{-n_{\p,\lambda}}$ holds for all $u\in T_K(\QQ_\ell)_c$.

Now fix a prime ideal $\p|\ell$ of $\OO_K$.  Define the homomorphism 
\[
\widetilde{\beta}_{A,\lambda} \colon \OO_\p^\times \xrightarrow{i} I_K \xrightarrow{\beta_{A,\lambda}} \OO_\lambda^\times,
\]
where $i\colon \OO_\p^\times \hookrightarrow I_K$ is the inclusion on the $\p$-th term of $I_K$ and $1$ elsewhere.   We have 
\[
\widetilde{\beta}_{A,\lambda}(a)=a^{-n_{p,\lambda}}
\] 
for all $a\in \OO_\p^\times$; note that this does makes sense because $\ell$ splits completely in $K$ and $L$ and hence $\OO_\p^\times=\ZZ_\ell^\times$ and $\OO_\lambda^\times=\ZZ_\ell^\times$.   For an inertia subgroup $I_\p$ of $\Gal_K$ at the prime $\p$, class field theory now implies that $\beta_{A,\lambda}(\sigma)=\chi_\ell(\sigma)^{-n_{\p,\lambda}}$ holds for all $\sigma\in I_\p$, where $\chi_\ell\colon \Gal_K\to \ZZ_\ell^\times$ is the $\ell$-adic cyclotomic character.  By Lemma~\ref{L:tame inertia weights} and using that $\FF_\lambda^\times = \FF_\ell^\times$ is cyclic, we deduce that $-n_{\p,\lambda}$ is congruent modulo $\ell-1$ to an integer $0\leq b \leq 2g$.
\end{proof}

We now prove Lemma~\ref{L: X bases} (in the case where $L$ is a field).   Set $d:=[K:\QQ]$ and $e:=[L:\QQ]$. 

Let $\sigma_1,\ldots, \sigma_{d} \colon K \hookrightarrow \Qbar$ be the $d$ distinct embeddings of $K$.    Each $\sigma_i$ extends to a homomorphism $K\otimes_\QQ \Qbar \to \Qbar$ of $\Qbar$-algebras and defines a character $\alpha_i \in X(T_K)=\operatorname{Hom}((T_K)_{\Qbar},\GG_{m,\Qbar})$.   Observe that $X(T_K)$ is a free abelian group of rank $d$ with basis $\alpha_1,\ldots, \alpha_{d}$.  The natural $\Gal_\QQ$-action on $X(T_K)$ permutes the characters $\alpha_1,\ldots, \alpha_{d}$. 

Let $\tau_1,\ldots, \tau_{e} \colon L \hookrightarrow \Qbar$ be the $e$ distinct embeddings of $L$.  As above, each $\tau_i$ determines a character $\gamma_i \in X(T_L)$.   The group $X(T_L)$ is free abelian of rank $e$ with basis $\gamma_1,\ldots, \gamma_{e}$.  The natural $\Gal_\QQ$-action on $X(T_L)$ permutes the characters $\gamma_1,\ldots, \gamma_{e}$. 

We have $\varphi(T_K) \subseteq T_L$.  So for each $1\leq j \leq e$, we have
\begin{align} \label{E:gamma vs alpha 2}
\gamma_j \circ \varphi = \prod_{i=1}^d \alpha_i^{n_{i,j}}
\end{align}
for unique integers $n_{i,j}$. 

A fixed embedding $\Qbar \hookrightarrow \Qbar_\ell$ induces isomorphisms $X(T_K) = X((T_K)_{\QQ_\ell})$ and $X(T_L) = X((T_L)_{\QQ_\ell})$.  Observe that under these isomorphisms, we have $\{\alpha_\p: \p |\ell\} = \{\alpha_1,\ldots, \alpha_d\}$ and $\{\gamma_\lambda: \lambda |\ell\} = \{\gamma_1,\ldots, \gamma_e\}$.   By comparing (\ref{E:gamma vs alpha 1}) and (\ref{E:gamma vs alpha 2}), we deduce that each $n_{i,j}$ is equal to some $n_{\p,\lambda}$.  By Lemma~\ref{L:mod ell-1}, $n_{i,j}$ is congruent modulo $\ell-1$ to an integer that has absolute value at most $2g$.     

By the Chebotarev density theorem, there are infinitely many primes $\ell\geq 5$ that splits completely in the number fields $K$ and $L$ and are not divisible by any prime for which $A$ has bad reduction.   Since $n_{i,j}$ is congruent modulo $\ell-1$ to an integer with absolute value at most $2g$ for infinitely many $\ell$, we deduce that $|n_{i,j}|\leq 2g$.\\

We now consider the general case where $L$ need not be a field.  For each $1\leq i \leq s$, let $\varphi_i\colon T_K \to T_{L_i}$ be the homomorphism $\varphi$ from \S\ref{SS:Serre tori} for the abelian variety $A_i$.    Observe that $\varphi \colon T_K\to T_L =\prod_{i=1}^s T_{L_i}$ is given by $(\varphi_1,\ldots, \varphi_s)$; one way to show this is to note the Proposition~\ref{P:U in Tc} characterizes $\varphi$ and that $\beta_{A,\ell}=\prod_{i=1}^s \beta_{A_i,\ell}$.    

From the case of Lemma~\ref{L: X bases} we have already proved, we find that there is a basis $\alpha_1,\ldots, \alpha_d$ of $X(T_K)$ such that for all $1\leq i \leq s$, we have $\gamma_{i,j} \circ \varphi_i = \prod_{k=1}^d \alpha_k^{n_{i,j,k}}$, where $\gamma_{i,1},\ldots, \gamma_{i,[L_i:\QQ]}$ is a basis of $X(T_{L_i})$ and $n_{i,j,k}$ is an integer with absolute value at most $2g$.    Moreover, the bases $\alpha_1,\ldots, \alpha_d$ and $\gamma_{i,1},\ldots, \gamma_{i,[L_i:\QQ]}$ are stable under the natural $\Gal_\QQ$-action.  Lemma~\ref{L: X bases} now follows immediately with the basis $\alpha_1,\ldots, \alpha_d$ for $X(T)$ and $\{ \gamma_{i,j}: 1\leq i \leq s, \, 1\leq j \leq [L_i:\QQ]\}$ for $X(T_L)=\oplus_{i=1}^s X(T_{L_i})$.

\subsection{Proof of Theorem~\ref{T:main abelian}} \label{SS:proof of main abelian}

\begin{lemma}\label{L:initial abelian index}
\begin{romanenum}
\item
We have $\varphi(T_K)=Y$.
\item
For any prime $\ell$, we have $[Y(\QQ_\ell)_c \colon\beta_{A,\ell}(\Gal_K)] \ll_g [Y(\QQ_\ell)_c:\varphi(T_K(\QQ_\ell)_c)]$.
\end{romanenum}
\end{lemma}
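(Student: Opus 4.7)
The plan is to deduce both (i) and (ii) from Lemma~\ref{L:locally algebraic} and Proposition~\ref{P:U in Tc}, with Proposition~\ref{P:Zariski dense in Y} supplying the identification of $Y_{\QQ_\ell}$ as the Zariski closure of $\beta_{A,\ell}(\Gal_K)$.

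For (i), I will first establish the inclusion $Y \subseteq \varphi(T_K)$. Lemma~\ref{L:locally algebraic} factors $\beta_{A,\ell}$ through $\phi \colon S_\m \to T_L$, so $\beta_{A,\ell}(\Gal_K) \subseteq \phi(S_\m)(\QQ_\ell)$. The algebraic subgroup $\phi(S_\m) \subseteq T_L$ has neutral component $\phi(T_\m) = \varphi(T_K)$, since $T_\m = S_\m^\circ$ and the quotient $T_K \to T_\m$ is surjective. Because $Y$ is a torus, hence connected, and $Y_{\QQ_\ell}$ is the Zariski closure of $\beta_{A,\ell}(\Gal_K)$ inside $\phi(S_\m)_{\QQ_\ell}$, we must have $Y_{\QQ_\ell} \subseteq \phi(S_\m)^\circ_{\QQ_\ell} = \varphi(T_K)_{\QQ_\ell}$, and descending to $\QQ$ gives $Y \subseteq \varphi(T_K)$. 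For the reverse inclusion, I will apply Proposition~\ref{P:U in Tc} to produce an open subgroup $U \subseteq T_K(\QQ_\ell)_c$ on which $\beta_{A,\ell}(\pi_\ell(u)) = \varphi(u)^{-1}$, so that $\varphi(U) \subseteq \beta_{A,\ell}(\Gal_K) \subseteq Y(\QQ_\ell)$. The step I expect to require the most care—and hence the main obstacle—is verifying that $U$ is Zariski dense in $(T_K)_{\QQ_\ell}$: this follows because an $\ell$-adically open subset of the $\QQ_\ell$-points of a geometrically integral variety has full $\ell$-adic analytic dimension and therefore cannot be contained in the $\QQ_\ell$-points of a proper closed subvariety. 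Granting that density, $\varphi(U)$ is Zariski dense in $\varphi(T_K)_{\QQ_\ell}$, and Zariski-closing the containment $\varphi(U) \subseteq Y(\QQ_\ell)$ yields $\varphi(T_K)_{\QQ_\ell} \subseteq Y_{\QQ_\ell}$, hence $\varphi(T_K) \subseteq Y$ over $\QQ$.

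For (ii), I will transfer the index bound through $\varphi$ using the same open subgroup $U$. Proposition~\ref{P:U in Tc} provides $[T_K(\QQ_\ell)_c : U] \ll_g 1$, and since $\varphi$ is a group homomorphism this forces $[\varphi(T_K(\QQ_\ell)_c) : \varphi(U)] \leq [T_K(\QQ_\ell)_c : U] \ll_g 1$. Part (i) together with the compactness of $T_K(\QQ_\ell)_c$ shows $\varphi(T_K(\QQ_\ell)_c) \subseteq Y(\QQ_\ell)_c$, and chaining indices then gives
\[
[Y(\QQ_\ell)_c : \beta_{A,\ell}(\Gal_K)] \leq [Y(\QQ_\ell)_c : \varphi(U)] = [Y(\QQ_\ell)_c : \varphi(T_K(\QQ_\ell)_c)] \cdot [\varphi(T_K(\QQ_\ell)_c) : \varphi(U)] \ll_g [Y(\QQ_\ell)_c : \varphi(T_K(\QQ_\ell)_c)],
\]
which is the desired bound. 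Beyond Proposition~\ref{P:U in Tc} and basic properties of group indices, no additional input is needed for this step.
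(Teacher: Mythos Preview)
Your argument is correct, and for part~(ii) and for the inclusion $\varphi(T_K)\subseteq Y$ it coincides with the paper's proof. The difference lies in how you establish $Y\subseteq\varphi(T_K)$. The paper argues indirectly: it considers the quotient representation $\psi\colon\Gal_K\to Y(\QQ_\ell)/\varphi(T_K)(\QQ_\ell)$, uses Lemma~\ref{L:potential good} to show $\psi$ is unramified away from $\ell$ after a controlled base change, and combines this with $\varphi(U)\subseteq\varphi(T_K)(\QQ_\ell)$ to conclude $\psi$ is everywhere unramified over some finite extension $F/K$, hence has finite image; then Zariski density of $\beta_{A,\ell}(\Gal_F)$ in $Y_{\QQ_\ell}$ forces $\varphi(T_K)=Y$. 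Your route is more direct: since Lemma~\ref{L:locally algebraic} already factors $\beta_{A,\ell}$ through the algebraic group homomorphism $\phi\colon S_\m\to T_L$, the Zariski closure $Y_{\QQ_\ell}$ of $\beta_{A,\ell}(\Gal_K)$ lands in the closed subgroup $\phi(S_\m)_{\QQ_\ell}$, and connectedness of $Y$ pushes it into the neutral component $\phi(T_\m)_{\QQ_\ell}=\varphi(T_K)_{\QQ_\ell}$. This bypasses the ramification argument and the appeal to Lemma~\ref{L:potential good} entirely, at the cost of leaning more heavily on the algebraic-group content of Lemma~\ref{L:locally algebraic}. Both approaches are sound; yours is shorter and more structural, while the paper's makes the arithmetic mechanism (control of ramification) more visible.
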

\begin{proof}
Take the open subgroup $U\subseteq T_K(\QQ_\ell)_c$ as in Proposition~\ref{P:U in Tc}.  We have 
\[
\beta_{A,\ell}(\pi_\ell(U))=\varphi(U)^{-1}=\varphi(U).
\] 
The set $U$ is Zariski dense in $(T_K)_{\QQ_\ell}$ and $\varphi(U) = \beta_{A,\ell}(\pi_\ell(U)) \subseteq Y(\QQ_\ell)$, so $\varphi((T_K)_{\QQ_\ell}) \subseteq Y_{\QQ_\ell}$.  Therefore, $\varphi(T_K)\subseteq Y$.

We now prove $\varphi(T_K)= Y$.  
Let $\psi\colon \Gal_K \to Y(\QQ_\ell)/\varphi(T_K)(\QQ_\ell)$ be the homomorphism obtained by composing $\beta_{A,\ell}$ with the obvious quotient map.    By Lemma~\ref {L:potential good}, there is a finite extension $F/K$ such that $\psi|_{\Gal_F}$ is unramified at all prime ideals $\p\nmid \ell$ of $\OO_F$.    Since $\varphi(U)\subseteq \varphi(T_K)(\QQ_\ell)$ and $[T_K(\QQ_\ell)_c: U]\ll_g 1$, we deduce, after possibly replacing $F$ by a finite extension, that $\psi|_{\Gal_F}$ is unramified at all prime ideals $\p$ of $\OO_F$.    Therefore, $\psi$ has finite image and hence $\psi|_{\Gal_F}=1$ for some finite extension $F/K$.     For such a finite extension $F/K$, we have $\beta_{A,\ell}(\Gal_F) \subseteq \varphi(T_K)(\QQ_\ell)$.  The group $\beta_{A,\ell}(\Gal_F)$ is Zariski dense in $Y_{\QQ_\ell}$ by Proposition~\ref{P:Zariski dense in Y} and using that $Y$ is connected.   So $\varphi(T_K)(\QQ_\ell)$ is Zariski dense in $Y_{\QQ_\ell}$.   Therefore, $\varphi(T_K)_{\QQ_\ell}=Y_{\QQ_\ell}$ and hence $\varphi(T_K)= Y$ as desired.

Finally, we have
\[
[Y(\QQ_\ell)_c \colon\beta_{A,\ell}(\Gal_K)]
\leq 
[Y(\QQ_\ell)_c \colon\beta_{A,\ell}(\pi_\ell(U))]
=
[Y(\QQ_\ell)_c \colon\varphi(U)].
\ll_g [Y(\QQ_\ell)_c \colon\varphi(T(\QQ_\ell)_c)],
\]
where the last inequality uses that $[T(\QQ_\ell)_c:U]\ll_g 1$.
\end{proof}

Take any prime $\ell$.  Let $\rho\colon \Gal_{\QQ_\ell}\to \Aut_\ZZ(X( (T_K)_{\QQ_\ell}))$ be the Galois action on the character group of the torus $(T_K)_{\QQ_\ell}$.   Let $F'$ be the subfield of $\Qbar_\ell$ fixed by $\ker \rho$.   Let $F/\QQ_\ell$ be any subfield of $F'$ for which the extension $F/\QQ_\ell$ is Galois and the extension $F'/F$ is unramified.  Define the integer 
\[
e:=[F:\QQ_\ell]\cdot [Y(F)_c\colon \varphi(T_K(F)_c)],
\]
where $Y(F)_c$ and $T_K(F)_c$ are the maximal compact subgroups of $Y(F)$ and $T_K(F)$, respectively, with respect to the $\m$-adic topology.

\begin{lemma} \label{L:new e power}
We have $y^{e} \in \varphi(T_K(\QQ_\ell)_c)$ for each $y\in Y(\QQ_\ell)_c$.
\end{lemma}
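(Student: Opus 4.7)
\medskip

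\noindent\textbf{Proof plan.}
The plan is to first solve the problem after base change to $F$, and then to descend by a norm argument using that $\varphi$ is defined over $\QQ$ (hence commutes with $\Gal(F/\QQ_\ell)$).

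Set $m := [Y(F)_c : \varphi(T_K(F)_c)]$, so that $e = [F:\QQ_\ell] \cdot m$. Take any $y \in Y(\QQ_\ell)_c$. The continuous inclusion $Y(\QQ_\ell) \hookrightarrow Y(F)$ carries the compact subgroup $Y(\QQ_\ell)_c$ into the unique maximal compact subgroup $Y(F)_c$ of $Y(F)$, so $y \in Y(F)_c$. By the definition of $m$ we then have $y^m \in \varphi(T_K(F)_c)$, so there exists $t \in T_K(F)_c$ with $\varphi(t) = y^m$.

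Next I would produce a preimage of $y^e$ that lies in $T_K(\QQ_\ell)_c$ by taking the $\Gal(F/\QQ_\ell)$-norm of $t$. Concretely, set
\[
t' := \prod_{\sigma \in \Gal(F/\QQ_\ell)} \sigma(t) \in T_K(F).
\]
Since $\Gal(F/\QQ_\ell)$ acts on $T_K(F)$ through the action on $F$ (the torus $T_K$ being defined over $\QQ$), the element $t'$ is $\Gal(F/\QQ_\ell)$-invariant, so by Galois descent $t' \in T_K(\QQ_\ell)$. Each factor $\sigma(t)$ lies in $T_K(F)_c$ because $\sigma$ is a topological automorphism of $T_K(F)$ preserving its maximal compact subgroup; hence $t' \in T_K(\QQ_\ell)\cap T_K(F)_c = T_K(\QQ_\ell)_c$.

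Finally, because $\varphi$ is defined over $\QQ$ (and in particular over $\QQ_\ell$), it commutes with the $\Gal(F/\QQ_\ell)$-action, and since $y \in Y(\QQ_\ell)_c$ is already Galois-fixed, we compute
\[
\varphi(t') = \prod_{\sigma \in \Gal(F/\QQ_\ell)} \sigma\bigl(\varphi(t)\bigr) = \prod_{\sigma \in \Gal(F/\QQ_\ell)} \sigma(y^m) = (y^m)^{[F:\QQ_\ell]} = y^e,
\]
which yields the desired preimage. This argument is essentially formal once one knows that $\varphi$ is a $\QQ$-morphism and that norm-from-$F$-to-$\QQ_\ell$ preserves compactness; no obstacle is expected. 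Note that the hypothesis that $F'/F$ be unramified plays no role here (it will be used in the subsequent lemma, where one needs to bound $m$ uniformly).
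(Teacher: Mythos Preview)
Your proof is correct and follows essentially the same approach as the paper's own proof: lift $y^m$ to $T_K(F)_c$, take the Galois norm to descend to $T_K(\QQ_\ell)_c$, and use that $\varphi$ is defined over $\QQ_\ell$ and $y$ is Galois-fixed to compute $\varphi(t')=y^e$. Your added remark that the unramifiedness of $F'/F$ is not used in this lemma is also accurate.
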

\begin{proof}
Take any $y\in Y(\QQ_\ell)_c$.  We have $y^n = \varphi(t)$ for some $t\in T_K(F)_c$, where $n:=[Y(F)_c:\varphi(T_K(F)_c)]$.   Since $y$ and $\varphi$ are defined over $\QQ_\ell$, we have $y^n=\varphi(\sigma(t))$ for all $\sigma\in \Gal(F/\QQ_\ell)$.   Therefore, $y^e=y^{[F:\QQ] n} = \varphi(t')$ for $t' := \prod_{\sigma\in \Gal(F/\QQ_\ell)} \sigma(t)$.  We have $t'\in T_K(\QQ_\ell)$ since it stable under the $\Gal(F/\QQ_\ell)$-action. 

Each $\sigma \in \Gal(F/\QQ_\ell)$ is a continuous automorphism of $F$ and hence $\sigma(t)\in T_K(F)_c$.   Therefore, $t' \in T_K(F)_c\cap T_K(\QQ_\ell) =T_K(\QQ_\ell)_c$.
\end{proof}

\begin{lemma} \label{L:new ramified case}
We have $[Y(\QQ_\ell)_c: \varphi(T(\QQ_\ell)_c)]\leq [Y(\QQ_\ell): \gamma(Y(\QQ_\ell))]$, where $\gamma\colon Y \to Y$ is the $e$-th power map.
\end{lemma}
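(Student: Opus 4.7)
The plan is to sandwich $\varphi(T_K(\QQ_\ell)_c)$ between $\gamma(Y(\QQ_\ell)_c)$ and $Y(\QQ_\ell)_c$ and then compare the compact-index quotient with the full quotient. Explicitly, Lemma~\ref{L:new e power} says that $y^e \in \varphi(T_K(\QQ_\ell)_c)$ for every $y \in Y(\QQ_\ell)_c$, i.e., $\gamma(Y(\QQ_\ell)_c) \subseteq \varphi(T_K(\QQ_\ell)_c)$. Taking indices in $Y(\QQ_\ell)_c$ immediately yields
\[
[Y(\QQ_\ell)_c : \varphi(T_K(\QQ_\ell)_c)] \leq [Y(\QQ_\ell)_c : \gamma(Y(\QQ_\ell)_c)].
\]

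It then suffices to show $[Y(\QQ_\ell)_c : \gamma(Y(\QQ_\ell)_c)] \leq [Y(\QQ_\ell) : \gamma(Y(\QQ_\ell))]$. For this I would consider the natural homomorphism
\[
\iota \colon Y(\QQ_\ell)_c/\gamma(Y(\QQ_\ell)_c) \longrightarrow Y(\QQ_\ell)/\gamma(Y(\QQ_\ell))
\]
induced by the inclusion $Y(\QQ_\ell)_c \hookrightarrow Y(\QQ_\ell)$, and verify that $\iota$ is injective; this gives the desired inequality.

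The injectivity of $\iota$ amounts to the claim $\gamma(Y(\QQ_\ell)) \cap Y(\QQ_\ell)_c = \gamma(Y(\QQ_\ell)_c)$. The main input here is the standard structural fact that for a torus $Y$ defined over the local field $\QQ_\ell$, the quotient $Y(\QQ_\ell)/Y(\QQ_\ell)_c$ is a finitely generated \emph{torsion-free} abelian group (it is free of rank equal to the $\QQ_\ell$-rank of $Y$, via the valuation maps on the cocharacter lattice). Granting this, suppose $y \in Y(\QQ_\ell)_c$ and $y = \gamma(z) = z^e$ for some $z \in Y(\QQ_\ell)$. Writing $\bar{z}$ for the image of $z$ in $Y(\QQ_\ell)/Y(\QQ_\ell)_c$, we have $e\cdot \bar{z} = 0$ in this torsion-free group, hence $\bar{z} = 0$, so $z \in Y(\QQ_\ell)_c$ and $y \in \gamma(Y(\QQ_\ell)_c)$, as required.

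The only step that requires any substance is the torsion-freeness of $Y(\QQ_\ell)/Y(\QQ_\ell)_c$; the rest is bookkeeping. I anticipate no genuine obstacle, since torsion-freeness of this quotient is a well-known fact about tori over non-archimedean local fields (it can be proved by passing to a finite Galois extension splitting $Y$, where $Y(\QQ_\ell)_c$ is the kernel of the valuation map to the cocharacter lattice, and then taking Galois invariants).
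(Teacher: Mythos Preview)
Your proof is correct and follows the same overall structure as the paper's: first use Lemma~\ref{L:new e power} to get $\gamma(Y(\QQ_\ell)_c)\subseteq \varphi(T_K(\QQ_\ell)_c)$, then show the natural map $Y(\QQ_\ell)_c/\gamma(Y(\QQ_\ell)_c)\to Y(\QQ_\ell)/\gamma(Y(\QQ_\ell))$ is injective. The only difference is in how injectivity is justified. You invoke the structural fact that $Y(\QQ_\ell)/Y(\QQ_\ell)_c$ is torsion-free for a torus over a local field; the paper instead argues directly from the definition of $Y(\QQ_\ell)_c$ as the \emph{maximal} compact subgroup: if $x^e\in Y(\QQ_\ell)_c$, then the group generated by $x$ and $Y(\QQ_\ell)_c$ contains $Y(\QQ_\ell)_c$ with finite index, hence is compact, hence equals $Y(\QQ_\ell)_c$, so $x\in Y(\QQ_\ell)_c$. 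These are really the same argument---the paper's maximality step is exactly a proof that the quotient is torsion-free---but the paper's version is slightly more self-contained since it avoids appealing to the cocharacter/valuation description of the quotient.
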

\begin{proof}
Let $\gamma\colon Y \to Y$ be the $e$-th power map; it is an isogeny.   We have a quotient homomorphism
\begin{align} \label{E:injectivity compact}
Y(\QQ_\ell)_c/\gamma(Y(\QQ_\ell)_c) \to Y(\QQ_\ell)/\gamma(Y(\QQ_\ell)).
\end{align}

We claim that (\ref{E:injectivity compact}) is injective.   Take any $y\in Y(\QQ_\ell)_c$ for which $y=x^e$ for some $x\in Y(\QQ_\ell)$.    Let $G$ be the group generated by $x$ and $Y(\QQ_\ell)_c$.  Since $x^e\in Y(\QQ_\ell)_c$, we find that $Y(\QQ_\ell)_c$ is a finite index subgroup of $G$ and hence $G$ is compact.   We have $G=Y(\QQ_\ell)_c$ since $Y(\QQ_\ell)_c$ is the maximal compact subgroup of $Y(\QQ_\ell)$. Therefore, $x\in Y(\QQ_\ell)_c$ which finishes the proof of the claim.

By the injectivity of (\ref{E:injectivity compact}), we have  $[Y(\QQ_\ell)_c: \gamma(Y(\QQ_\ell)_c)] \leq [Y(\QQ_\ell): \gamma(Y(\QQ_\ell))]$.  The lemma is now a consequence of Lemma~\ref{L:new e power} which says that $\varphi(T(\QQ_\ell)_c) \supseteq \gamma(Y(\QQ_\ell)_c)$.
\end{proof}

Let $\gamma\colon Y\to Y$ be the $e$-th power map.  The map $\gamma$ is an isogeny and hence $Z:= \ker \gamma$ is a finite group scheme.    Starting with the short exact sequence $1\to Z(\Qbar_\ell)\to Y(\Qbar_\ell) \xrightarrow{\gamma} Y(\Qbar_\ell) \to 1$ and taking Galois cohomology gives an injective homomorphism $Y(\QQ_\ell)/\gamma(Y(\QQ_\ell)) \hookrightarrow H^1(\Gal_{\QQ_\ell}, Z(\Qbar_\ell))$.  This injective homomorphism and Lemma~\ref{L:new ramified case} implies that
\begin{align} \label{E:H1 bound}
[Y(\QQ_\ell)_c : \varphi(T(\QQ_\ell)_c)] \leq |H^1(\Gal_{\QQ_\ell},Z(\Qbar_\ell))|.
\end{align}

\begin{lemma} \label{L:H1 finite}
Let $H$ be a finite abelian group with a $\Gal_{\QQ_\ell}$-action.  Then the cardinality of $H^1(\Gal_{\QQ_\ell},H)$ can be bounded in terms of $|H|$.
\end{lemma}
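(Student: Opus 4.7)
The plan is to invoke Tate's local Euler--Poincar\'e characteristic formula together with Tate local duality; both apply to any finite discrete $\Gal_{\QQ_\ell}$-module, which is our setting since any $\Gal_{\QQ_\ell}$-action on a finite group is automatically continuous. Setting $n:=|H|$, the strategy is to bound each of $|H^0(\Gal_{\QQ_\ell},H)|$ and $|H^2(\Gal_{\QQ_\ell},H)|$ by $n$, and then to solve for an upper bound on $|H^1(\Gal_{\QQ_\ell},H)|$ using the Euler--Poincar\'e formula.

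First, $H^0(\Gal_{\QQ_\ell},H)$ sits inside $H$, so its cardinality is at most $n$. Next, I would apply Tate local duality: the groups $H^2(\Gal_{\QQ_\ell},H)$ and $H^0\bigl(\Gal_{\QQ_\ell},\operatorname{Hom}(H,\mu_{\Qbar_\ell})\bigr)$ are Pontryagin dual, so $|H^2(\Gal_{\QQ_\ell},H)|$ is bounded by the order of $\operatorname{Hom}(H,\mu_{\Qbar_\ell})$, which is $n$. Tate's local Euler--Poincar\'e characteristic formula then reads
\[
\frac{|H^0(\Gal_{\QQ_\ell},H)|\cdot|H^2(\Gal_{\QQ_\ell},H)|}{|H^1(\Gal_{\QQ_\ell},H)|} \;=\; \#(\ZZ_\ell/n\ZZ_\ell)^{-1} \;=\; \ell^{-v_\ell(n)},
\]
and rearranging gives
\[
|H^1(\Gal_{\QQ_\ell},H)| \;\leq\; n\cdot n\cdot \ell^{v_\ell(n)} \;\leq\; n^3,
\]
since the $\ell$-part $\ell^{v_\ell(n)}$ of $n$ is itself bounded by $n$.

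There is no substantive obstacle here: the two invoked results are standard consequences of local Tate duality for $p$-adic fields and apply verbatim to finite discrete $\Gal_{\QQ_\ell}$-modules. An elementary alternative that avoids these citations would be to note that the action factors through $\Gal(L/\QQ_\ell)$ for a finite Galois extension $L/\QQ_\ell$ with $[L:\QQ_\ell]\leq |\Aut(H)|$, apply inflation--restriction to reduce to the trivial-action case, and then estimate $|H^1(\Gal_L,H)|=|\operatorname{Hom}(L^\times/(L^\times)^n,H)|$ via local class field theory and the structure of local units; but this requires more bookkeeping and the Tate-theoretic approach is both shorter and yields the explicit bound $n^3$.
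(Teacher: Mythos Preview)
Your proof is correct and actually cleaner than the paper's. You invoke Tate's local Euler--Poincar\'e characteristic formula and Tate local duality to get the explicit bound $|H^1(\Gal_{\QQ_\ell},H)|\leq n^3$ with $n=|H|$. The paper instead takes precisely the ``elementary alternative'' you sketch at the end: it passes to an open normal subgroup $N\subseteq\Gal_{\QQ_\ell}$ of index at most $n!$ acting trivially on $H$, applies inflation--restriction, bounds $H^1(\Gal_{\QQ_\ell}/N,H^N)$ crudely in terms of $n!$ and $n$, and then identifies $H^1(N,H)$ with $\operatorname{Hom}(K^\times/(K^\times)^n,H)$ via local class field theory for the corresponding extension $K/\QQ_\ell$. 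Your approach is shorter and yields a much sharper explicit bound ($n^3$ versus something on the order of $(n!)^{n!}$ from the paper's argument), at the cost of invoking the two Tate theorems as black boxes; the paper's route is more self-contained but, as you anticipated, requires more bookkeeping. One small quibble: the claim that ``any $\Gal_{\QQ_\ell}$-action on a finite group is automatically continuous'' is not literally true, but continuity is implicit in the lemma's hypotheses (and holds in the application to $Z(\Qbar_\ell)$), so this does not affect your argument.
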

\begin{proof}
Set $n=|H|$ and $G:=\Gal_{\QQ_\ell}$.   There is an open normal subgroup $N\subseteq G$ of index at most $n!$ that acts trivially on $H$.  We have an inflation-restriction exact sequence
\[
0\to H^1(G/N, H^N) \to H^1(G,H) \to H^1(N,H)
\]
The cardinality of $H^1(G/N, H^N)$ can be bounded in terms of $|G/N|\leq n!$ and $|H^N|\leq n$.  So it suffices to bound $H^1(N,H)$ which is the group of continuous homomorphisms $N\to H$ since $N$ acts trivially on $H$.

Let $K$ be the extension of $\QQ_\ell$ corresponding to the subgroup $N$ of $G$.   By local class field theory, we can idenitify $H^1(N,H)$ with $\text{Hom}(K^\times/(K^\times)^n, H)$.  The lemma follows by noting that the cardinality of $K^\times/(K^\times)^n$ can be bounded in terms of $n$ and $[K:\QQ_\ell] \leq n!$.
\end{proof}

 The group scheme $Z$ is finite with cardinality $e^{\dim Y}$.   Since $\dim Y\leq 2g$, Lemma~\ref{L:H1 finite} and (\ref{E:H1 bound}) imply that $[Y(\QQ_\ell)_c : \varphi(T(\QQ_\ell)_c)] \ll_{g,e} 1$.   

\begin{lemma} \label{L:F bound}
We have $[Y(F)_c\colon \varphi(T_K(F)_c)]\ll_{g} 1$.
\end{lemma}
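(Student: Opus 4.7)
The plan is to replace the analytic question about compact subgroups of $F$-points by an étale-cohomological computation on smooth integral models over $\mathcal{O}_F$. The choice of $F$ ensures $F'/F$ is unramified, so the character group $X((T_K)_F)$ is an unramified $\Gal_F$-module and $(T_K)_F$ is an unramified torus over $F$. The subtorus $(W^\circ)_F$ (whose character group is a quotient of $X((T_K)_F)$) and the quotient torus $Y_F$ (whose character group is a subgroup) are then unramified as well. Each therefore extends canonically to a smooth torus scheme $\mathcal{T}_K$, $\mathcal{W}^\circ$, $\mathcal{Y}$ over $\mathcal{O}_F$ whose $\mathcal{O}_F$-points coincide with the maximal compact subgroup of the generic fiber — in particular $\mathcal{T}_K(\mathcal{O}_F) = T_K(F)_c$ and $\mathcal{Y}(\mathcal{O}_F) = Y(F)_c$. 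Let $\mathcal{W}$ denote the schematic kernel of the extended map $\mathcal{T}_K \to \mathcal{Y}$; its identity component is $\mathcal{W}^\circ$, and $\mathcal{W}/\mathcal{W}^\circ$ is a finite étale $\mathcal{O}_F$-group scheme of order $|W/W^\circ|\ll_g 1$ by Lemma~\ref{L:W cardinality}.

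Next, from the fppf exact sequence $1\to\mathcal{W}\to\mathcal{T}_K\to\mathcal{Y}\to 1$ over $\mathcal{O}_F$ one obtains
\[
\mathcal{T}_K(\mathcal{O}_F) \to \mathcal{Y}(\mathcal{O}_F) \to H^1(\mathcal{O}_F,\mathcal{W}) \to H^1(\mathcal{O}_F,\mathcal{T}_K).
\]
By Grothendieck--Serre, fppf cohomology of a torus over the henselian local ring $\mathcal{O}_F$ reduces to étale cohomology of the special fiber over the finite residue field, where Lang's theorem forces vanishing. Hence $H^1(\mathcal{O}_F,\mathcal{T}_K)=0$, so
\[
[Y(F)_c : \varphi(T_K(F)_c)] \leq |H^1(\mathcal{O}_F,\mathcal{W})|.
\]

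Finally, the sequence $1\to\mathcal{W}^\circ\to\mathcal{W}\to\mathcal{W}/\mathcal{W}^\circ\to 1$ gives
\[
H^1(\mathcal{O}_F,\mathcal{W}^\circ) \to H^1(\mathcal{O}_F,\mathcal{W}) \to H^1(\mathcal{O}_F,\mathcal{W}/\mathcal{W}^\circ);
\]
the first term vanishes (Lang again), and the last has order at most $|W/W^\circ|\ll_g 1$, since it is the continuous cohomology of $\widehat{\ZZ}$ on a finite module. Thus $|H^1(\mathcal{O}_F,\mathcal{W})|\ll_g 1$ and the lemma follows. The main technical hurdle is verifying the structural properties of the integral models --- that $\mathcal{W}/\mathcal{W}^\circ$ is étale over $\mathcal{O}_F$ and that $\mathcal{T}(\mathcal{O}_F)=T(F)_c$ holds for the canonical smooth model of an unramified torus --- both of which ultimately rest on the fact, built into the definition of $F$, that every torus appearing has character group with trivial $\Gal_F$-inertia action.
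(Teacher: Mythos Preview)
Your overall strategy matches the paper's: pass to smooth integral models of the unramified tori over $\calO_F$, use the two short exact sequences $1\to\calW\to\calT_K\to\calY\to1$ and $1\to\calW^\circ\to\calW\to\calB\to1$, and kill the contribution of the torus $\calW^\circ$ by Lang's theorem. The paper, however, does not compute fppf cohomology over $\calO_F$; it first reduces to the residue field $\FF$ by the elementary count $|\calH(R/\m^n)|=|\calH(\FF)|\cdot q^{(n-1)\dim\calH}$ for smooth $\calH$, and then bounds $[\calY(\FF):\psi(\calT(\FF))]$ via ordinary Galois cohomology $H^1(\Gal_\FF,\calB(\FFbar))$.

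There is a genuine gap in your version. You assert that $\calW/\calW^\circ$ is \emph{\'etale} over $\calO_F$, but $\calB=\calW/\calW^\circ$ is a finite group scheme of \emph{multiplicative} type: its Cartier dual is \'etale, while $\calB$ itself is \'etale only when $\ell\nmid m$, where $m=|W/W^\circ|$. Since the lemma must hold for every prime $\ell$, this case cannot be excluded. When $\ell\mid m$ your identification of $H^1_{\mathrm{fppf}}(\calO_F,\calB)$ with $H^1(\widehat\ZZ,\cdot)$ fails, and the bound $\ll_g 1$ is false at that step: already for $\calB=\mu_\ell$ one has $H^1_{\mathrm{fppf}}(\calO_F,\mu_\ell)\cong \calO_F^\times/(\calO_F^\times)^\ell$, whose order is at least $\ell^{[F:\QQ_\ell]}$. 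This depends on $[F:\QQ_\ell]$, which is bounded only in terms of $[K:\QQ]$, not $g$.

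The paper's detour through $\FF$ is exactly what circumvents this. Over the residue field one works with the discrete $\Gal_\FF$-module $\calB(\FFbar)$; its order is at most $m$ regardless of whether $\calB$ is \'etale (a $\mu_\ell$-factor contributes a single $\FFbar$-point in characteristic $\ell$), and $|H^1(\Gal_\FF,\calB(\FFbar))|\leq m$ since $\Gal_\FF$ is procyclic. The preliminary point-counting argument absorbs the ``wild'' contribution from $1+\m$ that your fppf computation over $\calO_F$ would otherwise see. Your argument can be repaired by inserting this same reduction to the special fiber before invoking cohomology.
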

\begin{proof}
Let $R$ be the ring of integers of $F$ and denote its maximal ideal by $\m$.    Define the residue field $\FF=R/\m$ and denote its cardinality by $q$.
Let $F^{\un}$ be the maximal unramified extension of $F$ in $\bbar{F}$.

We now consider algebraic group schemes of multiplicative type; for background, see \cite{MR3362641}*{Appendix B}.  The category of algebraic group schemes over $R$ of multiplicative type is equivalent to the category of algebraic group schemes over $F$ of multiplicative type for which the action of $\Gal_F$ on the character group is unramified.  This can be seen by noting that both are anti-equivalent to the category of discrete $\Gal(F^\un/F)$-modules that are finitely generated abelian groups, see \cite{MR3362641}*{Corollary B.3.6}. Explicitly, the equivalence is given by base extension by $F$.

Let $\calH$ be a torus over $R$.   The group $\calH(F)$ has a natural $\m$-adic topology.   Observe that $\calH(R)$ agrees with the maximal compact subgroup $\calH(F)_c$ of $\calH(F)$ with respect to the $\m$-adic topology (one can prove this by extending $R$ to reduce to the split case).     One can use the smoothness of $\calH$ and Hensel's lemma to show that $\calH(R/\m^{n+1})\to \calH(R/\m^n)$ is surjective with kernel isomorphic to  $\FF^{\dim \calH_F}$; in particular, the kernel has cardinality $q^{\dim \calH_F}$.  Therefore, $|\calH(R/\m^n)|= |\calH(\FF)|\cdot q^{(n-1) \dim \calH_F}$ for all $n\geq 1$.\\

Define $W:=\ker(\varphi) \subseteq T_K$ and $B:=W/W^\circ$.  The group scheme $B$ is finite and denote it cardinality by $m$.   By Lemma~\ref{L:W cardinality}, we have $m\ll_g 1$.

By our choice of $F$, the action of $\Gal_F$ on $X((T_K)_F))$ is unramified, i.e., factors through $\Gal(F^\un/F)$.  The actions of $\Gal_F$ on $X(W_F)$ and $X(Y_F)$ are also unramified since they can be viewed as a quotient and subgroup, respectively, of $X((T_K)_F)$ stable under the $\Gal_F$ action.     So there are short exact sequences 
\[
1\to \calW \xrightarrow{\iota} \calT \xrightarrow{\psi} \calY \to 1 \quad\quad \text{ and }\quad \quad
1\to \calW_0 \to \calW \to \calB \to 1
\]
of $R$-groups of multiplicative type such that base extension by $F$ gives rise to the short exact sequences
\[
1 \to W_F \hookrightarrow (T_K)_F \xrightarrow{\varphi} Y_F \to 1  \quad\quad \text{ and }\quad \quad
1\to W_F^\circ \hookrightarrow W_F \to B_F \to 1.
\]
We have $\calY(R)=Y(F)_c$ and $\calT(R)=T_K(F)_c$, and hence $[\calY(R):\psi(\calT(R))]=[Y(F)_c:\varphi(T_K(F)_c)]$.  So it suffices to prove that $[\calY(R):\psi(\calT(R))]\ll_{g} 1$.   Since $\psi(\calT(R))$ is a closed subgroup of $\calY(R)$ in the $\m$-adic topology, it suffices to prove that $[\calY(R/\m^n):\psi(\calT(R/\m^n))]\ll_{g} 1$ holds for all $n\geq 1$.  \\

First suppose that $n>1$.  We have $|\calY(R/\m^n)|=|\calY(\FF)| \cdot q^{(n-1)\dim Y}$ and 
\begin{align*}
|\psi(\calT(R/\m^n))| &= \frac{|\calT(R/m^n)|}{ |\calW(R/m^n)|} \\
& \geq \frac{1}{m}  \frac{|\calT(R/m^n)|}{ |\calW_0(R/m^n)|}\\
& \gg_g \frac{|\calT(\FF)|\cdot q^{(n-1) \dim T_K}}{ |\calW_0(\FF)|\cdot q^{(n-1) \dim W_0}} \\
&\geq \frac{|\calT(\FF)|}{ |\calW(\FF)|} \cdot q^{(n-1) (\dim T_K-\dim W)}.
\end{align*}
Using that $\dim Y = \dim T_K -\dim W$, we have
\[
[\calY(R/\m^n):\psi(\calT(R/\m^n))]
\ll_g |\calY(\FF)|/(|\calT(\FF)|/|\calW(\FF)|) = [\calY(\FF): \psi(\calT(\FF))].
\]
So it suffices to prove the $n=1$ case, i.e., show that $[\calY(\FF): \psi(\calT(\FF))]\ll_{g} 1$.

From the short exact sequence $1\to \calW(\FFbar)\to \calT(\FFbar) \xrightarrow{\psi} \calY(\FFbar) \to 1$, taking Galois cohomology gives an injective homomorphism $\calY(\FF)/\psi(\calT(\FF)) \hookrightarrow H^1(\Gal_{\FF}, \calW(\FFbar))$.  From the short exact sequence $1\to \calW_0(\FFbar)\to \calW(\FFbar) \to \calB(\FFbar) \to 1$, we have an exact sequence $H^1(\Gal_\FF, \calW_0(\FFbar))\to H^1(\Gal_\FF, \calW(\FFbar)) \to H^1(\Gal_\FF, \calB(\FFbar))$.  Since $(\calW_0)_{\FF}$ is connected,  Lang's theorem implies that $H^1(\Gal_\FF, \calW_0(\FFbar))=0$ and hence
\[
[\calY(\FF):\psi(\calT(\FF))] \leq |H^1(\Gal_{\FF}, \calW(\FFbar))| \leq |H^1(\Gal_\FF, \calB(\FFbar))|.
\]
Since $\calB(\FFbar)$ is a finite group of cardinality at most $m\ll_g 1$ and $\Gal_\FF$ is pro-cyclic, we have $|H^1(\Gal_\FF, \calB(\FFbar))| \ll_g 1$ and hence $[\calY(\FF):\psi(\calT(\FF))]\ll_g 1$.
\end{proof}

Since  $[Y(\QQ_\ell)_c : \varphi(T(\QQ_\ell)_c)] \ll_{g,e} 1$, Lemma~\ref{L:F bound} implies that $[Y(\QQ_\ell)_c : \varphi(T(\QQ_\ell)_c)]\ll_{g,[F:\QQ_\ell]} 1$.  If $\ell$ is unramified in $K$, then we can choose $F=\QQ_\ell$ and hence $[Y(\QQ_\ell) : \varphi(T(\QQ_\ell)_c)] \ll_{g} 1$.  This proves part (\ref{T:main abelian i}).

To prove part (\ref{T:main abelian ii}), it suffices to show that $[F:\QQ_\ell] \ll_{[K:\QQ]} 1$.   By Lemma~\ref{L: X bases}, there is a basis $\alpha_1,\ldots, \alpha_{[K:\QQ]}$ of $X( (T_K)_{\QQ_\ell})$ that is permuted by the natural $\Gal_{\QQ_\ell}$-action.  Therefore, $[F:\QQ_\ell] \leq [K:\QQ]!$.

\section{Proof of Theorem~\ref{T:main new}(\ref{T:main new a}) and (\ref{T:main new b})}
\label{S:end of main proof}

By Lemma~\ref{L:reduction to connected case}, we may assume the groups $G_{A,\ell}$ are all connected.  

Fix a prime $\ell\geq c \cdot \max(\{[K:\QQ],h(A), N(\q)\})^\gamma$.  We have already proved part (\ref{T:main new c}) of Theorem~\ref{T:main new} in \S\ref{S:proof of c and d},  so by increasing the constants $c$ and $\gamma$ appropriately, we may assume that $\ZZ_\ell$-group scheme $\calG_{A,\ell}$ is reductive.   Let $\calS_{A,\ell}$ be the derived subgroup of $\calG_{A,\ell}$; it is a semisimple group scheme over $\ZZ_\ell$.    We have  proved part (\ref{T:main new d}) of Theorem~\ref{T:main new} in \S\ref{S:proof of c and d}, so by increasing the constants $c$ and $\gamma$ appropriately, we may also assume that $\rho_{A,\ell}(\Gal_K)$ contains the commutator subgroup  $\calG_{A,\ell}(\ZZ_\ell)'$ of $\calG_{A,\ell}(\ZZ_\ell)$.   In particular, $\rho_{A,\ell}(\Gal_K)\supseteq\calS_{A,\ell}(\ZZ_\ell)'$.  

Define $S=(\calS_{A,\ell})_{\QQ_\ell}$; it is the derived subgroup of the connected reductive group $G_{A,\ell}$.   

With notation as in \S\ref{S:abelian setup} and Proposition~\ref{P:Zariski dense in Y}, there is a 
homomorphism 
\[
\delta:={\det}_L \colon G_{A,\ell} \to Y_{\QQ_\ell}
\]
of algebraic groups over $\QQ_\ell$, where $Y$ is a torus defined over $\QQ$.  Define $H:=\ker(\delta)$.

\begin{lemma} \label{L:H conn}
We have $H^\circ = S$ and the cardinality of the group scheme $H/S$ can be bounded in terms of $g$.
\end{lemma}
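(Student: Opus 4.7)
The plan is to exploit the decomposition of the connected reductive group $G_{A,\ell}$ as an almost-direct product of its derived subgroup $S$ and its central torus, and then use that $\delta$ is already well-understood on the central torus.

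First I would observe that $\delta(S) = 1$: since $\delta$ factors through the torus $Y_{\QQ_\ell}$ and $S$ is semisimple, any homomorphism from $S$ to a torus is trivial. Hence $S \subseteq H$. Next, since $G_{A,\ell}$ is connected and reductive, we have an almost-direct product decomposition
\[
G_{A,\ell} = S \cdot (C_A)_{\QQ_\ell},
\]
where by Proposition~\ref{P:same centers} the group $(C_A)_{\QQ_\ell}$ is the central torus of $G_{A,\ell}$.

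Now recall from \S\ref{S:abelian setup} that $\det_L|_{C_A}\colon C_A \to Y$ is an isogeny of degree at most $d_1\cdots d_s \ll_g 1$. Base changing, $\delta|_{(C_A)_{\QQ_\ell}}\colon (C_A)_{\QQ_\ell} \to Y_{\QQ_\ell}$ is an isogeny of tori with kernel $K_0$ satisfying $|K_0| \ll_g 1$. The key identity to establish is
\[
H = S \cdot K_0.
\]
The inclusion $\supseteq$ is clear. For the reverse inclusion, take any $h \in H(\Qbar_\ell)$ and write $h = sc$ with $s \in S(\Qbar_\ell)$ and $c \in (C_A)(\Qbar_\ell)$; then $1 = \delta(h) = \delta(s)\delta(c) = \delta(c)$, so $c \in K_0(\Qbar_\ell)$ and $h \in (S\cdot K_0)(\Qbar_\ell)$. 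Since we are in characteristic $0$ all algebraic groups involved are reduced, so this equality of $\Qbar_\ell$-points upgrades to an equality of subgroup schemes of $G_{A,\ell}$.

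From $H = S\cdot K_0$ with $K_0$ finite (hence \'etale in characteristic $0$), we deduce that $H$ is a finite union of translates of the connected group $S$, so $H^\circ = S$. Finally, $H/S$ is a quotient of $K_0/(S\cap K_0)$, giving
\[
|H/S| \leq |K_0| \leq d_1 \cdots d_s \ll_g 1,
\]
which completes both claims. I do not anticipate any real obstacle here; the only point requiring a moment of care is the identification $H = S \cdot K_0$ as schemes, which is handled by reducedness in characteristic zero.
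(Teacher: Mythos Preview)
Your proof is correct and follows essentially the same approach as the paper. The paper's argument is terser—it observes $S\subseteq H$ and then says it suffices to bound the kernel of $\delta|_C$ on the central torus $C=(C_A)_{\QQ_\ell}$, invoking the isogeny $\det_L|_{C_A}\colon C_A\to Y$ of degree $d_1\cdots d_s\ll_g 1$—but your explicit identification $H=S\cdot K_0$ via the almost-direct product decomposition is precisely the content being elided there.
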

\begin{proof}
We have $H \supseteq S$ since $S$ is semisimple and $Y_{\QQ_\ell}$ is a torus.  It thus suffices to show that the kernel of $\delta|_C=\det_L|_C$ is finite with cardinality bounded in terms of $g$, where $C$ is the central torus of $G_{A,\ell}$.  By Proposition~\ref{P:same centers},  $C=(C_{A})_{\QQ_\ell}$ where $C_A$ is the central torus of $\MT_A$.  As noted in \S\ref{S:abelian setup}, the homomorphism $\det_L|_{C_A}\colon C_A \to Y$ is an isogeny of degree $d_1\cdots d_s \ll_g 1$.
\end{proof}

Define the homomorphism $\beta_{A,\ell}\colon \Gal_K \to Y(\QQ_\ell)$ by $\beta_{A,\ell}= \det_L\circ\rho_{A,\ell}$.  We have $\beta_{A,\ell}(\Gal_K)\subseteq Y(\QQ_\ell)_c$, where $Y(\QQ_\ell)_c$ is the maximal compact subgroup of $Y(\QQ_\ell)$ with respect to the $\ell$-adic topology. 

We have inequalities
\begin{align*}
[\calG_{A,\ell}(\ZZ_\ell):\rho_{A,\ell}(\Gal_K)]
&=\,[{\det}_L(\calG_{A,\ell}(\ZZ_\ell))\colon {\det}_L(\rho_{A,\ell}(\Gal_K))]
\cdot [ H(\QQ_\ell)\cap \calG_{A,\ell}(\ZZ_\ell) : H(\QQ_\ell) \cap \rho_{A,\ell}(\Gal_K)]\\
&\leq \,[Y(\QQ_\ell)_c \colon\beta_{A,\ell}(\Gal_K)]
\cdot [ H(\QQ_\ell)\cap \calG_{A,\ell}(\ZZ_\ell) : H(\QQ_\ell) \cap \rho_{A,\ell}(\Gal_K)]\\
&\ll_g \,[Y(\QQ_\ell)_c \colon\beta_{A,\ell}(\Gal_K)]
\cdot [ S(\QQ_\ell)\cap \calG_{A,\ell}(\ZZ_\ell) : S(\QQ_\ell) \cap \rho_{A,\ell}(\Gal_K)],
\end{align*}
where we have used Lemma~\ref{L:H conn} in the last inequality and we have also used that ${\det}_L(\calG_{A,\ell}(\ZZ_\ell))$ is a compact subgroup of $Y(\QQ_\ell)$.     We have
\begin{align*} 
 [ S(\QQ_\ell)\cap \calG_{A,\ell}(\ZZ_\ell) : S(\QQ_\ell) \cap \rho_{A,\ell}(\Gal_K)] &= [ \calS_{A,\ell}(\ZZ_\ell) : \calS_{A,\ell}(\ZZ_\ell) \cap \rho_{A,\ell}(\Gal_K)]\\
 &\leq  [ \calS_{A,\ell}(\ZZ_\ell):\calS_{A,\ell}(\ZZ_\ell)']\\
 &\ll_g 1,
 \end{align*}
where the last inequality uses Proposition~\ref{P:calS}(\ref{P:calS iii}).   Combining our inequalities, we find that
\begin{align*}
[\calG_{A,\ell}(\ZZ_\ell):\rho_{A,\ell}(\Gal_K)] \ll_g  [Y(\QQ_\ell)_c \colon\beta_{A,\ell}(\Gal_K)].
\end{align*}
Theorem~\ref{T:main new}(\ref{T:main new a}) and (\ref{T:main new b}) now follow from Theorem~\ref{T:main abelian}.

\section{Proof of Theorem~\ref{T:GRH bound for Nq}}  \label{S:GRH bound for Nq}

We first give a slight generalization of Theorem~\ref{T:main new}.  

\begin{thm} \label{T:main new revised}
The conclusion of Theorem~\ref{T:main new} holds with (\ref{E:main ell bound}) replaced by the assumption that $\ell \geq c \cdot \max(\{[K:\QQ],h(A)\})^\gamma$  and $\ell \nmid n$, where $n$ is a positive integer satisfying $n < c N(\q)^\gamma$ that depends on $A$.
\end{thm}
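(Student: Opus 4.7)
The plan is to revisit the proof of Theorem~\ref{T:main new} and localize every place where the hypothesis $\ell \geq c\cdot N(\q)^\gamma$ is actually used. I expect that in each such spot, this hypothesis serves only to guarantee that $\ell$ does not divide a specific nonzero integer attached to $A$ and $\q$, and that these integers are all bounded by $c'N(\q)^{\gamma'}$ with constants depending only on $g$. Collecting their product into a single positive integer $n$ will then give the required statement.

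First I will scan \S\ref{S:proof of c and d}--\S\ref{S:end of main proof}. I anticipate finding precisely two essential places where $N(\q)$ enters. The first is in \S\ref{SS:new reductive proof}, where $\ell \geq c\cdot N(\q)^\gamma$ is used to force both $\q\nmid\ell$ (equivalently, $\ell\neq p$, where $p$ is the rational prime below $\q$) and $\ell\nmid D$, where $D=\prod_{i<j}(\lambda_i-\lambda_j)^2$ is built from the distinct roots of $P_{A,\q}(x)$; the Weil bound combined with $[\QQ(\pi_1,\ldots,\pi_{2g}):\QQ]\ll_g 1$ yields $|D|\ll_g N(\q)^\gamma$, and $p\leq N(\q)$. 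The second is in \S\ref{SS:same rank proof}, where $\ell\nmid\beta_M$ is required for each $M$ in the finite set $\calM$ (whose size is bounded in terms of $g$), with $|\beta_M|\ll_g N(\q)^\gamma$. Every other ``$\ell$ sufficiently large'' condition in the proof (Masser--W\"ustholz, Nori theory, Larsen's theorem, the abelian bound of \S\ref{S:abelian}, the final assembly in \S\ref{S:end of main proof}) depends only on $g$, $[K:\QQ]$ and $h(A)$. In particular, the reduction to the connected case in Lemma~\ref{L:reduction to connected case} passes to a prime $\q'\subseteq\OO_{K_A^\conn}$ above $\q$ with $N(\q')=N(\q)$, so no new integers arise.

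Given this, I will take
\[
n := p\cdot |D|\cdot \prod_{M\in\calM}|\beta_M|.
\]
Each factor is a nonzero integer: the $\beta_M$ by Lemma~\ref{L:betaM nonzero}, $D$ by distinctness of the $\lambda_i$, and $p$ trivially. Since $|\calM|\ll_g 1$ and each factor is bounded by $c\cdot N(\q)^\gamma$, we have $n<c\cdot N(\q)^\gamma$ for appropriate constants depending only on $g$. Assuming $\ell\geq c\cdot\max\{[K:\QQ],h(A)\}^\gamma$ and $\ell\nmid n$, the condition $\ell\nmid n$ supplies exactly the divisibility hypotheses $\q\nmid\ell$, $\ell\nmid D$, and $\ell\nmid\beta_M$ for every $M\in\calM$ that were previously extracted from $\ell\geq c\cdot N(\q)^\gamma$. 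The remainder of the proof of Theorem~\ref{T:main new} then applies verbatim to yield all four conclusions.

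The main obstacle will be the bookkeeping step of ensuring that no hidden $N(\q)$-dependency has been overlooked. The most delicate place to double-check is the abelian representation machinery of \S\ref{S:abelian}: one must verify that the torus $Y$, the auxiliary extension $F/K$ of Lemma~\ref{L:potential good}, the modulus $\m$ of Lemma~\ref{L:locally algebraic}, and the bounds in Lemmas~\ref{L: X bases} and \ref{L:F bound} depend only on $A/K$ and not on $\q$. I expect this to go through, but it must be inspected explicitly so that no additional factor needs to be adjoined to $n$.
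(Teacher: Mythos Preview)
Your proposal is correct and follows essentially the same approach as the paper's own proof: the paper identifies the same two places (\S\ref{SS:new reductive proof} and \S\ref{SS:same rank proof}) where $\ell\geq cN(\q)^\gamma$ is used, and takes $n:=N(\q)\cdot|D|\cdot\prod_{M\in\calM}|\beta_M|$ (using $N(\q)$ rather than your $p$ to enforce $\q\nmid\ell$, a cosmetically different but equivalent choice). Your additional care in verifying that \S\ref{S:abelian} is independent of $\q$ is sound, though the paper omits this check as self-evident.
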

\begin{proof}
There are only two times in the proof of Theorem~\ref{T:main new} that we used the assumption  
\begin{align} \label{E:old assumption}
\ell\geq c\, N(\q)^\gamma.
\end{align}
In \S\ref{SS:new reductive proof}, we used (\ref{E:old assumption}) to prove that $\q\nmid \ell$ and that $\ell$ does not divide a specific non-zero integer $D$ that satisfies $|D| <c\, N(\q)^\gamma$.  In \S\ref{SS:same rank proof}, we used (\ref{E:old assumption}) to prove that $\q\nmid \ell$ and that $\ell$ does not divide  non-zero integers $\beta_M$ satisfying $|\beta_M| <c\, N(\q)^\gamma$, where $M$ lie in a set $\calM$ with $|\calM|\ll_g 1$.

The theorem thus holds with $n:= N(\q) \cdot |D| \cdot \prod_{M\in \calM} |\beta_M |$ after possibly increasing the constants $c$ and $\gamma$ so that $n<cN(\q)^\gamma$.
\end{proof}

By the same arguments as in the proof of Lemma~\ref{L:reduction to connected case}, we may assume that all the groups $G_{A,\ell}$ are connected; note that the integer $D$ is unchanged if we replace $A/K$ with $A_{K_A^\conn}/K_A^\conn$.

\begin{lemma} \label{L:GRH Bell new}
Let $\ell$ be a prime for which $\calG_{A,\ell}$ is reductive.   For each maximal torus $T$ of $G$, there is a subset $U_T \subseteq T(\FF_\ell)$ such that the following conditions hold:
\begin{alphenum}
\item \label{I:Bell a}
Let $\p\nmid\ell$ be a non-zero prime ideal of $\OO_K$ for which $A$ has good reduction.  If $\bbar\rho_{A,\ell}(\Frob_\p)$ is conjugate in $G(\FF_\ell)$ to an element of $T(\FF_\ell)-U_T$, then $\Phi_{A,\p} \cong \ZZ^r$.
\item \label{I:Bell b}
We have $|U_T| \ll_g \ell^{r-1}$.
\item \label{I:Bell c}
For any $h \in G(\FF_\ell)$ and maximal torus $T$ of $G$, we have $U_{hTh^{-1}}=hU_T h^{-1}$.
\end{alphenum}
\end{lemma}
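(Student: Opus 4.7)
The plan is to exploit Proposition~\ref{P:Frob group possibilities}, which tells us that the multiplicative relation lattice
$M_\p := \{e \in \ZZ^{2g} : \prod_j \pi_j^{e_j} = 1\}$
among the Frobenius eigenvalues $\pi_j$ takes only finitely many values $M_1,\dots,M_s$, depending only on $g$. Since $\Phi_{A,\p} \cong \ZZ^{2g}/M_\p$, the condition $\Phi_{A,\p}\cong\ZZ^r$ picks out exactly those $M_i$ of rank $2g-r$ with torsion-free quotient; let $\calM_\mathrm{bad}$ denote the complementary finite collection. The idea is to define $U_T$ as the locus of $t \in T(\FF_\ell)$ whose character values reproduce modulo $\ell$ one of the bad relation patterns; (a) will then follow by contrapositive, (b) from the bounded complexity of $\calM_\mathrm{bad}$, and (c) from the naturality of the construction under $G(\FF_\ell)$-conjugation.

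\textbf{Construction.}
Let $\chi_1,\dots,\chi_{2g} \in X(T_{\FFbar_\ell})$ be the weights of $T \subseteq \GL_{A[\ell]}$ on $A[\ell]\otimes \FFbar_\ell$. Because this representation is faithful, these weights span $X(T_{\FFbar_\ell}) \cong \ZZ^r$ and the lattice $N := \ker\big(\ZZ^{2g} \twoheadrightarrow X(T_{\FFbar_\ell})\big)$ has rank $2g-r$. For each pair $(M,\sigma) \in \calM_\mathrm{bad} \times S_{2g}$, I consider the closed subgroup scheme
\[
V_{M,\sigma} := \bigcap_{m \in M} \ker\Big({\prod}_j \chi_{\sigma(j)}^{m_j}\Big) \subseteq T_{\FFbar_\ell},
\]
and define $U_T$ to be the union of $V_{M,\sigma}(\FF_\ell)$ taken over those pairs for which $V_{M,\sigma} \subsetneq T_{\FFbar_\ell}$.

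\textbf{Verification of the three properties.}
For (a), suppose $\bar\rho_{A,\ell}(\Frob_\p)$ is conjugate to some $\bar t \in T(\FF_\ell)$ while $\Phi_{A,\p}\not\cong\ZZ^r$; then $M_\p \in \calM_\mathrm{bad}$, and matching the multiset of Frobenius eigenvalues to $\{\chi_j(\bar t)\}$ supplies a permutation $\sigma$ with $\bar\pi_j = \chi_{\sigma(j)}(\bar t)$. Reducing each identity $\prod_j \pi_j^{m_j}=1$ (for $m\in M_\p$) modulo $\ell$ gives $\bar t \in V_{M_\p,\sigma}$, whence $\bar t \in U_T$ provided $V_{M_\p,\sigma}$ is a proper subgroup, which is the crux treated below. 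For (b), every proper $V_{M,\sigma}$ has dimension $<r$ and is cut out by characters whose coordinates are $g$-bounded (since $M$ lies in the finite set $\calM_\mathrm{bad}$), so $|V_{M,\sigma}(\FF_\ell)| \ll_g \ell^{r-1}$; the number of pairs $(M,\sigma)$ is $\ll_g 1$. Part (c) is immediate: the weights, the data of $\calM_\mathrm{bad}$, and hence each $V_{M,\sigma}$ transport canonically under the identification $X(T)=X(hTh^{-1})$ induced by conjugation.

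\textbf{Main obstacle.}
The delicate step is to verify that $V_{M_\p,\sigma}$ really is proper in the situation of (a); equivalently, $M_\p \not\subseteq N_\sigma$, where $N_\sigma$ denotes the permutation of $N$ by $\sigma$. The rank-bad case ($\rk M_\p > 2g-r$) is immediate from $\rk N_\sigma = 2g-r$. The torsion-bad case ($\rk M_\p = 2g-r$ with $\ZZ^{2g}/M_\p$ having torsion) is the obstacle: I argue by contradiction. If $M_\p \subsetneq N_\sigma$, pick any $m \in N_\sigma \setminus M_\p$; then $\zeta := \prod_j \pi_j^{m_j}$ is a nontrivial root of unity whose order divides $|N_\sigma/M_\p|$, a quantity bounded in terms of $g$ because $M_\p$ lies in a $g$-finite set and $\rk N_\sigma$ is bounded. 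The relation $\sum_j m_j \chi_{\sigma(j)}=0$ in $X(T_{\FFbar_\ell})$ simultaneously forces $\bar\zeta = 1$ in $\FFbar_\ell$, so $\ell \mid \ord(\zeta)$; this is impossible once $\ell$ exceeds a $g$-dependent constant, which is the regime in which the reductive hypothesis on $\calG_{A,\ell}$ is established (cf.~Theorem~\ref{T:main new}(\ref{T:main new c})). This rules out the torsion-bad obstruction and completes the plan.
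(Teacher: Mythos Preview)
Your construction is close in spirit to the paper's but organized differently: you index $U_T$ by the finite list of bad relation lattices $M\in\calM_{\mathrm{bad}}$ together with permutations, whereas the paper takes $U_T$ to be the union of $\ker\beta_m(\FF_\ell)$ over all single vectors $m$ of bounded height lying outside the weight-relation lattice of $T$. Your handling of part~(\ref{I:Bell a}) is also different. In the torsion-bad case the paper does not argue with roots of unity modulo $\ell$; instead it works in characteristic~$0$: once $M_\p$ is contained in the weight-relation lattice one gets a surjection $\Phi_{A,\p}\twoheadrightarrow\ZZ^r$, so the Zariski closure $T_\p$ of $\langle\rho_{A,\ell}(\Frob_\p)\rangle$ in $G_{A,\ell}$ has $T_\p^\circ$ a maximal torus, and then $T_\p=T_\p^\circ$ because a maximal torus in a connected reductive group is its own centralizer, whence $\Phi_{A,\p}\cong X(T_\p)$ is free of rank $r$. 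Your root-of-unity argument is valid once $\ell$ exceeds a $g$-bounded constant, and since both proofs ultimately invoke results (Theorem~\ref{T:equal groups}, Lemma~\ref{L:new bounded formal character}) established only in that regime, this is not a real loss. One small correction: your bound on $[N_\sigma:M_\p]$ does not follow merely from ``$M_\p$ in a finite set and $\operatorname{rk} N_\sigma$ bounded''; the point is that $N_\sigma$ is \emph{saturated} in $\ZZ^{2g}$ (as $X(T_{\FFbar_\ell})\cong\ZZ^{2g}/N_\sigma$ is free), so $M_\p\subseteq N_\sigma$ of equal rank forces $N_\sigma=\operatorname{sat}(M_\p)$ and the index equals $|(\ZZ^{2g}/M_\p)_{\mathrm{tors}}|$, which is bounded since $M_\p$ lies in a finite set.

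There is, however, a genuine gap in your treatment of~(\ref{I:Bell b}). Knowing that $V_{M,\sigma}$ is a proper subgroup ``cut out by characters whose coordinates are $g$-bounded'' does not by itself give $|V_{M,\sigma}(\FF_\ell)|\ll_g\ell^{r-1}$: you must also bound the order of the component group $V_{M,\sigma}/V_{M,\sigma}^\circ$, i.e., the torsion in $X(T_{\FFbar_\ell})/\langle\text{your characters}\rangle$. The exponents $m_j$ are bounded because $M$ ranges over $\calM_{\mathrm{bad}}$, but the weights $\chi_j$ themselves could a priori sit at large height inside $X(T_{\FFbar_\ell})\cong\ZZ^r$, producing arbitrarily large torsion. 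What is missing is that the relation lattice $N$ among the $\chi_j$ has bounded complexity---equivalently $\scrF((\calG_{A,\ell})_{\FF_\ell})\ll_g 1$---which the paper supplies via Theorem~\ref{T:equal groups} and Lemma~\ref{L:new bounded formal character}. With that input there are only finitely many possibilities (in terms of $g$) for $N$, hence for $X(V_{M,\sigma})$ and its torsion, and the bound follows after a Galois-descent step; the paper carries out exactly this argument for its analogous subvarieties $W$. You should invoke the same bound on $N$ explicitly.
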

\begin{proof}
Fix a maximal torus $T$ of $G$.  Let $\alpha_1,\ldots, \alpha_{2g} \in X(T)$ be the weights of $T\subseteq \GL_{A[\ell]}$, with multiplicity, acting on $A[\ell]$.    Let $M\subseteq \ZZ^{2g}$ be the group consisting of $e\in \ZZ^{2g}$ satisfying $\prod_{i=1}^{2g}\alpha_i^{e_i}=1$.    By Theorem~\ref{T:equal groups} and Lemma~\ref{L:new bounded formal character}, there are only finite many possibilities for $M$ in terms of $g$.
 Note that we have an isomorphism $X(T)\cong \ZZ^{2g}/ M$; in particular, $\ZZ^{2g}/M$ is a free abelian group of rank $r$.      Define the finite set \[
\scrA:=\{m\in \ZZ^{2g} - M : \max_i |m_i| \leq C\},
\]
where $C$ is a positive constant depending only on $g$ that we will later impose an additional condition on.  For each $m\in \ZZ^{2g}$, define the character  $\beta_m:=\prod_{i=1}^{2g} \alpha_i^{m_i} \in X(T)$.   We have $\beta_m\neq 1$ for each $m\in \scrA$ since $m\notin M$.    Define $Y:= \cup_{m\in \scrA} \ker \beta_m$.  The set of characters $\{\beta_m: m \in \scrA\}\subseteq X(T)$ is stable under the action of $\Gal_{\FF_\ell}$ since $\alpha_1,\ldots,\alpha_{2g}$ is stable under this Galois action.  We may thus view $Y$ as a subvariety of $T$ defined over $\FF_\ell$.    Define the subset $U_T:=Y(\FF_\ell)$ of $T(\FF_\ell)$.    Note that while $M$ and $\scrA$ depend on our choice of ordering $\alpha_1,\ldots, \alpha_{2g}$ of weights, the set $U_T$ does not.

We now prove (\ref{I:Bell a}).  Take any prime $\p\nmid \ell$ for which $A$ has good reduction and $\bbar\rho_{A,\ell}(\Frob_\p)$ is conjugate in $G(\FF_\ell)$ to an element of $T(\FF_\ell)-U_T$.   We may assume that $t_\p:=\bbar\rho_{A,\ell}(\Frob_\p)$ lies in $T(\FF_\ell)-U$.   The roots of $P_{A,\p}(x)$ modulo $\ell$, with multiplicity, are $\alpha_1(t_\p),\ldots, \alpha_{2g}(t_\p) \in \FFbar_\ell$.   Let $R$ be the ring of integers of a splitting field $F/\QQ_\ell$ of $P_{A,\p}(x)$.    Let $\pi_1,\ldots,\pi_{2g} \in R$ be the roots of $P_{A,\p}(x)$ with multiplicity;  note that each $\pi_i$ lies in $R$ since it is an algebraic integer in $F$.  Let $\FF$ be the residue field of $R$.   Let $\bbar\pi_1,\ldots, \bbar\pi_{2g} \in \FFbar_\ell$ be the values obtained by reducing each $\pi_i$ and then applying a fixed embedding $\FF\hookrightarrow \FFbar_\ell$.  By  rearranging the $\pi_i$, we may assume that $\bbar{\pi}_i = \alpha_i(t_\p)$ holds for all $1\leq i \leq 2g$.  Let $M_\p$ be the group of $e\in \ZZ^{2g}$ that satisfy $\prod_{i=1}^{2g} \pi_i^{e_i}=1$. By Proposition~\ref{P:Frob group possibilities}, $M_\p$ is one of a finite number of subgroups of $\ZZ^{2g}$ that depend on $g$.

We claim that $M_\p \subseteq M$.   Suppose to the contrary that there is an $m \in M_\p - M$.    We may assume that $\max_i |m_i| \leq C$, where $C$ is our constant depending only on $g$; we can take $m$ to be in a fixed finite set of generators for each of the groups $M_i$ from Proposition~\ref{P:Frob group possibilities}.  We have $(\prod_{i=1}^{2g}\alpha^{m_i})(t_\p)=1$ since $m\in M_\p$.  In particular, by our choice of $C$, there is an $m\in \scrA$ such that $\beta_m(t_\p)=1$.   Therefore, $t_\p \in T(\FF_\ell)\cap Y(\FFbar_\ell) = Y(\FF_\ell)=U_T$.   However, this is a contradiction since we assumed that $t_\p \in T(\FF_\ell)-U_T$. This proves the claim.

Since $M_\p \subseteq M$, we have a surjective homomorphism $\Phi_{A,\p}\cong \ZZ^{2g}/ M_\p \to \ZZ^{2g}/ M \cong \ZZ^r$. Let $T_\p$ be the Zariski closure in $G_{A,\ell}$ of the subgroup generated by the semisimple element $\rho_{A,\ell}(\Frob_\p)$.    As explained in the proof of Lemma~\ref{L:pre common rank}, we have $X(T_\p) \cong \Phi_{A,\p}$.    We thus have $X(T_\p^\circ) = \Phi_{A,\p}/(\Phi_{A,\p})_{\operatorname{tors}}$, where the neutral component $T_\p^\circ$ is a torus and $(\Phi_{A,\p})_{\operatorname{tors}}$ is the torsion subgroup of $\Phi_{A,\p}$.   Since there is a surjective homomorphism $\Phi_{A,\p} \twoheadrightarrow \ZZ^r$, the torus $T_\p^\circ$ has dimension at least $r$.    Since $T_\p^\circ$ is contained in the reductive group $G_{A,\ell}$ of rank $r$, we find that $T_\p^\circ$ is a maximal torus of $G_{A,\ell}$ and has dimension $r$.    We have $T_\p=T_\p^\circ$ since a maximal torus of a connected reductive group is its own centralizer.    So $\Phi_{A,\p}\cong X(T_\p)$ is a free abelian group of rank $r$.  This completes the proof of (\ref{I:Bell a}). 

We now prove (\ref{I:Bell b}).   Since $|\scrA|\ll_g 1$, to verify $|U_T|\ll_g \ell^{r-1}$ it suffices to prove that $|\{t\in T(\FF_\ell): \beta_m(t)=1\}|\ll_g \ell^{r-1}$ for each $m\in \scrA$.     Take any $m\in \scrA$.    Let $\FF_{\ell^d}/\FF_\ell$ be the smallest extension over which $T$ splits; the character $\beta_m$ is defined over $\FF_{\ell^d}$.   We have $d\ll_g 1$.     For $t\in T(\FF_\ell)$ satisfying $\beta_m(t)=1$,  we have $\sigma(\beta_m)(t)=\sigma(\beta_m(t))=\sigma(1)=1$ for all $\sigma\in \Gal(\FF_{\ell^d}/\FF_\ell)$.   Define $W:=\bigcap_{\sigma \in \Gal(\FF_{\ell^d}/\FF_\ell)} \ker \sigma(\beta_m)$; it is a subvariety of $T$ defined over $\FF_\ell$ that contains all $t\in T(\FF_\ell)$ satisfying $\beta_m(t)=1$.   So to verify $|U_T|\ll_g \ell^{r-1}$ it suffices to prove that $|W(\FF_\ell)|\ll_g \ell^{r-1}$.   For any $\sigma\in \Gal(\FF_{\ell^d}/\FF_\ell)$, $\sigma$ permutes the characters $\alpha_1,\ldots, \alpha_{2g}$ (with multiplicity) and hence there is an $m_\sigma \in \scrA$ satisfying $\sigma(\beta_m)=\beta_{m_\sigma}$.  The algebraic group $W$ is diagonalizable (over $\FF_{\ell^d}$) and 
\[
X(W) \cong \ZZ^{2g}/\Big(M + \sum_{\sigma\in \Gal(\FF_{\ell^d}/\FF_\ell)}\ZZ  m_\sigma\Big)
\]
There are only finitely many possibilities (in terms of $g$) for the group $X(W)$ since  $d\ll_g 1$ and since there are only finitely many possibilities (in terms of $g$) for $M$ and each $m_\sigma$.  In particular, the torsion subgroup of $X(W)$ can be bounded in terms of $g$ and hence $|(W/W^\circ)(\FFbar_\ell)|\ll_g 1$.   Therefore, $|W(\FF_\ell)|\ll_g |W^\circ (\FF_\ell)|  \ll_g \ell^{r-1}$, where the last inequality uses that $W^\circ$ is a torus over $\FF_\ell$ of rank at most $r-1$ and $r$ can be bounded in terms of $g$.  We deduce that $|U_T|\ll_g \ell^{r-1}$.

It remains to prove (\ref{I:Bell c}).   Take any $h \in G(\FF_\ell)$ and define the maximal torus $T':=hTh^{-1}$ of $G$.  We have an isomorphism $\iota\colon T'\to T$, $t\mapsto h^{-1}th$ of tori and an isomorphism $X(T)\to X(T')$, $\alpha\mapsto \alpha\circ \iota$ of groups that respects the $\Gal_{\FF_\ell}$-actions.  For $1\leq i \leq 2g$, define $\alpha_i':=\alpha_i \circ \iota$.   Observe that $\alpha_1',\ldots, \alpha_{2g}'$ are the weights, with multiplicity, of $T'$ acting on $A[\ell]$.   The group $M$ is also the group consisting of $e\in \ZZ^{2g}$ satisfying $\prod_{i=1}^{2g}(\alpha_i')^{e_i}=1$.  So we have the same set $\scrA$ when defining $U_{T'}$ (with our ordering of weights $\alpha_1',\ldots, \alpha_{2g}'$).  For each $m\in \ZZ^{2g}$, define the character $\beta'_m:=\prod_{i=1}^{2g} (\alpha_i')^{m_i}$ of $T'$.   Note that $\beta'_m=\beta_m \circ \iota$ for all $m\in \ZZ^{2g}$.    For any $t\in T(\FF_\ell)$, we have 
\[
t \in U_T \iff \beta_m(t)=1 \text{ for some $m\in \scrA$} \iff \beta'_m(hth^{-1})=1 \text{ for some $m\in \scrA$} \iff  hth^{-1} \in U_{T'}.
\]
Since $\iota$ induces an isomorphism $T'(\FF_\ell)\to T(\FF_\ell)$, we have $U_{T'}=hU_T h^{-1}$.
\end{proof}

\begin{lemma} \label{L:GRH Bell}
Let $\ell$ be a prime for which $\calG_{A,\ell}$ is reductive.   There is a subset $\calB_\ell$ of $\calG_{A,\ell}(\FF_\ell)$ stable under conjugation satisfying $|\calB_\ell|/|\calG_{A,\ell}(\FF_\ell)| = 1 + O_g(1/\ell)$ such that if $\p\nmid \ell$ is a prime ideal of $\OO_K$ for which $A$ has good reduction and  $\bbar\rho_{A,\ell}(\Frob_\p) \in \calB_\ell$, then $\Phi_{A,\p}$ is a free abelian group of rank $r$. 
\end{lemma}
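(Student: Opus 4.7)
The plan is to set $G := (\calG_{A,\ell})_{\FF_\ell}$ and define
\[
\calB_\ell := \{ g \in G(\FF_\ell)^\rs : g \notin U_{T(g)} \},
\]
where $G(\FF_\ell)^\rs$ is the set of regular semisimple $\FF_\ell$-points of $G$ and, for each such $g$, $T(g) := C_G(g)^\circ$ is the unique maximal torus of $G$ containing $g$, automatically defined over $\FF_\ell$. Part~(\ref{I:Bell c}) of Lemma~\ref{L:GRH Bell new} immediately yields that $\calB_\ell$ is stable under $G(\FF_\ell)$-conjugation: if $h\in G(\FF_\ell)$ and $g \in \calB_\ell$, then $T(hgh^{-1})= hT(g)h^{-1}$ and $U_{T(hgh^{-1})}= hU_{T(g)}h^{-1}$, so $hgh^{-1} \notin U_{T(hgh^{-1})}$. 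The desired implication is then immediate: if $\bbar\rho_{A,\ell}(\Frob_\p) \in \calB_\ell$, setting $T := T(\bbar\rho_{A,\ell}(\Frob_\p))$, the element itself lies in $T(\FF_\ell) \setminus U_T$, so part~(\ref{I:Bell a}) of Lemma~\ref{L:GRH Bell new} forces $\Phi_{A,\p}$ to be a free abelian group of rank $r$.

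To bound the density, I would write, with $d := \dim G$,
\[
|G(\FF_\ell) \setminus \calB_\ell| \leq |G(\FF_\ell) \setminus G(\FF_\ell)^\rs| + \sum_T |U_T|,
\]
where $T$ runs over the maximal $\FF_\ell$-tori of $G$. The first summand is $O_g(\ell^{d-1})$, since $G \setminus G^\rs$ is a proper closed subvariety of $G$ cut out by the non-vanishing of the Weyl discriminant, and its complexity is bounded uniformly because $G \hookrightarrow \GL_{2g,\FF_\ell}$ draws $G$ from a family of bounded root-datum type. For the torus sum, part~(\ref{I:Bell b}) of Lemma~\ref{L:GRH Bell new} gives $|U_T| \ll_g \ell^{r-1}$, and to count the maximal $\FF_\ell$-tori I would use the partition $|G(\FF_\ell)^\rs| = \sum_T |T(\FF_\ell)^\rs|$ together with the standard estimate $|T(\FF_\ell)^\rs| \asymp_g \ell^r$ to conclude that the number of maximal $\FF_\ell$-tori is $\ll_g \ell^{d-r}$. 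Multiplying yields $\sum_T |U_T| \ll_g \ell^{d-1}$, and combined with $|G(\FF_\ell)| \gg_g \ell^d$ (via Lang--Steinberg), this gives $|\calB_\ell|/|G(\FF_\ell)| = 1 + O_g(1/\ell)$.

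The principal technical point is ensuring uniformity of the implicit constants in both $A$ and $\ell$, since $G$ varies with $A$. This is handled because $G$ embeds in $\GL_{2g,\FF_\ell}$ and so its dimension, rank, Weyl group order, and the degree of the Weyl discriminant are all bounded in terms of $g$ alone; moreover, for $\ell$ below any absolute threshold depending on $g$ the conclusion is vacuous upon enlarging the $O_g$-constant, so one may freely assume $\ell$ is larger than any such threshold when making the asymptotic estimates.
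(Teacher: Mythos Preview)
Your proof is correct and follows essentially the same approach as the paper: the same set $\calB_\ell$ (regular semisimple elements of $G(\FF_\ell)$ avoiding $U_{T(g)}$), the same appeal to parts (\ref{I:Bell a}) and (\ref{I:Bell c}) of Lemma~\ref{L:GRH Bell new}, and the same counting ingredients $|T(\FF_\ell)_{\rs}|=\ell^r+O_g(\ell^{r-1})$ and $|U_T|\ll_g\ell^{r-1}$. The only cosmetic difference is that you bound $|G(\FF_\ell)\setminus\calB_\ell|$ by first extracting an upper bound $\ll_g\ell^{d-r}$ on the number of maximal $\FF_\ell$-tori from the partition, whereas the paper keeps the sum $\sum_T$ intact and compares $|\calB_\ell|\geq\sum_T(|T(\FF_\ell)_{\rs}|-|U_T|)$ directly with $|G(\FF_\ell)_{\rs}|=\sum_T|T(\FF_\ell)_{\rs}|$.
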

\begin{proof}
The group $G:=(\calG_{A,\ell})_{\FF_\ell}$ is connected and reductive.  
Let $\calB_\ell$ be the set of elements in $G(\FF_\ell)- \bigcup_T U_T$ that are semisimple and regular in $G$, where the union is over all maximal tori of $G$ and the sets $U_T$ are as in Lemma~\ref{L:GRH Bell new}.  Using property (\ref{I:Bell c}) of Lemma~\ref{L:GRH Bell new}, we find that $\calB_\ell$ is stable under conjugation by $G$.

Take any prime ideal $\p\nmid \ell$ of $\OO_K$ for which $A$ has good reduction and $\bbar\rho_{A,\ell}(\Frob_\p) \in \calB_\ell$.   In particular, $\bbar\rho_{A,\ell}(\Frob_\p)$ is conjugate in $G(\FF_\ell)$ to an element of $T(\FF_\ell)-U_T$ for some maximal torus $T$ of $G$.  Property (\ref{I:Bell a}) of Lemma~\ref{L:GRH Bell new} implies that $\Phi_{A,\p}\cong \ZZ^r$.  

It remains to prove that $|\calB_\ell|/|\calG_{A,\ell}(\FF_\ell)| = 1 + O_g(1/\ell)$.  Let $G(\FF_\ell)_{\rs}$ be the set of elements in $G(\FF_\ell)$ that are regular and semisimple in $G$.   For each maximal torus $T$ of $G$, define $T(\FF_\ell)_{\rs}=T(\FF_\ell)\cap G(\FF_\ell)_{\rs}$.   We have $|G(\FF_\ell)_{\rs}| = |G(\FF_\ell)| (1+O_g(1/\ell))$ and $|T(\FF_\ell)_{\rs}| = \ell^r + O_g(\ell^{r-1})$ for any maximal torus $T$ of $G$ by  the proof of Lemma~4.5 of \cite{MR3038553}; note that the proof of this lemma only uses that $G/\FF_\ell$ is reductive and there are only a finite number of possibilities, in terms of $g$, for the  Lie type of $G$.

Every element of $G(\FF_\ell)$ that is regular and semisimple element in $G$ lies in a unique maximal torus.  We thus have a disjoint union 
$\calB_\ell=\bigcup_T \big(T(\FF_\ell)_{\rs} - U_T\big)$, with the union being over all maximal tori $T$ of $G$.  Therefore,
\begin{align*}
|\calB_\ell| &\geq \sum_T \big(|T(\FF_\ell)_{\rs}| - |U_T|\big) = \sum_T \ell^r \cdot (1+ O_g(1/\ell)),
\end{align*}
where we have used property (\ref{I:Bell b}) of Lemma~\ref{L:GRH Bell new}.  We also have a disjoint union $G(\FF_\ell)_{\rs}=\bigcup_T T(\FF_\ell)_{\rs}$ and hence $|G(\FF_\ell)_{\rs}| = \sum_T \ell^r \cdot (1+O_g(1/\ell))$.  Since $|G(\FF_\ell)_{\rs}| = |G(\FF_\ell)| (1+O_g(1/\ell))$, we have inequalities
 $|G(\FF_\ell)| \geq |\calB_\ell| \geq |G(\FF_\ell)| (1+O_g(1/\ell))$.  Therefore, $|\calB_\ell|/|\calG_{A,\ell}(\FF_\ell)| = 1 + O_g(1/\ell)$.
\end{proof}

Let $\q$ be a non-zero prime ideal of $\OO_K$ for which $A$ has good reduction and $\Phi_{A,\q}$ is a free abelian group of rank $r$.   We can assume that $\q$ is chosen so that $N(\q)$ is minimal.   We have $D=\prod_{p\in V} p$, where $V$ is the set of primes $p$ that ramify in $K$ or are divisible by a prime ideal for which $A$ has bad reduction.  \\

By Theorem~\ref{T:main new revised}, there are positive constants $c$ and $\gamma$, depending only on $g$, and a positive integer $n < c N(\q)^\gamma$ such that  for all primes $\ell \nmid n D$ satisfying $\ell \geq c \cdot \max(\{[K:\QQ],h(A)\})^\gamma$, we have
   \begin{align*}
 [\calG_{A,\ell}(\ZZ_\ell): \rho_{A,\ell}(\Gal_K)] &\ll_{g} 1 
 \end{align*}
and the $\ZZ_\ell$-group scheme $\calG_{A,\ell}$ is reductive.   

\begin{lemma} \label{L:existence of ell}
Let $\ell\nmid nD$ be a prime satisfying $\ell \geq c \cdot \max(\{[K:\QQ],h(A)\})^\gamma$.    There is a subset $\calC_\ell$ of $\bbar\rho_{A,\ell}(\Gal_K)$ that is stable under conjugation such that the following hold:
\begin{alphenum}
\item \label{L:existence of ell a}
if $\p\nmid \ell$ is a prime ideal of $\OO_K$ for which $A$ has good reduction and $\bbar\rho_{A,\ell}(\Frob_p)\in \calC_\ell$, then $\Phi_{A,\p}$ is a free abelian group of rank $r$.
\item \label{L:existence of ell b}
${|\calC_\ell|}/{|\bbar\rho_{A,\ell}(\Gal_K)|} =1 +O_g(\ell)$.
\end{alphenum}
\end{lemma}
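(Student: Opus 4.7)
The plan is to construct $\calC_\ell$ as the intersection $\calB_\ell \cap \bbar\rho_{A,\ell}(\Gal_K)$, where $\calB_\ell \subseteq \calG_{A,\ell}(\FF_\ell)$ is the subset supplied by Lemma~\ref{L:GRH Bell}. Part (\ref{L:existence of ell a}) is then immediate from the defining property of $\calB_\ell$. The set $\calB_\ell$ is stable under conjugation by $\calG_{A,\ell}(\FF_\ell)$, and in particular by $\bbar\rho_{A,\ell}(\Gal_K)$, so $\calC_\ell$ is stable under conjugation by $\bbar\rho_{A,\ell}(\Gal_K)$ as required.

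For part (\ref{L:existence of ell b}), first observe that Theorem~\ref{T:main new revised} (conclusion~(\ref{T:main new c})) ensures that $\calG_{A,\ell}$ is a reductive (hence smooth) $\ZZ_\ell$-group scheme, so the reduction homomorphism $\calG_{A,\ell}(\ZZ_\ell) \to \calG_{A,\ell}(\FF_\ell)$ is surjective. Combined with conclusion~(\ref{T:main new a}) — or rather the stronger bound $[\calG_{A,\ell}(\ZZ_\ell):\rho_{A,\ell}(\Gal_K)] \ll_g 1$ that actually holds here (having reduced to the connected case at the start of the section, exactly as in \S\ref{S:end of main proof}) — this gives a bound $M := [\calG_{A,\ell}(\FF_\ell):\bbar\rho_{A,\ell}(\Gal_K)] \ll_g 1$.

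Now apply Lemma~\ref{L:GRH Bell}, whose hypothesis that $\calG_{A,\ell}$ be reductive is met. Writing $\calB_\ell^c := \calG_{A,\ell}(\FF_\ell) \smallsetminus \calB_\ell$, we have $|\calB_\ell^c| = O_g(|\calG_{A,\ell}(\FF_\ell)|/\ell)$. Therefore
\[
|\bbar\rho_{A,\ell}(\Gal_K)| - |\calC_\ell| \;\leq\; |\calB_\ell^c| \;=\; O_g\!\left(\frac{M\cdot |\bbar\rho_{A,\ell}(\Gal_K)|}{\ell}\right) \;=\; O_g\!\left(\frac{|\bbar\rho_{A,\ell}(\Gal_K)|}{\ell}\right),
\]
since $M \ll_g 1$. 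Dividing through gives $|\calC_\ell|/|\bbar\rho_{A,\ell}(\Gal_K)| = 1 + O_g(1/\ell)$, which is the content of (\ref{L:existence of ell b}).

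The only subtle point is the passage from the $\ZZ_\ell$-index bound to a mod-$\ell$ index bound on $\bbar\rho_{A,\ell}(\Gal_K)$ inside $\calG_{A,\ell}(\FF_\ell)$; everything else is a direct bookkeeping argument given Lemma~\ref{L:GRH Bell}. That step is handled cleanly because $\calG_{A,\ell}$ is reductive (hence smooth), so the reduction map is surjective and indices only shrink under reduction. (I note that the exponent in the statement of part~(\ref{L:existence of ell b}) reads ``$O_g(\ell)$'' but the intended bound — and what we actually prove — is $O_g(1/\ell)$.)
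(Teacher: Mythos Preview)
Your proof is correct and follows essentially the same approach as the paper: define $\calC_\ell := \calB_\ell \cap \bbar\rho_{A,\ell}(\Gal_K)$, read off part~(\ref{L:existence of ell a}) and conjugation-stability from Lemma~\ref{L:GRH Bell}, and for part~(\ref{L:existence of ell b}) bound the complement by $|\calG_{A,\ell}(\FF_\ell)-\calB_\ell|\ll_g |\calG_{A,\ell}(\FF_\ell)|/\ell$ and use $[\calG_{A,\ell}(\FF_\ell):\bbar\rho_{A,\ell}(\Gal_K)]\ll_g 1$. Your observation that the ``$O_g(\ell)$'' in the statement should read ``$O_g(1/\ell)$'' is also correct, and your explicit justification of the mod-$\ell$ index bound via smoothness of $\calG_{A,\ell}$ is a nice touch (the paper simply writes the inequality $[\calG_{A,\ell}(\FF_\ell):\bbar\rho_{A,\ell}(\Gal_K)]\leq[\calG_{A,\ell}(\ZZ_\ell):\rho_{A,\ell}(\Gal_K)]$, which of course relies on the same surjectivity).
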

\begin{proof}
Take any prime $\ell\nmid nD$ satisfying $\ell \geq c \cdot \max(\{[K:\QQ],h(A)\})^\gamma$.   From above, we know that $\calG_{A,\ell}$ is reductive and $[\calG_{A,\ell}(\ZZ_\ell): \rho_{A,\ell}(\Gal_K)] \ll_{g} 1$.  Let $\calB_\ell$ be the set of elements in $\calG_{A,\ell}(\FF_\ell)$ as in Lemma~\ref{L:GRH Bell}.  Define $\calC_\ell:=  \bbar\rho_{A,\ell}(\Gal_K) \cap \calB_\ell$; it is stable under conjugation by $\bbar\rho_{A,\ell}(\Gal_K)$.  Take any prime ideal $\p\nmid \ell$ of $\OO_K$ for which $A$ has good reduction and $\bbar\rho_{A,\ell}(\Frob_p)\in \calC_\ell$.  Since $\bbar\rho_{A,\ell}(\Frob_p)\in \calB_\ell$, the group $\Phi_{A,\p}$ is free abelian of rank $r$.  This proves part (\ref{L:existence of ell a}).

We have  $\bbar\rho_{A,\ell}(\Gal_K) - \calC_\ell \subseteq \calG_{A,\ell}(\FF_\ell)- \calB_\ell$ and hence
\[
|\bbar\rho_{A,\ell}(\Gal_K) - \calC_\ell| \leq |\calG_{A,\ell}(\FF_\ell)- \calB_\ell| = |\calG_{A,\ell}(\FF_\ell)| (1 - |\calB_\ell|/|\calG_{A,\ell}(\FF_\ell)|) \ll_g |\calG_{A,\ell}(\FF_\ell)|/\ell,
\]
where the last inequality uses that $|\calB_\ell|/|\calG_{A,\ell}(\FF_\ell)| = 1+ O_g(1/\ell)$.   Using that $[\calG_{A,\ell}(\FF_\ell): \bbar\rho_{A,\ell}(\Gal_K)]\leq [\calG_{A,\ell}(\ZZ_\ell): \rho_{A,\ell}(\Gal_K)] \ll_{g} 1$, we deduce that
\[
|\bbar\rho_{A,\ell}(\Gal_K)| - |\calC_\ell| = |\bbar\rho_{A,\ell}(\Gal_K) - \calC_\ell|\ll_g |\bbar\rho_{A,\ell}(\Gal_K)|/\ell.
\]
Part (\ref{L:existence of ell b}) follows by dividing by $|\bbar\rho_{A,\ell}(\Gal_K)|$.
\end{proof}

\begin{prop} \label{P:need GRH}
Take any prime $\ell\nmid nD$ satisfying $\ell\geq c \cdot \max(\{[K:\QQ],h(A)\})^\gamma$.   After possibly increasing the constant $c$, that depends only on $g$,  we have $N(\q)\ll_g (\max\{\ell, [K:\QQ], \log D\})^e,$ where $e\geq 1$ depends only on $g$.
\end{prop}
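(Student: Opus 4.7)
The plan is to apply an effective GRH-conditional form of the Chebotarev density theorem to produce a small-norm prime $\p$ of $\OO_K$ at which $A$ has good reduction and $\Phi_{A,\p}\cong \ZZ^r$, and then invoke the minimality of $N(\q)$ to conclude $N(\q)\leq N(\p)$. Let $L$ be the subfield of $\Kbar$ fixed by $\ker \bbar\rho_{A,\ell}$, so that $L/K$ is a Galois extension with $\Gal(L/K)$ canonically identified with $\bbar\rho_{A,\ell}(\Gal_K)$, unramified at every prime of $\OO_K$ that does not divide $\ell D$. By Lemma~\ref{L:existence of ell}, there is a conjugation-stable subset $\calC_\ell \subseteq \Gal(L/K)$ of relative density $1+O_g(1/\ell)$, and after enlarging the constant $c$ we may assume this density is at least $1/2$.

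Next, the effective Chebotarev bound of Lagarias--Odlyzko, applied to $L/K$ under GRH, produces a prime $\p\subseteq\OO_K$ that is unramified in $L$, satisfies $\Frob_\p\in\calC_\ell$, and has
\[
N(\p) \,\ll\, \bigl(\log |d_L|\bigr)^2,
\]
where $d_L$ denotes the absolute discriminant of $L$. Because $\p$ is unramified in $L$, it does not divide $\ell D$, so $A$ has good reduction at $\p$ and Lemma~\ref{L:existence of ell}(\ref{L:existence of ell a}) gives $\Phi_{A,\p}\cong \ZZ^r$; hence $N(\q)\leq N(\p)$ by minimality. To close the argument, one bounds $\log |d_L|$: using $[L:K]\leq |\GL_{2g}(\FF_\ell)|\ll_g \ell^{4g^2}$, the fact that $L/K$ ramifies only at primes dividing $\ell D$, and the standard estimate $\log |d_K| \ll [K:\QQ]\bigl(\log [K:\QQ] + \log D\bigr)$ (every rational prime ramifying in $K$ divides $D$), a conductor-discriminant type inequality yields
\[
\log |d_L| \,\ll_g\, \ell^{4g^2}\, [K:\QQ]\, \bigl(\log\ell + \log [K:\QQ] + \log D\bigr).
\]
Squaring and absorbing logarithms into polynomial factors of $M := \max\{\ell,[K:\QQ],\log D\}$ then gives $N(\q)\ll_g M^e$ for some $e=e(g)$.

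The main obstacle will be bookkeeping: the wild ramification of $L/K$ above $\ell$ can inflate the exponent of each prime above $\ell$ in the relative discriminant by a factor as large as $[L:K]\ll_g \ell^{4g^2}$, and this is then squared when passing through the effective Chebotarev inequality. Verifying that the resulting bound nevertheless collapses to a single polynomial $M^e$ requires a careful discriminant estimate for $L/K$, but the routine items---that the density hypothesis of effective Chebotarev holds once $\ell$ is large, that $\p$ is automatically coprime to $\ell D$, and that $\log|d_K|$ is dominated by $[K:\QQ]$ and $\log D$---cause no difficulty.
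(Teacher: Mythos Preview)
Your overall strategy---apply effective Chebotarev under GRH to $L=K(A[\ell])/K$ with the set $\calC_\ell$, then bound $\log|d_L|$ polynomially in $\ell$, $[K:\QQ]$, and $\log D$---is essentially the paper's. But there is a genuine gap in the sentence ``Because $\p$ is unramified in $L$, it does not divide $\ell D$, so $A$ has good reduction at $\p$.''

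The implication $\p$ unramified in $L\Rightarrow \p\nmid\ell$ is fine, since $K(\mu_\ell)\subseteq L$ and $\ell$ is unramified in $K$. The problem is the other half: a prime $\p\nmid\ell$ at which $A$ has \emph{bad} reduction can perfectly well be unramified in $L=K(A[\ell])$. N\'eron--Ogg--Shafarevich only says that inertia at such $\p$ acts nontrivially on $T_\ell(A)$, not on the quotient $A[\ell]$; at a prime of semistable reduction, for example, inertia acts unipotently on $T_\ell(A)$ and its image in $\GL_{A[\ell]}(\FF_\ell)$ may be trivial for the particular $\ell$ in play. (Similarly, $\p$ lying over a rational prime ramified in $K$---hence dividing $D$---says nothing about ramification of $\p$ in $L/K$.) So the least-prime form of Lagarias--Odlyzko may hand you a $\p$ of bad reduction, and then Lemma~\ref{L:existence of ell}(\ref{L:existence of ell a}) does not apply.

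The paper handles this by using the \emph{counting} form of effective Chebotarev (Serre, \cite{MR644559}*{Th\'eor\`eme~4}): one shows $\pi_{\calC_\ell}(x)\geq \tfrac14\operatorname{Li}(x)$ once $x\gg_g M^e$, then checks that for such $x$ this already exceeds $2[K:\QQ](\log D+1)+1$, an upper bound for the number of prime ideals of $\OO_K$ dividing $D\ell$. Hence at least one prime counted by $\pi_{\calC_\ell}(x)$ satisfies $\p\nmid D\ell$, and in particular $A$ has good reduction there. Your argument can be repaired along exactly these lines---the discriminant bookkeeping you outline is correct and sufficient---but as written, the exclusion of bad primes is unjustified.
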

\begin{proof}
Take any prime $\ell\nmid nD$ satisfying $\ell\geq c \cdot \max(\{[K:\QQ],h(A)\})^\gamma$.   Define the group $G:=\bbar\rho_{A,\ell}(\Gal_K)$ and the field $L:=K(A[\ell])$. Note that $L$ is the subfield of $\Kbar$ fixed by $\ker \bbar\rho_{A,\ell}$.    
Let $\calC_\ell\subseteq G$ be the set from Lemma~\ref{L:existence of ell}.  By increasing the constant $c$, that only depends on $g$, we may assume that $|\calC_\ell|/|G| \geq 1/2$.

Let $\pi_{\calC_\ell}(x)$ be the set of non-zero prime ideals $\p$ of $\OO_K$ that are unramified in $L$ and satisfy $\bbar\rho_{A,\ell}(\Frob_\p)\in \calC_\ell$.   An effective version of the Chebotarev density theorem (Th\'eor\`eme~4 and Remark ($20_R$) of \cite{MR644559} along with the trivial bound $|\calC_\ell|\leq [L:K]$) implies that
\[
\Big|\pi_{\calC_\ell}(x) -  \frac{|\calC_\ell|}{|G|}  \operatorname{Li}(x) \Big| \ll [L:K] x^{1/2} \Big( \log x + \log [L:\QQ] + [K:\QQ]^{-1} \log d_K + \sum_{p\in P(L/K)} \log p\Big),
\]
where $\operatorname{Li}(x)=\int^x_2 (\log t)^{-1}\, dt$, $d_K$ is the absolute value of the discriminant of $K$, and $P(L/K)$ is the set of primes $p$ that are divisible by some prime ideal of $\OO_K$ that ramifies in $L$.   Note that this version of the Chebotarev density theorem uses our GRH assumption.
By \cite{MR644559}*{Proposition~6}, we have 
\[
[K:\QQ]^{-1}\log d_K \leq \sum_{p\in P(K)} \log p + |P(K)| \log [K:\QQ] \ll (\log [K:\QQ]+1) \sum_{p\in P(K)} \log p, 
\]
where $P(K)$ is the set of primes $p$ that ramify in $K$.   Since $P(K)\cup P(L/K) \subseteq V \cup\{\ell\}$, we have
\[
\Big|\pi_{\calC_\ell}(x) -  \frac{|\calC_\ell|}{|G|}  \operatorname{Li}(x) \Big| \ll [L:\QQ] x^{1/2} \Big( \log x + \log [L:\QQ] + (\log [K:\QQ]+1) ({\sum}_{p\in V} \log p + \log \ell)\Big).
\]
Since $[L:K] \leq |\GL_{2g}(\FF_\ell)| \leq \ell^{4g^2}$, we find that 
\[
\Big|\pi_{\calC_\ell}(x) -  \frac{|\calC_\ell|}{|G|}  \operatorname{Li}(x) \Big| \ll_g \ell^{4g^2+1}[K:\QQ]^2 x^{1/2} \Big( \log x +  {\sum}_{p\in V} \log p \Big)
\]
and hence
\[
\pi_{\calC_\ell}(x) \geq  \frac{1}{2} \operatorname{Li}(x) +O_g\Big( \ell^{4g^2+1}[K:\QQ]^2 x^{1/2} (\log x+\log D)  \Big).
\]
So there is an $e\geq 2$, depending only on $g$, such that if $\max\{\ell, [K:\QQ], \log D\} \ll x^{1/e}$, then $\pi_{\calC_\ell}(x) \geq  \frac{1}{2} \operatorname{Li}(x) +O_g(x^{9/10})$.   Thus for $x\gg_g (\max\{\ell, [K:\QQ], \log D\})^e$, we will have $\pi_{\calC_\ell}(x) \geq \frac{1}{4} \operatorname{Li}(x)$ and also $\frac{1}{4} \operatorname{Li}(x)\geq 2 [K:\QQ](\log D+1)+1$ after possibly increasing $e$ (which depends only on $g$).   Therefore, for $x\gg_g (\max\{\ell, [K:\QQ], \log D\})^e$, we have $\pi_{\calC_\ell}(x)\geq 2 [K:\QQ](\log D+1)+1$ and hence $\pi_{\calC_\ell}(x)$ is strictly larger than the number of prime ideals $\p$ of $\OO_K$ dividing $D\ell$.  So there is a non-zero prime ideal $\p\nmid D\ell$ with $N(\p)\ll_g (\max\{\ell, [K:\QQ], \log D\})^e$ for which $\bbar\rho_{A,\ell}(\Frob_\p) \in \calC_{\ell}$.   The abelian variety $A$ has good reduction at $\p$ since $\p\nmid D$.   By Lemma~\ref{L:existence of ell}, $\Phi_{A,\p}$ is a free abelian group of rank $r$.     By the minimality of our choice of $\q$, we have $N(\q) \leq N(\p) \ll_g (\max\{\ell, [K:\QQ], \log D\})^e$.
\end{proof}

We have
\[
\sum_{\ell | nD} \log \ell  \leq\log n + \log D \leq \log (c N(\q)^{\gamma}) + \log D =\gamma \log N(\q) +\log c + \log D
\]

Define $C:= c \cdot \max(\{[K:\QQ],h(A)\})^\gamma$.  By the prime number theorem, there is an absolute constant $m\geq 2$ such that for all $Q\geq mC$, we have
\[
\sum_{C\leq \ell \leq Q} \log\ell \geq Q/2
\]
So with $Q:= 4 \max\{mC, \gamma \log N(\q) +\log c + \log D\}$, we have 
\[
\sum_{C\leq \ell \leq Q} \log\ell \geq 2 \max\{mC, \gamma \log N(\q) +\log c + \log D\} > \gamma \log N(\q) +\log c + \log D \geq \sum_{\ell | nD} \log \ell.
\]
From $\sum_{C\leq \ell \leq Q} \log\ell > \sum_{\ell | nD} \log \ell$, we deduce that there is a prime $C\leq \ell \leq Q$ with $\ell \nmid nD$.  By Proposition~\ref{P:need GRH}, we have
\begin{align*}
N(\q)&\ll_g (\max\{Q, [K:\QQ], \log D\})^e \\
& \ll_g   (\max\{C,\log N(\q), [K:\QQ], \log D\})^e\\
& \ll_g   (\max\{\log N(\q), [K:\QQ], h(A), \log D\})^f,
\end{align*}
where $e\geq 1$ and $f\geq 1$ depend only on $g$.   If $\log N(\q) \leq \max\{ [K:\QQ], h(A), \log D\}$, then
\begin{align} \label{E:Nq bound}
N(\q) \ll_g (\max\{[K:\QQ], h(A), \log D\})^f.
\end{align}
Now suppose that $\log N(\q) > \max\{ [K:\QQ], h(A), \log D\}$ and hence $N(\q) \ll_g (\log N(\q))^f$.  Since $f$ depends only on $g$, we have $N(\q) \ll_g 1$ and hence (\ref{E:Nq bound}) holds as well.

Theorem~\ref{T:GRH bound for Nq} is now a direct consequence of Theorem~\ref{T:main new} and the upper bound (\ref{E:Nq bound}) for $N(\q)$.

\section{Proof of Corollary~\ref{C:maximal monodromy}} 
\label{S:proof of maximal monodromy}
 
Take any prime $\ell\geq c \cdot \max(\{[K:\QQ],h(A), N(\q)\})^\gamma$ that is unramified in $K$, where $c$ and $\gamma$ are constants as in Theorem~\ref{T:main new}.   After possibly increasing the constants $c$ and $\gamma$, that depend only on $g$, Th\'eor\`eme~1.1 of \cite{MR3263028} implies that $A$ has a polarization defined over $K$ whose degree is not divisible by $\ell$.   This polarization gives rise to an isogeny $\varphi\colon A\to A^\vee$ whose degree is not divisible by $\ell$, where $A^\vee$ is the dual abelian variety of $A$.   Combining the Weil pairing of $A$ with $\varphi$ gives rise to a non-degenerate skew-symmetric form of $\ZZ_\ell$-modules
\[
e_\ell\colon T_\ell (A) \times T_\ell (A)  \xrightarrow{\text{id}\times \varphi } T_\ell (A) \times T_\ell (A^\vee)\to \ZZ_\ell(1)\cong \ZZ_\ell
\]
such that $e_\ell(\sigma(P),\sigma(Q))=\chi_\ell(\sigma) e_\ell(P,Q)$ for all $P,Q\in T_\ell(A)$ and $\sigma\in \Gal_K$, where $\chi_\ell\colon \Gal_K \to \ZZ_\ell^\times$ is the $\ell$-adic cyclotomic character.  We thus have
\begin{align} \label{E:GSp}
\rho_{A,\ell}\colon \Gal_K \to \GSp(T_\ell(A),e_\ell)\cong \GSp_{2g}(\ZZ_\ell),
\end{align}
where the last isomorphism depends on a suitable choice of a $\ZZ_\ell$-basis of $T_\ell(A)$.    We have $\chi_\ell(\Gal_K)=\ZZ_\ell^\times$ since $\ell$ is unramified in $K$.  So to prove that $\rho_{A,\ell}(\Gal_K)=\GSp_{2g}(\ZZ_\ell)$ it suffices to show that $\rho_{A,\ell}(\Gal_K) \supseteq \Sp_{2g}(\ZZ_\ell)$.

From (\ref{E:GSp}), we may identify $G_{A,\ell}$ with a closed subgroup of $\GSp_{2g,\QQ_\ell}$.

\begin{lemma} \label{L:GSp monodromy}
We have $G_{A,\ell}=\GSp_{2g,\QQ_\ell}$.
\end{lemma}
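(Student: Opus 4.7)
Since $\rho_{A,\ell}(\Gal_K)\subseteq\GSp_{2g}(\ZZ_\ell)$ by (\ref{E:GSp}), taking Zariski closure gives $G_{A,\ell}\subseteq\GSp_{2g,\QQ_\ell}$. Because $\GSp_{2g,\QQ_\ell}$ is connected, it suffices to prove that the identity component $G^\circ:=G_{A,\ell}^\circ$ already equals $\GSp_{2g,\QQ_\ell}$. My plan is to collect enough structural data about $G^\circ$---its reductive type, its centre, and its rank---to reduce the problem to a purely group-theoretic classification of full-rank irreducible subgroups of $\Sp_{2g,\QQ_\ell}$.

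Three properties of $G^\circ$ will be needed. First, $G^\circ$ is reductive by Proposition~\ref{P:conn facts}(\ref{P:conn facts i}). Second, by Proposition~\ref{P:conn facts}(\ref{P:conn facts iii}) its commutant in $\End_{\QQ_\ell}(V_\ell(A))$ is $\End(A_{\Kbar})\otimes_\ZZ\QQ_\ell$, which by the hypothesis $\End(A_{\Kbar})=\ZZ$ equals $\QQ_\ell$; since $V_\ell(A)$ is a semisimple $G^\circ$-module (as $G^\circ$ is a connected reductive group in characteristic zero), this forces $V_\ell(A)$ to be absolutely irreducible as a $G^\circ$-module. Third, the rank $r$ of $G^\circ$ equals $g+1$: the inequality $r\leq g+1$ follows from $G^\circ\subseteq\GSp_{2g,\QQ_\ell}$, while $r\geq g+1$ follows from Lemma~\ref{L:pre common rank}(\ref{L:pre common rank i}) applied to our prime $\q$, for which $\Phi_{A,\q}$ is free abelian of rank $g+1$.

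Now I write $G^\circ=Z^\circ\cdot H$, where $Z^\circ$ is the central torus and $H:=[G^\circ,G^\circ]$ the derived subgroup, and argue that $Z^\circ=\GG_m\cdot\mathrm{Id}$ while $H$ is a connected semisimple subgroup of $\Sp_{2g,\QQ_\ell}$ of rank $g$. On the one hand $Z^\circ$ commutes with $G^\circ$, so it lies in the commutant $\QQ_\ell\cdot\mathrm{Id}$ and is a subtorus of the one-dimensional scalar torus, giving $\dim Z^\circ\leq 1$. On the other hand, any connected semisimple subgroup of $\GSp_{2g}$ must lie in $\Sp_{2g}$ (the similitude character has no nontrivial restriction to a semisimple group), so $\rank(H)\leq g$. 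The equality $\rank(Z^\circ)+\rank(H)=r=g+1$ forces both inequalities to be equalities, so $Z^\circ=\GG_m\cdot\mathrm{Id}$ and $H\subseteq\Sp_{2g,\QQ_\ell}$ has rank $g$. Since $G^\circ=Z^\circ\cdot H$ with $Z^\circ$ scalar, $H$ also acts irreducibly on $V_\ell(A)$.

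The remaining---and main---step is the classification assertion that a connected semisimple subgroup $H$ of $\Sp_{2g,\QQ_\ell}$ of rank $g$ acting irreducibly on the standard $2g$-dimensional representation must be all of $\Sp_{2g,\QQ_\ell}$. I would prove this after base change to $\Qbar_\ell$ by writing $H$ as an almost-direct product $H_1\cdots H_s$ of simple factors and decomposing the restriction of the standard representation as an outer tensor product $W_1\otimes\cdots\otimes W_s$ of nontrivial irreducibles of the $H_i$. The constraints $\sum_i\rank(H_i)=g$, $\prod_i\dim W_i=2g$, and the fact that a symplectic-form-carrying tensor product must have an odd number of symplectic tensor factors (the rest being orthogonal), together with the known lower bounds on minimal faithful irreducible dimensions for each Dynkin type ($A_n$: $n+1$; $B_n$: $2n+1$; $C_n$: $2n$; $D_n$: $2n$; exceptional types: much larger), cut the possibilities down to the single case $s=1$, $H_1$ of type $C_g$, $W_1$ the standard representation, giving $H=\Sp_{2g,\QQ_\ell}$. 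This case-by-case check is the step I expect to require the most care, but it is entirely finite and algebraic. Combining $H=\Sp_{2g,\QQ_\ell}$ with $Z^\circ=\GG_m\cdot\mathrm{Id}$ yields $G^\circ=\GSp_{2g,\QQ_\ell}$, and hence $G_{A,\ell}=\GSp_{2g,\QQ_\ell}$.
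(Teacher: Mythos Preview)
Your argument is correct, but it takes a substantially longer route than the paper's own proof. The paper's first two paragraphs match yours exactly---establishing $G_{A,\ell}\subseteq\GSp_{2g,\QQ_\ell}$, identifying the commutant of $G_{A,\ell}^\circ$ as $\QQ_\ell$ via $\End(A_{\Kbar})=\ZZ$, and computing $r=g+1$ from the prime $\q$ and Lemma~\ref{L:pre common rank}(\ref{L:pre common rank i}). At that point, however, the paper simply invokes Lemma~\ref{L:reductive inclusion} (Wintenberger's criterion): it applies to the inclusion $G_{A,\ell}^\circ\subseteq\GSp_{2g,\QQ_\ell}$ of reductive groups of equal rank whose commutants in $\End_{\QQ_\ell}(V_\ell(A))$ have the same (semisimple) center $\QQ_\ell$, and yields $G_{A,\ell}^\circ=\GSp_{2g,\QQ_\ell}$ in one line. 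No decomposition $G^\circ=Z^\circ\cdot H$ and no classification of maximal-rank irreducible subgroups of $\Sp_{2g}$ is needed.

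Your approach has the virtue of being self-contained: it does not rely on the black-box Lemma~\ref{L:reductive inclusion}, and the classification you outline is genuinely correct (the inequality $\prod_i(r_i+1)\leq\prod_i d_i=2g=2\sum_i r_i$ forces $s\leq 2$, and the $s=2$ case collapses to $\SL_2\times\SL_g$ on $\QQ^2\otimes\QQ^g$, which is never symplectic for $g\geq 2$). But since Lemma~\ref{L:reductive inclusion} is already stated in the paper and is precisely tailored for this situation, the paper's route is considerably more economical. If you wish to align with the paper, simply replace your last two paragraphs by an appeal to Lemma~\ref{L:reductive inclusion} with $G_1=G_{A,\ell}^\circ$, $G_2=\GSp_{2g,\QQ_\ell}$, and $R=\QQ_\ell$.
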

\begin{proof}
We have $G_{A,\ell}\subseteq \GSp_{2g,\QQ_\ell}$ and hence the rank $r$ of $G_{A,\ell}^\circ$ is at most $g+1$, i.e., the rank of $\GSp_{2g,\QQ_\ell}$.  By assumption, we have a prime ideal $\q \subseteq \OO_K$ for which $A$ has good reduction and for which the group $\Phi_{A,\q}$ is free abelian of rank $g+1$.     By Lemma~\ref{L:pre common rank}(\ref{L:pre common rank i}), we have $g+1 \leq r$.    Therefore, $r=g+1$.

Our assumption $\End(A_{\Kbar})=\ZZ$ and Proposition~\ref{P:conn facts}(\ref{P:conn facts iii}) implies that the commutant of $G_{A,\ell}^\circ$ in $\End_{\QQ_\ell}(V_\ell(A))$ agrees with the scalar endomorphisms $\QQ_\ell$.   The commutant of $\GSp_{2g,\QQ_\ell}$ in $\End_{\QQ_\ell}(V_\ell(A))$ is also $\QQ_\ell$.  By Lemma~\ref{L:reductive inclusion}, we deduce that $G_{A,\ell}^\circ=\GSp_{2g,\QQ_\ell}$ and hence $G_{A,\ell}=\GSp_{2g,\QQ_\ell}$
\end{proof}

By Lemma~\ref{L:GSp monodromy} and (\ref{E:GSp}), we have $\calG_{A,\ell}=\GSp_{2g,\ZZ_\ell}$.    By Theorem~\ref{T:main new}(\ref{T:main new d}), we have 
\[
\rho_{A,\ell}(\Gal_K) \supseteq \GSp_{2g}(\ZZ_\ell)' \supseteq \Sp_{2g}(\ZZ_\ell)'.
\]
It remains to prove that $\Sp_{2g}(\ZZ_\ell)'=\Sp_{2g}(\ZZ_\ell)$.

With notation as in \S\ref{SS:derived calG}, we have $\calS_{A,\ell}=\Sp_{2g,\ZZ_\ell}$.  By Proposition~\ref{P:calS}(\ref{P:calS i}) and Lemma~\ref{L:calS mod ell basics},  with appropriate $c$ and $\gamma$, it suffices to show that $\calS_{A,\ell}(\FF_\ell)=\Sp_{2g}(\FF_\ell)$ is generated by elements of order $\ell$.  This is indeed true; moreover, $\Sp_{2g}(\FF_\ell)$ is generated by symplectic transvections.

%\bibliographystyle{plain}
%\bibliography{master}

% \bib, bibdiv, biblist are defined by the amsrefs package.
%\newpage
%\tableofcontents
%\newpage
\begin{bibdiv}
\begin{biblist}

\bib{MR574307}{article}{
      author={Bogomolov, Fedor~Alekseivich},
       title={Sur l'alg\'ebricit\'e des repr\'esentations {$l$}-adiques},
        date={1980},
        ISSN={0151-0509},
     journal={C. R. Acad. Sci. Paris S\'er. A-B},
      volume={290},
      number={15},
       pages={A701\ndash A703},
      review={\MR{MR574307 (81c:14025)}},
}

\bib{MR2372809}{article}{
   author={Caruso, Xavier},
   title={Conjecture de l'inertie mod\'{e}r\'{e}e de Serre},
   language={French, with French summary},
   journal={Invent. Math.},
   volume={171},
   date={2008},
   number={3},
   pages={629--699},
   issn={0020-9910},
   review={\MR{2372809}},
   doi={10.1007/s00222-007-0091-9},
}

\bib{MR861978}{article}{
   author={Chai, Ching-Li},
   title={Siegel moduli schemes and their compactifications over ${\bf C}$},
   conference={
      title={Arithmetic geometry},
      address={Storrs, Conn.},
      date={1984},
   },
   book={
      publisher={Springer, New York},
   },
   date={1986},
   pages={231--251},
   review={\MR{861978}},
}

\bib{MR3362641}{article}{
   author={Conrad, Brian},
   title={Reductive group schemes},
   language={English, with English and French summaries},
   conference={
      title={Autour des sch\'{e}mas en groupes. Vol. I},
   },
   book={
      series={Panor. Synth\`eses},
      volume={42/43},
      publisher={Soc. Math. France, Paris},
   },
   date={2014},
   pages={93--444},
   review={\MR{3362641}},
}

\bib{MR632548}{book}{
   author={Curtis, Charles W.},
   author={Reiner, Irving},
   title={Methods of representation theory. Vol. I},
   note={With applications to finite groups and orders;
   Pure and Applied Mathematics;
   A Wiley-Interscience Publication},
   publisher={John Wiley \&\ Sons, Inc., New York},
   date={1981},
   pages={xxi+819},
   isbn={0-471-18994-4},
   review={\MR{632548}},
}

\bib{MR654325}{book}{
   author={Deligne, Pierre},
   author={Milne, James S.},
   author={Ogus, Arthur},
   author={Shih, Kuang-yen},
   title={Hodge cycles, motives, and Shimura varieties},
   series={Lecture Notes in Mathematics},
   volume={900},
   publisher={Springer-Verlag, Berlin-New York},
   date={1982},
   pages={ii+414},
   isbn={3-540-11174-3},
   review={\MR{654325}},
}

\bib{1008.3675}{article}{
   author={Ellenberg, Jordan S.},
   author={Hall, Chris},
   author={Kowalski, Emmanuel},
   title={Expander graphs, gonality, and variation of Galois
   representations},
   journal={Duke Math. J.},
   volume={161},
   date={2012},
   number={7},
   pages={1233--1275},
   issn={0012-7094},
   review={\MR{2922374}},
}

\bib{MR861971}{incollection}{
      author={Faltings, Gerd},
       title={Finiteness theorems for abelian varieties over number fields},
        date={1986},
   booktitle={Arithmetic geometry ({S}torrs, {C}onn., 1984)},
   publisher={Springer},
     address={New York},
       pages={9\ndash 27},
        note={Translated from the German original [Invent. Math. {{\bf{7}}3}
  (1983), no. 3, 349--366; ibid. {{\bf{7}}5} (1984), no. 2, 381; MR
  85g:11026ab] by Edward Shipz},
      review={\MR{MR861971}},
}

\bib{MR1175627}{book}{
   author={Faltings, Gerd},
   author={W\"ustholz, Gisbert},
   author={Grunewald, Fritz},
   author={Schappacher, Norbert},
   author={Stuhler, Ulrich},
   title={Rational points},
   series={Aspects of Mathematics, E6},
   edition={3},
   note={Papers from the seminar held at the Max-Planck-Institut f\"ur
   Mathematik, Bonn/Wuppertal, 1983/1984;
   With an appendix by W\"ustholz},
   publisher={Friedr. Vieweg \& Sohn, Braunschweig},
   date={1992},
   pages={x+311},
   isbn={3-528-28593-1},
   review={\MR{1175627}},
   doi={10.1007/978-3-322-80340-5},
}

\bib{MR3263028}{article}{
   author={Gaudron, \'{E}ric},
   author={R\'{e}mond, Ga\"{e}l},
   title={Polarisations et isog\'{e}nies},
   language={French, with English and French summaries},
   journal={Duke Math. J.},
   volume={163},
   date={2014},
   number={11},
   pages={2057--2108},
   issn={0012-7094},
%   review={\MR{3263028}},
%   doi={10.1215/00127094-2782528},
}

\bib{MR693314}{incollection}{
      author={Henniart, Guy},
       title={Repr\'esentations {$l$}-adiques ab\'eliennes},
        date={1982},
   booktitle={Seminar on {N}umber {T}heory, {P}aris 1980-81 ({P}aris,
  1980/1981)},
      series={Progr. Math.},
      volume={22},
   publisher={Birkh\"auser Boston, Boston, MA},
       pages={107\ndash 126},
      review={\MR{693314 (85d:11070)}},
}

\bib{MR3038553}{article}{
   author={Jouve, F.},
   author={Kowalski, E.},
   author={Zywina, D.},
   title={Splitting fields of characteristic polynomials of random elements
   in arithmetic groups},
   journal={Israel J. Math.},
   volume={193},
   date={2013},
   number={1},
   pages={263--307},
   issn={0021-2172},
   review={\MR{3038553}},
   doi={10.1007/s11856-012-0117-x},
}

\bib{MR2832632}{article}{
      author={Larsen, Michael},
       title={Exponential generation and largeness for compact {$p$}-adic {L}ie
  groups},
        date={2010},
        ISSN={1937-0652},
     journal={Algebra Number Theory},
      volume={4},
      number={8},
       pages={1029\ndash 1038},
         url={http://dx.doi.org/10.2140/ant.2010.4.1029},
      review={\MR{2832632}},
}

\bib{MR1441234}{article}{
      author={Larsen, Michael},
      author={Pink, Richard},
       title={A connectedness criterion for {$l$}-adic {G}alois
  representations},
        date={1997},
        ISSN={0021-2172},
     journal={Israel J. Math.},
      volume={97},
       pages={1\ndash 10},
      review={\MR{MR1441234 (98k:11066)}},
}

\bib{larsen-pink-finite_groups}{article}{
      author={Larsen, Michael},
      author={Pink, Richard},
       title={Finite subgroups of algebraic groups},
        date={2011},
     journal={J. Amer. Math. Soc.},
      volume={24},
       pages={1105\ndash 1158},
}

\bib{explicit}{article}{
      author={Lombardo, Davide},
       title={Explicit open image theorems for abelian varieties with trivial endomorphism ring},
        date={2015},
      eprint={arXiv:1508.01293},
        note={arXiv:1508.01293},
}

\bib{MR1336608}{incollection}{
      author={Masser, D.~W.},
      author={W{\"u}stholz, G.},
       title={Refinements of the {T}ate conjecture for abelian varieties},
        date={1995},
   booktitle={Abelian varieties ({E}gloffstein, 1993)},
   publisher={de Gruyter},
     address={Berlin},
       pages={211\ndash 223},
      review={\MR{MR1336608 (97a:11092)}},
}

\bib{MR2544610}{article}{
   author={Miller, Alexander},
   author={Reiner, Victor},
   title={Differential posets and Smith normal forms},
   journal={Order},
   volume={26},
   date={2009},
   number={3},
   pages={197--228},
   issn={0167-8094},
   review={\MR{2544610}},
%   doi={10.1007/s11083-009-9114-z},
}

\bib{MR880952}{article}{
      author={Nori, Madhav~V.},
       title={On subgroups of {${\rm GL}_n({\bf F}_p)$}},
        date={1987},
        ISSN={0020-9910},
     journal={Invent. Math.},
      volume={88},
      number={2},
       pages={257\ndash 275},
}

\bib{MR3566639}{article}{
   author={Petersen, Sebastian},
   title={Group-theoretical independence of $\ell$-adic Galois
   representations},
   journal={Acta Arith.},
   volume={176},
   date={2016},
   number={2},
   pages={161--176},
   issn={0065-1036},
   review={\MR{3566639}},
}

\bib{MR0457455}{article}{
      author={Ribet, Kenneth~A.},
       title={Galois action on division points of {A}belian varieties with real
  multiplications},
        date={1976},
        ISSN={0002-9327},
     journal={Amer. J. Math.},
      volume={98},
      number={3},
       pages={751\ndash 804},
      review={\MR{MR0457455 (56 \#15660)}},
}

\bib{MR0387283}{article}{
   author={Serre, Jean-Pierre},
   title={Propri\'{e}t\'{e}s galoisiennes des points d'ordre fini des courbes
   elliptiques},
   language={French},
   journal={Invent. Math.},
   volume={15},
   date={1972},
   number={4},
   pages={259--331},
   issn={0020-9910},
   review={\MR{0387283}},
   doi={10.1007/BF01405086},
}

\bib{MR0476753}{incollection}{
      author={Serre, Jean-Pierre},
       title={Repr\'esentations {$l$}-adiques},
        date={1977},
   booktitle={Algebraic number theory ({K}yoto {I}nternat. {S}ympos., {R}es.
  {I}nst. {M}ath. {S}ci., {U}niv. {K}yoto, {K}yoto, 1976)},
   publisher={Japan Soc. Promotion Sci.},
     address={Tokyo},
       pages={177\ndash 193},
      review={\MR{MR0476753 (57 \#16310)}},
}

\bib{MR644559}{article}{
   author={Serre, Jean-Pierre},
   title={Quelques applications du th\'{e}or\`eme de densit\'{e} de Chebotarev},
   language={French},
   journal={Inst. Hautes \'{E}tudes Sci. Publ. Math.},
   number={54},
   date={1981},
   pages={323--401},
   issn={0073-8301},
   review={\MR{644559}},
}

\bib{MR1484415}{book}{
      author={Serre, Jean-Pierre},
       title={Abelian {$l$}-adic representations and elliptic curves},
      series={Research Notes in Mathematics},
   publisher={A K Peters Ltd.},
     address={Wellesley, MA},
        date={1998},
      volume={7},
        ISBN={1-56881-077-6},
        note={With the collaboration of Willem Kuyk and John Labute, Revised
  reprint of the 1968 original},
      review={\MR{MR1484415 (98g:11066)}},
}

\bib{MR1730973}{book}{
      author={Serre, Jean-Pierre},
       title={{\OE}uvres. {C}ollected papers. {IV}},
   publisher={Springer-Verlag},
     address={Berlin},
        date={2000},
        ISBN={3-540-65683-9},
        note={1985--1998},
      review={\MR{MR1730973 (2001e:01037)}},
}

\bib{MR0354656}{book}{
   title={Groupes de monodromie en g\'eom\'etrie alg\'ebrique. I},
   language={French},
   series={Lecture Notes in Mathematics, Vol. 288},
   note={S\'eminaire de G\'eom\'etrie Alg\'ebrique du Bois-Marie 1967--1969 (SGA 7
   I);
   Dirig\'e par A. Grothendieck. Avec la collaboration de M. Raynaud et D. S.
   Rim},
   publisher={Springer-Verlag, Berlin-New York},
   date={1972},
   pages={viii+523},
   review={\MR{0354656}},
   label={SGA7 I},
}

\bib{MR3117743}{article}{
   author={Ullmo, Emmanuel},
   author={Yafaev, Andrei},
   title={Mumford-Tate and generalised Shafarevich conjectures},
   language={English, with English and French summaries},
   journal={Ann. Math. Qu\'e.},
   volume={37},
   date={2013},
   number={2},
   pages={255--284},
   issn={2195-4755},
   review={\MR{3117743}},
   doi={10.1007/s40316-013-0009-4},
}

\bib{MR2400251}{article}{
   author={Vasiu, Adrian},
   title={Some cases of the Mumford-Tate conjecture and Shimura varieties},
   journal={Indiana Univ. Math. J.},
   volume={57},
   date={2008},
   number={1},
   pages={1--75},
   issn={0022-2518},
   review={\MR{2400251}},
   doi={10.1512/iumj.2008.57.3513},
}

\bib{MR1944805}{article}{
      author={Wintenberger, J.-P.},
       title={D\'emonstration d'une conjecture de {L}ang dans des cas
  particuliers},
        date={2002},
        ISSN={0075-4102},
     journal={J. Reine Angew. Math.},
      volume={553},
       pages={1\ndash 16},
      review={\MR{MR1944805 (2003i:11075)}},
}

\bib{Zywina-Large families}{article}{
	author={Zywina, David},
	title={Families of abelian varieties and large Galois images (preprint)},
	date={2019},
}

\end{biblist}
\end{bibdiv}

\end{document}